\newcommand{\Z}{{\mathbb Z}}
\newcommand{\R}{{\mathbb R}}
\newcommand{\C}{{\mathbb C}}
\newcommand{\D}{{\mathbb D}}
\newtheorem{lemma}{Lemma}[section]
\newtheorem{theorem}[lemma]{Theorem}
\newtheorem{hypothesis}[lemma]{Hypothesis}
\newtheorem{remark}[lemma]{Remark}
\newtheorem{proposition}[lemma]{Proposition}
\newtheorem{corollary}[lemma]{Corollary}
\newtheorem{definition}[lemma]{Definition}
\newcommand{\nn}{\nonumber}
\newcommand{\be}{\begin{equation}}
\newcommand{\ee}{\end{equation}}
\newcommand{\ul}{\underline}
\newcommand{\ol}{\overline}
\newcommand{\ti}{\tilde}
\newcommand{\spr}[2]{\left\langle #1 , #2 \right\rangle}
\newcommand{\E}{\mathrm{e}}
\newcommand{\I}{\mathrm{i}}
\newcommand{\tr}{\mathrm{tr}}
\DeclareMathOperator{\dist}{dist}
\newcommand{\eps}{\varepsilon}
\newcommand{\lam}{\lambda}
\numberwithin{equation}{section}
\begin{document}

\title[Correlated random operators]{Localization for random operators
 with non-monotone potentials with exponentially decaying correlations}

\author[H.\ Kr\"uger]{Helge Kr\"uger}
\address{Erwin Schr\"odinger Institute, Boltzmanngasse 9, A-1090 Vienna, Austria}
\email{\href{mailto:helge.krueger@rice.edu}{helge.krueger@rice.edu}}
\urladdr{\href{http://math.rice.edu/~hk7/}{http://math.rice.edu/\~{}hk7/}}

\thanks{H.\ K.\ was supported by NSF grant DMS--0800100, a Nettie S. Auttrey followship,
 and an Erwin Schr\"odinger junior research fellowship.}

\date{\today}

\keywords{Anderson localization, correlated Anderson model}

\begin{abstract}
 I consider random Schr\"odinger operators with exponentially decaying
 single site potential, which is allowed to change sign. For this
 model, I prove Anderson localization both in the sense of exponentially
 decaying eigenfunctions and dynamical localization. Furthermore,
 the results imply a Wegner-type estimate strong enough to use
 in classical forms of multi-scale analysis.

\end{abstract}

\maketitle

%
%
%

\section{Introduction}

In \cite{ander}, Anderson proposed that randomness of the
potential leads to localization phenomena in the Schr\"odinger
equation. In \cite{fs83}, Fr\"ohlich and Spencer laid the
foundations of {\em multi-scale analysis} to give a mathematical
justification of this phenomenon of {\em Anderson localization}.
Multi-scale analysis was then improved
by a sequence of people notably von Dreifus and Klein in \cite{vDK89} and
Germinet and Klein \cite{gk03}. These forms of multi-scale analysis
were used to prove pure point spectrum with exponentially decaying
eigenfunctions. For recent expositions of multi-scale analysis,
see the book \cite{stollmann} by Stollmann, the lecture notes
\cite{kirsch} by Kirsch, and the review \cite{klein} by Klein.

As pointed out by del Rio, Jitomirskaya, Last, and Simon in \cite{dRJLS}
this is not enough to conclude dynamical properties. This was first
shown using the {\em fractional moments method} by Aizenman and
Molchanov in \cite{am93}. Later methods using multi-scale analysis
were developed by de Bi\'evre and Germinet in \cite{dBG}, Damanik and Stollmann
in \cite{ds}, and Germinet and Klein \cite{gk04}.

All these proofs rely on an explicit a priori bound on the concentration
of eigenvalues  of the Schr\"odinger operator restricted
to a finite box known as a {\em Wegner estimate}. For one-dimensional
Schr\"odinger operators various techniques not relying on a Wegner
estimate exist. This is mainly due to transfer matrices, see for example
Carmona, Klein, and Martinelli \cite{ckm87},
Jitomirskaya \cite{j99}, Bourgain and Goldstein \cite{bg00},
and Damanik, Sims, and Stolz \cite{dss}.
A form of multi-scale analysis for multi-dimensional Schr\"odinger operators
without a Wegner estimate  was first 
developed in the context of quasi-periodic Schr\"odinger operators
by Bourgain, Goldstein, and Schlag in \cite{bgs2} and then improved
by Bourgain in \cite{b2002}, \cite{bbook}, \cite{b2007}.

It was then applied by Bourgain \cite{b2004} and more importantly
Bourgain and Kenig in \cite{bk} to prove
Anderson localization for Bernouilli-Anderson models.
Then used by Germinet, Hislop, and Klein \cite{ghk07}
for the Poisson random potential, and 
by Bourgain in \cite{b2009} to consider certain models with
vector-valued potentials. The methods for Bernouilli potentials
rely on a combinatorial fact known as {\em Sperner's lemma} and
unique continuation properties of the Laplace operator. Both
in \cite{b2009} and this paper, analyticity properties and
smooth distributions will be considered, and a certain fact
about analytic functions, {\em Cartan's lemma}, will play
a key-role. This is similar to the techniques used for
quasi-periodic operators.

\bigskip

My goal in this paper is to continue to develop the methods not
relying on a Wegner estimate. I will consider certain non-monotone
random models with long range correlations. Models with long range
correlations were considered for example by Kirsch, Stollmann, and
Stolz in \cite{kss} and non-monotonous models by Elgart, Tautenhahn,
and Veselic \cite{etv}, Tautenhahn and Veselic \cite{tv09},
and Veselic \cite{v09}.

Let me also point of the work by Baker, Loss, and Stolz in
\cite{bls08}, \cite{bls09} on the random displacement model.
This is a model for continuum Schr\"odinger operators, which
exhibits non-monotonic behavior.

\bigskip

The proof will be much in the spirit of the multi-scale analysis
of von Dreifus and Klein in \cite{vDK89}. However, I will use the
method of Bourgain from \cite{b2009} based on analyticity of
the potential in order to obtain Wegner estimates. Furthermore,
in difference to the results of \cite{bk} and \cite{b2009},
the results of this paper will imply a Wegner estimate, which
is strong enough to start the multi-scale analysis of Damanik
and Stollmann \cite{ds} or Stollmann \cite{stollmann}, which
would imply dynamical localization and pure point spectrum with
exponentially decaying eigenfunctions.

However, I have included a proof of dynamical lozalization at the end of
the paper. This is mainly done to show that the single energy
estimates of this paper are good enough to conclude it.
For a discussion how the single energy estimate
implies exponential decay of the eigenfunctions, I refer the
reader to the work of Bourgain and Kenig \cite{bk}.

%
%
%

\section{Statement of the results}
\label{sec:results}

Before introducing the general assumptions on the potential
$V$, I will introduce the class of {\em alloy-type potentials}
which will serve as an example.
Let the {\em single-site potential} $\varphi: \Z^d \to \R$ satisfy
\be
 \varphi(0) \neq 0,\quad |\varphi(n)| \leq \E^{-c |n|_{\infty}}
\ee
for some positive constant $c > 0$ and $|n|_{\infty} = \max_{1 \leq j \leq d} |n_j|$.
Furthermore let $\{\omega_x\}_{x \in \Z^d}$ be independent and identically
distributed random variables in $[-\frac{1}{2}, \frac{1}{2}]$,
whose density $\rho$ is bounded.
For $\lam > 0$ and $\omega$, we introduce the {\em potential}
\be\label{eq:potalloy}
 V_{\lam, \omega}(x) = \lam\left(\sum_{m \in \Z^d} \omega_{x + m} \varphi(m)\right).
\ee
Then our Schr\"odinger operator $H_{\lam,\omega}: \ell^2(\Z^d)\to \ell^2(\Z^d)$
is defined by
\be
 H_{\lam, \omega} = \Delta + V_{\lam, \omega},
\ee
where $\Delta u (x) = \sum_{|e|_1 = 1} u(x+e)$ is the discrete Laplacian
with $|n|_1 = \sum_{k=1}^{d} |n_k|$.
Operators of this form have been studied for example in 
\cite{etv}, \cite{tv09}, and \cite{v09}.

\bigskip

Before, defining all the properties of $H_{\lam,\omega}$, we wil
study, I will state the main results in the case of an alloy-type
potential.

\begin{theorem}\label{thm:int1}
 Let $\lam > 0$ be large enough, then we have
 \begin{enumerate}
  \item There exists an interval $\Sigma_{\lambda}$ such that for
   almost every $\omega$
   \be
    \sigma(H_{\lam,\omega}) = \Sigma_{\lambda}.
   \ee
  \item For almost every $\omega$, $H_{\lam,\omega}$ exhibits
   Anderson localization (see Definition~\ref{def:anderloc}).
  \item For almost every $\omega$, $H_{\lam,\omega}$ exhibits
   dynamical localization (see Definition~\ref{def:dynloc}).
  \item For $\beta > 1$, there exists a constant $C_{\beta} > 0$
   such that the integrated density of states $\mathcal{N}(E)$
   (see Definition~\ref{def:ids}) obeys
   \be\label{eq:contids}
    \mathcal{N}(E + \eps) - \mathcal{N}(E - \eps)
     \leq \frac{C_{\beta}}{\log(\eps^{-1})^{\beta}}
   \ee
   for any $E \in \R$ and $\eps \in (0, \frac{1}{2})$.
 \end{enumerate}
\end{theorem}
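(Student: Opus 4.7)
My plan is to dispose of the four parts in increasing order of difficulty, deferring the real technical work to a multi-scale analysis (MSA) combined with a Wegner estimate obtained from analyticity.

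For part (i), I would first observe that the family $\{H_{\lambda,\omega}\}$ is $\Z^d$-ergodic: because $\{\omega_x\}$ is i.i.d.\ and $\varphi$ depends only on the offset, the potential satisfies $V_{\lambda,T_a\omega}(x)=V_{\lambda,\omega}(x-a)$ where $T_a$ is the shift, so $H_{\lambda,\omega}$ is unitarily equivalent to $H_{\lambda,T_a\omega}$. By the standard Pastur-type argument, the spectrum, the absolutely continuous spectrum, the singular continuous spectrum and the pure point spectrum are all almost surely deterministic sets. To upgrade $\Sigma_\lambda$ to an interval I would argue via the integrated density of states: once (iv) is established, $\mathcal N$ is continuous, and $\Sigma_\lambda$ is the topological support of $d\mathcal N$. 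Connectedness then follows from the fact that the allowed $V_{\lambda,\omega}$ range continuously over a convex set of bounded configurations, so that $\sigma(\Delta+V_{\lambda,\omega})$ can be swept continuously through any gap.

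The bulk of the work is parts (ii) and (iii), which I plan to prove simultaneously by running a multi-scale analysis in the style of von Dreifus--Klein, adapted to the analytic-function framework of Bourgain. The two ingredients are (a) an \emph{initial length scale estimate} — for $\lambda$ large and some initial $L_0$, the box resolvent of $H_{\lambda,\omega}$ on $\Lambda_{L_0}$ is bounded and exponentially decaying off the diagonal with probability $\ge 1-L_0^{-p}$ — and (b) a \emph{Wegner estimate} saying $\PP(\dist(E,\sigma(H_{\lambda,\omega,\Lambda_L}))<\eps)$ is small, uniformly in $E$. The initial estimate is routine: at large $\lambda$, $V_{\lambda,\omega}(x)$ is typically of size $\sim\lambda$, so the Combes--Thomas-type bound gives off-diagonal decay as soon as we remove a small-probability event where $|V_{\lambda,\omega}(x)-E|$ is atypically small, and the latter is controlled by the boundedness of $\rho$. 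Given (a) and (b), I would run the MSA induction on scales $L_{k+1}=L_k^\alpha$ to obtain, at every scale, exponential decay of box Green functions with probability tending to $1$. Standard arguments (Bourgain--Kenig for exponential decay of generalized eigenfunctions, Damanik--Stollmann or Germinet--Klein for dynamical localization) then convert the single-energy MSA estimates into both (ii) and (iii).

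Part (iv) is an immediate corollary of the Wegner estimate, since $\mathcal N(E+\eps)-\mathcal N(E-\eps)\le |\Lambda_L|^{-1}\,\E[\#\{\text{eigenvalues of }H_{\Lambda_L}\text{ in }(E-\eps,E+\eps)\}]$ for every $L$, and the modulus of continuity of the Wegner bound is inherited verbatim. In particular, a Wegner estimate of the form $\log(\eps^{-1})^{-\beta}$ is what one should expect to extract, and it is this rate that drives (iv).

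The main obstacle will be the Wegner estimate, because the non-monotonicity of $\omega\mapsto V_{\lambda,\omega}$ rules out the standard spectral-averaging route, and the long-range correlations prevent the simple-minded reduction to a single $\omega_x$. Following Bourgain's method, I would fix a box $\Lambda_L$, regard the eigenvalues $E_j(\omega)$ of $H_{\lambda,\omega,\Lambda_L}$ as real-analytic in finitely many variables $\omega_y$ with $y$ in a slightly enlarged box (truncating the tails of $\varphi$ at cost $\E^{-cL}$), and invoke \emph{Cartan's lemma} to show that the analytic functions $E_j(\omega)-E$ cannot be smaller than $\eps$ on a set of measure more than $\exp(-c\log(\eps^{-1})^{1/\beta})$. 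The delicate point is choosing the distinguished directions in $\omega$-space along which $E_j$ has controlled, nondegenerate behavior, which is where $\varphi(0)\neq 0$ and the analyticity of $\rho$-free arguments enter. Once this single-box Wegner estimate is in hand, everything above assembles into the four conclusions.
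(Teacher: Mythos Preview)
Your overall architecture is right in spirit, but there is a genuine gap in the Wegner step that would prevent the argument from closing. You propose to obtain a Wegner estimate \emph{first}, by applying Cartan's lemma to the eigenvalue functions $E_j(\omega)$ (or to $\det(H^{\Lambda_L}_{\lambda,\omega}-E)$) viewed as analytic functions of the roughly $L^d$ variables $\{\omega_y\}_{y\in\Lambda_{L'}}$, and then feed this into a standard von~Dreifus--Klein induction. The difficulty is quantitative: the matrix-valued Cartan estimate carries a denominator $N\log(BD)$ with $N=\#\Lambda_L\sim L^d$, so the sublevel set $\{\|(H^{\Lambda_L}-E)^{-1}\|\ge \E^{s}\}$ has measure at most $\exp(-c\,s/L^d)$. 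To serve as a Wegner input for MSA one needs this probability to be $\le L^{-p}$ when $s$ is of order $\sqrt{L}$ (the resolvent threshold in the suitability notion), which would force $s\gtrsim L^d\log L$; this is far out of reach. Equally, the bound does not survive $L\to\infty$, so it cannot give (iv) directly either. There is a second, related issue you do not address: Cartan requires an a~priori point $x_0$ at which the analytic function is not too small, and for the full-box determinant you have no mechanism to produce such an $\omega_0$ with $\dist(E,\sigma(H^{\Lambda_L}_{\lambda,\omega_0}))$ bounded below.

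What the paper does instead is to \emph{intertwine} Cartan with the multi-scale induction rather than separate them. At scale $R$, the previous scale guarantees that outside at most $K$ small bad regions every sub-box of size $r$ is suitable; one then writes $H^{\Lambda_R}-E$ in block form with respect to the good/bad decomposition and passes via the Schur complement to a matrix supported on the bad set, whose size is only $\sim K r^d$ rather than $R^d$. Cartan is applied to \emph{that} object, in only those $\omega$-variables sitting inside the bad regions, and the required good point $x_0$ is supplied by a separate probabilistic step (for each bad region one can resample the interior variables to make the local resolvent moderate). This yields a resolvent bound at scale $R$ with probability loss governed by $r$, not $R^d$, which is what makes the induction close. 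The Wegner estimate of the form $(\log\eps^{-1})^{-\beta}$ in (iv) is then an \emph{output} of the completed MSA (via a short argument converting acceptability into an eigenvalue-counting bound), not an input. Your plan for (i) via a continuous path in configuration space is essentially the paper's argument; the detour through continuity of $\mathcal N$ is unnecessary.
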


(i) holds for all $\lam > 0$ and will be proven in Appendix~\ref{sec:spectrum}.
A finite volume version of (iv) holds, see Theorem~\ref{thm:intB}.
The continuity of the integrated density given here is probably
not optimal. In fact Veseli\'c has shown that if the density
$\rho$ is of bounded variation and $\sum_{n\in\Z^d} \varphi(n) \neq 0$
that \eqref{eq:contids} can be improved to
\be
 \mathcal{N}(E+\eps)-\mathcal{N}(E-\eps) \leq C \eps
\ee
for some constant $C > 0$. That is the integrated density
of states is Lipschitz continuous. This can be found in \cite{v09},
which I recommend also for discussions of earlier results
on the continuity of the integrated density of states
for models with alloy-type potential.
I will now define the two localization properties from
Theorem~\ref{thm:int1}.

\begin{definition}\label{def:anderloc}
 The operator $H: \ell^2(\Z^d) \to \ell^2(\Z^d)$ is said to 
 exhibit Anderson localization, if its spectrum is pure point
 and there exists a constant $\gamma > 0$ such that
 for every eigenfunction $\psi$, we have for $n\in\Z^d$ that
 \be
  |\psi(n)| \leq C_{\psi} \E^{-\gamma |n|_{\infty}},
 \ee
 where $C_{\psi} > 0$ is a constant.
\end{definition}

I denote by $\{e_x\}_{x\in\Z^d}$ the standard basis of $\ell^2(\Z^d)$
given by
\be
 e_x(n)=\begin{cases} 1, &x=n;\\ 0,& \text{otherwise}.\end{cases}
\ee
The function $\psi(t) = \E^{-\I t H} e_x$ is the solution
of the problem
\be\begin{split}
 \psi(0) &= e_x \\
 \I \partial_t \psi(t) &= H \psi(t),
\end{split}\ee
which is known as Schr\"odinger's equation. The second
localization property, we are interested in is

\begin{definition}\label{def:dynloc}
 The operator $H: \ell^2(\Z^d) \to \ell^2(\Z^d)$ is said
 to exhibit dynamical localization, if for every $x \in \Z^d$
 and $p \geq 1$ we have
 \be
  \sup_{t\in\R} \left(\sum_{n \in \Z^d} (1 + |n|_{\infty})^{p}
   |\spr{e_n}{ \E^{-\I tH} e_x}|^2 \right) < \infty.
 \ee
\end{definition}

I note that this is not a very strong localization
property. I believe that it is possible to show stronger
localization properties than this, but have decided not
to do so to keep this work at a reasonable length.

We now begin by introducing the {\em integrated
density of states}.
Denote by $\Lambda_r(x)$ the cube with radius $r$
and center $x$
\be
 \Lambda_r(x) = \{n \in \Z^d:\quad |n - x|_{\infty} \leq r\}.
\ee
$H_{\lam,\omega}^{\Lambda_r(x)}$ denotes the restriction
of $H_{\lam,\omega}$ to $\ell^2(\Lambda_r(x))$. We denote
the number of eigenvalues of $H_{\lam,\omega}^{\Lambda_r(x)}$
in the interval $[E_0,E_1]$ by
\be
 \tr(P_{[E_0, E_1]}(H_{\lam,\omega}^{\Lambda_r(x)})).
\ee
Furthermore, $\mathbb{E}$ denotes the expectation value,
and $\#(\Lambda_r(0))$ denotes the number of elements of $\Lambda_r(0)$.

\begin{definition}\label{def:ids}
 The integrated density of states $\mathcal{N}(E)$ is given by
 \be
  \mathcal{N}(E) = \lim_{r \to \infty}
   \left(\frac{1}{\#(\Lambda_r(0))}  
    \tr(P_{(-\infty, E]}(H_{\lam,\omega}^{\Lambda_r(0)}))
     \right).
 \ee
\end{definition}

Here the limit is known to exist (see Section~5 of \cite{kirsch}).
We will now discuss classes of potential, which include
potentials of the form \eqref{eq:potalloy}, but which
isolate the properties needed for the proof.

\begin{hypothesis}\label{hyp:expdecay}
 The potential $V_{\lam,\omega}(x)$ is said to have exponentially
 decaying correlations, if the following properties hold.
 \begin{enumerate}
  \item There exists a map
   \be
    f: \left[-\frac{1}{2},\frac{1}{2}\right]^{\Z^d} \to \R.
   \ee
   such that $V_{\lam,\omega}(x) = \lam f(T_x \omega)$, where $(T_x\omega)_n = \omega_{x+n}$.
  \item There exists $c > 0$, such that if
   $\omega_n = \ti{\omega}_n$ for $n \in \Lambda_r(0)$, we have
   \be\label{eq:asfr2}
    |f(\omega) - f(\ti{\omega})| \leq \E^{- c r}.
   \ee
  \item There are constants $F > 0$ and $\alpha > 0$ such that
   for any $E \in \R$ and $\eps > 0$, we have
   \be  
    \mu(\{\omega:\quad |f(\omega) - E| \leq \eps\})\leq F \cdot \eps^{\alpha}.
   \ee
  \item $\mathbb{P}$ is the product measure of a fixed probability
   measure $\mu$ on $\left[-\frac{1}{2}, \frac{1}{2}\right]$.
 \end{enumerate}
\end{hypothesis}

In order to see that the potential defined in \eqref{eq:potalloy}
satisfies this hypothesis, define
$$
 f(\omega) = \sum_{m\in\Z^d} \omega_m \varphi(m).
$$
Then (i) holds, (ii) follows from $|\varphi(n)| \leq \E^{-c|n|_{\infty}}$,
and (iii) from $\varphi(0) \neq 0$.

Assumptions (i) and (ii) imply that properties of $H_{\lam,\omega}^{\Lambda_r(x)}$
and $H_{\lam,\omega}^{\Lambda_r(y)}$ become almost independent
if $|x - y|_{\infty}$ is large enough. For example Lemma~\ref{lem:propXn}
is an implementation of this fact.
Assumption (iii) is necessary to obtain an initial condition for
multi-scale analysis, see Appendix~\ref{sec:initcond}. 

It is furthermore noteworthy that 
Conditions (i) and (ii) imply that the
function $f$ is H\"older continuous, if we use the metric
\be
 d(\omega, \ti{\omega}) = \sum_{n \in \Z^d} \frac{|\omega - \ti{\omega}|}{2^{|n|_{\infty}}}
\ee
on $[-\frac{1}{2},\frac{1}{2}]^{\Z^d}$. This is the natural
metric, since under it also the maps $T_x$ are H\"older
continuous.

The following theorem illustrates that Hypothesis~\ref{hyp:expdecay}
combined with a {\em Wegner estimate} is already sufficient to
prove Anderson localization. 

\begin{theorem}\label{thm:intwithwegner}
 Assume Hypothesis~\ref{hyp:expdecay}.
 In addition assume the Wegner estimate
 for $R \geq 1$ large enough and all $\eps > 0$
 \be\label{eq:asswegner}
  \mathbb{E}(\tr(P_{[E-\eps, E+\eps]}(H_{\lam,\omega}^{\Lambda_R(0)}))) 
   \leq \frac{C^{W} (\#\Lambda_R(0))^{b}}{\log(\eps^{-1})^{2 ( b \cdot d + \beta)}},
 \ee
 where $b \geq 0$, $C^{W}$ is a $\lam$ independent constant, and $\beta > 0$
 is large enough.

 Then for $\lam > 0$ large enough and almost every $\omega$ Anderson localization 
 and dynamical localization hold.
\end{theorem}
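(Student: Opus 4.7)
The plan is to run a multi-scale analysis (MSA) in the style of von Dreifus--Klein \cite{vDK89}, with the assumed Wegner bound \eqref{eq:asswegner} supplying the energy-concentration input and the combination of large $\lam$ with Hypothesis~\ref{hyp:expdecay}(iii) supplying the initial length scale input. Once the MSA conclusion (exponential off-diagonal decay of $G^{\Lambda_L(x)}(E)$ with probability $1 - L^{-p}$ at every scale and every $E$ in the spectrum) is in hand, Anderson localization follows from the standard generalized-eigenfunction argument of \cite{vDK89}, and dynamical localization follows by the Damanik--Stollmann method \cite{ds}. Throughout, the exponential decay of correlations in Hypothesis~\ref{hyp:expdecay}(ii) plays the role of independence.

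\textbf{Initial length scale.} For a fixed energy $E$, I would use Hypothesis~\ref{hyp:expdecay}(iii) to bound $\mathbb{P}(|V_{\lam,\omega}(x)-E|\leq \lam^{1/2}) \leq F \lam^{-\alpha/2}$ for each $x$, and union-bound over $x\in\Lambda_{L_0}(0)$ with initial scale $L_0$ polylogarithmic in $\lam$. On the resulting event every site satisfies $|V_{\lam,\omega}(x)-E|\geq \lam^{1/2}$, and a Combes--Thomas estimate applied to $\Delta + V_{\lam,\omega} - E$ gives $|G^{\Lambda_{L_0}(0)}(x,y;E)|\leq \E^{-\gam|x-y|_\infty}$ with decay rate $\gam$ of order $\log\lam$, which is the MSA input at scale $L_0$.

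\textbf{Multi-scale induction.} I would then pass from scale $L_k$ to $L_{k+1}=\lfloor L_k^{\alpha_0}\rfloor$ for some $\alpha_0\in(1,2)$ in the von Dreifus--Klein fashion: cover $\Lambda_{L_{k+1}}$ by subboxes of size $L_k$, apply the geometric resolvent equation so that the presence of at most one singular subbox yields exponential decay at the larger scale, and bound the probability of two disjoint singular subboxes by a sum over pairs. Two modifications of the i.i.d.\ argument are forced on us. First, disjoint subboxes are only approximately independent; by Hypothesis~\ref{hyp:expdecay}(ii) one can truncate $V_{\lam,\omega}$ on $\Lambda_{L_k}(x)$ to depend only on $\omega$ restricted to $\Lambda_{2L_k}(x)$, up to error $\lam\E^{-c L_k}$, after which the events governing two far-apart subboxes are genuinely independent and the truncation error is negligible against the resolvent bounds. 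Second, \eqref{eq:asswegner} is only logarithmic in $\eps$, so I would choose $\eps = \E^{-L_k^{\theta}}$ for a small $\theta>0$, converting the Wegner bound into a polynomial of the form $C^W L_k^{bd - 2\theta(bd+\beta)}$; provided $\beta$ is large enough this beats the combinatorial volume factor and the pair count, closing the induction.

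\textbf{Main obstacle and final extraction.} The delicate step is calibrating the induction parameters---the MSA decay rate $\gam_k$, the probability exponent $p$, the scale ratio $\alpha_0$, the singular-energy cut-off $\eps$, and the required size of $\beta$---so that at every level the Wegner polynomial simultaneously dominates the singular-pair probability and the correlation-truncation error coming from Hypothesis~\ref{hyp:expdecay}(ii); the logarithmic nature of \eqref{eq:asswegner} leaves less room than in the usual H\"older-Wegner setup. Once that balance is struck, Anderson localization as in Definition~\ref{def:anderloc} follows from polynomially bounded generalized eigenfunctions being forced to decay exponentially on the $(E,\gam)$-regular event at every scale, and dynamical localization as in Definition~\ref{def:dynloc} follows by the argument of \cite{ds}, which integrates the single-energy MSA estimate against spectral measures, with \eqref{eq:asswegner} again (for $\beta$ large) ensuring the relevant energy integrals converge.
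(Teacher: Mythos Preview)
Your proposal is correct and follows essentially the same route as the paper: large-$\lam$ initial scale via Hypothesis~\ref{hyp:expdecay}(iii) and Combes--Thomas, a von Dreifus--Klein induction with a single bad cube and the logarithmic Wegner estimate (the paper takes $\eps=8\E^{-\sqrt{R}}$, i.e.\ $\theta=\tfrac12$ rather than ``small''), and the exponential-correlation truncation to recover independence of distant boxes. The only notable difference is in the extraction of dynamical localization: you invoke Damanik--Stollmann \cite{ds}, whereas the paper gives its own argument (Sections~\ref{sec:firststeplocalization}--\ref{sec:dynloc}) modeled on Bourgain--Jitomirskaya \cite{bj}; both routes are valid from the MSA output, and the paper itself remarks that \cite{ds} would apply.
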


I have decided to include this theorem,
since it is a good dividing line to lay out the framework
of the classical parts of multi-scale analysis, which can
also be found for example in the already mentioned works
by Kirsch \cite{kirsch} and Stollmann \cite{stollmann}.
The main difference between the proof of this theorem,
and with what we will use later, is that for this theorem
we only need to allow one bad cube, and later we will let
this number go to infinity. The tools for this proof
are developed in Section~\ref{sec:suitability} to
\ref{sec:combinatoric1}. The proof is then given in
Section~\ref{sec:asswegner}.

It is still possible in Theorem~\ref{thm:intwithwegner}
that the measure $\mu$ is a Bernouilli
measure. However, it is unclear how to prove a Wegner estimate
in this generality. Except in special cases like the one
treated by Bourgain in \cite{b2004}. See also
Veseli\'c \cite{v09} for Wegner estimates for 
$\varphi$ finitely supported and $\mu$ absolutely continuous.
In fact, we will make an analyticity assumption
on the potential in Hypothesis~\ref{hyp:analytic}, which will
serve as a replacement of \eqref{eq:asswegner}. In the case of the potential defined in
\eqref{eq:potalloy} the assumption reduces to the measure $\mu$
being absolutely continuous with a bounded density.

We denote by $\mathbb{D}$ the disk in $\C$ with center $0$ and radius
$6$, that is
\be
 \mathbb{D} = \{z \in \C:\quad |z| < 6\}.
\ee
We are now ready for

\begin{hypothesis}\label{hyp:analytic}
 We say the $V_{\lam,\omega}(x)$ obeys the analyticity assumption,
 if the following hold.
 \begin{enumerate}
  \item There exist a sequence of maps
   \be
    f_r:\mathbb{D}^{\Lambda_r(0)} \to \C.
   \ee
   and a constant $c > 0$ satisfying the tail estimate for $R \geq 0$
   \be\label{eq:asfr}
    \sum_{r = R}^{\infty} \|f_r\|_{L^{\infty}(\mathbb{D}^{\Lambda_r(0)})}
    \leq \E^{-c R}.  
   \ee
  \item The potential is given by
   \be\label{eq:defVn}
    V_{\lam,\omega}(x) = 
     \lam \left(\sum_{r=0}^{\infty} f_{r}(\{\omega_{n}\}_{n \in \Lambda_r(x)})\right).
   \ee
  \item The map
   \be
    \omega_0 \mapsto V_{\lam,\omega}(0) 
   \ee
   is non-constant for any choice of $\{\omega_x\}_{x\neq 0}$.
  \item The measure $\mu$ has a density $\rho$, which is bounded.
 \end{enumerate}
\end{hypothesis}

One also can check that the potential defined in \eqref{eq:potalloy}
satisfies this hypothesis. It is also possible to show that
Hypothesis~\ref{hyp:analytic} implies Hypothesis~\ref{hyp:expdecay}.
A proof that (iii) of Hypothesis~\ref{hyp:expdecay} holds is
given in Lemma~\ref{lem:loja}.
The main result of this paper is

\begin{theorem}\label{thm:intA}
 Assume Hypothesis~\ref{hyp:analytic}. 
 Then for $\lam > 0$ large enough, Anderson localization and
 dynamical localization hold for almost every $\omega$.
\end{theorem}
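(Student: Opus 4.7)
The plan is to reduce to Theorem~\ref{thm:intwithwegner}, so it suffices to verify Hypothesis~\ref{hyp:expdecay} and the Wegner estimate \eqref{eq:asswegner} under Hypothesis~\ref{hyp:analytic}. For the hypothesis, set $f(\omega) = \sum_{r \geq 0} f_r(\{\omega_n\}_{n\in\Lambda_r(0)})$. The series converges absolutely by the tail bound \eqref{eq:asfr}, giving (i); the correlation-decay statement (ii) follows by applying \eqref{eq:asfr} to $r > R$, since those are the only terms that can change when $\omega$ is replaced by $\ti{\omega}$ on $\Lambda_R(0)$; and (iv) is verbatim Hypothesis~\ref{hyp:analytic}(iv). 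The transversality bound (iii) is Lemma~\ref{lem:loja}: non-constancy of $\omega_0 \mapsto f(\omega)$ combined with analyticity and the bounded density $\rho$ forces $\{|f - E| \leq \eps\}$ to have small $\mu$-measure via a Lojasiewicz-type argument.

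The real content is establishing the Wegner estimate \eqref{eq:asswegner}, for which I would use analyticity and a quantitative Cartan lemma. First, truncate \eqref{eq:defVn} to $r \leq R' := R + K\log(\eps^{-1})$ for a large constant $K$; by \eqref{eq:asfr} the truncation perturbs $H^{\Lambda_R(0)}_{\lam,\omega}$ by much less than $\eps$ in operator norm. The truncated Hamiltonian depends only on finitely many $\omega_n$, and each coordinate extends holomorphically to $\mathbb{D}$. Consequently the eigenvalues $E_j(\omega)$ of $H^{\Lambda_R(0)}_{\lam,\omega}$ extend as analytic functions to a polydisk in the complex variables $\omega_n$, with standard block reduction handling near-degeneracies.

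Fixing an eigenvalue index $j$ and the variables $\omega_n$ for $n \neq 0$, I would apply Cartan's lemma to the one-variable analytic function $z \mapsto E_j(z, \omega_{\neq 0}) - E$ on $\mathbb{D}$. Hypothesis~\ref{hyp:analytic}(iii) combined with first-order perturbation theory provides a uniform lower bound on the non-constancy of this map; Cartan's lemma then yields $|\{z \in [-\frac{1}{2},\frac{1}{2}] : |E_j(z) - E| < \eps\}| \leq C_1 \log(\eps^{-1})^{-\gamma}$ with $\gamma$ as large as desired, at the cost of $C_1$. Integrating against the bounded density $\rho(z)\,dz$, summing over the $\#\Lambda_R(0)$ eigenvalues, and averaging over the remaining variables yields \eqref{eq:asswegner} with $b = 1$ and arbitrarily large $\beta$; Theorem~\ref{thm:intwithwegner} then completes the proof.

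The main obstacle will be making the Cartan estimate quantitative enough to produce the exponent $2(bd+\beta)$ uniformly in the frozen variables $\omega_{\neq 0}$ and in the box size $R$. A naive Cartan bound yields a weak logarithmic rate whose implicit constants can degrade with $R$ or with near-degeneracies of $H^{\Lambda_R(0)}_{\lam,\omega}$. Controlling such degeneracies (where $\nabla_\omega E_j$ is small in every single coordinate but $E_j$ still varies at higher order) and arranging the combinatorics so that the Cartan bound in the chosen variable $\omega_0$ remains robust are the main technical points; this parallels the Bourgain--Kenig treatment of the Bernoulli model in \cite{bk} and Bourgain's analytic framework in \cite{b2009}.
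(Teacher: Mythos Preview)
Your reduction to Theorem~\ref{thm:intwithwegner} via a direct Wegner estimate has a genuine gap, and the ``main obstacle'' you flag is not a technicality but the entire content of the argument. Two concrete problems: first, the eigenvalues $E_j(\omega)$ of $H_{\lam,\omega}^{\Lambda_R(0)}$ are \emph{not} analytic functions of the complexified variables on $\mathbb{D}^{\nu}$; they are only algebraic, with branch points at crossings, so Cartan's lemma does not apply to $z\mapsto E_j(z,\omega_{\neq 0})-E$ as stated. Second, and more seriously, Hypothesis~\ref{hyp:analytic}~(iii) gives non-constancy of $\omega_0\mapsto V_{\lam,\omega}(0)$, but by Feynman--Hellmann $\partial_{\omega_0}E_j=\lam\langle\psi_j,\partial_{\omega_0}f\,\psi_j\rangle$, and for a box of side $R$ the eigenfunction $\psi_j$ may be concentrated far from the origin, so this derivative can be exponentially small in $R$. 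There is no uniform-in-$R$ lower bound on the non-constancy of any single eigenvalue in any single coordinate, and no a~priori mechanism to produce one; this is precisely why no Wegner estimate is available in the first place.

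The paper proceeds in the opposite order. It does \emph{not} feed a Wegner estimate into Theorem~\ref{thm:intwithwegner}; instead it runs a separate multi-scale induction (Theorem~\ref{thm:indstep}) in which Cartan's lemma is applied not to eigenvalue branches but to the \emph{determinant}, or rather to the Schur complement of $H^{\Lambda_R(0)}_{\lam,\omega}-E$ with respect to a small union of ``bad'' subcubes (Theorem~\ref{thm:schroecartan}). This object \emph{is} analytic in the random parameters, and the crucial ``good point'' $x_0$ with $\|A(x_0)^{-1}\|\leq D$ required by the matrix Cartan theorem (Theorem~\ref{thm:matcartan2}) is supplied by the inductive hypothesis at the previous scale via Proposition~\ref{prop:probmodifyomega}. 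The resulting acceptability at all scales (Corollary~\ref{cor:msa2}) then yields localization through Theorem~\ref{thm:dynlocmsa}, and the Wegner estimate of Theorem~\ref{thm:intB} is a \emph{corollary} of the completed multi-scale analysis (Appendix~\ref{sec:wegner}), not an input to it.
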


We also have

\begin{theorem}\label{thm:intB}
 Assume Hypothesis~\ref{hyp:analytic} and $\lambda > 0$ large enough. 
 For any $\beta \geq 1$ there exists a constant $C_{\beta}$
 and a length scale $R_{\beta}$ such that for all $R \geq R_{\beta}$
 the Wegner estimate
 \be
  \mathbb{E}\left(\frac{1}{\#\Lambda_R(0)}\tr(P_{[E-\eps, E+\eps]}(H_{\lam,\omega}^{\Lambda_R(0)}))\right) 
   \leq \frac{C_{\beta}}{\log(\eps^{-1})^{\beta}}
 \ee
 holds for $E\in\R$ and $\eps\in (0,\frac{1}{2})$.
\end{theorem}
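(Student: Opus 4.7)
The plan is to reduce the Wegner estimate to a concentration bound for the spectrum of $H^{\Lambda_R(0)}_{\lam,\omega}$ near $E$, and then to treat that bound by the Cartan-lemma machinery used by Bourgain in \cite{b2009}, which replaces the usual monotonicity argument in this non-monotone setting. Since $\tr P_{[E-\eps,E+\eps]}(H^{\Lambda_R(0)}_{\lam,\omega})\leq \#\Lambda_R(0)$ and is nonzero only when $\dist(E,\sigma(H^{\Lambda_R(0)}_{\lam,\omega}))\leq\eps$, it suffices to establish
\[
\mathbb{P}\bigl(\dist(E,\sigma(H^{\Lambda_R(0)}_{\lam,\omega}))\leq\eps\bigr)\leq \frac{C_\beta}{\log(\eps^{-1})^\beta}.
\]

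First I would truncate the series \eqref{eq:defVn}, keeping only $r\leq R'$ with $R'\asymp \log(\eps^{-1})$. By \eqref{eq:asfr} the truncation error in operator norm is at most $\lam\,\E^{-cR'}\ll\eps$, so up to a harmless doubling of the interval the operator $H^{\Lambda_R(0)}_{\lam,\omega}$ depends only on the finitely many variables $(\omega_n)_{n\in\Lambda_{R+R'}(0)}$, with analytic extension of each $\omega_n$ to the disk $\mathbb{D}$. Set $D_\omega(E)=\det(E-H^{\Lambda_R(0)}_{\lam,\omega})$, a polynomial of degree $N=\#\Lambda_R(0)$ in $E$ and entire in each $\omega_n$. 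The event $\{\dist(E,\sigma)\leq\eps\}$ forces $|D_\omega(E)|\leq\eps(2\|H\|)^{N-1}=:\delta$.

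The key step is to bound $\mathbb{P}(|D_\omega(E)|\leq\delta)$ via Cartan's lemma. Freeze all $\omega_n$ except $\omega_0$ and regard $g(z)=D_\omega(E)\bigl|_{\omega_0=z}$ as an analytic function on $\mathbb{D}$. Hypothesis~\ref{hyp:analytic}(iii), quantified by the non-degeneracy inequality of Lemma~\ref{lem:loja}, supplies (for a.e.\ frozen configuration) a uniform upper bound on $\|g\|_{L^\infty(\mathbb{D})}$ together with a lower bound on the size of $g$ at some reference point in $[-\tfrac12,\tfrac12]$. Cartan's lemma then yields a measure estimate of the form
\[
\bigl|\{z\in[-\tfrac12,\tfrac12]:|g(z)|\leq\delta\}\bigr|\leq \frac{C}{\log(\delta^{-1})^\beta},
\]
where the exponent $\beta$ is manufactured by iterating Cartan's lemma across several coordinates $\omega_n$ (roughly one iteration per unit of $\beta$). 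Multiplying by $\|\rho\|_\infty$ from Hypothesis~\ref{hyp:analytic}(iv) and integrating out the remaining variables by Fubini produces the desired probabilistic bound. Choosing $R\geq R_\beta$ and $\lam\geq \lam_\beta$ so that $\log\delta^{-1}\asymp \log\eps^{-1}$ then yields the claimed Wegner estimate.

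The main obstacle is precisely that non-monotonicity forbids the standard Wegner argument based on $\partial E_j/\partial\omega_n>0$. One must replace it with a uniform lower bound on the variation of the characteristic polynomial under a single $\omega_n$, which requires not just the non-constancy in Hypothesis~\ref{hyp:analytic}(iii) but a quantitative version of it furnished by Lemma~\ref{lem:loja}. The secondary difficulty is producing arbitrarily large $\beta$: a single application of Cartan's lemma yields only a fixed exponent, so one must iterate across several random variables $\omega_n$ to amplify the logarithmic decay, taking care that the truncation scale $R'$ and the dimension $N$ enter the final bound only through multiplicative constants absorbed into $C_\beta$ and $R_\beta$.
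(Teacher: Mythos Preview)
Your approach has a genuine gap, and it is precisely the difficulty that forces the paper to run a full multi-scale analysis before deducing the Wegner estimate rather than attacking it directly.

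The central problem is the reference point in Cartan's lemma. You write that Lemma~\ref{lem:loja} ``supplies \dots a lower bound on the size of $g$ at some reference point,'' but Lemma~\ref{lem:loja} is a statement about the single-site function $f(\omega)$, not about $D_\omega(E)=\det(E-H^{\Lambda_R(0)}_{\lam,\omega})$. To apply Cartan to $g(z)=D_\omega(E)|_{\omega_0=z}$ you need some $z_0$ with $|g(z_0)|$ bounded below, which amounts to knowing that for some choice of $\omega_0$ the spectrum of $H^{\Lambda_R(0)}_{\lam,\omega}$ has a quantified gap at $E$. Nothing in Hypothesis~\ref{hyp:analytic} gives this a priori: varying a single $\omega_0$ perturbs the operator by $O(\lam)$ at one site and exponentially less elsewhere, while the box carries $N=\#\Lambda_R(0)$ eigenvalues packed in an interval of length $O(\lam)$, so it is perfectly possible that every choice of $\omega_0$ leaves an eigenvalue near $E$. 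Producing this reference point is not a technicality --- it is the whole content of the induction.

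Even granting a reference point, the quantitative output of the matrix Cartan lemma (Theorem~\ref{thm:matcartan}) carries $N$ in the denominator of the exponent: the sublevel set has measure $\exp(-s/(N\log(BD)))$. With $N\sim R^d$ this is useless unless $\log(\eps^{-1})\gg R^d$, whereas the theorem demands a bound uniform for all $R\geq R_\beta$. Your remark that $N$ should enter ``only through multiplicative constants'' is exactly what fails. Likewise, iterating Cartan across coordinates (this is what Proposition~\ref{prop:cartanbourgain} already does) does not raise the exponent $\beta$; it still produces a bound of the form $\exp(-cs/\log(\eps_0^{-1}))$.

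The paper's route is essentially the opposite of yours. It first runs the multi-scale induction (Theorem~\ref{thm:indstep}, Corollary~\ref{cor:msa2}) to obtain, for every $\beta$, an $R_\beta$ with $[R_\beta,\infty)$ $(1,\beta)$-acceptable. Inside that induction Cartan is applied not to the full $N\times N$ determinant but, via the Schur complement (Theorem~\ref{thm:schroecartan}), to a matrix supported on the few bad sub-boxes, whose size is polynomial in the small scale $r$ rather than $R$; and the reference point $x_0$ is supplied by the induction hypothesis (condition (ii.d) of Theorem~\ref{thm:indsetup}). Once acceptability is in hand, the Wegner estimate is a short corollary: the Proposition in Appendix~\ref{sec:wegner} converts the suitability bound $\psi(r)=r^{-\beta}$ into the claimed control on $\mathbb{E}\bigl(\tfrac{1}{\#\Lambda_R(0)}\tr P_{[E-\eps,E+\eps]}\bigr)$. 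The arbitrary $\beta$ comes from the growing exponent $\ti\alpha_r$ in Theorem~\ref{thm:indstep}, not from iterating Cartan.
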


These two theorems imply Theorem~\ref{thm:int1}.
Actually, the results imply an estimate of the form
\be
 \mathbb{E}\left(\frac{1}{\#\Lambda_R(0)}\tr(P_{[E-\eps, E+\eps]}(H_{\lam,\omega}^{\Lambda_R(0)}))\right) 
   \leq \E^{-C \log(\log(\frac{1}{\eps}))^{3/2}}
\ee
for some $C > 0$. I have decided to state it in the form
of Theorem~\ref{thm:intB}, since it is somewhat easier
on the eye.

The proof of Theorem~\ref{thm:intA} and \ref{thm:intB}
proceed by a version of multi-scale analysis similar
to the one of Bourgain in \cite{b2009}. Let me point
out some new features except that I allow for long
range correlations. The control on the probabilities is super polynomial
in the length scale as in the work of Germinet and Klein
\cite{gk01}, which allows us to proof Theorem~\ref{thm:intB}.
Instead of working with {\em elementary regions} as in \cite{b2009},
I work only with boxes $\Lambda_r(x)$ in the proof.

At this point, I also wish to point out that the
result should be easy to extend in various direction.
First, one should be able to allow arbitrary 
background operators and not just the Laplacian $\Delta$.
Second, one should be able to extend the proof to more
general underlying probability spaces then $[-\frac{1}{2},\frac{1}{2}]$
and vector valued potentials as done in \cite{b2009}.
The essential point here is that one still need an
analog of Cartan's Lemma to hold.
Third, one should be able to use the methods to
understand localization in Lifschitz tails.
I will give some more comments on further directions
in the next section.

\bigskip

Let me now discuss the example that motivated Hypothesis~\ref{hyp:analytic}.
Consider the times two map
\be\begin{split}
 T: [0,1]&\to [0,1] \\
   Tx&=2 x \pmod{1}.
\end{split}\ee
It is relatively well known that this is an ergodic transformation.
Furthermore, this transformation has attracted some attention in
spectral theory \cite{bs00}, \cite{chsp}.
Using the binary expansion $x = \sum_{i=1}^{\infty} \frac{x_i}{2^{i}}$,
we can conjugate $T$ to the shift on the space $[0,1]^{\Z_+}$ with 
respect to the Bernouilli measure. 

One can show that for an analytic and one-periodic function $g$,
one has that
\be
 f(\omega) = g\left(\sum_{j=1}^{\infty} \frac{1}{2^{j}} \omega_j\right)
\ee
satisfies Hypothesis~\ref{hyp:analytic}. So transformations like
the doubling map would be in our framework, if we could relax
the absolutely continuous measure to a pure point one.

\bigskip

In Section~\ref{sec:multischeme}, we show how the procedure of multi-scale
analysis works. As already mentioned, we lay out the basics
for the multi-scale analysis with a Wegner estimate in
Sections~\ref{sec:suitability} to \ref{sec:combinatoric1}.
Then we provide the proof of Theorem~\ref{thm:intwithwegner}
in Section~\ref{sec:asswegner}.

Sections~\ref{sec:resolv1} to \ref{sec:cartanschroe} discuss the machinery used
to replace the Wegner estimate. The main result is
Theorem~\ref{thm:schroecartan}, which essentially proofs
a Wegner type estimate on a large scale from a weak
assumption at the large scale and strong assumptions on
a smaller scale. The main ingredient here is {\em Cartan's Lemma},
whose different forms we review in Section~\ref{sec:cartan}.
Sections~\ref{sec:combinatoric2} to \ref{sec:proofthmindstep}
then contain the main elements of the proof of Theorem~\ref{thm:intA}.

In Sections~\ref{sec:firststeplocalization} to \ref{sec:dynloc},
we prove that the conclusions of our multi-scale analysis
imply dynamical localization. I have decided not to include
a proof of Anderson localization, since it is similar to the
argument of Bourgain and Kenig \cite{bk}. Also it follows
by running a multi-scale analysis as in Kirsch \cite{kirsch}
having the Wegner estimate from Theorem~\ref{thm:intB}
at ones disposition.

In Appendix~\ref{sec:initcond}, I demonstrate how one
can deduce the initial condition for multi-scale analysis,
essentially from a largeness assumption on $\lambda$.
In Appendix~\ref{sec:spectrum}, I have included an argument
that shows that the spectrum of the operators under consideration
is an interval. Lastly, Appendix~\ref{sec:wegner} demonstrates
how the conclusions of multi-scale analysis imply a
Wegner estimate.

%
%
%

\section{Comments, Improvements, and Questions}

In this section, I want to discuss some further directions
the results of this paper can be improved on. I also wish to
ask some questions.

It is required in Hypothesis~\ref{hyp:analytic} that
all the functions $f_r$ are defined on $\mathbb{D}^{\Lambda_r(0)}$,
where
\be
 \D = \{z:\quad |z| < 6\}.
\ee
It would be natural to replace $\D$ in this definition by
\be
 \mathcal{A}_{\rho} = \{z:\quad \dist(z, [-\frac{1}{2}, \frac{1}{2}]) < \rho\},
\ee
a $\rho$ neighborhood of $[-\frac{1}{2}, \frac{1}{2}]$.
Then at least two changes are necessary first, one needs
to add a covering argument to the application of Cartan's
lemma (e.g. in the proof of Theorem~\ref{thm:schroecartan}).
Then the assumption (v) of it changes to the existence of 
$x_0 \in [-\frac{1}{2}, \frac{1}{2}]^n$ to the existence
of such an $x_0$ in every ball of radius $\frac{\rho}{12}$.
I believe it is possible to change the probabilistic
arguments to show this. Such a treatment is necessary
for quasi-periodic systems and ones defined by the
skew-shift, see \cite{bbook}.

It should be also be possible to replace $[-\frac{1}{2},\frac{1}{2}]$
by more general sets. An interesting example are Lie groups
such as $SU(n)$ as discussed by Bourgain in \cite{b2009}.
I believe that probably changes as above will also be necessary
in the proof.

\bigskip

Reducing the size of the sets, where the functions $f_r$
are analytic is a way of lowering the regularity of these
functions. Similarly, one could ask if the result stays true
for smooth functions or quasi-analytic functions.
By the remarks following Hypothesis~\ref{hyp:expdecay},
we know that our assumptions imply that the function
$f$ is H\"older continuous on $[-\frac{1}{2}, \frac{1}{2}]^{\Z^d}$.
It is also intriguing if the assumption of exponential
decay of the correlations is optimal. I would expect
that one can relax condition (ii) of Hypothesis~\ref{hyp:expdecay}
significantly.

The main motivation for this is that Kirsch, Stollmann,
and Stolz have shown in \cite{kss} that an assumption of
the form ($\eps > 0$ and $C >0$)
\be
 |f(\omega) - f(\ul{\omega})| \leq \frac{C}{r^{2d + \eps}}
\ee
is sufficient to carry out multi-scale analysis if
a Wegner estimate is available. It is known from Veseli\'c work \cite{v09}
that such an estimate holds for the alloy-type potential as
long as $\sum_{n\in\Z^d} \varphi(n) \neq 0$.

\bigskip

This paper shows localization at large disorders. The usual
proofs of localization also show it near the band-edges. It
would be interesting to obtain such a result for the potentials
considered here. The necessary improvement is not in the results
of this paper, but to prove {\em Lifschitz tails} to have
an initial scale estimate for multi-scale analysis.

\bigskip

As mentioned after Theorem~\ref{thm:int1}, I do not expect
that control of the integrated density of states in this
paper to be optimal. One should probably at least expect
some form of H\"older continuity of it. See also the
paper \cite{v09} of Veseli\'c for further discussion and
currently the best results in this direction.

%
%
%

\section{The multi-scale scheme}
\label{sec:multischeme}

The goal of this section is to discuss the main aspects of the proofs
of the theorems from the introduction. As the classical
multi-scale analysis, we will be concerned with showing that certain
estimates on the inverses of the restrictions of $H_{\lam,\omega}$
to boxes $\Lambda_r(0)$ hold with large probability. We make the
required properties precise in the following definition.

\begin{definition}\label{def:acceptable}
 An interval $[r_0, r_1]$ is called $(\gamma,\alpha)$-acceptable for $H_{\lam,\omega} - E$
 if for $r_0 \leq r \leq r_1$ there exists an event $\mathcal{B}_r$ such that
 \be
  \mathbb{P}(\mathcal{B}_r) \leq \frac{1}{r^{\alpha}}
 \ee
 and for $\omega \notin \mathcal{B}_r$ we have
 \begin{enumerate}
  \item The resolvent estimate
   \be
    \|(H_{\lam,\omega}^{\Lambda_r(0)} - E)^{-1}\| \leq \frac{1}{8}\E^{ \sqrt{r}}.
   \ee
  \item For any pair $x,y \in \Lambda_r(0)$ with $|x - y| \geq \frac{r}{10}$
   \be
    |\spr{e_x}{(H_{\lam,\omega}^{\Lambda_r(0)} - E)^{-1} e_y}| 
     \leq \frac{1}{8} \frac{1}{\#(\partial_-(\Lambda_r(0)))} \E^{-\gamma |x-y|}.
   \ee
 \end{enumerate}
\end{definition}

Here $\#(\Xi)$ denotes the number of elements of $\Xi\subseteq\Z^d$
and $\partial_-(\Lambda_r(0))$ the inner boundary defined
in \eqref{eq:definnbdd}. We will begin by stating the initial
condition of multi-scale analysis. The proof uses the largeness
assumption on $\lam > 0$ and is given in Appendix~\ref{sec:initcond}.

\begin{proposition}\label{prop:msinitcond}
 Assume Hypothesis~\ref{hyp:expdecay}.
 Let $r\geq 1$ and $\alpha > 0$.
 Then there exists $\lam_0 = \lam_0(r,\alpha)$ such that
 for any $E \in \R$ and $\lam \geq \lam_0$.
 \be
  [1, r]\text{ is $(2, \alpha)$-acceptable for } H_{\lam,\omega} - E.
 \ee
\end{proposition}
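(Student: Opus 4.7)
The plan is to exploit the large-disorder regime: for $\lam$ large the potential dominates the Laplacian on every box, and the resolvent of $H_{\lam,\omega}^{\Lambda_{r'}(0)}-E$ is controlled by a Neumann expansion around its diagonal part. I would write
\[
H_{\lam,\omega}^{\Lambda_{r'}(0)}-E = D_{r'}+\Delta\big|_{\Lambda_{r'}(0)},
\]
where $D_{r'}$ is multiplication by $x\mapsto \lam f(T_x\omega)-E$ and $\|\Delta\|\le 2d$. Fix a threshold $K=K(r')=C_0\,r'^{\,d-1}$, with a $d$-dependent constant $C_0$ to be enlarged along the way, and declare the candidate bad event
\[
\mathcal{B}_{r'}=\Big\{\omega:\ \exists\,x\in\Lambda_{r'}(0)\ \text{with}\ |\lam f(T_x\omega)-E|<K\Big\}.
\]

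On $\mathcal{B}_{r'}^{c}$ one has $\|D_{r'}^{-1}\|\le 1/K$, and for $K\ge 4d$ the Neumann series
\[
(H_{\lam,\omega}^{\Lambda_{r'}(0)}-E)^{-1}=\sum_{k\ge 0}(-1)^{k} D_{r'}^{-1}(\Delta D_{r'}^{-1})^{k}
\]
converges in norm, yielding $\|(H_{\lam,\omega}^{\Lambda_{r'}(0)}-E)^{-1}\|\le 2/K\le \tfrac{1}{8}\E^{\sqrt{r'}}$ for $C_0$ large enough; this is condition (i) of Definition~\ref{def:acceptable}. For condition (ii) I would expand matrix elements as sums over nearest-neighbour paths in $\Lambda_{r'}(0)$ of length $k\ge |x-y|_{1}$: there are at most $(2d)^{k}$ such paths, each contributing at most $K^{-(k+1)}$, so
\[
|\spr{e_x}{(H_{\lam,\omega}^{\Lambda_{r'}(0)}-E)^{-1} e_y}|\le \frac{2}{K}\Big(\frac{2d}{K}\Big)^{|x-y|_{1}}\le \frac{2}{K}\,\E^{-2|x-y|_{\infty}},
\]
after enlarging $C_0$ so that $2d/K\le \E^{-2}$ and using $|x-y|_{1}\ge |x-y|_{\infty}$. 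A final enlargement of $C_0$ produces $2/K\le \tfrac{1}{8}(\#\partial_-\Lambda_{r'}(0))^{-1}$, which is compatible with $K=C_0\,r'^{\,d-1}$ since $\#\partial_-\Lambda_{r'}(0)=O(r'^{\,d-1})$; this delivers (ii) with $\gam=2$.

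It remains to bound $\mathbb{P}(\mathcal{B}_{r'})$ and fix $\lam_0$. Since $\mathbb{P}$ is translation invariant and $V_{\lam,\omega}(x)=\lam f(T_x\omega)$, a union bound over $x\in\Lambda_{r'}(0)$ combined with part (iii) of Hypothesis~\ref{hyp:expdecay}, applied at centre $E/\lam$ with tolerance $K/\lam$, gives
\[
\mathbb{P}(\mathcal{B}_{r'})\le \#\Lambda_{r'}(0)\cdot F\Big(\frac{K}{\lam}\Big)^{\alpha_{\mathrm H}} \le C\,r'^{\,d}\cdot F\Big(\frac{C_0\, r'^{\,d-1}}{\lam}\Big)^{\alpha_{\mathrm H}},
\]
where $\alpha_{\mathrm H}>0$ denotes the H\"older exponent of Hypothesis~\ref{hyp:expdecay}(iii), renamed here to avoid notational collision with the $\alpha$ of Definition~\ref{def:acceptable}. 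Since $r'\in[1,r]$ is bounded, every $r'$-dependent factor can be absorbed into a single positive power of $r$, so the right-hand side is at most $r'^{-\alpha}$ uniformly in $r'\in[1,r]$ as soon as $\lam\ge\lam_0(r,\alpha)$ for an explicit $\lam_0$. I do not expect any genuine obstacle: the argument is a quantitative large-disorder (Combes--Thomas type) estimate, and the only bookkeeping issue is the order of quantifiers---$r$ and $\alpha$ are prescribed first, and $\lam_0$ is then allowed to depend on both.
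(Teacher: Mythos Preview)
Your argument is correct and follows essentially the same route as the paper. Both single out the bad event where some diagonal entry $|\lam f(T_x\omega)-E|$ is small, bound its probability via a union bound and Hypothesis~\ref{hyp:expdecay}(iii), and then convert ``all diagonal entries large'' into suitability; the paper packages this last step as a Combes--Thomas estimate (Proposition~\ref{prop:combesthomas}), whereas you carry it out explicitly via the Neumann/random-walk expansion. These are the same mechanism, so there is nothing substantively different to compare.
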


The proof of this is pretty standard, and only uses condition
(iii) of Hypothesis~\ref{hyp:expdecay}. We now come to results
that allow us to extend the range of the interval, that
is $(\gamma,\alpha)$-acceptable.

\begin{theorem}\label{thm:indstepwegner}
 Assume Hypothesis~\ref{hyp:expdecay} and \eqref{eq:asswegner}
 for $\beta > 1$ large enough.
 Let $1 \leq \gamma \leq 2$, $r \geq 1$ large enough,
 and 
 \be
  \alpha = d \frac{2 d+ 1}{2d - 1}.
 \ee
 Assume 
 \be
  \{r\} \text{ is $(\gamma,\alpha)$-acceptable for }H_{\lam,\omega} - E.
 \ee
 Then
 \be
  [R_0, R_1]\text{ is $(\ti{\gamma}, \alpha)$-acceptable for }H_{\lam,\omega} - E,
 \ee
 where 
 \be
  R_0 = \left\lceil (r)^{1 + \frac{1}{5 d}} \right\rceil,\quad
  R_1 = \left\lfloor (r)^{1+\frac{1}{2d}} \right\rfloor,\quad 
  \ti{\gamma} \geq \gamma \left(1 - \frac{200}{r^{\frac{1}{d}}}\right).
 \ee
\end{theorem}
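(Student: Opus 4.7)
The plan is to execute one step of the standard von Dreifus--Klein multi-scale scheme, with the usual independence at small scale replaced by the approximate independence coming from Hypothesis~\ref{hyp:expdecay}(ii), and with the assumed Wegner estimate \eqref{eq:asswegner} providing the norm control on the global resolvent that allows the chain to close.

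For each $R \in [R_0, R_1]$ I would cover $\Lambda_R(0)$ by translates $\{\Lambda_r(y)\}_{y\in\mathcal{Y}}$, with $\mathcal{Y}$ a grid in $\Lambda_R(0)$ of spacing $\sim r/10$, and call $\Lambda_r(y)$ \emph{good} if conditions (i)--(ii) of Definition~\ref{def:acceptable} hold at scale $r$ centered at $y$. By translation invariance of $\mathbb{P}$ (Hypothesis~\ref{hyp:expdecay}(i),(iv)) and the assumed $(\gamma,\alpha)$-acceptability of $\{r\}$, each such event has probability at most $r^{-\alpha}$. Hypothesis~\ref{hyp:expdecay}(ii) lets me approximate it within $\E^{-cr}$ by a $\{\omega_n\}_{n\in\Lambda_{Cr}(y)}$-measurable event, so that two sub-boxes whose centers lie at $\ell^\infty$-distance $\gtrsim r$ are essentially independent. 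A union bound over pairs then yields a probability $\lesssim (R/r)^{2d}\,r^{-2\alpha}$ of seeing two well-separated bad sub-boxes inside $\Lambda_R(0)$, and a short algebraic check using $R\leq r^{1+1/(2d)}$ and $\alpha = d(2d+1)/(2d-1)$ bounds this by $\tfrac12 R^{-\alpha}$. In parallel I apply \eqref{eq:asswegner} with $\varepsilon = 8\E^{-\sqrt R}$ to obtain $\mathbb{P}(\|(H^{\Lambda_R(0)}_{\lam,\omega}-E)^{-1}\| > \tfrac18 \E^{\sqrt R}) \leq C^W R^{bd}/(\sqrt R)^{2(bd+\beta)}$, which is $\leq \tfrac12 R^{-\alpha}$ provided $\beta$ is chosen sufficiently large in terms of $\alpha$ and $d$. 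Taking $\mathcal{B}_R$ to be the union of these two bad events gives $\mathbb{P}(\mathcal{B}_R) \leq R^{-\alpha}$.

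On $\mathcal{B}_R^c$, $\Lambda_R(0)$ contains at most one bad sub-box and $\|(H_{\lam,\omega}^{\Lambda_R(0)}-E)^{-1}\| \leq \tfrac18 \E^{\sqrt R}$. For $u,v\in\Lambda_R(0)$ with $|u-v|_\infty \geq R/10$ I would then iterate the geometric resolvent identity: pick a good sub-box $\Lambda_r(y)\ni u$, peel off one GRI step at cost $\tfrac18 \E^{-\gamma r}$ (the sum over the $\#\partial_-\Lambda_r(y)$ boundary sites absorbs the $1/\#\partial_-$ prefactor in Definition~\ref{def:acceptable}(ii)), and move to a boundary-neighbor at distance $\sim r$ closer to $v$. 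Iterating $\sim |u-v|/r$ times, detouring $O(1)$ steps around the unique bad sub-box if necessary, and closing the chain with the Wegner operator-norm bound at the terminal step produces
\be
 |\spr{e_u}{(H_{\lam,\omega}^{\Lambda_R(0)}-E)^{-1}e_v}| \leq \frac{1}{8\#\partial_-\Lambda_R(0)}\E^{-\tilde\gamma|u-v|},
\ee
where $\tilde\gamma$ is $\gamma$ reduced by contributions from the $\E^{\sqrt R}$ Wegner factor spread over the chain length, the $\asymp r^{d-1}$ boundary factor incurred per step, and the $O(r)$-length detour; after optimization one obtains $\tilde\gamma \geq \gamma(1 - 200/r^{1/d})$.

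The principal obstacle is this last optimization: one must carefully account for the per-step losses in the GRI chain and verify that their aggregate stays within $200\gamma/r^{1/d}$ per unit length, uniformly in $R\in[R_0,R_1]$. A subsidiary but essential point is the algebraic check in the probability step that $\alpha = d(2d+1)/(2d-1)$ is precisely the exponent for which the pair union bound at $R = r^{1+1/(2d)}$ still returns $R^{-\alpha}$, which is what makes the inductive scheme iterable across an unbounded sequence of length scales.
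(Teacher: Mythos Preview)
Your overall strategy matches the paper's proof in Section~\ref{sec:asswegner}: combine Proposition~\ref{prop:probatmostonebadcube} (with $K=1$) for the ``at most one bad $r$-box'' event with the Wegner estimate for the norm bound, and on the good event run the geometric resolvent chain of Section~\ref{sec:green}. The probabilistic bookkeeping is essentially right.

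The gap is in how you handle the bad sub-box during the GRI chain. You write that you ``detour $O(1)$ steps around the unique bad sub-box'', but the GRI iteration \eqref{eq:geomresolv2} is not a steerable path: at each step one must take the maximum over \emph{all} points on the inner boundary of the current $r$-box, and that maximum may well be attained at a point lying inside the bad region $\Lambda_{2\ol{r}}(m_\omega)$, where no good $r$-box is available and the chain cannot continue. The paper's remedy---and the standard vDK remedy---is to apply the Wegner estimate \eqref{eq:asswegner} not only to $\Lambda_R(0)$ but to \emph{every} subcube $\Lambda_s(n)\subseteq\Lambda_R(0)$ (this costs only a harmless extra factor $R^{d+1}$ in the union bound; see the first lemma of Section~\ref{sec:asswegner}). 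One then encloses the bad region in a cube $\Lambda_{t}(m_\omega)$ with $t$ chosen via Proposition~\ref{prop:combinatoric}, uses the Wegner bound $\|(H^{\Lambda_t(m_\omega)}_{\lam,\omega}-E)^{-1}\|\le\frac18\E^{\sqrt R}$ on that \emph{intermediate} box, and invokes Theorem~\ref{thm:resolv2}: its hypothesis~(iv) is precisely this intermediate-box bound, and Lemma~\ref{lem:removebadsets} shows how it lets the chain pass through the bad region at controlled cost. The resolvent bound on $\Lambda_R(0)$ alone, which is all you invoke, supplies hypothesis~(v) of Theorem~\ref{thm:resolv2} but not hypothesis~(iv).
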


The proof of this statement is well known, and can for example be found
in the lecture notes \cite{kirsch} of Kirsch or in the paper
by von Dreifus and Klein \cite{vDK89}. We will give a proof
in Sections~\ref{sec:suitability} to \ref{sec:asswegner}.
Let us now record the main consequence

\begin{corollary}
 Assume Hypothesis~\ref{hyp:expdecay} and \eqref{eq:asswegner}
 for $\beta > 1$ large enough.
 Let $\alpha = d \frac{2d+1}{2d -1}$. Then
 for $\lam \geq \lam_0$ and all energies $E$
 \be
  [1,\infty)\text{ is $(1,\alpha)$-acceptable for } H_{\lam,\omega} - E
 \ee
\end{corollary}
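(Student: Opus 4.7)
The plan is to combine the initial-scale estimate of Proposition~\ref{prop:msinitcond} with an iteration of Theorem~\ref{thm:indstepwegner}. First I would fix $r_0$ so large that (a) the threshold ``$r$ large enough'' required by Theorem~\ref{thm:indstepwegner} is met at $r_0$, and (b) a convergent infinite product introduced below stays bounded below by $1/2$. Applying Proposition~\ref{prop:msinitcond} with this $r_0$ and the given $\alpha = d(2d+1)/(2d-1)$, for every $\lam \geq \lam_0(r_0, \alpha)$ and every $E \in \R$ the interval $[1, r_0]$ is $(2, \alpha)$-acceptable for $H_{\lam, \om} - E$.

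Next I would iterate Theorem~\ref{thm:indstepwegner}. Define $M_0 = r_0$, $M_{k+1} = \lfloor M_k^{1+1/(2d)} \rfloor$, and $L_k = \lceil M_k^{1/(1+1/(5d))} \rceil$, so that by construction $R_0(L_k) \leq M_k$ and $R_1(M_k) \geq M_{k+1}$. Inductively, assume $[1, M_k]$ is $(\gam_k, \alpha)$-acceptable with $\gam_k \in [1, 2]$. Then for every integer $r \in [L_k, M_k]$ the singleton $\{r\}$ satisfies the hypothesis of Theorem~\ref{thm:indstepwegner}, which yields acceptability of $[R_0(r), R_1(r)]$ with constant at least $\gam_k(1 - 200/r^{1/d})$. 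A short scale computation, whose key inequality is $(r+1)^{1+1/(5d)} \leq r^{1+1/(2d)}$ for $r$ large, would show that the family of intervals $\{[R_0(r), R_1(r)]\}_{r = L_k}^{M_k}$ has pairwise overlapping neighbors and therefore unites into a single connected interval containing $[M_k, M_{k+1}]$. Combining this with the inductive hypothesis gives $(\gam_{k+1}, \alpha)$-acceptability of $[1, M_{k+1}]$ with
\begin{equation*}
 \gam_{k+1} \geq \gam_k \bigl(1 - 200\, L_k^{-1/d}\bigr).
\end{equation*}

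Finally, $M_k$ grows like $r_0^{(1+1/(2d))^k}$, so $L_k$ grows doubly exponentially and $\sum_{k} L_k^{-1/d}$ is dominated by its first term, namely $O(r_0^{-1/(d(1+1/(5d)))})$. Choosing $r_0$ sufficiently large at the outset forces $\prod_{k\geq 0}(1 - 200\, L_k^{-1/d}) \geq 1/2$, hence $\gam_k \geq 2 \cdot \tfrac{1}{2} = 1$ for every $k$. Writing $[1, \infty) = [1, r_0] \cup \bigcup_{k \geq 0}[M_k, M_{k+1}]$ gives the claim. The main obstacle I anticipate is the scale bookkeeping: I must choose $L_k$ so that the union over $r \in [L_k, M_k]$ of the intervals produced by Theorem~\ref{thm:indstepwegner} is both connected and actually covers the gap $(M_k, M_{k+1}]$, and I must ensure that the loss factors multiply to something bounded away from $0$. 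Both reduce to elementary but fiddly inequalities in the exponents $1+1/(5d)$ and $1+1/(2d)$, controlled by the single largeness condition on $r_0$.
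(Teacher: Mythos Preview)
Your proposal is correct and follows the same strategy as the paper: invoke Proposition~\ref{prop:msinitcond} for an initial interval, iterate Theorem~\ref{thm:indstepwegner}, and show the resulting $\gamma$'s stay $\geq 1$ via a convergent product. The paper's own proof is a two-line sketch that defers the details to the analogous argument in Corollary~\ref{cor:msa2}; your version is in fact more explicit about the covering issue, handling it by sweeping $r$ over $[L_k,M_k]$, whereas a single-scale-per-step iteration (with $r_{k+1}=R_0(r_k)$) also works because $(1+\tfrac{1}{5d})^2\leq 1+\tfrac{1}{2d}$ makes successive output intervals overlap, leaving only a harmless initial gap that is absorbed by enlarging $r_0$ in Proposition~\ref{prop:msinitcond}.
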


\begin{proof}
 One needs to check that $\ti{\gamma}$ defined in Theorem~\ref{thm:indstepwegner}
 stays $\geq 1$ while applying Theorem~\ref{thm:indstepwegner}
 countably many times. This can be done as in Corollary~\ref{cor:msa2}.
\end{proof}

Before deriving consequences of this corollary,
in particular proving Theorem~\ref{thm:intwithwegner},
we will look at the multi-scale steps of our second
approach.

\begin{theorem}\label{thm:indstep}
 Assume Hypothesis~\ref{hyp:analytic} and that $r$ is large enough.

 Assume for $\alpha \geq 3d + 2$ that
 \be
  [r, r^3]\text{ is $(\gamma,\alpha)$-acceptable for }H_{\lam,\omega} - E.
 \ee
 Then with 
 \be
  \ti{\alpha}_{r} = \left\lfloor\sqrt{\frac{1}{2d} \frac{\log(r)}{\log(\max(4,2 + \frac{4\gamma}{c}))}}\right\rfloor
 \ee
 we have
 \be
  [R_0, R_1]\text{ is $(\ti{\gamma}, \ti{\alpha}_{r})$-acceptable for }H_{\lam,\omega} - E,
 \ee
 where 
 \be
  R_0 = r^{3d + 8},\quad R_1 = r^{\frac{\alpha - 1}{d+ 1}},\quad
  \ti{\gamma} = \gamma \cdot \left(1-\frac{2}{r}\right).
 \ee
\end{theorem}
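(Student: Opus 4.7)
The plan is to run the Bourgain-style multi-scale induction step, exploiting the exponential decay of correlations to bound the number of simultaneously ``bad'' sub-boxes, and substituting Cartan's lemma (packaged as Theorem~\ref{thm:schroecartan}) for the Wegner estimate that was available in Theorem~\ref{thm:indstepwegner}. Fix $R \in [R_0, R_1]$ and cover $\Lambda_R(0)$ by an essentially disjoint collection of sub-boxes $\Lambda_r(x_j)$ at the smaller scale. Call such a sub-box good if the two resolvent estimates of Definition~\ref{def:acceptable} hold for $H_{\lam,\omega}^{\Lambda_r(x_j)} - E$ with rate $\gamma$; by the assumption that $[r, r^3]$ is $(\gamma,\alpha)$-acceptable, each individual sub-box is good off of an event of probability at most $r^{-\alpha}$.

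The first main task is combinatorial: I would show that on a set of probability at least $1 - R^{-\ti{\alpha}_r}$, fewer than $\ti{\alpha}_r$ of the sub-boxes in $\Lambda_R(0)$ are simultaneously bad. Joint probabilities do not factor directly, but Hypothesis~\ref{hyp:expdecay}(ii) lets one approximate each local indicator $\mathbf{1}_{\mathcal{B}_r}$ by one depending only on the $\omega_n$ with $n$ in a ball of radius $r/3$ about the center, with an error at most $\E^{-cr/3}$ that is below every resolvent threshold in sight. For sub-boxes separated by more than $r$, these truncated indicators are genuinely independent by Hypothesis~\ref{hyp:expdecay}(iv), so a union bound over the $\sim (R/r)^{dK}$ configurations of $K$ bad sub-boxes gives a contribution of order $(R/r)^{dK} r^{-\alpha K}$. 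The specific choice $\ti{\alpha}_r = \lfloor \sqrt{\tfrac{1}{2d} \log r / \log \max(4, 2 + 4\gamma/c)} \rfloor$ is exactly what is required to beat $R^{-\ti{\alpha}_r}$ for $R$ as large as $R_1 = r^{(\alpha-1)/(d+1)}$, given that the truncation error $\E^{-cr/3}$ must itself be absorbed into the correlation radius, which is where the $\gamma/c$ ratio and the denominator $\log\max(4,2+4\gamma/c)$ enter.

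To replace the Wegner estimate, I would invoke Theorem~\ref{thm:schroecartan}: the analyticity of the potential in the $\omega_n$ guaranteed by Hypothesis~\ref{hyp:analytic}, together with the boundedness of the density $\rho$, lets Cartan's lemma upgrade a weak a priori bound at scale $R$ into an event of probability at least $1 - R^{-\ti{\alpha}_r}$ on which $\|(H_{\lam,\omega}^{\Lambda_R(0)} - E)^{-1}\| \leq \tfrac{1}{8}\E^{\sqrt{R}}$. With both the norm bound and the bound on the number of bad sub-boxes in hand, the remainder is classical: iterate the geometric resolvent identity along a chain of good sub-boxes connecting any pair $x, y \in \Lambda_R(0)$ with $|x-y|_\infty \geq R/10$, detouring around the at most $\ti{\alpha}_r$ bad sub-boxes and paying the Cartan norm bound $\E^{\sqrt{R}}$ each time the chain is forced to cross one. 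Each good step contributes a factor $\E^{-\gamma r}$, the total number of steps is $\sim R/r$, and the accumulated polynomial losses from boundary counts, detour lengths, and the $\E^{\sqrt{R}}$ terms eat only a factor $1 - 2/r$ out of the rate, yielding $\ti{\gamma} = \gamma(1 - 2/r)$.

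The main obstacle is the interplay between the combinatorial step and the Cartan step: the Wegner replacement only delivers probability $R^{-\ti{\alpha}_r}$, which is much weaker than the super-polynomial input at the small scale, and it forces $\ti{\alpha}_r$ to grow only logarithmically in $r$. All of the sharp parameter choices --- the lower end $R_0 = r^{3d+8}$ (large enough for the geometric resolvent iteration plus detour budget), the upper end $R_1 = r^{(\alpha-1)/(d+1)}$ (the largest $R$ for which the combinatorial bound $(R/r)^{dK} r^{-\alpha K}$ still beats $R^{-\ti{\alpha}_r}$), the specific form of $\ti{\alpha}_r$, and the hypothesis $\alpha \geq 3d + 2$ --- are dictated by simultaneously balancing all three terms, and this is where the argument diverges from the classical von Dreifus--Klein scheme executed in Theorem~\ref{thm:indstepwegner}.
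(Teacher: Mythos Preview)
Your overall strategy matches the paper's: count bad sub-boxes probabilistically, replace Wegner by Cartan, then run the geometric resolvent iteration of Theorem~\ref{thm:resolv2}. But there is a genuine gap at the Cartan step. Theorem~\ref{thm:schroecartan} does not ``upgrade a weak a priori bound at scale $R$'': its hypothesis~(v) demands the \emph{existence} of a single point $x_0$ in parameter space at which the resolvent on each bad region $\Lambda_{r_j+r}(m_j)$ is already controlled. No such $x_0$ is available a priori, and producing one is the heart of the induction step; you have not said where it comes from. The paper obtains it via a \emph{second} probabilistic estimate, Proposition~\ref{prop:probmodifyomega}, which is logically independent of the ``at most $K$ bad cubes'' count you describe. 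For each bad center one tries $K$ nested scales $s_1<t_1<\dots<s_K<t_K$ inside $[r,r^3]$; since the corresponding events live in disjoint annuli, independence gives that with probability $\geq 1-(3R)^d\eps^K$ at least one scale admits an $\hat\omega$ (agreeing with $\omega$ outside the inner box) for which $\Lambda_{t_k}^R(m)$ \emph{is} suitable. This $\hat\omega$ is the seed $x_0$. The need to iterate the enlargement $r\mapsto\ol r$ roughly $2dK^2$ times while remaining below $r^3$ (Lemma~\ref{lem:estitinf}, with growth factor $T_0=2+4\gamma/c$) is precisely what pins down $K=\ti\alpha_r$, and it is why the hypothesis requires acceptability on the whole interval $[r,r^3]$ rather than just at $r$.

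A related point you have glossed over: Cartan cannot be applied to $H^{\Lambda_R(0)}$ as a function of all $\sim(3R)^d$ random variables, because then the matrix dimension $N$ in Theorem~\ref{thm:matcartan2} is $\sim(3R)^d$ and the resulting bound $\exp(-s/(N\log BD))$ is worthless. One must first freeze $\omega$ outside the bad regions and use the Schur complement (Lemma~\ref{lem:schur}) to reduce to an $N\times N$ problem with $N\lesssim K(r_\infty)^d\leq K r^{3d}$, then apply Cartan in only those $\nu\lesssim K r^{3d}$ variables. This reduction (packaged as Theorem~\ref{thm:applycartan}) is where the lower bound $R\geq r^{3d+8}=R_0$ is actually used, and it again requires the seed $x_0$ above to verify hypothesis~(v). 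Without these two ingredients the Cartan step does not go through.
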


The proof of this theorem will be given in Section~\ref{sec:proofthmindstep}.
This theorem will allow us to prove

\begin{corollary}\label{cor:msa2}
 For any $\beta$, there exists $R_{\beta}$ such that
 \be\label{eq:acceptforallbeta}
  [R_{\beta}, \infty)\text{ is $(1, \beta)$-acceptable for } H_{\lam,\omega} - E.
 \ee
\end{corollary}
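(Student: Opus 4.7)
The plan is to iterate Theorem~\ref{thm:indstep} starting from an initial acceptable interval supplied by Proposition~\ref{prop:msinitcond}, and at each step to apply the theorem to a whole range of scales (rather than a single scale) so that the output intervals union together without gaps to cover $[R_{\beta}, \infty)$.

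Fix $\beta \geq 1$. By Proposition~\ref{prop:msinitcond}, for any initial scale $N$ and initial exponent $\alpha_0$ there is $\lam_0(N,\alpha_0)$ such that $[1, N]$ is $(2, \alpha_0)$-acceptable whenever $\lam \geq \lam_0$; I would choose $N$ and $\alpha_0$ so large that the verifications below go through, in particular $\alpha_0 \geq (3d+8)(d+1) + 1$. Set $b_0 = N$ and $\gamma_0 = 2$. Assuming inductively that $[1, b_k]$ is $(\gamma_k, \alpha_k)$-acceptable, I would apply Theorem~\ref{thm:indstep} to every scale $r \in [b_k^{1/(3d+8)}, b_k^{1/3}]$: each such $r$ yields $(\gamma_k(1 - 2/r), \ti\alpha_r)$-acceptability of $[r^{3d+8}, r^{(\alpha_k - 1)/(d+1)}]$. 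As $r$ varies continuously, the family of output intervals sweeps out the single interval $[b_k, b_k^{(\alpha_k-1)/(3(d+1))}]$: for each $t$ in this range the set of admissible $r$, namely $[t^{(d+1)/(\alpha_k - 1)}, t^{1/(3d+8)}] \cap [b_k^{1/(3d+8)}, b_k^{1/3}]$, is nonempty, so $t$ lies inside the theorem's output at some scale $r(t)$. Concatenation then gives $(\gamma_{k+1}, \alpha_{k+1})$-acceptability of $[1, b_{k+1}]$ with
\[
 b_{k+1} = b_k^{(\alpha_k - 1)/(3(d+1))},\quad \alpha_{k+1} = \ti\alpha_{b_k^{1/(3d+8)}},\quad \gamma_{k+1} = \gamma_k\bigl(1 - 2/b_k^{1/(3d+8)}\bigr),
\]
these being the worst-case values over the range of $r$ used.

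Since $\alpha_k \geq (3d+8)(d+1) + 1$ implies $b_{k+1} \geq b_k^{1 + \eps}$ for some $\eps > 0$, the scales $b_k$ grow super-exponentially; in particular $b_k^{1/(3d+8)} \to \infty$, so $\alpha_{k+1} = \ti\alpha_{b_k^{1/(3d+8)}}$ grows like $\sqrt{\log b_k}$ (using that $\gamma_k$ stays bounded, so the constant $\max(4, 2 + 4\gamma_k/c)$ in the definition of $\ti\alpha$ is uniformly bounded). The telescoping product $\gamma_k = 2\prod_{j < k}(1 - 2/b_j^{1/(3d+8)})$ converges to a limit strictly exceeding $1$ provided $N$ is chosen so that $\sum_{j \geq 0} 2/b_j^{1/(3d+8)} < \tfrac{1}{2}\log 2$, which holds once $b_0$ is large. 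Picking $k_0$ so that $\alpha_k \geq \beta$ for $k \geq k_0$ and setting $R_{\beta} := b_{k_0}$, one obtains that $[R_{\beta}, \infty) = \bigcup_{k \geq k_0}[b_k, b_{k+1}]$ is $(1, \beta)$-acceptable.

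The main technical obstacle is the gap-filling step: a single application of Theorem~\ref{thm:indstep} produces an interval whose left endpoint $r^{3d+8}$ lies strictly above $r^3$, so iterating at a single scale would leave holes. Sweeping $r$ continuously over $[b_k^{1/(3d+8)}, b_k^{1/3}]$ closes the holes, at the cost that the new exponent $\alpha_{k+1}$ must be measured at the smallest scale used and the new $\gamma_{k+1}$ at the largest $\gamma$-contraction; one must then check that this worst-case $\alpha$ still grows without bound and that the accompanying contractions $(1 - 2/b_k^{1/(3d+8)})$ are summable in log, both of which reduce to taking the initial scale $N$ large.
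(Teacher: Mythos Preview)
Your sweep-over-scales argument works in spirit, but it is more elaborate than necessary, and the paper avoids it entirely. The paper simply takes the initial exponent $\alpha$ large enough (specifically $\frac{\alpha-1}{d+1} \geq (3d+8)^3$) together with $r$ large enough that $\ti{\alpha}_r \geq \alpha$, and then applies Theorem~\ref{thm:indstep} at a \emph{single} scale $R_{k-1}$ per step, setting $R_k = R_{k-1}^{3d+8}$. With this choice the output interval $[R_k, R_{k-1}^{(\alpha-1)/(d+1)}]$ already extends past $R_{k+1} = R_k^{3d+8}$, so consecutive outputs overlap and there is no hole to fill; moreover the output exponent $\ti{\alpha}_{R_{k-1}} \geq \alpha$ is automatically large enough for the next step. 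Your worry that ``iterating at a single scale would leave holes'' is thus dissolved not by sweeping $r$ but simply by taking $\alpha$ large at the outset---which Proposition~\ref{prop:msinitcond} permits.

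There is also a bookkeeping slip in your induction. You assert that $[1,b_{k+1}]$ is $(\gamma_{k+1},\alpha_{k+1})$-acceptable with $\alpha_{k+1}=\ti{\alpha}_{b_k^{1/(3d+8)}}$, but the old portion $[1,b_k]$ only carries exponent $\alpha_k$, so the exponent on the union is $\min(\alpha_k,\alpha_{k+1})$. Since you later want $\alpha_k\to\infty$, this minimum would stagnate at $\alpha_0$, and then your formula $b_{k+2}=b_{k+1}^{(\alpha_{k+1}-1)/(3(d+1))}$ uses an input exponent larger than what is actually available. The fix is to separate roles: run the induction with a fixed floor exponent $\alpha_*$ (chosen once, large enough), and record the growing output exponents $\ti{\alpha}_{b_k^{1/(3d+8)}}$ only on the new pieces $[b_k,b_{k+1}]$---which is all that the final statement needs. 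With that correction your route goes through, but the paper's single-scale iteration is cleaner.
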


\begin{proof}
 Fix $E\in \R$.
 By Proposition~\ref{prop:msinitcond}, we can conclude that for some large
 enough $r$ and $\alpha \geq 1$, we have
 $$
  [1, r^{3}]\text{ is $(2,\alpha)$-acceptable for }H_{\lam,\omega} -E.
 $$
 We choose $\alpha$ such that $\frac{\alpha-1}{d+1} \geq (3d+8)^3$,
 and $r$ so large that $\ti{\alpha}_r \geq \alpha$. Then, we 
 define a sequence of $R_k$
 $$
  R_1 = r^{3d + 8},\quad R_k = (R_{k-1})^{3d + 8},
 $$
 satisfying $R_k \geq r^{k}$, and
 $$
  \gamma_1 = 2,\quad \gamma_k = \gamma_{k-1}\left(1 - \frac{2}{R_{k-1}}\right).
 $$
 We obtain by Theorem~\ref{thm:indstep} that
 $$
  [R_k, (R_k)^{3d+9}]\text{ is $(\gamma_k, \alpha_{R_k})$-acceptable for } H_{\lam,\omega} -E.
 $$
 In particular $\alpha_{R_k} \to \infty$ as $k \to \infty$.
 It remains to check that $\gamma_k \geq 1$, but
 this is easy. The claim follows.
\end{proof}

Before discussing the results on localization,
let us quickly prove the result on the
Wegner estimate.

\begin{proof}[Proof of Theorem~\ref{thm:intB}]
 This follows from the previous corollary combined with
 the results from Section~\ref{sec:wegner}.
\end{proof}

We now come to

\begin{theorem}\label{thm:dynlocmsa}
 Let $r_{\infty} \geq 1$, $\gamma > 0$.
 Assume Hypothesis~\ref{hyp:expdecay} and
 \be
  [r_{\infty},\infty)\text{ is $(\gamma, 4d)$-acceptable for } H_{\lam,\omega} - E.
 \ee
 Then $H_{\lam,\omega} - E$ exhibits dynamical localization
 for almost every $\omega$.
\end{theorem}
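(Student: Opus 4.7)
The plan is to turn the single-energy, finite-volume resolvent estimates packaged in the $(\gam,4d)$-acceptability hypothesis into exponential decay of the generalized eigenfunctions of $H_{\lam,\omega}$ at energy $E$, and then to conclude dynamical localization via the eigenfunction expansion. This mirrors the classical von Dreifus-Klein scheme adapted to the polynomial probability setting of Definition~\ref{def:acceptable}, and is the same strategy used in \cite{bk} to obtain Anderson localization from single-energy MSA output.

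First I would establish an almost-sure abundance of good cubes. Since $\mathbb{P}(\mathcal{B}_r)\leq r^{-4d}$ and $\sum_{r\geq r_\infty}r^{-4d}<\infty$ for any $d\geq 1$, Borel-Cantelli yields that for almost every $\omega$ only finitely many $\mathcal{B}_r$ occur. Translation covariance of the random field, which follows from parts (i) and (iv) of Hypothesis~\ref{hyp:expdecay}, means that the translated event $\mathcal{B}_r(x)$ for any center $x$ still has probability at most $r^{-4d}$; summing over $x$ in any polynomially sized window $\Lambda_{r^K}(0)$ still yields a convergent series in $r$ (provided $K<4-\tfrac{1}{d}$), so almost surely every translate in that window is a good cube once $r$ is large.

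Next I would chain these good cubes scale-by-scale. Given a target site $n$, choose a sequence of good cubes $\Lambda_{r_k}(x_k)$ with geometrically growing radii that links $0$ to $n$. Applying the geometric resolvent identity along the chain, together with the norm bound $\|(H_{\lam,\omega}^{\Lambda_r(0)}-E)^{-1}\|\leq \tfrac{1}{8}\E^{\sqrt{r}}$ and the off-diagonal estimate carrying the boundary prefactor $\tfrac{1}{8\#\partial_-(\Lambda_r(0))}$ from Definition~\ref{def:acceptable}, one concludes that any polynomially bounded generalized eigenfunction $\psi$ with $H_{\lam,\omega}\psi=E\psi$ obeys $|\psi(n)|\leq C_\psi\,\E^{-\gam'|n|_\infty}$ for $|n|_\infty$ large. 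The algebraic point is that the boundary prefactor in (ii) of Definition~\ref{def:acceptable} precisely absorbs the combinatorial growth of boundary sites produced at each iteration, so the decay rate degrades by only a small multiplicative factor at each step, and the infinite product stays bounded below.

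Finally, the eigenfunction expansion
\[
\langle e_n,\E^{-\I t H_{\lam,\omega}}e_x\rangle=\sum_k \E^{-\I t E_k}\,\overline{\psi_k(n)}\,\psi_k(x),
\]
combined with Cauchy-Schwarz and the eigenfunction decay bound, controls $\sum_n(1+|n|_\infty)^p|\langle e_n,\E^{-\I t H_{\lam,\omega}}e_x\rangle|^2$ uniformly in $t$, yielding dynamical localization in the sense of Definition~\ref{def:dynloc}. The main obstacle is precisely that the chaining step produces eigenfunction decay only at the single energy $E$, while the expansion sums over all eigenvalues $E_k$. To bridge this gap I would exploit the analyticity of $(H_{\lam,\omega}^{\Lambda_r(x)}-z)^{-1}$ in $z$: the norm bound in Definition~\ref{def:acceptable}(i) keeps any finite-volume eigenvalue at distance $\geq\tfrac{1}{8}\E^{-\sqrt r}$ from $E$, so the acceptability transfers, with slightly worsened parameters, to a small energy neighborhood of $E$; a union bound over a countable dense subset of energies, intersected with the full-measure set of $\omega$ common to all of them, upgrades the single-energy conclusion to eigenfunction decay at every eigenvalue that actually contributes to the expansion. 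This energy-covering step is the technical heart of the argument and what makes the $\alpha=4d$ tail of Definition~\ref{def:acceptable} a natural threshold.
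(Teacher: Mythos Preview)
Your high-level plan is correct, and you rightly identify the energy-covering step as the crux. But your proposed implementation of that step does not work. The stability window in energy for $(\gam,\tau,p)$-suitability of $\Lambda_r(n)$ is only of size $\sim \E^{-4\gam r}$ (Lemma~\ref{lem:perturbEsuitable}), not $\E^{-\sqrt r}$; to cover a bounded spectral interval you therefore need $\sim \E^{4\gam r}$ grid energies, and a union bound over these with per-energy failure probability $r^{-4d}$ diverges. Taking instead a countable dense set and intersecting the resulting full-measure sets does not help: the Borel--Cantelli threshold $r(\omega,E)$ above which all cubes are good depends on $E$, so for a random eigenvalue $E_k(\omega)$ you cannot pick a nearby grid energy $E$ with $|E-E_k|\leq \E^{-4\gam r}$ and simultaneously have $r\geq r(\omega,E)$. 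This circularity is exactly what the paper's argument is designed to break.

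The paper's device (Sections~\ref{sec:firststeplocalization}--\ref{sec:dynloc}) is twofold. First, at scale $r=2^k$ it considers $r$ disjoint annuli $\{|n|_\infty=t_j\}$ with $t_j=(1+2j)\ol r$; by the exponential decorrelation in Hypothesis~\ref{hyp:expdecay} these are independent, so the probability that \emph{every} annulus contains a bad $r$-cube is $\leq r^{-dr}=\E^{-dr\log r}$, small enough to survive a union bound over the $\sim \E^{3\gam r}$ grid energies (Lemma~\ref{lem:probexistannulus}). Second, it conditions on $\omega$ inside $\Lambda_{4r\ol r}(0)$, uses Proposition~\ref{prop:closetosigmafin} to pin the generalized eigenvalues to the \emph{finite} set $\sigma(H_{\lam,\omega}^{\Lambda_{3r\ol r}(0)})$, and then runs the annulus argument at distances $4r\ol r\leq t\leq 16r\ol r$ where it is independent of the conditioning. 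The price is that you control the eigenfunction only through a single good annulus of width $r$ at distance $\sim r^2$ from the origin, so you obtain $|\psi(n)|\leq \E^{-c\sqrt{|n|_\infty}}$ rather than true exponential decay (Theorem~\ref{thm:ppsqrt}); your claim of $\E^{-\gam'|n|_\infty}$ is therefore too strong in this framework. That stretched-exponential decay is still enough for the eigenfunction-expansion step (Section~\ref{sec:dynloc}), which proceeds much as you outline.
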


We can obtain Anderson localization as in \cite{bk}.
We see that Theorem~\ref{thm:intwithwegner}
and Theorem~\ref{thm:intA} follow.

%
%
%

\section{Suitability and probabilistic estimates}
\label{sec:suitability}

In this section, we first introduce the notion of suitability,
which quantifies the properties of restrictions to finite boxes.
In order to emphasize that these notions are independent of
the specific setting, we will write $H$ instead of $H_{\lam,\omega}$.
Second, we will derive the main probabilistic estimates needed
for the proof of Theorem~\ref{thm:indstep}.

We denote by $\{e_x\}_{x \in \Z^d}$ the standard basis of
$\ell^2(\Z^d)$, that is
\be
 e_x(n) = \begin{cases} 1,& n = x; \\ 0,& \text{otherwise}.\end{cases}
\ee
We recall that for $\Lambda_r(x) = \{n\in\Z^d:\ |x-n|_{\infty}\leq r\}$
a box in $\Z^d$, we denote by
$H^{\Lambda_r(x)}$ the restriction of $H$ to $\ell^2(\Lambda_r(x))$.
We will denote by $\partial_- \Lambda_r(x)$ the {\em inner boundary},
that is
\be\label{eq:definnbdd}
 \partial_- \Lambda_r(x) = \{n \in \Lambda_r(x):\quad \exists m \in \Z^d \setminus \Lambda_r(x):\ |n-m|_1 = 1\}.
\ee
We note that $\partial_-\Lambda_r(x) = \Lambda_r(x) \setminus \Lambda_{r-1}(x)$.
In particular,
\be
 \# (\partial_- \Lambda_r(x)) \leq 2d (3r)^{d-1}.
\ee

\begin{definition}\label{def:suitable}
 Let $\gamma > 0$, $\tau \in (0,1)$, and $p \geq 0$ an integer.
 A box $\Lambda_r(n)$ is called $(\gamma,\tau,p)$-suitable for $H - E$
 if the following hold.
 \begin{enumerate}
  \item The resolvent estimate
   \be
    \|(H^{\Lambda_r(n)} - E)^{-1}\| \leq \frac{1}{2^p}\E^{ r^{\tau}}.
   \ee
  \item For $x,y \in \Lambda_r(n)$ satisfying $|x -y| \geq \frac{r}{10}$
   \be
    |\spr{e_x}{(H^{\Lambda_r(n)} - E)^{-1} e_y}| \leq 
     \frac{1}{2^{p}} \frac{1}{\#(\partial_-\Lambda_r(x))} \E^{-\gamma |x-y|}.
   \ee
 \end{enumerate}
\end{definition}

This definition is made in such a way, that it becomes useful
for the computations in Sections~\ref{sec:green} and \ref{sec:resolv1}.
Furthermore, this definition is more general than we will need. In fact, it will
suffice for the purposes of this paper to restrict to the
case $\tau = \frac{1}{2}$ and $p \in \{0,1,2,3\}$. The main motivation
for the more general definition is that parameters $\tau$ close
to $1$ are necessary, if one wants to treat skew-shifts on
tori of large dimension, see \cite{kphd}. Let me make the connection
to $(\gamma, \alpha)$-acceptable precise:

\begin{remark}
 We have that $[r_0, r_1]$ is $(\gamma,\alpha)$-acceptable in the sense
 of Definition~\ref{def:acceptable} if and only if
 for $r \in [r_0, r_1]$ we have
 \be
  \mathbb{P}(\Lambda_r(0)\text{ is not $(\gamma,\frac{1}{2},3)$-suitable for }H_{\lam,\omega} - E)
   \leq \frac{1}{r^{\alpha}}.
 \ee
\end{remark}

The integer $p$ will serve as a parameter, we can lower if
we need to perturb $E$ or $H$ slightly. We will now
proceed to make this precise.

\begin{lemma}\label{lem:suitstable}
 Let $\gamma > 0$, $\tau \in (0,1)$, and $p \geq 1$ an integer.
 Assume $\Lambda_r(n)$ is $(\gamma,\tau,p)$-suitable for $H - E$
 and
 \be
  \|(\ti{H}^{\Lambda_r(n)} - \ti{E}) - (H^{\Lambda_r(n)} - E)\| 
   \leq \frac{1}{2^{p+1}} \frac{1}{\#(\partial_-\Lambda_r(x))} \E^{- \gamma r  - 2 r^{\tau}}.
 \ee
 Then $\Lambda_r(n)$ is $(\gamma,\tau,p-1)$-suitable for $\ti{H} - \ti{E}$.
\end{lemma}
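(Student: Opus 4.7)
The natural tool here is the second resolvent identity. Set $G=(H^{\Lambda_r(n)}-E)^{-1}$, $\tilde G=(\tilde H^{\Lambda_r(n)}-\tilde E)^{-1}$ and $\delta=(\tilde H^{\Lambda_r(n)}-\tilde E)-(H^{\Lambda_r(n)}-E)$, so that formally $\tilde G=G-\tilde G\delta G=G-G\delta\tilde G$. The plan is to feed the two hypotheses of $(\gamma,\tau,p)$-suitability together with the smallness of $\|\delta\|$ into this identity and collect a factor of $\tfrac{1}{2}$ in each of the two required estimates, which is exactly what a drop from $p$ to $p-1$ allows.

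First I would establish convergence of the Neumann series. Using $\|\delta\|\le 2^{-(p+1)}(\#\partial_-\Lambda_r(n))^{-1}\E^{-\gamma r-2r^\tau}$ and $\|G\|\le 2^{-p}\E^{r^\tau}$, the product $\|\delta G\|\le\|\delta\|\|G\|$ is bounded by $2^{-(2p+1)}(\#\partial_-\Lambda_r(n))^{-1}\E^{-\gamma r-r^\tau}$, which for $r$ large is far below $1/2$. Therefore $(I+\delta G)$ is invertible with $\|(I+\delta G)^{-1}\|\le 2$, and
\[
 \|\tilde G\|\le \|G\|\cdot\|(I+\delta G)^{-1}\|\le 2\|G\|\le \tfrac{1}{2^{p-1}}\E^{r^\tau},
\]
which is exactly condition (i) of $(\gamma,\tau,p-1)$-suitability for $\tilde H-\tilde E$.

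For condition (ii), I would start from the pointwise identity $\tilde G(x,y)=G(x,y)-\langle e_x,\tilde G\delta G\,e_y\rangle$ and estimate the correction by three operator norms,
\[
 |\langle e_x,\tilde G\delta G\,e_y\rangle|\le \|\tilde G\|\,\|\delta\|\,\|G\|\le \tfrac{1}{2^{3p+1}}\tfrac{1}{\#\partial_-\Lambda_r(n)}\E^{-\gamma r},
\]
where I have inserted the bounds from the previous step and the hypothesis on $\|\delta\|$ (the $\E^{-2r^\tau}$ factor is tuned precisely to cancel the two $\E^{r^\tau}$ factors coming from $\|G\|$ and $\|\tilde G\|$, leaving the $\E^{-\gamma r}$ that controls spatial decay). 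Combined with the hypothesis $|G(x,y)|\le 2^{-p}(\#\partial_-)^{-1}\E^{-\gamma|x-y|}$ and the observation that on the box one has $\E^{-\gamma r}\le \E^{-\gamma|x-y|}$ in the regime where the off-diagonal estimate is required, the two terms add to at most $(2^{-p}+2^{-(3p+1)})(\#\partial_-)^{-1}\E^{-\gamma|x-y|}\le 2^{-(p-1)}(\#\partial_-)^{-1}\E^{-\gamma|x-y|}$, which is condition (ii).

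The main obstacle is book-keeping rather than anything deep: one must verify that the exponent $-\gamma r-2r^\tau$ in the hypothesis on $\|\delta\|$ is sharp enough to absorb both the two factors of $\E^{r^\tau}$ coming from $\|G\|\|\tilde G\|$ (needed to upgrade operator norm to a pointwise estimate) and the full spatial decay $\E^{-\gamma|x-y|}$ inside $\Lambda_r(n)$, while still giving only a factor $2$ loss in the prefactor so that $p$ drops by one. With these choices the two conditions of Definition~\ref{def:suitable} at level $p-1$ are reproduced for $\tilde H-\tilde E$, proving the lemma.
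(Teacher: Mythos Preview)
Your approach is essentially the paper's own: both write $\tilde G=G(I+\delta G)^{-1}$, use $\|\delta G\|\le\tfrac12$ to obtain $\|\tilde G\|\le 2\|G\|$, and then bound the off-diagonal correction by $2\|G\|^2\|\delta\|\le 2^{-3p}(\#\partial_-)^{-1}\E^{-\gamma r}$ (you wrote $2^{-(3p+1)}$, a harmless arithmetic slip). One caveat shared with the paper's terse ``the claim follows'': your assertion $\E^{-\gamma r}\le\E^{-\gamma|x-y|}$ fails when $r<|x-y|_\infty\le 2r$, so strictly the hypothesis should carry $\E^{-2\gamma r}$; in every application (e.g.\ Lemma~\ref{lem:perturbEsuitable} with $\E^{-4\gamma r}$) there is ample slack and the point is moot.
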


\begin{proof}
 Denote $A = H^{\Lambda_r(n)} - E$ and $B = \ti{H}^{\Lambda_r(n)} - \ti{E}$.
 From $B^{-1} - A^{-1} = B^{-1} (B - A) A^{-1}$,
 we obtain
 $$
  B^{-1} = A^{-1} (\mathbb{I} + (A - B) A^{-1})^{-1}
 $$
 By assumption, we have $\|(A - B) A^{-1}\| \leq \frac{1}{2}$ and thus
 \begin{align*}
  \|B^{-1}\| & \leq 2 \|A^{-1}\|, \\
  |\spr{e_x}{B^{-1} e_y}| & \leq |\spr{e_x}{A^{-1} e_y}| + 2 \|A^{-1}\|^2 \|B - A\|.
 \end{align*}
 The claim follows.
\end{proof}

We now specialize to the case of estimating a perturbation in the energy

\begin{lemma}\label{lem:perturbEsuitable}
 Assume
 \be\label{eq:asrgamp}
  d 2^{p+2} (3 r)^{d- 1} \leq \E^{\gamma r},\quad \gamma r^{1 - \tau} \geq 1,
 \ee
 that $\Lambda_r(x)$ is $(\gamma, \tau, p)$-suitable for $H - E$,
 and that 
 \be
  |E - \hat{E}| \leq \E^{-4 \gamma r}.
 \ee
 Then $\Lambda_r(x)$ is $(\gamma,\tau,p-1)$-suitable for $H - \hat{E}$.
\end{lemma}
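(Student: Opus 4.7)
The plan is to apply Lemma~\ref{lem:suitstable} with $\tilde{H} = H$ and $\tilde{E} = \hat{E}$. In this case the operator difference becomes purely a scalar,
\[
(\tilde{H}^{\Lambda_r(x)} - \tilde{E}) - (H^{\Lambda_r(x)} - E) = (E - \hat{E})\,\mathbb{I},
\]
whose norm is bounded by $\E^{-4\gamma r}$ by hypothesis. The only nontrivial task is to verify that this bound is small enough to feed into Lemma~\ref{lem:suitstable}, i.e.\ that
\[
\E^{-4\gamma r} \leq \frac{1}{2^{p+1}} \frac{1}{\#(\partial_-\Lambda_r(x))}\,\E^{-\gamma r - 2 r^{\tau}}.
\]

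To check this, I would first use the crude bound $\#(\partial_-\Lambda_r(x)) \leq 2d(3r)^{d-1}$ recorded just before Definition~\ref{def:suitable}. The required inequality then rearranges to
\[
2^{p+2} d (3r)^{d-1} \leq \E^{3\gamma r - 2 r^{\tau}}.
\]
The first assumption in \eqref{eq:asrgamp} gives $2^{p+2} d (3r)^{d-1} \leq \E^{\gamma r}$, so it suffices to show $\E^{\gamma r} \leq \E^{3\gamma r - 2 r^{\tau}}$, i.e.\ $\gamma r \geq r^{\tau}$. This is precisely the second condition $\gamma r^{1-\tau} \geq 1$ in \eqref{eq:asrgamp}.

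With the hypothesis of Lemma~\ref{lem:suitstable} verified, its conclusion yields that $\Lambda_r(x)$ is $(\gamma,\tau,p-1)$-suitable for $\tilde{H} - \tilde{E} = H - \hat{E}$, as desired. There is no real obstacle here; the lemma is essentially a bookkeeping exercise unpacking the two inequalities in \eqref{eq:asrgamp}, which are exactly calibrated so that the scalar perturbation $|E - \hat{E}|$ fits inside the perturbative window of Lemma~\ref{lem:suitstable}.
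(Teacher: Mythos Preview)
Your proof is correct and essentially identical to the paper's: both apply Lemma~\ref{lem:suitstable} with $\tilde{H}=H$, $\tilde{E}=\hat{E}$, and verify the perturbative hypothesis by combining the two inequalities in \eqref{eq:asrgamp}. The paper simply compresses your verification into the single line ``\eqref{eq:asrgamp} implies $2^{p+1}\#(\partial_-\Lambda_r(x))\E^{2r^\tau}\leq \E^{3\gamma r}$,'' which is your displayed inequality rearranged.
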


\begin{proof}
 \eqref{eq:asrgamp} implies
 $$
  2^{p+1} \#(\partial_{-}\Lambda_r(x)) \E^{2 r^{\tau}} \leq \E^{3 \gamma r}.
 $$
 Thus the claim follows from Lemma~\ref{lem:suitstable}.
\end{proof}

We will now begin to study properties specific to potentials
obeying either Hypothesis~\ref{hyp:expdecay} or \ref{hyp:analytic}.
In particular \eqref{eq:asfr2} respectively \eqref{eq:asfr}
will be important. The first goal will be to understand
what happens if we change $\omega$. For $\Xi \subseteq \Z^d$,
we denote by $\Xi^c = \Z^d\setminus \Xi$ its complement.

\begin{definition}
 Let $\omega, \ti{\omega} \in \Omega$ and $\Xi \subseteq \Z^d$, we will write
 $\omega = \ti{\omega} \pmod{\Xi}$
 if
 \be
  \omega_x = \ti{\omega}_x,\quad x \in \Xi^c.
 \ee
\end{definition}

Having this definition, we are ready for

\begin{lemma}
 Let $\omega = \ti{\omega} \pmod{\Lambda_r(x) ^c}$, then
 \be
  |V_{\lam, \omega}(x) - V_{\lam, \ti{\omega}}(x)| \leq \lam \E^{-c r}.
 \ee
\end{lemma}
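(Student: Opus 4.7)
The plan is to unwind the definitions and invoke Hypothesis~\ref{hyp:expdecay}(i)--(ii). First, by Hypothesis~\ref{hyp:expdecay}(i), there is a function $f:[-\tfrac{1}{2},\tfrac{1}{2}]^{\Z^d}\to\R$ such that
$$
 V_{\lam,\omega}(x)=\lam f(T_x\omega),\qquad (T_x\omega)_n=\omega_{x+n}.
$$
Consequently,
$$
 |V_{\lam,\omega}(x)-V_{\lam,\ti\omega}(x)|=\lam\,|f(T_x\omega)-f(T_x\ti\omega)|,
$$
so the task reduces to comparing $f(T_x\omega)$ and $f(T_x\ti\omega)$ via Hypothesis~\ref{hyp:expdecay}(ii).

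Next I would translate the hypothesis $\omega=\ti\omega\pmod{\Lambda_r(x)^c}$ into a statement about $T_x\omega$ and $T_x\ti\omega$. By the definition above Lemma~4.6 (or its intended reading), the congruence says that $\omega_m=\ti\omega_m$ for all $m$ outside $\Lambda_r(x)^c$, i.e.\ for all $m\in\Lambda_r(x)$. Substituting $m=x+n$ shows that $(T_x\omega)_n=(T_x\ti\omega)_n$ for every $n\in\Lambda_r(0)$. Thus $T_x\omega$ and $T_x\ti\omega$ agree on the centered box $\Lambda_r(0)$, which is exactly the hypothesis needed to apply Hypothesis~\ref{hyp:expdecay}(ii).

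Applying \eqref{eq:asfr2} with $T_x\omega$ in place of $\omega$ and $T_x\ti\omega$ in place of $\ti\omega$ yields
$$
 |f(T_x\omega)-f(T_x\ti\omega)|\leq \E^{-cr},
$$
and multiplying by $\lam$ gives the claimed bound. There is essentially no obstacle here: the statement is a bookkeeping consequence of the translation-covariance built into Hypothesis~\ref{hyp:expdecay}(i) together with the exponential-correlation estimate in Hypothesis~\ref{hyp:expdecay}(ii). The only mild subtlety is tracking that the recentering by $T_x$ converts agreement on $\Lambda_r(x)$ (as imposed by the congruence with modulus $\Lambda_r(x)^c$) into agreement on $\Lambda_r(0)$ (as required by \eqref{eq:asfr2}).
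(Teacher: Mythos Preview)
Your proof is correct and follows exactly the approach of the paper, which simply cites \eqref{eq:asfr2} (respectively \eqref{eq:asfr}) without spelling out the translation step. You have merely made explicit the bookkeeping that the congruence $\omega=\ti\omega\pmod{\Lambda_r(x)^c}$ becomes agreement of $T_x\omega$ and $T_x\ti\omega$ on $\Lambda_r(0)$, which is precisely what the one-line proof in the paper leaves to the reader.
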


\begin{proof}
 This follows from \eqref{eq:asfr2} respecively \eqref{eq:asfr}.
\end{proof}

\begin{lemma}\label{lem:Homtiom}
 Let $r, s > 0$, and $\omega = \ti{\omega} \pmod{\Lambda_{r + s}(x) ^c}$.
 Then
 \be
  \|H_{\lam,\omega}^{\Lambda_r(x)} - H_{\lam,\ti{\omega}}^{\Lambda_r(x)}\|
   \leq \lam \E^{- c s}.
 \ee
\end{lemma}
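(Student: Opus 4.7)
The plan is to reduce to the previous lemma via an elementary containment argument about boxes. The key observation is that the Laplacian $\Delta$ is a fixed (deterministic) operator, so when we restrict both $H_{\lam,\omega}$ and $H_{\lam,\ti{\omega}}$ to $\ell^2(\Lambda_r(x))$, the kinetic parts coincide, and the difference operator is purely multiplicative:
\[
H_{\lam,\omega}^{\Lambda_r(x)} - H_{\lam,\ti{\omega}}^{\Lambda_r(x)} = \bigl(V_{\lam,\omega} - V_{\lam,\ti{\omega}}\bigr)\big|_{\Lambda_r(x)}.
\]
Since this is a diagonal operator in the standard basis, its operator norm equals $\max_{y \in \Lambda_r(x)} |V_{\lam,\omega}(y) - V_{\lam,\ti{\omega}}(y)|$.

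Next, I would fix an arbitrary $y \in \Lambda_r(x)$ and verify the inclusion $\Lambda_s(y) \subseteq \Lambda_{r+s}(x)$. Indeed, for any $n \in \Lambda_s(y)$, the triangle inequality in the $|\cdot|_{\infty}$ norm gives $|n - x|_{\infty} \leq |n - y|_{\infty} + |y - x|_{\infty} \leq s + r$. Since $\omega$ and $\ti{\omega}$ agree on $\Lambda_{r+s}(x)$ by hypothesis, they in particular agree on $\Lambda_s(y)$, i.e., $\omega = \ti{\omega} \pmod{\Lambda_s(y)^c}$. The previous lemma (applied with $x$ replaced by $y$ and $r$ replaced by $s$) then yields
\[
|V_{\lam,\omega}(y) - V_{\lam,\ti{\omega}}(y)| \leq \lam \E^{-cs}.
\]
Taking the maximum over $y \in \Lambda_r(x)$ gives the claimed bound.

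There is no real obstacle here; this is essentially bookkeeping. The only substantive point is ensuring that the hypothesis of the previous lemma is satisfied uniformly for every site $y$ inside the restricted box, which is exactly what the $r + s$ thickening in $\Lambda_{r+s}(x)$ buys us. The constant $c$ is the same decay rate coming from Hypothesis~\ref{hyp:expdecay}(ii) (or the tail estimate \eqref{eq:asfr} in the analytic setting), and it is inherited unchanged from the previous lemma.
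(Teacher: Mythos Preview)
Your proof is correct and follows essentially the same approach as the paper's own proof. The paper's argument is just a terser version of yours: it notes that for each $y \in \Lambda_r(x)$ one has $\omega = \ti{\omega} \pmod{\Lambda_s(y)^c}$ (your containment step) and then invokes the previous lemma.
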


\begin{proof}
 By assumption, we have for $y \in \Lambda_r(x)$
 that $\omega = \ti{\omega} \pmod{\Lambda_s(y)^c}$.
 The claim now follows by the previous lemma.
\end{proof}

In order to state the next lemma, we define given $r$ the number
$\ol{r}$ by
\be\label{eq:defrbar}
 \ol{r} = \left\lceil\left(1 + \frac{4 \gamma}{c}\right) r + \frac{\log(\lam)}{c}\right\rceil,
\ee
 
\begin{lemma}\label{lem:LamrstoLamr}
 Assume \eqref{eq:asrgamp} and that
 \be
  \Lambda_r(x)\text{ is $(\gamma,\tau,p)$-suitable for }H_{\lam, \omega} - E.
 \ee
 Furthermore assume that
 \be
  \omega = \ti{\omega} \pmod{\Lambda_{\ol{r}}(x)^c},
 \ee
 Then $\Lambda_r(x)$ is $(\gamma,\tau,p-1)$-suitable for $H_{\lam, \ti{\omega}} - E$.
\end{lemma}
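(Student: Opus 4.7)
The plan is to reduce this to a direct application of Lemma~\ref{lem:suitstable} with $\ti{E} = E$, by controlling the operator-norm difference between $H_{\lam,\omega}^{\Lambda_r(x)}$ and $H_{\lam,\ti{\omega}}^{\Lambda_r(x)}$ via Lemma~\ref{lem:Homtiom}.

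First, I would apply Lemma~\ref{lem:Homtiom} with the choice $s = \ol{r} - r$. Since $\omega = \ti{\omega} \pmod{\Lambda_{\ol{r}}(x)^c} = \pmod{\Lambda_{r+s}(x)^c}$, this yields
\be
 \|H_{\lam,\omega}^{\Lambda_r(x)} - H_{\lam,\ti{\omega}}^{\Lambda_r(x)}\| \leq \lam\, \E^{-c(\ol{r}-r)}.
\ee
From the definition \eqref{eq:defrbar} we have $c(\ol{r}-r) \geq 4\gamma r + \log(\lam)$, so $\lam\E^{-c(\ol{r}-r)} \leq \E^{-4\gamma r}$.

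Next I would verify, using \eqref{eq:asrgamp}, that this bound is strong enough to invoke Lemma~\ref{lem:suitstable}, i.e.\ that
\be
 \E^{-4\gamma r} \leq \frac{1}{2^{p+1}}\frac{1}{\#(\partial_-\Lambda_r(x))}\E^{-\gamma r - 2 r^{\tau}}.
\ee
Using $\#(\partial_-\Lambda_r(x)) \leq 2d(3r)^{d-1}$ and the first inequality in \eqref{eq:asrgamp} we obtain $2^{p+1}\#(\partial_-\Lambda_r(x)) \leq \E^{\gamma r}$. The second inequality $\gamma r^{1-\tau} \geq 1$ gives $\E^{2 r^{\tau}} \leq \E^{2\gamma r}$. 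Multiplying these, the left-hand side is dominated by $\E^{3\gamma r}$, which combined with the $\E^{-4\gamma r}$ above leaves a factor $\E^{-\gamma r} \leq 1$. This is exactly the hypothesis required by Lemma~\ref{lem:suitstable}, so applying it with $\ti{H} = H_{\lam,\ti{\omega}}$ and $\ti{E} = E$ immediately concludes that $\Lambda_r(x)$ is $(\gamma,\tau,p-1)$-suitable for $H_{\lam,\ti{\omega}} - E$.

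There is no real obstacle here; the proof is essentially bookkeeping. The mild subtlety is that the exponent $4\gamma/c$ in the definition \eqref{eq:defrbar} of $\ol{r}$ is chosen precisely so that the slack between $\E^{-4\gamma r}$ and the $\E^{-\gamma r - 2r^{\tau}}$ threshold absorbs both the boundary factor $\#(\partial_-\Lambda_r(x))$ and the perturbation factor $\E^{2r^{\tau}}$ coming from Lemma~\ref{lem:suitstable}, while the additive term $\log(\lam)/c$ absorbs the $\lam$ in Lemma~\ref{lem:Homtiom}. Once these constants are tracked, the result is immediate.
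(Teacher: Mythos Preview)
Your proof is correct and follows essentially the same approach as the paper: apply Lemma~\ref{lem:Homtiom} with $s=\ol{r}-r$ to obtain $\|H_{\lam,\omega}^{\Lambda_r(x)}-H_{\lam,\ti{\omega}}^{\Lambda_r(x)}\|\leq\E^{-4\gamma r}$, then invoke Lemma~\ref{lem:suitstable}. You are simply more explicit than the paper in checking (via \eqref{eq:asrgamp}) that $\E^{-4\gamma r}$ is below the threshold required by Lemma~\ref{lem:suitstable}, whereas the paper leaves this verification implicit.
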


\begin{proof}
 By Lemma~\ref{lem:Homtiom} with $s = \frac{4 \gamma}{c} \cdot r + \frac{\log(\lam)}{c}$
 and \eqref{eq:asrgamp}, we obtain
 $$
  \|H_{\lam,\omega}^{\Lambda_r(x)} - H_{\lam,\ti{\omega}}^{\Lambda_r(x)}\| \leq \E^{-4 \gamma r}.
 $$
 By Lemma~\ref{lem:suitstable}, the claim follows.
\end{proof}

We furthermore note

\begin{lemma}\label{lem:proprbar}
 We have
 \be
  \ol{r} \leq T_0 \max(r,\frac{\log(\lam)}{c}),\quad T_0 = \left(2  + \frac{4\gamma}{c}\right).
 \ee
\end{lemma}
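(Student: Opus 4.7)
\medskip

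\textbf{Proof plan for Lemma~\ref{lem:proprbar}.}

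This is just arithmetic bookkeeping from the definition \eqref{eq:defrbar}. Set $M = \max\!\left(r,\tfrac{\log(\lam)}{c}\right)$. The plan is to strip the ceiling using $\lceil x\rceil \le x+1$, and then to bound each of the two terms $(1+\tfrac{4\gamma}{c})r$ and $\tfrac{\log(\lam)}{c}$ separately by a multiple of $M$. Concretely, since $r\le M$ we get $(1+\tfrac{4\gamma}{c})r \le (1+\tfrac{4\gamma}{c})M$, and since $\tfrac{\log(\lam)}{c}\le M$ we get $\tfrac{\log(\lam)}{c}\le M$. Adding the two bounds yields
\[
  \left(1+\tfrac{4\gamma}{c}\right)r + \tfrac{\log(\lam)}{c}
  \;\le\; \left(2+\tfrac{4\gamma}{c}\right) M \;=\; T_0\, M.
\]

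The only subtlety is the outer ceiling: a priori $\ol r$ may exceed the right side $T_0 M$ by up to $1$. This is absorbed either by the mild convention that $r,\log(\lam)/c\ge 1$ (so $T_0 M\ge 2$ leaves slack for integer rounding) or, more cleanly, by observing that in the case $r\ne \log(\lam)/c$ the gap $M-\min(r,\log(\lam)/c)$ is strictly positive and provides the needed room, while in the equality case the estimate is already trivially an equality. Either way, the lemma follows with no obstacle of substance; I would not expect more than one or two lines in the written proof.
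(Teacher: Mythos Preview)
Your approach matches the paper's, which also simply invokes the definition \eqref{eq:defrbar} without further comment. Your concern about the ceiling is legitimate---neither of your two suggested fixes is quite airtight (a strictly positive gap need not be $\ge 1$, and the equality case gives $\ol r=\lceil T_0 M\rceil$, not $T_0 M$), but the paper ignores this issue entirely; in every application $r$ is taken large enough that the stray $+1$ is immaterial.
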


\begin{proof}
 This follows from \eqref{eq:defrbar}.
\end{proof}

We will now begin with the probabilistic arguments.
For $n \in \Lambda_R(0)$ and $r \geq 1$, introduce $X^{r}_{n} = X^{r}_{n}(\gamma,\tau,p, E)$
as the set of $\omega$
such that for some $\ti{\omega} = \omega \pmod{\Lambda_{\ol{r}}(n)^{c}}$
we have
\be
 \Lambda_{r}(n)\text{ is not $(\gamma,\tau,p-1)$-suitable for }H_{\lam,\ti{\omega}} - E.
\ee

\begin{lemma}\label{lem:propXn}
 Assume \eqref{eq:asrgamp} and 
 \be\label{eq:probnotsuitableleqeps}
  \mathbb{P}(\Lambda_r(0)\text{ is not $(\gamma,\tau,p)$-suitable for } H_{\lam,\omega} - E) \leq \eps.
 \ee
 Then $\mathbb{P}(X^r_n) \leq \eps$. 

 Furthermore, for
 $|m - n|_{\infty} \geq 2 \ol{r} + 1$, $X_n^r$ and $X_m^r$
 are independent.
\end{lemma}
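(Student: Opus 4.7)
The plan is to reduce both assertions to Lemma~\ref{lem:LamrstoLamr} combined with the product and stationarity structure of $\mathbb{P}$. For the probability bound I would establish the inclusion
\be
 X^r_n \subseteq \{\omega : \Lambda_r(n) \text{ is not $(\gamma,\tau,p)$-suitable for }H_{\lam,\omega} - E\},
\ee
and then transfer \eqref{eq:probnotsuitableleqeps} to the translate $\Lambda_r(n)$ by stationarity. For the independence claim the route is to argue that the event $X^r_n$ is measurable with respect to $\{\omega_k : k \in \Lambda_{\ol{r}}(n)\}$, after which the product structure of $\mathbb{P}$ and the disjointness of the two $\ol{r}$-cubes take over.

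To establish the inclusion I would argue by contrapositive. Suppose $\Lambda_r(n)$ is $(\gamma,\tau,p)$-suitable for $H_{\lam,\omega} - E$. Hypothesis \eqref{eq:asrgamp} is in force, so for \emph{any} $\ti{\omega}$ with $\ti{\omega} = \omega \pmod{\Lambda_{\ol{r}}(n)^c}$ Lemma~\ref{lem:LamrstoLamr} gives that $\Lambda_r(n)$ is $(\gamma,\tau,p-1)$-suitable for $H_{\lam,\ti{\omega}} - E$; hence no such $\ti{\omega}$ witnesses $\omega \in X^r_n$. Stationarity of the law of $H_{\lam,\omega}$ under the shifts $T_m$ follows from the cocycle form $V_{\lam,\omega}(x) = \lam f(T_x\omega)$ in Hypothesis~\ref{hyp:expdecay}(i) together with the product structure in (iv), so the probability on the right side of the displayed inclusion equals $\mathbb{P}(\Lambda_r(0)\text{ is not $(\gamma,\tau,p)$-suitable for }H_{\lam,\omega} - E)$, which is bounded by $\eps$ via \eqref{eq:probnotsuitableleqeps}.

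The independence claim hinges on the observation that the defining condition $\ti{\omega} = \omega \pmod{\Lambda_{\ol{r}}(n)^c}$ forces $\ti{\omega}$ to coincide with $\omega$ precisely on $\Lambda_{\ol{r}}(n)$ while leaving $\ti{\omega}$ unconstrained on $\Lambda_{\ol{r}}(n)^c$. Consequently the existence of a witnessing $\ti{\omega}$ is a function of $\omega|_{\Lambda_{\ol{r}}(n)}$ alone, so $X^r_n$ is $\sigma(\{\omega_k : k\in\Lambda_{\ol{r}}(n)\})$-measurable. When $|m-n|_\infty \geq 2\ol{r}+1$ the cubes $\Lambda_{\ol{r}}(n)$ and $\Lambda_{\ol{r}}(m)$ are disjoint, so $X^r_n$ and $X^r_m$ depend on disjoint blocks of the i.i.d.\ coordinates and independence follows from Hypothesis~\ref{hyp:expdecay}(iv). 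I do not anticipate a serious obstacle; the only minor technical point is the measurability of $X^r_n$ in the presence of an uncountable existential quantifier, which can be dispensed with by restricting the quantifier to a countable dense collection of $\ti{\omega}$ and invoking the continuous dependence of resolvent entries on $\omega$ furnished by Lemma~\ref{lem:Homtiom}.
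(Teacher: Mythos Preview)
Your proposal is correct and follows essentially the same route as the paper: the probability bound comes from the inclusion $X^r_n \subseteq \{\Lambda_r(n)\text{ not }(\gamma,\tau,p)\text{-suitable}\}$ obtained via Lemma~\ref{lem:LamrstoLamr} together with shift-invariance, and the independence from the fact that $X^r_n$ depends only on $\omega|_{\Lambda_{\ol{r}}(n)}$ and that $\Lambda_{\ol{r}}(n)\cap\Lambda_{\ol{r}}(m)=\emptyset$. The paper's proof is a two-line version of exactly this; your added remark on measurability of $X^r_n$ is a legitimate technical point that the paper leaves implicit.
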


\begin{proof}
 By assumption and Lemma~\ref{lem:LamrstoLamr}, we have that
 $\mathbb{P}(X^r_n) \leq \eps$. The independence
 follows from $\Lambda_{\ol{r}}(n) \cap \Lambda_{\ol{r}}(m) = \emptyset$
\end{proof}

This implies

\begin{proposition}\label{prop:probatmostonebadcube}
 Assume \eqref{eq:asrgamp} and \eqref{eq:probnotsuitableleqeps}.
 Then for $R \geq r$ and $K \geq 1$ there exists a set $\mathcal{B}_{1}^{R,K}$ with the 
 following properties.
 \begin{enumerate}
  \item $\mathbb{P}(\mathcal{B}_{1}^{R,K}) \leq \frac{1}{(K+1)!} \left((3 R)^{d} \cdot \eps\right)^{K+1}$.
  \item For $\omega \notin \mathcal{B}_{1}^{R,K}$
   there exist $0\leq L \leq K$ and $m_{\omega}^{1}, \dots m_{\omega}^{L}$ such that
   for 
   \be
     \Lambda_r(n) \subseteq \Lambda_R(0) \setminus \bigcup_{k=1}^{L} \Lambda_{2 \ol{r}}(m_{\omega}^{k}),
   \ee
   we have
   \be
    \Lambda_r(n)\text{ is $(\gamma,\tau,p-1)$-suitable for }H_{\lam, \omega} - E.
   \ee
 \end{enumerate}
\end{proposition}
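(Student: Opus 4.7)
The plan is to exploit the events $X_n^r$ from Lemma~\ref{lem:propXn} together with a greedy packing argument. Let me set
\[
 \mathcal{B}_1^{R,K} = \bigcup \bigl\{ X_{n_1}^r \cap \cdots \cap X_{n_{K+1}}^r : n_1,\dots,n_{K+1}\in \Lambda_R(0),\ |n_i-n_j|_\infty \geq 2\ol{r}+1 \text{ for } i\neq j\bigr\},
\]
i.e.\ the event that we can find $K+1$ pairwise widely separated centers inside $\Lambda_R(0)$ at which the bad event $X_{n_j}^r$ occurs simultaneously.

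First I would bound $\mathbb{P}(\mathcal{B}_1^{R,K})$. For any fixed $(K+1)$-tuple with pairwise $|\cdot|_\infty$-distance at least $2\ol{r}+1$, Lemma~\ref{lem:propXn} gives that the $X_{n_j}^r$ are mutually independent, each of probability at most $\eps$, so the joint probability is at most $\eps^{K+1}$. The number of unordered $(K+1)$-tuples in $\Lambda_R(0)$ is at most $\binom{(2R+1)^d}{K+1}\leq \frac{(3R)^{d(K+1)}}{(K+1)!}$, and a union bound yields the estimate (i).

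Next, for $\omega\notin \mathcal{B}_1^{R,K}$, I would construct the centers $m_\omega^1,\dots,m_\omega^L$ greedily: let $S(\omega)=\{n\in \Lambda_R(0):\omega\in X_n^r\}$ and, if $S(\omega)$ is non-empty, pick $m_\omega^1\in S(\omega)$, then $m_\omega^2\in S(\omega)\setminus \Lambda_{2\ol{r}}(m_\omega^1)$ (if possible), and so on, obtaining a maximal sequence $m_\omega^1,\dots,m_\omega^L$ with pairwise $|\cdot|_\infty$-distance $\geq 2\ol{r}+1$. If we had $L\geq K+1$, these would witness $\omega\in\mathcal{B}_1^{R,K}$, contradicting our assumption, so $L\leq K$. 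Maximality forces $S(\omega)\subseteq \bigcup_{k=1}^L \Lambda_{2\ol{r}}(m_\omega^k)$. Hence any $n\in \Lambda_R(0)\setminus \bigcup_{k=1}^L \Lambda_{2\ol{r}}(m_\omega^k)$ lies outside $S(\omega)$, i.e.\ $\omega\notin X_n^r$. Taking $\ti\omega=\omega$ in the definition of $X_n^r$, this precisely means $\Lambda_r(n)$ is $(\gamma,\tau,p-1)$-suitable for $H_{\lam,\omega}-E$, which is conclusion (ii).

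The computations here are essentially bookkeeping; there is no single hard step. The main thing to be careful about is the combinatorial counting factor $\frac{1}{(K+1)!}$, which must come from working with unordered tuples (not from any independence cancellation), and the observation that the packing radius $2\ol{r}+1$ used in the greedy selection matches exactly the threshold needed for independence in Lemma~\ref{lem:propXn}. Once those align, (i) and (ii) follow in parallel.
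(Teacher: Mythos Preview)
Your proof is correct and follows essentially the same route as the paper: a union bound over $(K+1)$-tuples of well-separated centers (using the independence and probability bound from Lemma~\ref{lem:propXn}) gives (i), and a maximal/greedy selection of bad centers gives (ii). The only cosmetic difference is that the paper defines the bad event via ``$\Lambda_r(m_k)$ not $(\gamma,\tau,p-1)$-suitable'' and then embeds it into $\bigcap_k X_{m_k}^r$, whereas you work with the $X_n^r$ directly; since the former is contained in the latter, both versions yield the same estimates.
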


The conclusion of this proposition has to be understood as
{\em there exist at most $K$ bad cubes with large probability}.
Of course in order for this statement to be true, we need
that $\eps < (3R)^{d}$.

We now begin the proof of Proposition~\ref{prop:probatmostonebadcube}.
Let $m_k \in \Lambda_R(0)$ for $k = 1, \dots, K + 1$,
which satisfy for $k \neq \ell$ 
\be\label{eq:LamnLammdisjoint}
 \Lambda_{\ol{r}}(m_k)  \cap \Lambda_{\ol{r}}(m_\ell) = \emptyset.
\ee
Denote the collection $\ul{m} = \{m_k\}_{k=1}^{K + 1}$.
Introduce $\mathcal{B}_{r, K}^{\ul{m}}$ as the set of $\omega$ such that
for $k = 1, \dots, K+1$, we have
\be
 \Lambda_{r}(m_k)\text{ is not $(\gamma,\tau, p-1)$-suitable for }H_{\lam,\omega} - E.
\ee
We have

\begin{lemma}\label{lem:suitindep}
 Assume \eqref{eq:asrgamp}, and \eqref{eq:probnotsuitableleqeps}.
 Let $\ul{m} \in \Lambda_R(0)^{K+1}$ satisfy \eqref{eq:LamnLammdisjoint}. Then
 \be
  \mathbb{P}(\mathcal{B}_{r, K}^{\ul{m}}) \leq \eps^{K+1}.
 \ee
\end{lemma}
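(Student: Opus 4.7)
The plan is to reduce $\mathcal{B}_{r,K}^{\ul{m}}$ to an intersection of the events $X^r_{m_k}$ from Lemma~\ref{lem:propXn}, and then to exploit their mutual independence, which follows from the disjointness assumption \eqref{eq:LamnLammdisjoint}.

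First I would note the key measurability observation: by its very definition, $X^r_n$ is invariant under changing the coordinates $\omega_y$ with $y \in \Lambda_{\ol{r}}(n)^c$ (any such change can be absorbed into the choice of $\ti{\omega}$), and hence $X^r_n$ is measurable with respect to the $\sigma$-algebra generated by $\{\omega_y : y \in \Lambda_{\ol{r}}(n)\}$. Under the hypothesis \eqref{eq:LamnLammdisjoint}, the sets $\Lambda_{\ol{r}}(m_k)$ are pairwise disjoint, so the events $X^r_{m_1}, \dots, X^r_{m_{K+1}}$ depend on disjoint collections of the i.i.d.\ coordinates $\omega_y$, and are therefore mutually independent (not merely pairwise independent as one might first read off from Lemma~\ref{lem:propXn}).

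Next I would verify the inclusion
\be
 \mathcal{B}_{r,K}^{\ul{m}} \subseteq \bigcap_{k=1}^{K+1} X^r_{m_k}.
\ee
Indeed, if $\omega \in \mathcal{B}_{r,K}^{\ul{m}}$, then for each $k$ the box $\Lambda_r(m_k)$ fails to be $(\gamma,\tau,p-1)$-suitable for $H_{\lam,\omega}-E$, so we may take $\ti{\omega}=\omega$ in the definition of $X^r_{m_k}$ to conclude $\omega \in X^r_{m_k}$.

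Combining these two observations with the pointwise probability bound $\mathbb{P}(X^r_{m_k}) \leq \eps$ from Lemma~\ref{lem:propXn} (which uses \eqref{eq:asrgamp} and \eqref{eq:probnotsuitableleqeps}) and mutual independence, we obtain
\be
 \mathbb{P}(\mathcal{B}_{r,K}^{\ul{m}}) \leq \mathbb{P}\Bigl(\bigcap_{k=1}^{K+1} X^r_{m_k}\Bigr) = \prod_{k=1}^{K+1}\mathbb{P}(X^r_{m_k}) \leq \eps^{K+1},
\ee
which is the desired estimate. The only genuinely subtle point is the first step above: one must realize that $X^r_n$ is actually determined by the coordinates inside $\Lambda_{\ol{r}}(n)$ rather than outside. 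Once that is unwound from the definition, the proof is essentially a one-line application of independence combined with a subset inclusion.
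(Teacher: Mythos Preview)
Your proof is correct and follows essentially the same approach as the paper: establish the inclusion $\mathcal{B}_{r,K}^{\ul{m}} \subseteq \bigcap_{k=1}^{K+1} X^r_{m_k}$ and then invoke the independence and probability bound from Lemma~\ref{lem:propXn}. Your version is more explicit than the paper's (which is a two-line sketch), in particular you spell out why the $X^r_{m_k}$ are \emph{mutually} independent via their measurability with respect to disjoint coordinate blocks, a point the paper leaves implicit.
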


\begin{proof}
 One shows that
 $$
  \mathcal{B}_{r,K}^{\ul{m}} \subseteq \bigcap_{k =1}^{K+1} X^{m_k}.
 $$
 The claim now follows by Lemma~\ref{lem:propXn}.
\end{proof}

\begin{proof}[Proof of Proposition~\ref{prop:probatmostonebadcube}]
 Introduce $\mathcal{B}_1^{R,K}$ as the union over all possible choices
 of $\mathcal{B}_{r,K}^{\ul{m}}$. Since the number of these choices
 is bounded by
 $$
  \frac{1}{(K+1)!} (\# \Lambda_R(0))^{K+1} \leq \frac{1}{(K+1)!} (3R)^{(K+1)\cdot d},
 $$
 we obtain that (i) holds.
 
 Let us now check (ii). Take $\omega \notin \mathcal{B}_1^{R,K}$.
 Denote by $\{m_{\omega}^{j}\}_{j=1}^{J}$ a maximal collection of $m \in \Lambda_R(0)$
 such that \eqref{eq:LamnLammdisjoint} holds and
 $$
  \Lambda_r(m_{\omega}^{j})\text{ is not $(\gamma,\tau,p-1)$-suitable for }
   H_{\lam,\omega} - E.
 $$
 If $J \leq K$, we are done. Otherwise, we have a contradiction to
 $\omega \notin \mathcal{B}_{r,K}^{\{m_{\omega}^{j}\}_{j=1}^{K+1}}$,
 which finishes the proof.
\end{proof}

%
%
%

\section{Obtaining exponential decay of the off-diagonal terms}
\label{sec:green}

In this section, we demonstrate how to obtain estimates on the off-diagonal
elements, using an iteration of the resolvent equation. The results of this
section are again independent of the specific form of $H_{\lam,\omega}$,
so we use $H$ to denote some Schr\"odinger operator.
Introduce for $x,y \in \Lambda_r(n)$
the {\em Green's function}
\be
 G_{E}^{\Lambda_{r}(n)}(x,y) = \spr{e_x}{(H^{\Lambda_{r}(n)} - E)^{-1} e_y}.
\ee
We furthermore introduce the {\em boundary} of $\Lambda_r(n)$ in $\Lambda_R(n)$
by
\be
 \partial^{\Lambda_R(0)}\Lambda_r(n) = \{(x,y):\quad x\in\Lambda_r(n),\ y \in \Lambda_R(0) \setminus \Lambda_r(n),\ |x-y|_1 = 1\}.
\ee
For $x \in \Lambda_r(n)$ and $y \in \Lambda_R(0) \setminus \Lambda_r(n)$,
we obtain from the second resolvent equation
\be\label{eq:geomresolv}
 G_{E}^{\Lambda_{R}(0)}(x,y) = - \sum_{(u, v)\in\partial^{\Lambda_R(0)}\Lambda_r(n)}
  G_{E}^{\Lambda_{r}(n)}(x,u) \cdot G_{E}^{\Lambda_{R}(0)}(y,v) .
\ee
We will refer to this equation as the {\em geometric resolvent equation}.
See Section~5.3 in \cite{kirsch}.
We note the following consequence, which is more convenient for our applications
\begin{align}\label{eq:geomresolv2}
 |G_{E}^{\Lambda_{R}(0)}(x,y)| \leq &\left(\max_{n \in \Lambda_{r+1}(n)  \cap \Lambda_R(0)} |G_{E}^{\Lambda_R(0)}(y,n)| \right) \\
 \nn  &\cdot \left( \#(\partial_{-}^{\Lambda_R(0)} \Lambda_r(0)) \max_{n \in \partial_{-}^{\Lambda_R(0)} \Lambda_r(n)} |G_{E}^{\Lambda_{r}(n)}(x,n)| \right) .
\end{align}
Here $\partial_{-}^{\Lambda_R(0)} \Lambda_r(n)$ denotes
\be
 \partial_{-}^{\Lambda_R(0)} \Lambda_r(n) = \{x \in \Xi:\quad \exists y \in \Lambda_R(0) \setminus \Lambda_r(n):\quad |x -y|_1 = 1\}.
\ee
We note that $\partial_{-} \Lambda_r(n) = \partial_{-}^{\Z^d} \Lambda_r(n)$.
In order to illustrate the use of \eqref{eq:geomresolv2}, we first
prove the following, which is similar to Proposition~10.4. in \cite{kirsch}.

\begin{proposition}\label{prop:resolvitersimple}
 Let $R \geq r \geq 1$, $\tau\in (0,1)$, $\gamma>0$, and $p \geq 0$. Assume for
 $\Lambda_r(n) \subseteq \Lambda_R(0)$ that
 \be\label{eq:condLamrissuit}
  \Lambda_r(n)\text{ is $(\gamma,\tau,0)$-suitable for } H - E
 \ee
 and 
 \be
  \|(H^{\Lambda_R(0)} - E)\|\leq \frac{1}{2^p} \E^{R^{\tau}}.
 \ee 
 Then
 \be
  \Lambda_R(0)\text{ is $(\hat{\gamma},\tau,p)$-suitable for } H - E
 \ee
 where
 \be
  \hat{\gamma} = \gamma 
  \left(1 - \frac{1}{r + 1} -  \frac{10}{R}\left(\gamma r + R^{\tau} + d \log(3) \right)\right).
 \ee
\end{proposition}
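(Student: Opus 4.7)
The first condition of $(\hat\gamma,\tau,p)$-suitability is handed to us directly by the hypothesis on $\|(H^{\Lambda_R(0)}-E)^{-1}\|$, so the entire content of the proposition is clause (ii) of Definition~\ref{def:suitable}: the off-diagonal decay of the Green's function on $\Lambda_R(0)$. My plan is the standard von Dreifus--Klein \emph{resolvent chaining} argument: iterate the geometric resolvent inequality \eqref{eq:geomresolv2} along sub-boxes of radius $r$, gaining a factor $\E^{-\gamma r}$ per step, and terminate the chain with the crude operator-norm bound.

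Concretely, fix $x,y\in\Lambda_R(0)$ with $|x-y|\geq R/10$ and choose a sub-box $\Lambda_r(c_0)\subseteq\Lambda_R(0)$ containing $x$, with $c_0=x$ whenever $x$ has an $r$-margin from $\partial\Lambda_R(0)$. Because $x$ sits at the center, every $u\in\partial_-\Lambda_r(c_0)$ satisfies $|x-u|_\infty = r\geq r/10$, so the hypothesized $(\gamma,\tau,0)$-suitability of $\Lambda_r(c_0)$ gives
\[
 |G_E^{\Lambda_r(c_0)}(x,u)|\leq \frac{1}{\#\partial_-\Lambda_r(c_0)}\E^{-\gamma r}.
\]
This cancels the combinatorial factor $\#\partial_-\Lambda_r(c_0)$ from \eqref{eq:geomresolv2} exactly, leaving
\[
 |G_E^{\Lambda_R(0)}(x,y)|\leq \E^{-\gamma r}\max_{v_1\in\Lambda_{r+1}(c_0)\setminus\Lambda_r(c_0)}|G_E^{\Lambda_R(0)}(v_1,y)|.
\]
Iterating $k$ times, with $c_{i+1}$ re-selected at each step so that the current free endpoint $v_{i+1}$ lies at the center of $\Lambda_r(c_{i+1})\subseteq\Lambda_R(0)$, accumulates an overall factor $\E^{-\gamma r k}$, while the free endpoint has moved at most $k(r+1)$ away from $x$. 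Taking $k=\lfloor|x-y|/(r+1)\rfloor$ and bounding the residual $|G_E^{\Lambda_R(0)}(v_k,y)|$ by the operator norm yields
\[
 |G_E^{\Lambda_R(0)}(x,y)|\leq \frac{1}{2^p}\E^{-\gamma r k + R^\tau}.
\]
Substituting $\gamma r k\geq \gamma|x-y|\bigl(1-\tfrac{1}{r+1}\bigr)-\gamma r$ and absorbing the additive errors $\gamma r+R^\tau+\log\#\partial_-\Lambda_R(0)$ into a linear correction to $\gamma$ via $|x-y|\geq R/10$ reproduces the stated $\hat\gamma$.

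The main obstacle is boundary management inside $\Lambda_R(0)$: the chain requires both $\Lambda_r(c_i)\subseteq\Lambda_R(0)$ and the free endpoint $v_i$ at distance $\geq r/10$ from every face of $\Lambda_r(c_i)$, and these two constraints can fail once the iteration drifts within $r$ of $\partial\Lambda_R(0)$. The remedy is to truncate the chain at that moment and invoke the operator-norm bound one step early; the resulting shortfall costs at most a handful of $\Lambda_r$-diameters and is precisely what the $10/R$-correction in the formula for $\hat\gamma$ is designed to absorb. Everything else (the exact cancellation of $\#\partial_-\Lambda_r$, the inductive bookkeeping of the centers $c_i$, and the final arithmetic) is routine; no probability or analysis beyond the geometric resolvent identity is needed here, since the heavy lifting was front-loaded into the $(\gamma,\tau,0)$-suitability hypothesis on every $\Lambda_r(n)\subseteq\Lambda_R(0)$.
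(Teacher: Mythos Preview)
Your proposal is correct and follows essentially the same route as the paper: iterate the geometric resolvent inequality \eqref{eq:geomresolv2} along $r$-boxes centered at the running endpoint, gain $\E^{-\gamma r}$ per step, stop after $k=\lfloor |x-y|/(r+1)\rfloor$ steps, and close with the global resolvent bound. The only cosmetic difference is in the boundary fix: you propose to truncate the chain once it drifts within $r$ of $\partial\Lambda_R(0)$, whereas the paper (in the remark immediately following the proof) instead replaces $\Lambda_r(n)$ by the shifted cube $\Lambda_r^R(n)$ of Definition~\ref{def:shiftcube} so that the iteration never leaves $\Lambda_R(0)$; both devices cost $O(r)$ in the exponent and are absorbed by the same $10/R$ correction in $\hat\gamma$.
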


\begin{proof}
 We have to check that for $x, y \in \Lambda_R(0)$ with $|x - y| \geq \frac{R}{10}$,
 we have exponential decay of the off-diagonal elements. By \eqref{eq:geomresolv2}
 and \eqref{eq:condLamrissuit}, we obtain for any $u \in \Lambda_r(n)$ 
 and $v \in \Lambda_R(0) \setminus \Lambda_r(n)$ that
 $$
  |G_{E}^{\Lambda_{R}(0)}(u,v)| \leq \E^{-\gamma r} \left(\max_{n \in \Lambda_{r+1}(u) \cap \Lambda_R(0)} |G_{E}^{\Lambda_R(0)}(v,n)| \right).
 $$
 Suppose for $k \geq 1$ that $v \notin \Lambda_R(0) \setminus \Lambda_{k (r+1) r}(u)$,
 then by iterating the previous equation we find
 $$
  |G_{E}^{\Lambda_{R}(0)}(u,v)| \leq \E^{-\gamma r k} \left(\max_{n \in \Lambda_{k (r+1)}(u) \cap \Lambda_R(0)} |G_{E}^{\Lambda_R(0)}(v,n)| \right).
 $$
 We now apply this formula to the case when $u = x$ and $v = y$.
 Then, for
 $$
  k = \left\lfloor \frac{|x - y|}{r + 1} \right\rfloor \geq \frac{|x - y|}{r + 1} - 1
 $$
 we obtain that $y \notin \Lambda_R(0) \setminus \Lambda_{k (r+1) r}(y)$.
 Thus using that $|G_{E}^{\Lambda_R(0)}(n,m)| \leq \frac{1}{2^{p}} \E^{R^{\tau}}$
 for any $n,m \in \Lambda_R(0)$ by assumption, that
 $$
  |G_{E}^{\Lambda_{R}(0)}(x,y)| \leq \frac{1}{2^{p}}
   \E^{- \frac{r}{r + 1} \gamma |x -y| + \gamma r + R^{\tau}}.
 $$
 The claim follows.
\end{proof}

A close inspection of the argument in this proof shows that it is
slightly wrong. In order to make it completely rigorous, one would
need to replace $\Lambda_r(n)$ in \eqref{eq:geomresolv2} by
the shifted cube $\Lambda^{R}_{r}(n)$ defined in Definition~\ref{def:shiftcube}.
That this does not create problems follows from
\be
 \partial^{\Lambda_R(0)} \Lambda^{R}_{r}(n) \cap \partial_-\Lambda_R(0) = \emptyset.
\ee

Although Proposition~\ref{prop:resolvitersimple} illustrates well,
what we will do it is not good enough yet. In view of 
Proposition~\ref{prop:probatmostonebadcube}, we will need
to allow for $K$ regions $\Lambda_{s_k}(m_k) \subseteq \Lambda_R(0)$ such
that we only know that every
$$
 \Lambda_r(n) \subseteq 
  \Lambda_R(0) \setminus \left(\bigcup_{k=1}^{K} \Lambda_{s_k}(m_k)\right)
$$
is $(\gamma,\tau,0)$-suitable. We will now give such a result,
which is similar to Theorem~10.20 in \cite{kirsch}.

\begin{theorem}\label{thm:resolv2}
 Let $a \geq 1$ and
 \be\label{eq:ineqskartk}
   s_k + a r \leq t_k.
 \ee
 Assume the following conditions.
 \begin{enumerate}
  \item $\Lambda_{t_k}(m_k) \subseteq \Lambda_R(0)$,
  \item For $k \neq \ell$
   \be
    \Lambda_{t_k + a r}(m_k) \cap \Lambda_{t_{\ell} + a r}(m_{\ell}) = \emptyset.
   \ee
  \item For every subcube 
   \be
    \Lambda_r(n) \subseteq \Lambda_R(0) \setminus \left(\bigcup_{k=1}^{K} \Lambda_{s_k}(m_k)\right)
   \ee
   that
   \be
    \Lambda_r(n)\text{ is $(\gamma,\tau,0)$-suitable for } H - E
   \ee
  \item For $1 \leq k \leq K$
   \be\label{eq:resolvbddagamr}
    \|(H^{\Lambda_{t_k}(m_k)} - E)^{-1} \| \leq \E^{a \gamma r}
   \ee
  \item On the resolvent of the whole cube
   \be
    \|(H^{\Lambda_R(0)}-E)^{-1}\| \leq \frac{1}{2^{p}} \E^{R^{\tau}}.
   \ee
 \end{enumerate}
 Then
 \be
  \Lambda_R(0)\text{ is $(\hat{\gamma},\tau,p)$-suitable for } H - E
 \ee
 where
 \be
  \hat{\gamma} = \gamma \left(1 - \frac{1}{r + 1} - \frac{10}{R}\left(3\sum_{k=1}^{K} t_k + \gamma r + R^{\tau} + d \log(3)\right)\right).
 \ee
\end{theorem}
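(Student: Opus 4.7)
The plan is to iterate the geometric resolvent equation \eqref{eq:geomresolv2} along a path from $x$ to $y$, as in Proposition~\ref{prop:resolvitersimple}, while using the large-cube estimates \eqref{eq:resolvbddagamr} to ``navigate around'' the bad regions $\Lambda_{s_k}(m_k)$. Fix $x,y\in\Lambda_R(0)$ with $|x-y|\geq R/10$. I construct a finite sequence $z_0 = x, z_1, z_2, \ldots, z_N$ such that at each step I can bound $|G_E^{\Lambda_R(0)}(z_j,y)|$ by a multiplicative factor times $|G_E^{\Lambda_R(0)}(z_{j+1},y)|$ with $|z_{j+1}-y|$ strictly smaller.

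At each $z_j$, one of two cases occurs. \textbf{Good step:} if $\Lambda_r(z_j)$ is disjoint from $\bigcup_k \Lambda_{s_k}(m_k)$, then by hypothesis (iii) the cube is $(\gamma,\tau,0)$-suitable, so \eqref{eq:geomresolv2} combined with the suitability constant $\frac{1}{\#(\partial_-\Lambda_r(z_j))}$ gives a factor $e^{-\gamma r}$ and a displacement $|z_{j+1}-z_j|\leq r+2$. \textbf{Bad step:} if $\Lambda_r(z_j)$ meets some $\Lambda_{s_k}(m_k)$, then by \eqref{eq:ineqskartk} we have $\Lambda_r(z_j)\subseteq\Lambda_{t_k}(m_k)$, so I apply \eqref{eq:geomresolv2} with the cube $\Lambda_{t_k}(m_k)$ and use the crude bound \eqref{eq:resolvbddagamr}, producing a factor $\#(\partial_-\Lambda_{t_k}(m_k))\cdot e^{a\gamma r}$ and a displacement $|z_{j+1}-z_j|\leq 2(t_k+1)$, with $z_{j+1}\notin\Lambda_{t_k}(m_k)$. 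The disjointness of the enlarged cubes $\Lambda_{t_k+ar}(m_k)$ in hypothesis (ii) ensures each bad region is used at most once along the path. The iteration terminates once $z_N$ is within $O(r)$ of $y$, at which point hypothesis (v) supplies $|G_E^{\Lambda_R(0)}(z_N,y)|\leq \frac{1}{2^p}e^{R^\tau}$; this also directly provides the norm bound required by clause (i) of $(\hat\gamma,\tau,p)$-suitability.

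Telescoping, letting $N_g$ denote the number of good steps and $\mathcal{K}$ the indices of visited bad regions, I obtain
$$|G_E^{\Lambda_R(0)}(x,y)|\leq \frac{1}{2^p}\,e^{R^\tau}\cdot e^{-\gamma r N_g}\cdot\prod_{k\in\mathcal{K}}\#(\partial_-\Lambda_{t_k}(m_k))\cdot e^{a\gamma r}.$$
A path-length accounting then yields $rN_g\geq \tfrac{r}{r+1}\bigl(|x-y|-\sum_{k\in\mathcal{K}}2(t_k+1)\bigr)$. Using $\#(\partial_-\Lambda_{t_k}(m_k))\leq 2d(3t_k)^{d-1}$, the constraint $t_k\geq ar$ from \eqref{eq:ineqskartk}, and $|x-y|\geq R/10$, arithmetic manipulation of the exponent absorbs the contributions $\sum t_k$, $a\gamma r|\mathcal{K}|$, and the combinatorial prefactors into the announced expression $\frac{10\gamma}{R}(3\sum_k t_k + \gamma r + R^\tau + d\log 3)$, leaving the bound $|G_E^{\Lambda_R(0)}(x,y)|\leq \frac{1}{2^p}\,\frac{1}{\#(\partial_-\Lambda_R(0))}\,e^{-\hat\gamma|x-y|}$.

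\textbf{Main obstacle.} The principal nuisance is the bookkeeping in Step~2: constructing the path so that every iterate makes definite progress toward $y$ (the geometric resolvent equation produces a point on the \emph{boundary} of the chosen cube, not necessarily along $\overline{xy}$) and that bad regions of varying sizes $t_k$ are each entered at most once. A minor technicality, flagged already in the analogue of Proposition~\ref{prop:resolvitersimple}, is that one should really work with shifted cubes $\Lambda_r^R(n)$ as in Definition~\ref{def:shiftcube} so as not to create spurious boundary contributions against $\partial_-\Lambda_R(0)$; with that replacement the accounting goes through unchanged.
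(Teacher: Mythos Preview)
Your overall strategy matches the paper's: iterate the geometric resolvent, gain $e^{-\gamma r}$ at good cubes, and use the large cubes $\Lambda_{t_k}(m_k)$ to cross bad regions. The difference, and the point where your argument has a real gap, is precisely the obstacle you flag at the end.

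Your bad step produces a factor $\#(\partial_-\Lambda_{t_k}(m_k))\,e^{a\gamma r}$ and you compensate globally by asserting that each bad region is hit at most once. Hypothesis~(ii) does not give this: after your bad step you land on $\partial\Lambda_{t_k}(m_k)$, and subsequent good steps (each an adversarial jump of size $\leq r+1$) can carry the iterate back into $\Lambda_{s_k+r}(m_k)$ after roughly $(t_k-s_k)/r\approx a$ moves. Nothing in your setup prevents this from happening arbitrarily many times, and with unbounded re-entry your telescoped bound becomes useless: the loss factor becomes $\bigl(\#\partial_-\Lambda_{t_k}\cdot e^{a\gamma r}\bigr)^{n_k}$ with $n_k$ uncontrolled.

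The paper's device is Lemma~\ref{lem:removebadsets}: it pairs the single application of the bad-cube resolvent (cost $e^{a\gamma r}$) with $a$ good steps performed \emph{immediately} in the buffer annulus $\mathcal{A}_{s_k,t_k}(m_k)$, which exists precisely because of \eqref{eq:ineqskartk}. Those $a$ good steps are guaranteed to stay in the good region (by~(ii) they cannot reach any other $\Lambda_{s_\ell}(m_\ell)$, and by \eqref{eq:ineqskartk} they cannot reach $\Lambda_{s_k}(m_k)$ either), so they recover exactly $e^{-a\gamma r}$. The composite ``bad step'' thus has factor $\leq 1$, and re-entry becomes harmless for the multiplicative bookkeeping. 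The accounting is then done not by counting visits to bad regions but by counting the annuli $\mathcal{A}_{\ell r}(x)$ around $x$ that meet some $\Lambda_{t_k+ar}(m_k)$; there are at most $3\sum_k(\tfrac{t_k}{r}+a)$ of these, and these are the only levels $\ell$ at which the iteration fails to collect its factor $e^{-\gamma r}$. This is what yields the term $3\sum_k t_k$ in $\hat\gamma$, and it is the missing idea in your sketch.
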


In order to prove this theorem, we will introduce some additional
notation. Given $t \geq s \geq 1$, we introduce the annulus
\be
 \mathcal{A}_{s,t}^{R}(n) = (\Lambda_t(n) \setminus \Lambda_s(n))\cap\Lambda_R(0)
  = \{x\in\Lambda_R(0):\quad s+1\leq |x-n|_{\infty} \leq t\}.
\ee
We also introduce the abbreviation
\be
 \mathcal{A}_t^{R}(n) = \mathcal{A}_{t-1,t}^{R}(n)
  = \{x\in\Lambda_R(0):\quad |x-n|_{\infty} = t\}.
\ee
We note $\mathcal{A}^{R}_{t}(n) = \partial_{-}^{\Lambda_R(0)}\Lambda_t(n)$.

\begin{lemma}\label{lem:removebadsets}
 Let $x \in \Lambda_{t_k}(m_k)$ and $y \in \Lambda_R(0) \setminus \Lambda_{t_k + a r}(m_k)$.
 Assume \eqref{eq:resolvbddagamr}, then
 \be
  |G_{E}^{\Lambda_R(0)}(x,y)| \leq \max_{n \in \mathcal{A}_{t_k - ar, t_k + ar}^{R}(m_k)} |G_{E}^{\Lambda_R(0)}(y,n)|.
 \ee
\end{lemma}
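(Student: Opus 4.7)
The plan is to prove the lemma by iterating the geometric resolvent identity \eqref{eq:geomresolv2}: a single application based on the bad box $\Lambda_{t_k}(m_k)$ (where the a priori bound \eqref{eq:resolvbddagamr} is used) is followed by $a$ applications based on $(\gamma,\tau,0)$-suitable cubes of radius $r$ stepping outward through the annular region $\mathcal{A}^R_{t_k,\,t_k+ar}(m_k)$.

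First, I would apply \eqref{eq:geomresolv2} with $\Lambda_r(n)$ taken to be $\Lambda_{t_k}(m_k)$; this is admissible because $x\in\Lambda_{t_k}(m_k)$ and $y\notin\Lambda_{t_k+ar}(m_k)\supseteq\Lambda_{t_k}(m_k)$. Bounding the interior Green's-function factor $\max_{u\in\partial_-\Lambda_{t_k}(m_k)}|G^{\Lambda_{t_k}(m_k)}_E(x,u)|$ by the operator norm $\|(H^{\Lambda_{t_k}(m_k)}-E)^{-1}\|\le \mathrm{e}^{a\gamma r}$ via \eqref{eq:resolvbddagamr}, this step yields a bound on $|G^{\Lambda_R(0)}_E(x,y)|$ by a factor $\#\partial_-\Lambda_{t_k}(m_k)\cdot \mathrm{e}^{a\gamma r}$ times the maximum of $|G^{\Lambda_R(0)}_E(\cdot,y)|$ over the outer shell $\mathcal{A}^R_{t_k+1}(m_k)$.

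Next, I would iterate \eqref{eq:geomresolv2} on cubes $\Lambda_r(w)$ centered at the successive maximizing points, moving outward one step of length $r$ at a time. Hypotheses (i)--(iii) of Theorem~\ref{thm:resolv2}, together with $s_k+ar\le t_k$ from \eqref{eq:ineqskartk} and the disjointness in (ii), ensure that each such cube lies in $\Lambda_R(0)\setminus\bigcup_\ell \Lambda_{s_\ell}(m_\ell)$ and is therefore $(\gamma,\tau,0)$-suitable. Definition~\ref{def:suitable}(ii) then furnishes a clean gain $\#\partial_-\Lambda_r\cdot (1/\#\partial_-\Lambda_r)\,\mathrm{e}^{-\gamma r}=\mathrm{e}^{-\gamma r}$ per step, the polynomial boundary factors cancelling exactly. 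After $a$ iterations the cumulative factor $\mathrm{e}^{-a\gamma r}$ compensates the initial $\mathrm{e}^{a\gamma r}$, while the argument has migrated out to $\ell^\infty$-distance at most $t_k+ar$ from $m_k$, i.e.\ inside $\mathcal{A}^R_{t_k-ar,\,t_k+ar}(m_k)$.

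The principal technical obstacle is absorbing the residual polynomial-in-$t_k$ prefactor $\#\partial_-\Lambda_{t_k}(m_k)$ produced by the first step, since the operator-norm bound used there does not carry the boundary-size cancellation available inside suitable cubes. This is handled using the slack built into \eqref{eq:ineqskartk}: the annulus width $2ar$ leaves room for one or two extra suitable-cube iterations, each one providing an additional $\mathrm{e}^{-\gamma r}$ that dominates the polynomial, so the overall constant is $\le 1$ as the clean statement requires. A parallel bookkeeping issue, namely ensuring that the subsequent maximizing points $w$ remain in $\Lambda_R(0)$ and that the shifted cubes $\Lambda_r(w)$ keep avoiding all $\Lambda_{s_\ell}(m_\ell)$, is taken care of by the same disjointness hypothesis (ii) of Theorem~\ref{thm:resolv2}.
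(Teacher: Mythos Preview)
Your approach is essentially the same as the paper's: one application of the geometric resolvent identity on the bad box $\Lambda_{t_k}(m_k)$ using the norm bound \eqref{eq:resolvbddagamr}, followed by $a$ iterations on suitable cubes of radius $r$ sitting in the surrounding annulus. The paper phrases the first step via the exact identity \eqref{eq:geomresolv} (keeping a sum over boundary points) rather than the max-form \eqref{eq:geomresolv2}, but this is cosmetic.

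You have in fact been more careful than the paper about the residual prefactor $\#\partial_-\Lambda_{t_k}(m_k)$: the paper's proof silently drops this count when it passes from the sum over $v\in\partial_-^{\Lambda_R(0)}\Lambda_{t_k}(m_k)$ to a single maximum over the annulus, so the polynomial factor you worry about is simply not tracked there. Your proposed remedy of one or two extra suitable-cube iterations is the right idea, but note that it requires an inequality of the form $\E^{\gamma r}\gtrsim t_k^{d-1}$, which is \emph{not} among the stated hypotheses of the lemma. In the paper's applications one always has $t_k\le r^3$ and $r$ large, so this holds with room to spare; strictly speaking, though, the clean constant $1$ in the lemma's statement is not fully justified by either argument without importing such a side condition. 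Also, your accounting of ``room in the annulus'' is slightly optimistic: after $a$ outward steps of size $r+1$ one can already reach $t_k+a(r+1)>t_k+ar$, so the extra iterations must use the inner half $[t_k-ar,t_k]$ of the annulus (which is available since $s_k+ar\le t_k$) rather than further outward steps.
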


\begin{proof}
 Using \eqref{eq:geomresolv} and then \eqref{eq:geomresolv2}, we compute
 \begin{align*}
  |G_{E}^{\Lambda_R(0)}(x,y)| & \leq \|(H^{\Lambda_{t_k}(m_k)} - E)^{-1}\| 
  \cdot \sum_{v\in \partial_{-}^{\Lambda_R(0)} \Lambda_{t_k}(m_k)} |G_{E}^{\Lambda_R(0)}(y,v)| \\
  & \leq \|(H^{\Lambda_{t_k}(m_k)} - E)^{-1}\| \cdot \E^{-\gamma r} 
  \max_{n \in \mathcal{A}_{t - r, t + r}^{R}(n_0)} |G_{E}^{\Lambda_R(0)}(y,n)|,
 \end{align*}
 where we used that for $n$ with $|n - n_0|_{\infty} = t_k + 1$
 we have
 $$
  \Lambda_r(n)\text{ is $(\gamma,\tau,0)$-suitable for } H - E.
 $$
 The claim follows by iterating the above procedure $a$ times.
\end{proof}

Given $x,y \in \Lambda_R(0)$ satisfying $|x - y| \geq \frac{R}{10}$,
we introduce $L$ maximal such that $y \notin \Lambda_{L r}(x)$
or alternatively
\be
 y \in \mathcal{A}_{L r, (L+1)r}^{R}(x).
\ee
We then call $1 \leq \ell \leq L$ {\em bad}, if 
\be
 \mathcal{A}_{\ell r}^{R}(x) \cap \Lambda_{t_k + a r}(m_k) \neq \emptyset
\ee
for some $k$. We will need

\begin{lemma}
 We have
 \be
  \frac{\#(\text{bad } \ell)}{|x - y|} \leq \frac{30}{R} \left(\sum_{k=1}^{K} \left(\frac{t_k}{r} + a \right) \right).
 \ee
\end{lemma}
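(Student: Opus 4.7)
The plan is to bound, for each fixed $k$, the number of indices $\ell \in \{1,\dots,L\}$ such that the sphere $\mathcal{A}^R_{\ell r}(x)$ meets $\Lambda_{t_k+ar}(m_k)$, and then sum over $k$. The key observation is that whenever $\mathcal{A}^R_{\ell r}(x) \cap \Lambda_{t_k+ar}(m_k)\neq\emptyset$, the value $\ell r$ is forced to lie in an interval whose length is controlled by $t_k$ and $a$.

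More precisely, if $z$ lies in the intersection, then $|z - x|_\infty = \ell r$ and $|z - m_k|_\infty \leq t_k + a r$, so the triangle inequality in the $\ell^\infty$-metric gives
\be
 |x - m_k|_\infty - (t_k + a r) \leq \ell r \leq |x - m_k|_\infty + (t_k + a r).
\ee
Dividing by $r$, the admissible $\ell$ lie in an interval of length $\frac{2 (t_k + a r)}{r} = \frac{2 t_k}{r} + 2 a$, and thus there are at most $\frac{2 t_k}{r} + 2 a + 1$ such $\ell$ for each index $k$. Summing over $k=1,\dots,K$ and using $a\geq 1$, the total number of bad $\ell$ satisfies
\be
 \#(\text{bad }\ell) \leq \sum_{k=1}^K \left(\frac{2 t_k}{r} + 2 a + 1\right)
   \leq \sum_{k=1}^K \left(\frac{2 t_k}{r} + 3 a\right)
   \leq 3\sum_{k=1}^K \left(\frac{t_k}{r} + a\right).
\ee

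To finish, I divide by $|x-y|$ and use the standing hypothesis $|x-y| \geq \tfrac{R}{10}$, so that $\tfrac{3}{|x-y|} \leq \tfrac{30}{R}$; this converts the preceding bound into
\be
 \frac{\#(\text{bad }\ell)}{|x-y|} \leq \frac{30}{R}\left(\sum_{k=1}^K \left(\frac{t_k}{r}+a\right)\right),
\ee
which is the claim. The argument is essentially a sphere-packing count, and I do not expect any genuine obstacle; the only subtlety is confirming that $\ell^\infty$-balls behave well under the triangle inequality (which they do, since $|\cdot|_\infty$ is a metric) and keeping careful track of the slack constants so that $a\geq 1$ absorbs the ``$+1$'' that comes from counting integers in an interval of real length.
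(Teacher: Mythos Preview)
Your proof is correct and follows the same approach as the paper: for each $k$ you count the $\ell$ with $\mathcal{A}^{R}_{\ell r}(x)\cap\Lambda_{t_k+ar}(m_k)\neq\emptyset$ via the triangle inequality, obtaining the bound $3\bigl(\tfrac{t_k}{r}+a\bigr)$, then sum over $k$ and divide by $|x-y|\geq\tfrac{R}{10}$. The paper's own proof merely asserts the per-$k$ bound without justification, so your version is a strict expansion of the same argument.
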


\begin{proof}
 For $1 \leq k \leq K$, there are at most $3\left(\frac{t_k}{r}+a\right)$
 choices of $\ell$ such that
 $$
  \mathcal{A}_{\ell r}^{R}(x) \cap \Lambda_{t_k + a r}(m_k) \neq \emptyset.
 $$
 Summing over $k$ and using $|x - y| \geq \frac{R}{10}$ implies
 the claim.
\end{proof}

\begin{proof}[Proof of Theorem~\ref{thm:resolv2}]
 We now proceed as in the proof of Proposition~\ref{prop:resolvitersimple},
 except we use Lemma~\ref{lem:removebadsets} to deal with the bad $\ell$.
\end{proof}

%
%
%

\section{A combinatorial result}
\label{sec:combinatoric1}

In this section, we will derive a combinatorial results about
boxes in $\Z^d$, which will be used to ensure the geometric
conditions (i) and (ii) in Theorem~\ref{thm:resolv2}. We begin
by introducing some notation.

Given $R \geq 1$, $n \in \Lambda_R(0)$, and $0 \leq t \leq R$,
we denote by $n^t_R\in\Lambda_R(0)$ the point
\be
 (n^t_R)_j = \begin{cases} - R + t, & n_j < -R + t; \\
  R - t, & n_j > R - t; \\
  n_j, &\text{otherwise}.\end{cases}
\ee
This choice is made such that $\Lambda_t(n^t_R) \subseteq \Lambda_R(0)$
and 
$$
 \#(\Lambda_t(n^t_R) \cap \Lambda_t(n))
$$
is maximal under all subcubes $\Lambda_t(m) \subseteq \Lambda_R(0)$.

\begin{definition}\label{def:shiftcube}
 Given $0 \leq t \leq R$, $s\geq 0$, and $n \in \Lambda_t(n)$,
 we introduce the shifted cube
 \be
  \Lambda^{R,t}_s(n) = \Lambda_s(n^{t}_{R}).
 \ee
 Furthermore, we define $\Lambda^{R}_t(n) = \Lambda_{t}^{R,t}(n)$.
\end{definition}

We always have $\Lambda_t^{R}(n)\subseteq\Lambda_R(0)$, but this
must nopt be true for $\Lambda_s^{R,t}(n)$ if $s > t$.
In order to pass from the probabilistic result to what we need
in the analytic part, we will need the following proposition.

\begin{proposition}\label{prop:combinatoric}
 Let $Q \geq K + 1$ and $R \geq r \geq 1$. 
 Given $K$ points $n_1, \dots, n_K \in \Lambda_R(0)$ and 
 $Q$ length scales $r \leq r_1 \leq s_1 \leq \dots \leq r_Q \leq s_Q \leq R$ satisfying
 \be\label{eq:condrq1rq}
  r_{q + 1} \geq 3 s_q.
 \ee
 There exists a sequence of points $m_1,\dots,m_J$ and 
 numbers $q_1,\dots,q_J$ with $J\leq K$ such that
 \begin{enumerate}
  \item For $i \neq j$
   \be
    \Lambda_{s_{q_i}}^R(m_i) \cap \Lambda_{s_{q_j}}^R(m_j) = \emptyset
   \ee 
  \item For each $1 \leq k \leq K$, there exists $j$ such
   that
   \be
    \Lambda_{r}(n_k) \subseteq \Lambda_{r_{q_j}}^R(m_j).
   \ee
 \end{enumerate}
\end{proposition}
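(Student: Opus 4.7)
The plan is a greedy bottom-up clustering. Initialize $K$ singleton clusters $C_k = \{n_k\}$, each assigned scale $q_k = 1$ and representative $m_k = n_k$, and maintain the following \emph{invariant}: every current cluster $C$ at scale $q$ with representative $m_C$ satisfies $\Lambda_r(n) \subseteq \Lambda^R_{r_q}(m_C)$ for each $n \in C$. For the starting singletons this holds because $r \leq r_1$, using the boundary shift built into the definition $\Lambda^R_t$ when $n_k$ lies near $\partial \Lambda_R(0)$. The merging rule is: while there exist two distinct current clusters $C, C'$ of scales $q \leq q'$ with $\Lambda^R_{s_q}(m_C) \cap \Lambda^R_{s_{q'}}(m_{C'}) \neq \emptyset$, delete both and install $C \cup C'$ as a new cluster at scale $q' + 1$ with a freshly chosen representative $m_{\text{new}}$. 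Iterate until no such pair exists.

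Each merge strictly reduces the total number of clusters, so termination occurs after at most $K - 1$ merges. The scale assigned to any cluster is bounded by $1 + (\text{number of merges in its lineage}) \leq K$, so the hypothesis $Q \geq K + 1$ guarantees that the promoted scale $q' + 1$ always lies within the allowed range $\{1, \dots, Q\}$. At termination, enumerate the surviving clusters as $(m_j, q_j)_{j=1}^J$ with $J \leq K$: condition (i) of the proposition is the termination criterion itself, and condition (ii) follows from the invariant, since for each original $n_k$ we have $\Lambda_r(n_k) \subseteq \Lambda^R_{r_{q_j}}(m_j)$ for the index $j$ of the cluster that has absorbed $\{n_k\}$ through the merges.

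The only nontrivial step, which I expect to be the main obstacle, is verifying that the invariant survives the merge with the scale jumping from $q'$ to $q' + 1$. Intersection of the outer cubes $\Lambda^R_{s_q}(m_C)$ and $\Lambda^R_{s_{q'}}(m_{C'})$ forces the two (outer) shifted centers to lie within $s_q + s_{q'} \leq 2 s_{q'}$ of each other in $\ell^\infty$, with a further correction of at most $(s_q - r_q) + (s_{q'} - r_{q'})$ when comparing the outer-cube shifts to the inner-cube shifts used by the invariant. Consequently, for every $n \in C \cup C'$ the ball $\Lambda_r(n)$ sits inside a cube of radius at most $r_{q'} + 2 s_{q'} \leq 3 s_{q'} \leq r_{q' + 1}$, where the final step is the standing hypothesis \eqref{eq:condrq1rq} and I also used $r \leq r_{q'} \leq s_{q'}$. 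A minimum enclosing $\ell^\infty$-cube of the union then specifies an inner center; pulling it back through the projection $m \mapsto m^{r_{q'+1}}_R$ of Definition~\ref{def:shiftcube} produces a valid $m_{\text{new}} \in \Lambda_R(0)$ with $\bigcup_{n \in C \cup C'} \Lambda_r(n) \subseteq \Lambda^R_{r_{q'+1}}(m_{\text{new}})$, which is the invariant at the new scale.
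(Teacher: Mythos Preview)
Your greedy bottom-up merging is correct and is essentially the same mechanism as the paper's proof: both repeatedly absorb colliding clusters into a single cluster at scale $\max(q,q')+1$, relying on $r_{q+1}\ge 3s_q$ for the geometric containment. The paper organizes this as an induction on $K$ (add points one at a time, then cascade merges), while you run it in batch, but the invariant and the termination/scale-count arguments are identical.

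One simplification you missed: there is no need to compute a minimum enclosing cube or to ``pull back through the projection.'' The paper's lemma preceding the proof shows that when $\Lambda_{s_q}(m_C)\cap\Lambda_{s_{q'}}(m_{C'})\neq\emptyset$ with $q\le q'$, one already has $\Lambda_{r_q}(m_C)\subseteq\Lambda_{r_{q'+1}}(m_{C'})$; hence you can simply take $m_{\text{new}}=m_{C'}$, the representative of the higher-scale cluster. This also sidesteps the fiddly boundary-shift bookkeeping in your last paragraph, since the representative never moves and stays in $\Lambda_R(0)$ automatically.
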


We will need the following lemma.

\begin{lemma}
 Let $q \leq \ti{q}$ and $m, \ti{m} \in \Lambda_R(0)$.
 Assume \eqref{eq:condrq1rq} and
 \be
  \Lambda_{s_q}(m) \cap \Lambda_{s_{\ti{q}}}(\ti{m}) \neq 0.
 \ee
 Then 
 \be
  \Lambda_{r_q}(m) \subseteq \Lambda_{r_{\ti{q} + 1}}(\ti{m}).
 \ee
\end{lemma}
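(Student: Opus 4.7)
The plan is to prove this lemma by a direct triangle-inequality argument, exploiting the monotonicity of the sequence $r_1 \leq s_1 \leq r_2 \leq s_2 \leq \dots \leq r_Q \leq s_Q$ together with the geometric gap condition \eqref{eq:condrq1rq}.

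First, I would pick a point $y \in \Lambda_{s_q}(m) \cap \Lambda_{s_{\tilde q}}(\tilde m)$, which exists by hypothesis. By definition of the cubes,
\[
 |m - \tilde m|_\infty \leq |m - y|_\infty + |y - \tilde m|_\infty \leq s_q + s_{\tilde q}.
\]
Next, for an arbitrary $x \in \Lambda_{r_q}(m)$, the triangle inequality yields
\[
 |x - \tilde m|_\infty \leq |x - m|_\infty + |m - \tilde m|_\infty \leq r_q + s_q + s_{\tilde q}.
\]
From the ordering of the scales we have $r_q \leq s_q$, and from $q \leq \tilde q$ we have $s_q \leq s_{\tilde q}$, so the right-hand side is bounded by $3 s_{\tilde q}$. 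Finally, applying \eqref{eq:condrq1rq} with index $\tilde q$ in place of $q$ gives $3 s_{\tilde q} \leq r_{\tilde q + 1}$, hence $x \in \Lambda_{r_{\tilde q + 1}}(\tilde m)$, which is the desired inclusion.

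There is essentially no obstacle here — the content of the lemma is purely that the non-overlap condition \eqref{eq:condrq1rq} creates enough room between consecutive scales to absorb one extra translation. The only thing to be careful about is that the cubes in the statement are unshifted cubes $\Lambda_{\cdot}(\cdot)$ in $\Z^d$ rather than the shifted cubes $\Lambda^R_{\cdot}(\cdot)$ of Definition~\ref{def:shiftcube}, so the $\ell^\infty$ triangle inequality applies directly and no boundary truncation needs to be worried about at this stage; issues with the shifted cubes will be handled in the subsequent application of this lemma inside the proof of Proposition~\ref{prop:combinatoric}.
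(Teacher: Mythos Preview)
Your proof is correct and essentially identical to the paper's own argument: both pick a point in the intersection and use the triangle inequality together with $r_q \leq s_q \leq s_{\tilde q}$ and \eqref{eq:condrq1rq} to bound the distance from any $x \in \Lambda_{r_q}(m)$ to $\tilde m$ by $3 s_{\tilde q} \leq r_{\tilde q + 1}$. The only cosmetic difference is that you route the estimate through $|m - \tilde m|_\infty$ whereas the paper routes it through the intersection point directly.
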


\begin{proof}
 Let $x \in \Lambda_{s_q}(m) \cap \Lambda_{s_{\ti{q}}}(\ti{m})$.
 For any $y \in \Lambda_{r_q}(m)$, we have $|y - x|_{\infty} \leq 2 r_q$.
 Thus
 $$
  |y - \ti{m}|_{\infty} \leq |y - x|_{\infty} + |y- \ti{m}|_{\infty}
   \leq s_{\ti{q}} + 2 r_q \leq 3 s_{\ti{q}} \leq r_{\ti{q} + 1}.
 $$
 This implies the claim.
\end{proof}

We now come to

\begin{proof}[Proof of Proposition~\ref{prop:combinatoric}]
The proof of this result is constructive, and proceeds by induction.

{\em Base case of the induction:} Assume $K = 1$. Then $Q = 1$,
$m_1 = n_1$, and $q_1 = 1$. It is clear that
\be
 \Lambda_r(n_1) \subseteq \Lambda_{r_{q_1}}(m_1).
\ee
This finishes the base case.

Let us now do the {\em induction step}. So assume we have a solution
for $K - 1$, and we are given the point $n_{K} \in \Lambda_R(0)$.
Then there are several cases, depending on the location of 
$\Lambda_r(n_K)$ with respect to the $\Lambda_{r_{q_j}}(m_j)$.

{\bf Case 1:} Assume there exists $j$ such that
\be
 \Lambda_{r}(n_K) \subseteq \Lambda_{r_{q_j}}(m_j).
\ee
Then we do nothing.

{\bf Case 2:} Assume that for $1 \leq j \leq J$, we have
\be
 \Lambda_{s_1}(n_K) \cap \Lambda_{s_{q_j}}(m_j) = \emptyset.
\ee
Then, we add $n_k$ and $1$ to the $m_j$ and $q_j$.

{\bf Case 3:} There exists $1 \leq i \leq J$ such that
\be
 \Lambda_{s_1}(n_K) \cap \Lambda_{s_{q_i}}(m_i) \neq \emptyset.
\ee
Then, we choose an $i$ satisfying the previous condition
such that $q_i$ is maximal. We then increase $q_i$ by $1$.
It is clear that now our second condition holds since we will
have $\Lambda_{r}(n_K) \subseteq \Lambda_{r_{q_i+1}}(m_i)$.
But it is not clear that the $\Lambda_{s_{q_j}}(m_j)$ are
still disjoint from $\Lambda_{s_{q_i+1}}(m_i)$.

 If for some $j \neq i$
 $$
  \Lambda_{s_{q_j}}(m_j) \cap \Lambda_{s_{q_i+1}}(m_i) \neq \emptyset,
 $$ 
 then we increase the larger $q_u$ by $1$, and remove the other
 cube. This is possible by the previous lemma.
 It is clear that this process terminates, and generates
 the goal of the inductive scheme.

 We have already seen the existence. Let us now discuss the
 bound on $Q$. Each time we increase one of the $q_j$, we remove one
 of the $n_k$ from the list of $m_j$. Since, we only have $K$
 many $n_k$, we see that we can increase $q_j$ at most $K$
 times. This finishes the proof.
\end{proof}

%
%
%

\section{Under the assumption of a Wegner estimate}
\label{sec:asswegner}

In this section, we will give the multi-scale argument under the assumption
of a Wegner estimate. We will choose $\tau = \frac{1}{2}$, $p = 3$,
\be
 K = 1,\quad R \in \left[r^{1 + \frac{1}{5 d}}, r^{1 + \frac{1}{2 d}}\right],
 \quad \alpha = d \frac{2d + 2}{2 d- 1}.
\ee
The  first step is the following probabilistic estimate

\begin{lemma}
 Assume that for $r \geq r_0$ and some fixed $C > 0$, $b \geq 0$,
 and $\sigma > 0$.
 \be
  \mathbb{E}\left(\tr\left(P_{[E - \eps, E + \eps]}\left(H_{\lam,\omega}^{\Lambda_r(0)}\right)\right)\right)
   \leq \frac{C \cdot \left(\#\Lambda_r(0)\right)^b}{(\log(\eps^{-1})^{2 \alpha + 2 d (b+1) + \sigma}}.
 \ee
 Assume $R \geq R_0(d,C,\sigma)$.
 Then there exists a set $\mathcal{B}_W^{R}$ with the properties
 \begin{enumerate}
  \item $\mathbb{P}(\mathcal{B}_W^{R}) \leq \frac{1}{2} \cdot \frac{1}{R^{\alpha}}$.
  \item For $\omega \notin \mathcal{B}_W^{R}$ and $\Lambda_s(n) \subseteq \Lambda_R(0)$, 
   we have
   \be
    \|(H_{\lam,\omega}^{\Lambda_s(n)} - E)^{-1}\| \leq \frac{1}{8}\E^{\sqrt{R}}.
   \ee
 \end{enumerate}
\end{lemma}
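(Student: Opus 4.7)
The plan is to derive the claim from a Chebyshev bound applied to the hypothesized Wegner estimate, followed by a union bound over all subcubes $\Lambda_s(n) \subseteq \Lambda_R(0)$.

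First I would observe that for a self-adjoint operator $A$ on a finite-dimensional space, the bound $\|A^{-1}\| > M$ is equivalent to $A$ having at least one eigenvalue in $(-M^{-1}, M^{-1})$, which in turn says that $\tr(P_{[-M^{-1}, M^{-1}]}(A)) \geq 1$. Applying Markov's inequality to this integer-valued random variable, for each fixed subcube $\Lambda_s(n) \subseteq \Lambda_R(0)$ I get
\[
 \mathbb{P}\!\left(\|(H_{\lam,\omega}^{\Lambda_s(n)} - E)^{-1}\| > \tfrac{1}{8}\E^{\sqrt{R}}\right)
 \leq \mathbb{E}\!\left[\tr\!\left(P_{[E - 8\E^{-\sqrt{R}}, E + 8\E^{-\sqrt{R}}]}(H_{\lam,\omega}^{\Lambda_s(n)})\right)\right].
\]
Since $\mathbb{P}$ is a product measure and the model is translation invariant, the Wegner hypothesis applies not only to $\Lambda_s(0)$ but to every translate $\Lambda_s(n)$, with $\#\Lambda_s(n) = \#\Lambda_s(0)$.

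Next I would substitute $\eps = 8\E^{-\sqrt{R}}$ into the Wegner hypothesis. For $R$ large, $\log(\eps^{-1}) = \sqrt{R} - \log 8 \geq \tfrac{1}{2}\sqrt{R}$, so
\[
 \mathbb{E}\!\left[\tr\!\left(P_{[E-\eps, E+\eps]}(H_{\lam,\omega}^{\Lambda_s(n)})\right)\right]
 \leq \frac{C\,(\#\Lambda_s(0))^{b}\,2^{2\alpha + 2d(b+1)+\sigma}}{R^{\alpha + d(b+1) + \sigma/2}}.
\]
Then I would form $\mathcal{B}_W^R$ as the union over all pairs $(s,n)$ with $\Lambda_s(n) \subseteq \Lambda_R(0)$ of the bad events above. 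The number of such subcubes is at most $(R+1)(2R+1)^d \leq C_d R^{d+1}$, and $\#\Lambda_s(0)^b \leq (3R)^{db}$, so the union bound yields
\[
 \mathbb{P}(\mathcal{B}_W^R)
 \leq C_{d,\sigma,b}\, R^{d+1}\cdot \frac{R^{db}}{R^{\alpha + d(b+1) + \sigma/2}}
 = \frac{C_{d,\sigma,b}}{R^{\alpha + \sigma/2 - 1}}.
\]
Choosing $R_0 = R_0(d,C,\sigma)$ so that $R^{\sigma/2-1} \geq 2 C_{d,\sigma,b}$ for $R \geq R_0$ (which uses that the exponent $\sigma$ in the Wegner hypothesis is large enough, specifically $\sigma > 2$, so that the numerator loss from the union bound is beaten) gives $\mathbb{P}(\mathcal{B}_W^R) \leq \frac{1}{2R^\alpha}$, and outside $\mathcal{B}_W^R$ every subcube obeys the required resolvent bound.

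The only potential obstacle is bookkeeping of the exponents: verifying that the Wegner exponent $2\alpha + 2d(b+1) + \sigma$ is precisely what is needed to beat the factor of $R^{d(b+1)+1}$ coming from the union bound and the $\#\Lambda_s(0)^b$ weight, with a residual $R^{\sigma/2 - 1}$ to spare. Nothing else is delicate — the heart of the argument is just Chebyshev plus a counting estimate on subcubes.
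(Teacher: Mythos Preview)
Your argument is correct and follows essentially the same route as the paper: Markov's inequality applied to the eigenvalue-counting trace, followed by a union bound over the $\lesssim R^{d+1}$ subcubes, with the paper simply writing ``the claim follows after some computations'' where you carry out the exponent bookkeeping explicitly. Your remark that the argument actually needs $\sigma>2$ (so that $R^{\sigma/2-1}\to\infty$) is a fair observation; the paper's statement allows any $\sigma>0$, but in its application the Wegner exponent is taken ``large enough'', so this is a harmless imprecision rather than a gap.
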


\begin{proof}
 The number of subcubes $\Lambda_s(n) \subseteq \Lambda_R(0)$ is bounded
 by
 $$
  R \cdot \#(\Lambda_R(0)) \leq 3^d \cdot R^{d+1}.
 $$
 By Markov's inequality, we have
 $$
  \mathbb{P}\left(\|(H_{\lam,\omega}^{\Lambda_s(n)} - E)^{-1}\| 
   \leq \frac{1}{8}\E^{\sqrt{R}} \right)
    \leq \mathbb{E}\left(\tr\left(P_{[E - \eps, E + \eps]}
     \left(H_{\lam,\omega}^{\Lambda_s(0)}\right)\right)\right),
 $$
 where $\eps = 8 \E^{-\sqrt{R}}$. The claim follows after some computations.
\end{proof}

Introduce
\be
 \mathcal{B}^{R} = \mathcal{B}_1^{R} \cup \mathcal{B}_W^{R},
\ee
where $\mathcal{B}_1^{R}$ denotes the set from Proposition~\ref{prop:probatmostonebadcube}.
If $r \geq 1$ is large enough, we have
\be
 \mathbb{P}(\mathcal{B}^{R}) \leq \frac{1}{R^{\alpha}}.
\ee

We introduce in view of Theorem~\ref{thm:resolv2}
\be
 a = \left\lceil \frac{\sqrt{R}}{\gamma r} \right\rceil.
\ee
We now introduce a sequence of scales
\be
 r_1 = 2 \ol{r},\quad s_{q} = r_{q}, \quad r_{q+1} = \max(s_{q} + a r, 3 s_q).
\ee
We now obtain

\begin{lemma}
 Let $\omega\notin\mathcal{B}^{R}$. There exist
 $\ti{m}_1,\dots, \ti{m}_J$ and $q_1, \dots, q_J$ such that
 \begin{enumerate}
  \item For 
   \be
    \Lambda_r(n) \subseteq \Lambda_R(0) \setminus \bigcup_{j=1}^{J} \Lambda_{r_{q_j}}(\ti{m}_j)
   \ee
   we have 
   \be
    \Lambda_r(n)\text{ is $(\gamma,\frac{1}{2},1)$-suitable for } H_{\lam,\omega} - E.
   \ee
  \item For $i \neq j$, we have
   \be
    \Lambda_{r_{q_i} + ar}(\ti{m}_i) \cap \Lambda_{r_{q_j} + ar}(\ti{m}_j) = \emptyset.
   \ee
 \end{enumerate}
\end{lemma}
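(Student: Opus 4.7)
The plan is to combine the probabilistic bound of Proposition~\ref{prop:probatmostonebadcube} with the geometric packaging of Proposition~\ref{prop:combinatoric}. Since $\omega\notin\mathcal{B}^{R}$ implies $\omega\notin\mathcal{B}_1^{R}$, and the present section operates in the regime $\tau=\tfrac{1}{2}$, $p=3$, $K=1$, Proposition~\ref{prop:probatmostonebadcube} delivers an integer $L\in\{0,1\}$ together with, in the case $L=1$, a point $m_\omega^{1}\in\Lambda_R(0)$ such that every box $\Lambda_r(n)\subseteq\Lambda_R(0)\setminus\Lambda_{2\ol{r}}(m_\omega^{1})$ is $(\gamma,\tfrac{1}{2},2)$-suitable for $H_{\lam,\omega}-E$. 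Note that $(\gamma,\tfrac{1}{2},2)$-suitability is strictly stronger than $(\gamma,\tfrac{1}{2},1)$-suitability (the only numerical difference being $\tfrac{1}{2^2}\leq\tfrac{1}{2^1}$ in Definition~\ref{def:suitable}), so any box certified as the former automatically satisfies the latter.

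If $L=0$, set $J=0$: condition (ii) is vacuous and condition (i) is immediate from the remark just made. If $L=1$, feed the singleton $n_1=m_\omega^{1}$ into Proposition~\ref{prop:combinatoric} using the scale sequence $r_q,s_q$ of this section. The proposition's hypothesis $r_{q+1}\geq 3s_q$ is precisely the inequality engineered into the recursion $r_{q+1}=\max(s_q+ar,3s_q)$, and since $K=1$ the base case of its inductive construction returns $J=1$, $q_1=1$, and a shifted centre $\ti m_1$ (the coordinate-wise shift $(m_\omega^{1})^{r_1}_{R}$ of Definition~\ref{def:shiftcube}) with $\Lambda_{r_{q_1}}(\ti m_1)=\Lambda^{R}_{r_1}(m_\omega^{1})\subseteq\Lambda_R(0)$, and such that this shifted cube covers the portion $\Lambda_{2\ol{r}}(m_\omega^{1})\cap\Lambda_R(0)$ of the original bad region.

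Given these choices, condition (ii) is vacuous since $J\leq 1$. For condition (i), any $\Lambda_r(n)\subseteq\Lambda_R(0)\setminus\Lambda_{r_{q_1}}(\ti m_1)$ automatically avoids $\Lambda_{2\ol{r}}(m_\omega^{1})\cap\Lambda_R(0)$ by the covering established above, so Proposition~\ref{prop:probatmostonebadcube} certifies it as $(\gamma,\tfrac{1}{2},2)$-suitable, hence as $(\gamma,\tfrac{1}{2},1)$-suitable. The only real bookkeeping point — and essentially the whole substantive content of the argument in this trivial $K=1$ instance — is the boundary shift: verifying that $\Lambda^{R}_{r_1}(m_\omega^{1})$ really does swallow the part of $\Lambda_{2\ol{r}}(m_\omega^{1})$ lying inside $\Lambda_R(0)$. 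This is exactly the maximality property built into the definition $n\mapsto n^{t}_R$ and is routine to verify coordinate-wise. The heavier combinatorics of Proposition~\ref{prop:combinatoric} is not exercised at this stage; it will bear fruit later when $K$ is allowed to grow.
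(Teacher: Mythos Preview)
Your proof is correct and follows the same approach as the paper: apply Proposition~\ref{prop:probatmostonebadcube} to extract the bad centers $m_\omega^k$, then feed them into Proposition~\ref{prop:combinatoric} with the scales $r_q,s_q$ defined in this section. You have simply unpacked the $K=1$ case explicitly (where the combinatorics collapses to the base case and condition (ii) is vacuous) and noted the routine passage from $(\gamma,\tfrac12,2)$- to $(\gamma,\tfrac12,1)$-suitability, whereas the paper's two-line proof leaves these points implicit.
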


\begin{proof}
 Denote by $m_{\omega}^{1}, \dots, m_{\omega}^{K}$ the sites from
 conclusion (ii) of Proposition~\ref{prop:probatmostonebadcube}.
 The claim then follows by applying Proposition~\ref{prop:combinatoric}
 to these and the length scales $r_q$ and $s_q$ defined above.
\end{proof}

\begin{proof}[Proof of Theorem~\ref{thm:indstepwegner}].
 Let $\omega \notin \mathcal{B}^{R}$. Then by Theorem~\ref{thm:resolv2}
 and some computations,
 we have
 $$
  \Lambda_R(0)\text{ is $(\hat{\gamma}, \frac{1}{2}, 2)$-suitable for } H_{\lam,\omega} - E,
 $$
 where
 $$
  \hat{\gamma} = \gamma \left(1 - \frac{200}{r^{\frac{1}{d}}} \right).
 $$
 This finishes the proof.
\end{proof}


%
%
%

%
%
%

\section{Estimating the norm of the resolvent}
\label{sec:resolv1}

In this section, we will discuss how to obtain estimates on the norm
of the resolvent. The key point is that, we will develop methods that
will allow us to exploit the knowledge obtained from Proposition~\ref{prop:probatmostonebadcube}
to find a replacement of the Wegner estimate. In particular, we will be
able to show that the norm of the resolvent of $H_{\lam,\omega} - E$
restricted to the union of $(\gamma,\tau,p)$-suitable cubes of size $r$ is
bounded by $\E^{4 r^{\tau}}$.
Since the results of this section are independent of the specific form
of the potential, we will again write $H$ instead of $H_{\lam,\omega}$.
 
We will need to introduce some notation. Given
two sets $\Lambda \subseteq \Xi \subseteq \Z^d$,
the {\em boundary} $\partial^{\Xi} \Lambda$ of $\Lambda$ in $\Xi$
is defined by
\be
 \partial^{\Xi} \Lambda = \{(x,y):\quad x \in \Lambda,\ y \in \Xi\setminus\Lambda,\ |x - y|_1 =1\}.
\ee
The relevance of the boundary comes from that if
$u$ solves $H^{\Xi} u = E u$, then we have for $n \in \Lambda$ that
\be\label{eq:unasgreen}
 u(n) = - \sum_{(x,y) \in \partial^{\Xi}\Lambda} \spr{e_n}{(H^{\Lambda} - E)^{-1} e_x} u(y).
\ee
We furthermore introduce the distance of $n \in \Z^d$ to
$\partial^{\Xi} \Lambda$ by
\be
 \dist(n, \partial^{\Xi}\Lambda) = \min_{(x,y) \in \partial^{\Xi}\Lambda} |n - x|_{\infty}.
\ee
We are now ready to introduce the following geometric
condition.

\begin{definition}\label{def:setacceptable}
 A set $\Xi \subseteq \Z^d$ is called $r$-acceptable, if 
 the following conditions hold.
 \begin{enumerate}
  \item The set $\Xi$ is finite.
  \item For any $x \in \Xi$ there exists a cube $\Lambda_r(n) \subseteq \Xi$ such
   that
   \be
    \dist(x, \partial^{\Xi} \Lambda_r(n)) \geq \frac{r}{10}.
   \ee
 \end{enumerate}
\end{definition}

If a set $\Xi$ is $r$-acceptable, we can apply \eqref{eq:unasgreen} to every $n \in \Xi$
such that the off-diagonal decay condition in the definition of suitability is meaningful.
Furthermore an $r$-acceptable set is always the union of cubes of size $r$.
We are now ready for

\begin{theorem}\label{thm:resolv1}
 Let $\Xi$ be $r$-acceptable. Assume for any cube $\Lambda_r(n) \subseteq \Xi$ that
 \be
  \Lambda_r(n)\text{ is $(\gamma,\tau,0)$-suitable for }H - E,
 \ee
 and the inequalities
 \be\label{eq:ineqresolv1}
  4 \#(\partial_{-} \Lambda_r(n)) \leq \E^{(r)^{\tau}},\quad 40 \leq \gamma r, \quad r^{\tau} \geq \log(2).
 \ee
 Then
 \be
  \|(H^{\Xi} - E)^{-1} \| \leq \E^{3 (r)^{\tau}}.
 \ee
\end{theorem}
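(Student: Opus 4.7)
The plan is to prove $\|\psi\|_2 \leq \E^{3r^\tau}\|(H^\Xi-E)\psi\|_2$ for every $\psi \in \ell^2(\Xi)$ by iterating the local resolvent once on a well-chosen cube around each site, summing the resulting pointwise bounds, and absorbing the boundary term. Set $f := (H^\Xi - E)\psi$. By $r$-acceptability, for each $n\in\Xi$ I choose a cube $\Lambda_r(m_n)\subseteq\Xi$ with $n\in\Lambda_r(m_n)$ and $\dist(n,\partial^\Xi\Lambda_r(m_n))\geq r/10$. Restricting the identity $(H^\Xi-E)\psi=f$ to $\Lambda_r(m_n)$ and inverting with the geometric resolvent formula of Section~\ref{sec:green} gives
\be
 \psi(n) = \sum_{x\in\Lambda_r(m_n)} G_E^{\Lambda_r(m_n)}(n,x)\, f(x)
  \;-\!\!\! \sum_{(x,y)\in\partial^\Xi\Lambda_r(m_n)} G_E^{\Lambda_r(m_n)}(n,x)\,\psi(y).
\ee

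Squaring via $(a-b)^2\leq 2a^2+2b^2$, I bound each sum using the $(\gamma,\tau,0)$-suitability of $\Lambda_r(m_n)$. For the interior term, Cauchy--Schwarz and $\|G(n,\cdot)\|_2\leq\|(H^{\Lambda_r(m_n)}-E)^{-1}\|\leq\E^{r^\tau}$ give a pointwise bound of $2\E^{2r^\tau}\|f|_{\Lambda_r(m_n)}\|_2^2$. For the boundary term, the condition $\dist(n,\partial^\Xi\Lambda_r(m_n))\geq r/10$ lets me apply the off-diagonal estimate to every boundary site: $|G_E^{\Lambda_r(m_n)}(n,x)|\leq\E^{-\gamma r/10}/\#\partial_-\Lambda_r(m_n)$. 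Combined with $\#\partial^\Xi\Lambda_r(m_n)\leq 2d\,\#\partial_-\Lambda_r(m_n)$ and Cauchy--Schwarz, the squared boundary contribution is
\be
 \leq \frac{2d\,\E^{-\gamma r/5}}{\#\partial_-\Lambda_r(m_n)} \sum_{(x,y)\in\partial^\Xi\Lambda_r(m_n)}|\psi(y)|^2.
\ee

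Summing over $n\in\Xi$ and exchanging the order of summation, the multiplicities are controlled: for a fixed $x\in\Xi$, the condition $x,n\in\Lambda_r(m_n)$ forces $|x-n|_\infty\leq 2r$, so at most $(4r+1)^d$ values of $n$ contribute; a similar count handles the boundary term. This produces
\be
 \|\psi\|_2^2 \leq 2\,\E^{2r^\tau}(4r+1)^d\,\|f\|_2^2 + \frac{C_d\,(4r+1)^d\,\E^{-\gamma r/5}}{\#\partial_-\Lambda_r}\,\|\psi\|_2^2.
\ee
Using $40\leq\gamma r$ the coefficient of $\|\psi\|_2^2$ on the right is $\leq\tfrac12$ (the exponential decay dwarfs the polynomial $(4r+1)^d/\#\partial_-\Lambda_r$, which is itself $O(r)$); absorbing yields $\|\psi\|_2^2\leq 4(4r+1)^d\,\E^{2r^\tau}\|f\|_2^2$. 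The hypothesis $4\#\partial_-\Lambda_r\leq\E^{r^\tau}$ together with $(4r+1)^d\lesssim r\cdot\#\partial_-\Lambda_r$ and $r^\tau\geq\log 2$ lets me swallow $4(4r+1)^d$ into $\E^{4r^\tau}$, giving $\|\psi\|_2\leq\E^{3r^\tau}\|f\|_2$ as required.

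The main obstacle is bookkeeping the polynomial prefactors: one has to verify that the multiplicity $(4r+1)^d$ generated by overlapping cubes, the combinatorial factor $\#\partial^\Xi\Lambda_r$, and the $\#\partial_-\Lambda_r^{-1}$ from suitability all combine so that the final constant in the exponent is at most $3r^\tau$ rather than $(3+\epsilon)r^\tau$. The three inequalities in \eqref{eq:ineqresolv1} are precisely the minimal quantitative input that makes this bookkeeping succeed, and no deeper ingredient beyond the resolvent identity and Cauchy--Schwarz is required.
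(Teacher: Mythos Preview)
Your approach is genuinely different from the paper's and worth comparing. The paper argues by a \emph{maximum principle}: assuming some eigenvalue $\hat E$ of $H^\Xi$ lies within $\E^{-3r^\tau}$ of $E$, it takes the corresponding eigenfunction $u$, looks at a point $x$ where $|u(x)|$ is maximal, uses $r$-acceptability to place a suitable cube $\Lambda_r(n)$ around $x$, and from the boundary formula \eqref{eq:unasgreen} together with the off-diagonal decay (perturbed to energy $\hat E$ via Lemma~\ref{lem:perturbElarge}) obtains $|u(x)|\leq\tfrac12\max_y|u(y)|=\tfrac12|u(x)|$, forcing $u=0$. Normality of $H^\Xi$ then turns $\dist(E,\sigma(H^\Xi))\geq\E^{-3r^\tau}$ into the resolvent bound. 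No summation, no multiplicity counting: a single cube and a single pointwise inequality suffice.

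Your $\ell^2$ energy method is conceptually sound, but the absorption step has a gap as written. You need
\[
 \frac{C_d\,(4r+1)^d\,\E^{-\gamma r/5}}{\#\partial_-\Lambda_r}\leq\frac12,
\]
and you justify this by saying the exponential ``dwarfs'' the $O(r)$ polynomial $(4r+1)^d/\#\partial_-\Lambda_r$. But the hypothesis $\gamma r\geq 40$ gives only $\E^{-\gamma r/5}\leq\E^{-8}$, a fixed constant; it does not decay in $r$. Nothing in \eqref{eq:ineqresolv1} bounds $\gamma$ from below, so with $\gamma=40/r$ and $r$ large the factor $C_d\, r\,\E^{-8}$ is unbounded and the absorption fails. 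The paper's $\ell^\infty$ argument sidesteps this entirely: it needs only $\#\partial^\Xi\Lambda_r(n)\cdot|G(n,x)|\leq\tfrac12$ at the single maximizing point, which follows directly from suitability with no overlap-counting loss. Your argument would go through under the additional assumption $\gamma\geq c>0$ (which does hold throughout the paper's applications, where $\gamma\geq 1$), or if you handled the boundary term with an $\ell^1$--$\ell^\infty$ bound at the maximizer rather than a global $\ell^2$ Cauchy--Schwarz.
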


By the results of Section~\ref{sec:green}, we obtain

\begin{corollary}
 Let $R \geq r \geq 1$, $\tau\in(0,1)$, and $p \geq 0$.
 Assume \eqref{eq:ineqresolv1}, $3r^{\tau} + p \log(2) \leq R^{\tau}$,
 and for every subcube $\Lambda_r(n) \subseteq \Lambda_R(0)$ that
 \be
  \Lambda_r(n)\text{ is $(\gamma,\tau,0)$-suitable for } H - E
 \ee
 Then
 \be
  \Lambda_R(0)\text{ is $(\hat{\gamma},\tau,p)$-suitable for } H - E
 \ee
 where $\hat{\gamma} = \gamma \cdot \left(1  - \frac{1}{r + 1} - \frac{10}{R}\left(\gamma r + R^{\tau} + d \log(3) \right) \right)$.
\end{corollary}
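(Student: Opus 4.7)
The statement is essentially a packaging of the two main results of the section, so my plan is to combine Theorem~\ref{thm:resolv1} (which produces a norm bound on the resolvent of a restriction to any $r$-acceptable set) with Proposition~\ref{prop:resolvitersimple} (which, starting from a norm bound and one suitable subcube, produces off-diagonal decay on the big cube). The hypothesis ``every $\Lambda_r(n) \subseteq \Lambda_R(0)$ is $(\gamma,\tau,0)$-suitable'' is strong enough to feed both inputs simultaneously.

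First I would verify that $\Xi = \Lambda_R(0)$ is $r$-acceptable in the sense of Definition~\ref{def:setacceptable}. Finiteness is immediate, and for any $x \in \Lambda_R(0)$ the shifted cube $\Lambda^{R}_r(x) \subseteq \Lambda_R(0)$ from Definition~\ref{def:shiftcube} satisfies $\dist(x,\partial^{\Xi}\Lambda^{R}_r(x)) \geq r/10$, using that $R \geq r$ and the definition of the shift $x \mapsto x^r_R$. With $r$-acceptability in hand, Theorem~\ref{thm:resolv1} applies (its numerical hypotheses \eqref{eq:ineqresolv1} are precisely what we have assumed), yielding
\[
 \|(H^{\Lambda_R(0)} - E)^{-1}\| \leq \E^{3 r^{\tau}}.
\]

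Next I would use the arithmetic assumption $3 r^{\tau} + p \log(2) \leq R^{\tau}$ to rewrite this as
\[
 \|(H^{\Lambda_R(0)} - E)^{-1}\| \leq \frac{1}{2^{p}} \E^{R^{\tau}},
\]
which is exactly the norm hypothesis of Proposition~\ref{prop:resolvitersimple} (after correcting the obvious typo: the norm bound there is on the inverse). Since every subcube $\Lambda_r(n) \subseteq \Lambda_R(0)$ is $(\gamma,\tau,0)$-suitable, in particular the specific $\Lambda_r(n)$ required in \eqref{eq:condLamrissuit} exists, so Proposition~\ref{prop:resolvitersimple} applies and delivers
\[
 \Lambda_R(0)\text{ is $(\hat{\gamma},\tau,p)$-suitable for } H - E
\]
with $\hat{\gamma} = \gamma\bigl(1 - \tfrac{1}{r+1} - \tfrac{10}{R}(\gamma r + R^{\tau} + d\log 3)\bigr)$, matching the conclusion verbatim.

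There is no real obstacle here; the only things to be careful about are (a) checking $r$-acceptability of $\Lambda_R(0)$ via the shifted-cube construction of Definition~\ref{def:shiftcube} (which is also what Proposition~\ref{prop:resolvitersimple} tacitly uses, as the remark following that proposition notes) and (b) keeping the factor $2^{-p}$ straight when passing from the $r$-scale norm bound $\E^{3r^{\tau}}$ to the $R$-scale norm bound $2^{-p}\E^{R^{\tau}}$ demanded by the definition of $(\hat{\gamma},\tau,p)$-suitability. The combinatorial machinery of Section~\ref{sec:combinatoric1} is not needed, because here we assume \emph{all} subcubes are suitable (no bad cubes to avoid).
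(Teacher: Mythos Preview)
Your proposal is correct and is exactly the argument the paper has in mind: apply Theorem~\ref{thm:resolv1} to the $r$-acceptable set $\Lambda_R(0)$ to get $\|(H^{\Lambda_R(0)}-E)^{-1}\|\leq \E^{3r^{\tau}}\leq 2^{-p}\E^{R^{\tau}}$, and then feed this into Proposition~\ref{prop:resolvitersimple}. The paper's proof consists of the single line ``Use Proposition~\ref{prop:resolvitersimple}'', so you have simply made the implicit invocation of Theorem~\ref{thm:resolv1} and the $r$-acceptability check explicit.
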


\begin{proof}
 Use Proposition~\ref{prop:resolvitersimple}.
\end{proof}

We furthermore have the following variant, which loosens the requirement
on all subcubes of size $r$ to be suitable. However, we will need to at
least require control on the norm of the resolvent on the exceptional
regions, which can be larger then size $r$.

\begin{theorem}\label{thm:resolv1b}
 Given $R \geq 1$, $\gamma > 0$, $\tau \in (0,1)$, $m_1, \dots, m_Q \in \Lambda_R(0)$,
 and $t_q\geq s_q+r$ for $1 \leq q \leq Q$. Assume the following
 \begin{enumerate}
  \item For $1\leq q \leq Q$, we have $\Lambda_{t_q}(m_q) \subseteq \Lambda_R(0)$.
  \item The set
   \be
    \Lambda_R(0) \setminus \left(\bigcup_{q=1}^{Q} \Lambda_{s_q+r}(m_q) \right)
   \ee
   is $r$-acceptable.
  \item For $q \neq \ti{q}$, we have
   \be
    \Lambda_{t_q + 2r+1}(m_q) \cap \Lambda_{t_{\ti{q}} + 2r+1}(m_{\ti{q}}) = \emptyset.
   \ee
  \item For any cube   
   \be 
    \Lambda_r(n) \subseteq \Lambda_R(0) \setminus \left(\bigcup_{q=1}^{Q} \Lambda_{s_q}(m_q) \right),
   \ee
   we have
   \be
    \Lambda_r(n)\text{ is $(\gamma,\tau,0)$-suitable for }H - E.
   \ee
  \item For $1 \leq q \leq Q$, we have
   \be\label{eq:asthmresolv1b}
    \|(H^{\Lambda_{t_q}(m_q)} - E)^{-1} \| \leq \E^{(t_q)^{\tau}}.
   \ee
  \item The inequalities
   \be\label{eq:ineqresolv1b}
    4 \#(\Lambda_R(0)) \leq \E^{(t_{\infty})^{\tau}},\quad
     \gamma r \geq 10 (t_{\infty})^{\tau} + 10 \log(2 \#(\partial_-(\Lambda_r(0))),
   \ee
   where $t_{\infty} = \max_{1 \leq q \leq Q} t_q$.
  \end{enumerate}
 Then
 \be
  \|(H^{\Lambda_R(0)} - E)^{-1}\| \leq \E^{5 (t_{\infty})^{\tau}}.
 \ee
\end{theorem}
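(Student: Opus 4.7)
The plan is to mimic the approximate-eigenfunction strategy behind Theorem~\ref{thm:resolv1}, but to split the box $\Lambda_R(0)$ into a ``good'' part $G = \Lambda_R(0) \setminus \bigcup_q \Lambda_{s_q+r}(m_q)$ (which is $r$-acceptable by (ii)) and a ``bad'' part $B$, using the $(\gamma,\tau,0)$-suitability hypothesis (iv) on $G$ and only the crude operator-norm bound~\eqref{eq:asthmresolv1b} on each enlarged cube $\Lambda_{t_q}(m_q)$. Argue by contradiction: assume $\psi \in \ell^2(\Lambda_R(0))$ with $\|\psi\|_2 = 1$ and $\delta := \|(H^{\Lambda_R(0)}-E)\psi\|_2 < \E^{-5 t_\infty^\tau}$, and let $M = \|\psi\|_\infty$, $M_G = \sup_G|\psi|$, $M_B = \sup_B|\psi|$.

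For $n \in G$, use $r$-acceptability to pick $\Lambda_r(n_0) \subseteq G$ with $\dist(n,\partial^G\Lambda_r(n_0))\geq r/10$, and apply the geometric resolvent identity from Section~\ref{sec:green} exactly as in the proof of Theorem~\ref{thm:resolv1}. The off-diagonal suitability estimate absorbs the $\#\partial_-\Lambda_r(n_0)$ factor coming from the boundary sum and yields
\[
 M_G \leq \E^{r^\tau}\delta + c_d\,\E^{-\gamma r/10}\,M.
\]
For $n \in B$, say $n \in \Lambda_{s_q+r}(m_q) \subseteq \Lambda_{t_q}(m_q)$, run the resolvent identity on the larger cube $\Lambda_{t_q}(m_q)$, which sits inside $\Lambda_R(0)$ by (i). The key geometric observation is that every boundary point $y \in \partial^{\Lambda_R(0)}\Lambda_{t_q}(m_q)$ lies in $\Lambda_{t_q+1}(m_q)\setminus\Lambda_{t_q}(m_q)$, and by (iii) combined with $t_{q'} \geq s_{q'}+r$ such a $y$ cannot lie in any $\Lambda_{s_{q'}+r}(m_{q'})$; hence $y \in G$ and $|\psi(y)|\leq M_G$. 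Using only~\eqref{eq:asthmresolv1b} and the crude count $\#\partial^{\Lambda_R(0)}\Lambda_{t_q}(m_q) \leq \#\Lambda_R(0) \leq \tfrac{1}{4}\E^{t_\infty^\tau}$ from~(vi), I get
\[
 M_B \leq \E^{t_\infty^\tau}\delta + \tfrac{1}{4}\E^{2t_\infty^\tau}\,M_G.
\]

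Substituting the bound for $M_G$ into the bound for $M_B$ and taking the maximum produces a closed inequality $M \leq A\delta + \kappa M$ with $A \lesssim \E^{3 t_\infty^\tau}$ (using $r\leq t_\infty$) and $\kappa \lesssim \E^{2t_\infty^\tau - \gamma r/10}$. The second half of~\eqref{eq:ineqresolv1b}, namely $\gamma r \geq 10 t_\infty^\tau + 10\log(2\#\partial_-\Lambda_r(0))$, is tailored precisely to force $\kappa \leq \tfrac{1}{2}$, whence $M \leq 2A\delta \lesssim \E^{-2 t_\infty^\tau}$. Finally, $\|\psi\|_2 \leq \sqrt{\#\Lambda_R(0)}\,M \lesssim \E^{t_\infty^\tau/2}\cdot\E^{-2t_\infty^\tau} = \E^{-3 t_\infty^\tau/2} < 1$ contradicts $\|\psi\|_2 = 1$. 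The main obstacle is exactly the bookkeeping in this last combination: because we only have the operator-norm bound (not off-diagonal decay) inside the exceptional cubes, the bad regions produce an amplification of order $\#\Lambda_R(0)\cdot\E^{t_\infty^\tau}\approx\E^{2t_\infty^\tau}$, and this amplification must be overcome by the exponential decay $\E^{-\gamma r/10}$ furnished by the good cubes — explaining why the refined quantitative hypothesis~\eqref{eq:ineqresolv1b} is needed and why the exponent $5$ in the conclusion is sharp for this argument.
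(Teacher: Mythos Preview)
Your strategy is essentially the paper's: argue by contradiction with an (approximate) eigenfunction, use suitability and the $r$-acceptability of $G$ to make $\psi$ small on the good region, and then exploit the geometric fact that the boundary of each $\Lambda_{t_q}(m_q)$ lies in $G$ to contradict~\eqref{eq:asthmresolv1b}. The only organizational difference is in that last step: rather than bounding $M_B$ via the resolvent identity on $\Lambda_{t_q}(m_q)$ and closing a system $M\leq A\delta+\kappa M$, the paper truncates the eigenfunction to a test vector $v=u\cdot\chi_{\Lambda_{s_q+r}(m_q)}$ and shows directly that $\|(H^{\Lambda_{t_q}(m_q)}-E)v\|_2/\|v\|_2<\E^{-t_\infty^\tau}$; this is the dual computation and is marginally tighter because it involves $\sqrt{\#\partial}$ through the $\ell^2$ norm rather than $\#\partial$.

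One quantitative point deserves flagging. Your $\kappa\approx c_d\,\E^{2t_\infty^\tau-\gamma r/10}$, and hypothesis~\eqref{eq:ineqresolv1b} only yields $\gamma r/10\geq t_\infty^\tau+\log(2\#\partial_-\Lambda_r)$, so $\kappa$ is of order $\E^{t_\infty^\tau}$, not $\leq\tfrac12$ as you claim. The paper's argument has the very same slip: Lemma~\ref{lem:perturbElarge2} asserts an off-diagonal bound $\frac{1}{\#\partial}\E^{-2t_\infty^\tau}$, but the first term in its proof, namely $\frac{1}{\#\partial}\E^{-\gamma r/10}$, is only $\lesssim\frac{1}{(\#\partial)^2}\E^{-t_\infty^\tau}$ under~\eqref{eq:ineqresolv1b}. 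In both cases the fix is trivial (replace the constant $10$ in~\eqref{eq:ineqresolv1b} by $20$, say), so this is a shared bookkeeping imprecision, not a flaw in your strategy.
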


It should be noted that (i), (ii), and (iii) restrict the
geometry of the sets $\Lambda_R(0)$, $\Lambda_{s_q}(m_q)$
and $\Lambda_{t_q}(m_q)$. Whereas (iv) and (v) consist of
the assumptions on the operator $H - E$.

\begin{remark}\label{rem:normalop}
 Theorem~\ref{thm:resolv1} and \ref{thm:resolv1b} remain
 valid if the potential of the Schr\"odinger operator is
 complex valued, since the proof only uses that $H$
 is normal.
\end{remark}

We now start with the proof of Theorem~\ref{thm:resolv1} and
Theorem~\ref{thm:resolv1b} for which one needs to specialize
$\Xi = \Lambda_R(0)$. We first
recall that
\be
 \|(H^{\Xi} - E)^{-1} \| = \frac{1}{\dist(E, \sigma(H^{\Xi}))}.
\ee
Hence, there exists $\hat{E}\in\sigma(H^{\Xi})$ such that
\be\label{eq:normHhatE}
 \|(H^{\Xi} - E)^{-1} \| = \frac{1}{|E - \hat{E}|}
\ee
and a solution $u$ of $H^{\Xi} u = \hat{E} u$ with
$\| u \|_{\ell^2(\Xi)} = 1$. The strategy of the proof will
to show that for $|E - \hat{E}|$ small no such solution can exist.

\begin{lemma}\label{lem:perturbElarge}
 Assume $\Lambda_r(n)$ is $(\gamma,\tau,0)$-suitable for $H - E$
 and \eqref{eq:ineqresolv1}. Let 
 \be
  |\hat{E} - E| \leq \E^{- 3 (r)^{\tau}}.
 \ee
 Then for $x,y \in \Lambda_r(n)$ satisfying $|x - y| \geq \frac{r}{10}$
 \be
  |\spr{e_x}{(H^{\Lambda_r(n)} - \hat{E})^{-1} e_y}| \leq \frac{1}{2 \#(\partial_-\Lambda_r(0))}.
 \ee
\end{lemma}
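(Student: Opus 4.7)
The plan is to treat $B = H^{\Lambda_r(n)} - \hat E$ as a small scalar perturbation of $A = H^{\Lambda_r(n)} - E$, since $B - A = (E - \hat E)\mathbb{I}$, and then expand $B^{-1}$ in a Neumann series around $A^{-1}$. The hypotheses of $(\gamma,\tau,0)$-suitability give me the two ingredients I need simultaneously: the norm bound $\|A^{-1}\| \leq \E^{r^{\tau}}$ (which controls the perturbation series) and the pointwise off-diagonal bound $|\langle e_x, A^{-1} e_y\rangle| \leq \frac{1}{\#(\partial_- \Lambda_r(n))}\E^{-\gamma|x-y|}$ (which supplies the leading term).

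Concretely, since $|E-\hat E|\,\|A^{-1}\| \leq \E^{-3r^{\tau}} \cdot \E^{r^{\tau}} = \E^{-2r^{\tau}} \leq \tfrac{1}{4}$ (using $r^{\tau}\geq\log 2$), the Neumann series for $B^{-1} = A^{-1}\sum_{k\ge 0}\bigl((E-\hat E)A^{-1}\bigr)^k$ converges, and the tail beyond the leading term is controlled in norm by
\[
 \|A^{-1}\|\cdot \frac{|E-\hat E|\,\|A^{-1}\|}{1-|E-\hat E|\,\|A^{-1}\|}
 \leq \tfrac{4}{3}\,\E^{-r^{\tau}}.
\]
Thus $|\langle e_x, B^{-1} e_y\rangle| \leq |\langle e_x, A^{-1}e_y\rangle| + \tfrac{4}{3}\E^{-r^{\tau}}$.

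Now I plug in the off-diagonal decay from suitability and the assumptions \eqref{eq:ineqresolv1}. Since $|x-y|\geq r/10$ and $\gamma r \geq 40$, the first term is at most $\E^{-4}/\#(\partial_-\Lambda_r(n))$, while $4\#(\partial_-\Lambda_r(n))\leq \E^{r^{\tau}}$ gives $\tfrac{4}{3}\E^{-r^{\tau}} \leq \tfrac{1}{3\#(\partial_-\Lambda_r(n))}$. Adding these, $\E^{-4} + \tfrac{1}{3} < \tfrac{1}{2}$, which gives the claimed bound $\frac{1}{2\#(\partial_-\Lambda_r(0))}$. There is no real obstacle; the only thing to be careful about is tracking the numerical constants so that the sum of the two contributions stays strictly below $1/2$, which is precisely what the three quantitative inequalities in \eqref{eq:ineqresolv1} are designed to guarantee.
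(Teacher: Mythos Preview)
Your proof is correct and follows essentially the same approach as the paper: a Neumann/resolvent expansion of $(H^{\Lambda_r(n)}-\hat E)^{-1}$ around $(H^{\Lambda_r(n)}-E)^{-1}$, using the suitability norm bound to control the tail and the off-diagonal decay for the leading term. In fact your constant tracking is a bit more careful than the paper's, which simply invokes the computation from Lemma~\ref{lem:suitstable} and the inequality $|E-\hat E|\,\|A^{-1}\|^2\leq \E^{-r^{\tau}}\leq \tfrac{1}{4\#(\partial_-\Lambda_r(0))}$.
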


\begin{proof}
 By a computation as done in  the proof of Lemma~\ref{lem:suitstable},
 we find
 \begin{align*}
   |\spr{e_x}{(H^{\Lambda_r(n)} - \hat{E})^{-1} e_y}| = 
   &|\spr{e_x}{(H^{\Lambda_r(n)} - E)^{-1} e_y}|\\
   & + 2 |E - \hat{E}| \|(H^{\Lambda_r(n)} - E)^{-1}\|^2.
 \end{align*}
 By \eqref{eq:ineqresolv1} and the condition on $\hat{E}$,
 we find
 $$
  |E - \hat{E}| \|(H^{\Lambda_r(n)} - E)^{-1}\|^2 \leq \E^{- (r)^{\tau}} 
  \leq \frac{1}{4} \frac{1}{\#(\partial_-(\Lambda_r(0)))}.
 $$
 The claim follows.
\end{proof}

\begin{proof}[Proof of Theorem~\ref{thm:resolv1}]
 Let $\hat{E}$ be such that
 $$
  |\hat{E} - E| \leq \E^{- 3 (r)^{\tau}}.
 $$
 Let $u: \Xi\to\C$ solve $H^{\Xi} u = \hat{E} u$ with possibly $u = 0$.
 Since $\Xi$ is finite, we can choose $x$ such that
 $$
  |u(x)| = \max_{y\in\Xi} |u(y)|.
 $$
 Let $\Lambda_r(n)$ be the set from the $r$-acceptable property of $\Xi$
 for $x$. By \eqref{eq:unasgreen} and the previous lemma,
 we can conclude
 $$
  |u(x)| \leq \frac{1}{2} \sup_{(\ti{x}, \ti{y}) \in \partial^{\Xi}\Lambda_r(n)} |u(\ti{y})|
  \leq \frac{1}{2} |u(x)|.
 $$
 This is only possible if $u = 0$. Hence, we see that we must have
 $\dist(E, \sigma(H^{\Xi})) \geq \E^{-3 (r)^{\tau}}$.
 By \eqref{eq:normHhatE}, the claim follows.
\end{proof}

Similarly to Lemma~\ref{lem:perturbElarge}, one can show

\begin{lemma}\label{lem:perturbElarge2}
 Assume $\Lambda_r(n)$ is $(\gamma,\tau,0)$-suitable for $H - E$
 and \eqref{eq:ineqresolv1b}.
 Let 
 \be
  |\hat{E} - E| \leq \E^{- 5 (t)^{\tau}}.
 \ee
 Then for $x,y \in \Lambda_r(n)$ satisfying $|x - y| \geq \frac{r}{10}$
 \be
 |\spr{e_x}{(H^{\Lambda_r(n)} - \hat{E})^{-1} e_y}| 
   \leq \frac{1}{\#(\partial_- \Lambda_r(n))} \E^{-2 (t_{\infty})^{\tau}}.
 \ee
\end{lemma}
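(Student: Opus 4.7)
The plan is to mimic the argument of Lemma~\ref{lem:perturbElarge} verbatim, replacing the numerical budget by what is afforded by the sharper hypothesis $|\hat E-E|\le\E^{-5(t_\infty)^\tau}$ and by the two inequalities \eqref{eq:ineqresolv1b}. First I would write the second resolvent identity
\[
(H^{\Lambda_r(n)}-\hat E)^{-1}=(H^{\Lambda_r(n)}-E)^{-1}+(\hat E-E)(H^{\Lambda_r(n)}-\hat E)^{-1}(H^{\Lambda_r(n)}-E)^{-1}.
\]
The $(\gamma,\tau,0)$-suitability of $\Lambda_r(n)$ for $H-E$ gives $\|(H^{\Lambda_r(n)}-E)^{-1}\|\le\E^{r^\tau}\le\E^{(t_\infty)^\tau}$, and combined with $|\hat E-E|\le\E^{-5(t_\infty)^\tau}$ a short Neumann-series argument shows $H^{\Lambda_r(n)}-\hat E$ is invertible with $\|(H^{\Lambda_r(n)}-\hat E)^{-1}\|\le 2\E^{r^\tau}$. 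Taking the $(x,y)$ matrix element of the identity then yields
\[
|\spr{e_x}{(H^{\Lambda_r(n)}-\hat E)^{-1}e_y}|\le|\spr{e_x}{(H^{\Lambda_r(n)}-E)^{-1}e_y}|+2|\hat E-E|\,\E^{2r^\tau}.
\]

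Next I would estimate the two terms separately so that their sum is at most $\frac{1}{\#(\partial_-\Lambda_r(n))}\E^{-2(t_\infty)^\tau}$. For the first term, suitability delivers $\frac{1}{\#(\partial_-\Lambda_r(n))}\E^{-\gamma|x-y|}$; combining $|x-y|\ge r/10$ with the second inequality in \eqref{eq:ineqresolv1b} yields the required decay once the logarithmic prefactor $\log(2\#(\partial_-\Lambda_r(0)))$ is absorbed into $\gamma r/10$, while the remaining exponent is traded against $(t_\infty)^\tau$ by means of the first inequality in \eqref{eq:ineqresolv1b}. For the second term, one has $2|\hat E-E|\E^{2r^\tau}\le 2\E^{-5(t_\infty)^\tau+2r^\tau}\le 2\E^{-3(t_\infty)^\tau}$ using $r\le t_\infty$, and the additional factor $\frac{1}{\#(\partial_-\Lambda_r(n))}\E^{-(t_\infty)^\tau}$ needed to match the target bound is supplied by the first inequality in \eqref{eq:ineqresolv1b}, which guarantees $\#(\partial_-\Lambda_r(n))\le\#(\Lambda_R(0))\le\tfrac14\E^{(t_\infty)^\tau}$.

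Conceptually the proof is no harder than Lemma~\ref{lem:perturbElarge}: no new probabilistic, combinatorial or spectral input is required, only sharper numerical inputs. The main—and essentially only—obstacle is arithmetic bookkeeping. The delicate point to check is that the factor of $10$ appearing in $\gamma r\ge 10(t_\infty)^\tau+10\log(2\#(\partial_-\Lambda_r(0)))$ really suffices to produce a target exponent $2(t_\infty)^\tau$ after dividing by $10$ to account for $|x-y|\ge r/10$; this works out precisely because the first inequality in \eqref{eq:ineqresolv1b} links $\#(\Lambda_R(0))$ to $\E^{(t_\infty)^\tau}$ and thus lets the combinatorial prefactors be converted into additional exponential decay in $(t_\infty)^\tau$.
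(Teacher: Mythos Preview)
Your approach is exactly what the paper intends: it explicitly says ``Similarly to Lemma~\ref{lem:perturbElarge}, one can show'' and gives no further proof, so the resolvent-identity argument you outline is the right one, and your treatment of the second term $2|\hat E-E|\,\E^{2r^\tau}\le 2\E^{-3(t_\infty)^\tau}$ together with the first inequality in \eqref{eq:ineqresolv1b} is correct.

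There is, however, a genuine arithmetic slip in your handling of the first term. From the second inequality in \eqref{eq:ineqresolv1b} you get only $\gamma r/10\ge (t_\infty)^\tau+\log(2\#(\partial_-\Lambda_r(0)))$, whereas you need $\gamma|x-y|\ge 2(t_\infty)^\tau+\log 2$. Your claim that the first inequality in \eqref{eq:ineqresolv1b} ``lets the combinatorial prefactors be converted into additional exponential decay in $(t_\infty)^\tau$'' is backwards here: that inequality gives an \emph{upper} bound $\log(2\#(\partial_-\Lambda_r(0)))\le (t_\infty)^\tau$, not the lower bound you would need to turn the logarithmic term into a second copy of $(t_\infty)^\tau$. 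As written, the constants in \eqref{eq:ineqresolv1b} are off by a factor of two for this step (a $20$ in place of the $10$ would close it); this is a known loose-constant artifact of the paper rather than a conceptual gap in your argument, but you should not claim the numerics ``work out precisely.''
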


Let $\hat{E}$ satisfy
\be
 |E - \hat{E}| \leq \E^{-5 (t_{\infty})^{\tau}}.
\ee
Assume $u$ solves $H^{\Lambda_R(0)} u = \hat{E} u$ and $\|u\|_{\ell^2(\Lambda_R(0))} = 1$.
We can thus find $x_0 \in \Lambda_R(0)$ such that
\be
 |u(x_0)| = \max_{x\in\Lambda_R(0)} |u(x)|.
\ee
Since $\|u\|_{\ell^2(\Lambda_R(0))} = 1$, we have
\be
 |u(x_0)|^2 \geq \frac{1}{\#(\Lambda_R(0))}.
\ee
Our first goal will to localize $x_0$.

\begin{lemma}
 Assume (ii), (iv), and (vi). Let 
 \be
  x\in\Lambda_R(0)\setminus\left(\bigcup_{q=1}^{Q}\Lambda_{s_q+r}(m_q)\right).
 \ee
 Then, we have
 \be
  |u(x)| \leq \E^{-2 (t_{\infty})^{\tau}}.
 \ee
\end{lemma}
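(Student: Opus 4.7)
For $x$ in the good set $\Xi = \Lambda_R(0) \setminus \bigcup_{q=1}^{Q} \Lambda_{s_q+r}(m_q)$, I would use the $r$-acceptability hypothesis (ii) to pick a cube $\Lambda_r(n) \subseteq \Xi$ containing $x$ with $\dist(x, \partial^\Xi \Lambda_r(n)) \geq r/10$. Since $\Xi$ is a subset of $\Lambda_R(0) \setminus \bigcup_q \Lambda_{s_q}(m_q)$, hypothesis (iv) guarantees that $\Lambda_r(n)$ is $(\gamma,\tau,0)$-suitable for $H - E$, which is the key ingredient we need locally.

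Because $u$ solves $H^{\Lambda_R(0)} u = \hat{E} u$, the geometric resolvent identity \eqref{eq:unasgreen}, applied with ambient set $\Lambda_R(0)$, subdomain $\Lambda_r(n)$, and energy $\hat{E}$, gives
\be
u(x) = -\sum_{(a,b) \in \partial^{\Lambda_R(0)} \Lambda_r(n)} \spr{e_x}{(H^{\Lambda_r(n)} - \hat{E})^{-1} e_a}\, u(b).
\ee
The hypotheses of Lemma~\ref{lem:perturbElarge2}---the suitability of $\Lambda_r(n)$, the inequalities (vi), and the standing assumption $|E-\hat{E}|\leq \E^{-5(t_\infty)^\tau}$---are in force, so each Green's function entry with $|x-a|_\infty\geq r/10$ is bounded by $\frac{1}{\#(\partial_-\Lambda_r(n))}\E^{-2(t_\infty)^\tau}$. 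Since $\|u\|_{\ell^2(\Lambda_R(0))} = 1$ forces $|u(b)|\leq 1$, and $\#(\partial^{\Lambda_R(0)} \Lambda_r(n)) \leq 2d\cdot \#(\partial_-\Lambda_r(n))$, summing the bound over the boundary and using the slack $\gamma r \geq 10(t_\infty)^\tau + 10\log(2\#(\partial_-\Lambda_r(0)))$ from (vi) to absorb the polynomial factor $2d$ yields the desired estimate $|u(x)| \leq \E^{-2(t_\infty)^\tau}$.

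The main technical subtlety will be reconciling the boundary $\partial^\Xi \Lambda_r(n)$ appearing in the definition of $r$-acceptable with the boundary $\partial^{\Lambda_R(0)} \Lambda_r(n)$ appearing in the resolvent identity: the former is contained in the latter, and the extra pairs $(a,b)$ have outer neighbor $b\in \bigcup_q \Lambda_{s_q+r}(m_q)$. The thickness-$r$ buffer built into the definition $\Xi = \Lambda_R(0) \setminus \bigcup_q \Lambda_{s_q+r}(m_q)$ (rather than $\bigcup_q \Lambda_{s_q}(m_q)$) is presumably what ensures that such extra boundary vertices $a$ still lie at $\ell^\infty$-distance at least $r/10$ from $x$, so that Lemma~\ref{lem:perturbElarge2} applies uniformly to every term of the sum. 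Verifying this geometric claim carefully from the various cube inclusions, and checking that $|u(b)|\leq 1$ is enough (i.e.\ that we do not actually need $|u(b)|\leq |u(x_0)|$ here), is where I expect the bookkeeping to be most delicate.
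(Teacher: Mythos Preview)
Your overall strategy—pick a suitable cube around $x$, apply Lemma~\ref{lem:perturbElarge2}, and feed this into the expansion \eqref{eq:unasgreen}—is exactly what the paper does. The paper's proof is two lines: ``By (ii) and (iv), we can apply Lemma~\ref{lem:perturbElarge2} to $\Lambda_r(x)$. The claim follows from \eqref{eq:unasgreen}.''

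However, there is a genuine difference in which cube is used, and it matters for the subtlety you flag. You take the cube $\Lambda_r(n)\subseteq\Xi$ supplied by $r$-acceptability; the paper instead takes the cube $\Lambda_r(x)$ \emph{centred at $x$} (shifted as in Definition~\ref{def:shiftcube} if $x$ is near $\partial\Lambda_R(0)$). With that choice the technical worry you raise simply disappears: every $a$ occurring in $\partial^{\Lambda_R(0)}\Lambda_r(x)$ lies on $\partial_-\Lambda_r(x)$, so $|x-a|_\infty=r\geq r/10$ automatically, and Lemma~\ref{lem:perturbElarge2} applies to every term. The role of the thickness-$r$ buffer is not what you guessed; it is precisely to guarantee that the centred cube $\Lambda_r(x)$ avoids $\bigcup_q\Lambda_{s_q}(m_q)$ (since $x\notin\Lambda_{s_q+r}(m_q)$ forces $\Lambda_r(x)\cap\Lambda_{s_q}(m_q)=\emptyset$), so hypothesis (iv) applies to it.

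By contrast, your choice of the $r$-acceptability cube does not obviously work: the condition $\dist(x,\partial^{\Xi}\Lambda_r(n))\geq r/10$ controls only those inner-boundary points $a$ having a neighbour in $\Xi\setminus\Lambda_r(n)$, and in one dimension one can easily arrange an $a\in\partial_-\Lambda_r(n)$ whose only outside neighbour lies in $\bigcup_q\Lambda_{s_q+r}(m_q)$ while $|x-a|_\infty=0$. So the ``presumably'' in your last paragraph fails as stated; the clean fix is to centre the cube at $x$.

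Your remaining concerns are harmless: $|u(b)|\leq\|u\|_{\ell^2}=1$ is all that is needed, and the stray factor $2d$ is absorbed by the slack in (vi) exactly as you say (the paper is loose with such constants throughout).
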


\begin{proof}
 By (ii) and (iv), we can apply Lemma~\ref{lem:perturbElarge2} to
 $\Lambda_r(x)$. The claim follows from \eqref{eq:unasgreen}.
\end{proof}

\begin{proof}[Proof of Theorem~\ref{thm:resolv1b}]
 By the previous lemma, there is $1 \leq q \leq Q$ such that
 $$
  x_0 \in \Lambda_{s_q + r}(m_q) \subseteq \Lambda_{t_q}(m_q).
 $$
 Define $v$ by
 $$
  v(x) = \begin{cases} u(x), &x \in \Lambda_{s_q+r}(m_q);\\ 0, &\text{otherwise}.\end{cases}
 $$
 We can now compute
 $$
  \|(H^{\Lambda_{t_q}(m_q)} - E) v\| \leq 
   \sqrt{\#(\partial_-\Lambda_{t_q}(m_q))} \E^{-2 (t_{\infty})^{\tau}}
    \leq \frac{1}{2 \sqrt{\#(\Lambda_R(0))}} 
     \E^{-(t_{\infty})^{\tau}} \leq 
      \frac{1}{2} \|v\| \E^{-(t_{\infty})^{\tau}}.
 $$
 By \eqref{eq:normHhatE}, this implies 
 $\|(H^{\Lambda_{t_q}(m_q)} - E)^{-1}\| \geq 2 \E^{(t_{\infty})^{\tau}}$.
 This contradicts \eqref{eq:asthmresolv1b} showing no such $u$ can exist.
 The claim follows.
\end{proof}

%
%
%

\section{Cartan's lemma}
\label{sec:cartan}

In this section, we derive a simple matrix valued version of
Cartan's lemma. Cartan's lemma was first used in the spectral
theory context by Goldstein and Schlag \cite{gs01}
and in the form we use it by Bourgain, Goldstein, and Schlag in
\cite{bgs2}. Then further improved on by Bourgain in 
\cite{b2002}, \cite{bbook}, \cite{b2007}, and \cite{b2009}.
In particular \cite{b2009} is important for us, since
it contains a version of Cartan's lemma where the constant
depends nicely on the dimension. We introduce for $r > 0$
\be
 \mathbb{D}_r = \{z\in\mathbb{C}:\quad |z|<r\}
\ee
and we recall $\mathbb{D} = \mathbb{D}_6$.

\begin{proposition}\label{prop:cartanbourgain}
 Let $f: \mathbb{D}_{2\E}^n \to \C$ be an analytic function
 satisfying
 \be
  \|f\|_{L^\infty(\mathbb{D}_{2\E}^n)} \leq 1,
   \quad |f(0)| \geq \eps
 \ee
 Then for $s>0$
 \be\label{eq:cartesti}
  |\{x \in [-1, 1]^n:\quad |f(x)| \leq \E^{-s}\}|
   \leq 60 \E^3 n^{3/2} 2^n \exp\left(-\frac{s}{\log(\eps^{-1})} \right).
 \ee
\end{proposition}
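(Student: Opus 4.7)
The plan is to reduce the multi-variable Cartan estimate to the classical one-variable version via a polar decomposition of the cube $[-1,1]^n$ adapted to the sup-norm. First I would take as given (or briefly recall) the one-variable statement: if $g\colon \D_{2\E}\to\C$ is analytic with $\|g\|_\infty \leq 1$ and $|g(0)| \geq \eps$, then
\[
 \bigl|\{t\in[-1,1] : |g(t)|\leq \E^{-s}\}\bigr| \leq C \exp\!\left(-\frac{s}{\log(\eps^{-1})}\right).
\]
The standard derivation factors out the zeros of $g$ inside $\D_{\E}$ (there are $O(\log(\eps^{-1}))$ of them by Jensen's formula), uses the Harnack inequality to control the zero-free part on $[-1,1]$, and then invokes Cartan's classical polynomial lemma.

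For the $n$-variable step, every $x \in [-1,1]^n\setminus\{0\}$ is uniquely written as $x = t\om$ with $t = \|x\|_\infty \in (0,1]$ and $\om = x/\|x\|_\infty \in \partial[-1,1]^n$; the Jacobian of this change of variables is $t^{n-1}$, so
\[
 \bigl|\{x\in[-1,1]^n : |f(x)|\leq \E^{-s}\}\bigr| = \int_{\partial[-1,1]^n}\!\int_0^1 t^{n-1}\,\mathbf{1}\!\bigl\{|f(t\om)|\leq \E^{-s}\bigr\}\,dt\,d\sig(\om).
\]
For any fixed $\om\in\partial[-1,1]^n$ we have $\|\om\|_\infty = 1$, so the slice $g_\om(t):=f(t\om)$ is analytic on $\D_{2\E}$, satisfies $\|g_\om\|_\infty\leq 1$, and $|g_\om(0)| = |f(0)| \geq \eps$. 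Applying the one-variable estimate to $g_\om$ and using $t^{n-1}\leq 1$ on $[0,1]$ bounds the inner integral by $C\exp(-s/\log(\eps^{-1}))$, and integrating over $\partial[-1,1]^n$ contributes its surface area, which is of order $n\cdot 2^n$.

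I do not anticipate a serious obstacle: the crucial point is that the polydisc of analyticity is $\D_{2\E}^n$ (rather than just barely covering $[-1,1]^n$), so every ray from the origin into the cube extends to a one-variable analytic function on $\D_{2\E}$, which is exactly the slack required by the one-dimensional Cartan estimate. The numerical constant $60\E^3$ and the factor $n^{3/2}$ in the statement are not sharp; they absorb the constant from the one-variable lemma together with the surface area of the cube boundary, and any reasonable bookkeeping through the argument above yields a bound of the form asserted.
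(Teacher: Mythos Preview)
Your proposal is correct and follows essentially the same strategy as the paper: slice $[-1,1]^n$ into one-dimensional rays through the origin, apply the classical one-variable Cartan lemma on each slice, and integrate over the set of directions. The only difference is cosmetic: the paper parametrizes directions by the Euclidean sphere $S^{n-1}$ (so the radial variable ranges up to a direction-dependent $r_\vartheta^{\max}\in[1,\sqrt{n}]$, and one rescales to put the slice function on $\D_{2\E}$), whereas you parametrize by the $\ell^\infty$-sphere $\partial[-1,1]^n$ (so the radial variable ranges over $[0,1]$ and no rescaling is needed since $\|\om\|_\infty=1$ already guarantees $t\mapsto f(t\om)$ is analytic on $\D_{2\E}$). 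Your choice is marginally cleaner and in fact yields the slightly better constant $C\,n\,2^n$ rather than $C\,n^{3/2}\,2^n$, but the underlying idea is identical.
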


This proposition is a variant of Lemma~1 in \cite{b2009}. We note
that the $2^n$ on the right hand side corresponds to the measure
of $[-1,1]^n$. So that the constant has a slight dependence
on the dimension. It would be interesting to determine
the optimal value. We now turn to the proof of
Proposition~\ref{prop:cartanbourgain}. For this,
we first recall the one dimensional version of Cartan's lemma.

\begin{theorem}\label{thm:cartanlemma}
 Let $f: \mathbb{D}_{2\E} \to \C$ be analytic satisfying
 \be
  \sup_{|z|\leq 2\E} |f(z)| \leq 1,\quad |f(0)| \geq \eps.
 \ee
 Then for $s>0$
 \be
  |\{x\in \left[-1, 1 \right]:\quad |f(x)| \leq \E^{-s}\}|
   \leq 30 \E^{3} \exp\left(- \frac{s}{\log(\eps^{-1})}\right).
 \ee
\end{theorem}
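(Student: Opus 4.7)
The plan is to follow the standard factorization strategy: use Jensen's formula to bound the number of zeros of $f$ in a disk containing $[-1,1]$, then isolate those zeros in a Blaschke-type factor, leaving behind an analytic non-vanishing function that can be controlled from below by Harnack's inequality. The sublevel set of $|f|$ then reduces to the sublevel set of a polynomial, handled by the classical Cartan lemma on small values of polynomials (which covers the set by disks whose total radii are controlled by the reciprocal of the polynomial degree times a power of the sublevel).

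First, I would fix radii $1 < r < R \leq 2\E$ (for example $r = 2$, $R = 2\E$, so that $\log(R/r) = 1$) and apply Jensen's formula to $f$ on $\mathbb{D}_R$. The hypotheses $|f| \leq 1$ and $|f(0)| \geq \eps$ give
$$
 N \log(R/r) \leq \log(\eps^{-1}),
$$
where $N$ is the number of zeros $z_1,\dots,z_N$ of $f$ in $\mathbb{D}_r$, counted with multiplicity. I would then form the Blaschke-type product
$$
 B(z) = \prod_{j=1}^N \frac{r (z - z_j)}{r^2 - \overline{z_j} z},
$$
which satisfies $|B| = 1$ on $\partial\mathbb{D}_r$, and set $g = f/B$. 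By the maximum principle $g$ is analytic and non-vanishing on $\mathbb{D}_r$ with $|g| \leq 1$ and $|g(0)| \geq \eps$.

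Since $-\log|g| \geq 0$ is harmonic on $\mathbb{D}_r$, Harnack's inequality on $|z| \leq 1$ gives $|g(z)| \geq \eps^{\kappa}$ on $[-1,1]$ for a constant $\kappa = \kappa(r)$ (with $r = 2$ one obtains $\kappa = 3$). Hence the sublevel set is contained in
$$
 \{x \in [-1,1] :\ |B(x)| \leq \eps^{-\kappa} \E^{-s}\}.
$$
On $[-1,1]$ the Blaschke denominators are uniformly bounded (by $r^2 + r$), so $|B(x)| \geq c^{-N} |P(x)|$ for the polynomial $P(z) = \prod_j (z - z_j)$, reducing matters to estimating $|\{x \in [-1,1] :\ |P(x)| \leq H^N\}|$ for $H$ of order $(\eps^{-\kappa} \E^{-s})^{1/N}$. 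This last estimate is exactly the classical Cartan lemma on small values of polynomials, which covers the set by disks of total radius $O(H)$.

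Substituting $N \leq \log(\eps^{-1})/\log(R/r)$ produces the desired rate $\exp(-s/\log(\eps^{-1}))$; for the range of $s$ where this analytic estimate is weaker than the trivial bound $|[-1,1]| = 2$ one simply uses the trivial bound, which is subsumed by the stated constant $30\E^3$. The main obstacle is not conceptual but a matter of bookkeeping: the Blaschke denominator constant $c^N$, the Harnack exponent $\kappa$, the classical Cartan constant, and the intermediate radii $r, R$ must be balanced so that the product matches the claimed constant $30\E^3$ while keeping the decay rate exactly $1/\log(\eps^{-1})$. A secondary subtlety to address is that for small $N$ (in particular $N = 0$ or $1$) the raw estimate is weak, and one must interpolate with the trivial bound to absorb the factor $\eps^{-\kappa/N}$.
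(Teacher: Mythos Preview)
Your outline is correct and is in fact the standard proof of the one-dimensional Cartan estimate: Jensen to count zeros, Blaschke factorization, Harnack for the zero-free part, and the polynomial Cartan lemma for the remaining factor. Your handling of the problematic regime $s \lesssim \kappa\log(\eps^{-1})$ by falling back on the trivial bound $|[-1,1]|=2$ is exactly right, and with $\kappa=3$ this is precisely what produces the constant $30\,\E^{3}$.

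The paper, however, does not carry out any of this. Its proof consists of a single sentence: it applies Theorem~11.3.4 of Levin's book \cite{lev} to $g(z)=\eps^{-1}f(z)$ with $R=1$, $M_g(2\E)\leq\eps^{-1}$, and an explicit choice of the parameter $\eta$, and reads off the conclusion. So formally the approaches differ: the paper treats Cartan's lemma as a black box, while you unwind that black box from first principles. What you gain is a self-contained argument and a clear view of where the constant $30\,\E^{3}$ and the rate $1/\log(\eps^{-1})$ come from; what the paper gains is brevity. Substantively they are the same, since the Levin result is proved by exactly the Jensen--Blaschke--Harnack--polynomial-Cartan scheme you describe.
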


\begin{proof}
 We apply Theorem~11.3.4. in Levin's book \cite{lev} to $g(z) = \frac{1}{\eps} f(z)$
 with $R = 1$, $M_g(2\E) \leq \frac{1}{\eps}$ and
 $$
  \log(\eta) = - \frac{s}{\log(\eps^{-1})} + \log(15 \E^3).
 $$
 The claim follows using $|f(x)| \leq |g(x)|$.
\end{proof}

We now come to

\begin{proof}[Proof of Proposition~\ref{prop:cartanbourgain}]
 We can write $x \in \R^n$ as $x = r \vartheta$, where
 $\vartheta \in S^{n-1}$ and $r > 0$. Define $r_{\vartheta}^{\mathrm{max}}$
 as the maximal choice of $r > 0$ such that $r \vartheta\in [-1,1]^n$.
 Then one finds that
 $$
  2^n = \left|\left[-1, 1\right]^n \right|
  = c_n \int_{S^{n-1}} \int_0^{r_{\vartheta}^{\mathrm{max}}} r^{n-1} d r d\vartheta
  = c_n \int_{S^{n-1}} \frac{1}{n} (r_{\vartheta}^{\mathrm{max}})^n d \vartheta.
 $$
 In particular that $c_n \int_{S^{n-1}} r(\vartheta)^n d\vartheta = n 2^n$.
 Introduce
 $$
  E_s = \{x \in [-1,1]^n:\quad |f(x)| \leq \E^{-s}\}.
 $$ 
 We have
 $$
  |E_s| = c_n \int_{S^{n-1}} \int_{0}^{r_{\vartheta}^{\mathrm{max}}} 
    \chi_{E_s}(r \vartheta) r^{n-1} d r d\vartheta.
 $$
 We will now analyze the inner integral for fixed $\vartheta\in S^{n-1}$.
 Define $g_{\vartheta}(z) = f(\vartheta r_{\vartheta}^{\mathrm{max}} z)$, then we have
 that
 \begin{align*}
  \int_{0}^{r_{\vartheta}^{\mathrm{max}}} \chi_{E_s}(r \vartheta) r^{n-1} d r
  & = (r_\vartheta^{\mathrm{max}}) \cdot \int_{0}^{1} \chi_{F_{s}(\vartheta)}(x) 
    \left(x r_{\vartheta}^{\mathrm{max}} \right)^{n-1} d x \\
  & \leq (r_{\vartheta}^{\mathrm{max}})^n \cdot \int_{0}^{1} \chi_{F_{s}(\vartheta)}(x) dx \\
  & = (r_{\vartheta}^{\mathrm{max}})^n |F_{s}(\vartheta)|
 \end{align*}
 where
 $$
  F_{s}(\vartheta) = \{x \in [-1,1]:\quad |g_{\vartheta}(x)| \leq \E^{-s}\}.
 $$
 Observe now that $r_{\vartheta}^{\mathrm{max}} \vartheta \D_{2\E} \subseteq \D_{2\E}^n$,
 $|g_{\vartheta}(0)| \geq \eps$, and $\sup_{|z| \leq 2 e} |g_{\vartheta}(z)| \leq 1$.
 Hence, we obtain by Cartan's lemma that
 $$
  |F_s| \leq 30 \E^{3} \exp\left(-\frac{s}{\log(\eps^{-1})}\right).
 $$
 The claim now follows using that $r_{\vartheta}^{\mathrm{max}} \leq \sqrt{2 n}$.
\end{proof}

We will now derive a matrix-valued version of this result. The main
idea is to apply Proposition~\ref{prop:cartanbourgain} to the
determinant of the matrix.

\begin{theorem}\label{thm:matcartan}
 Let $B \geq N$, $D\geq 1$, $s>0$, and
 $A: \mathbb{D}^n \to \C^{N \times N}$ be an analytic function.
 Assume there exists $x_0 \in [-\frac{1}{2}, \frac{1}{2}]^n$
 such that
 \be
  \sup_{z \in \mathbb{D}^n} \|A(z) \|\leq B,\quad\text{and}\quad \|A(x_0)^{-1}\| \leq D .
 \ee
 Assume 
 \be
  \frac{s}{N} \geq 20 n \log(B \cdot D).
 \ee
 Then
 \be
  |\{x \in [-\frac{1}{2}, \frac{1}{2}]^n:\quad \|A(x)\| \geq \E^{s} \}|
    \leq \exp\left(-\frac{1}{2} \frac{s}{N \log(B D)} \right).
 \ee
\end{theorem}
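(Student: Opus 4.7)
The plan is to reduce the matrix-valued statement to the scalar version (Proposition~\ref{prop:cartanbourgain}) by considering the determinant of $A(x)$, which is a scalar analytic function inheriting the required boundedness and non-degeneracy. I read the conclusion as being about the set where $\|A(x)^{-1}\|\geq \E^s$, since with $\sup_z\|A(z)\|\leq B$ the set where $\|A(x)\|\geq \E^s$ is empty once $s>\log B$, which would trivialize the hypothesis.

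First, I would translate the two linear-algebraic inputs into quantitative facts about the determinant. The uniform bound $\|A(z)\|\leq B$ on $\mathbb{D}^n$ gives $|\det A(z)|\leq B^N$ everywhere. The inverse bound $\|A(x_0)^{-1}\|\leq D$ says $\sigma_{\min}(A(x_0))\geq D^{-1}$, and since the remaining singular values are no smaller, $|\det A(x_0)|=\prod \sigma_i \geq D^{-N}$. On the other hand, if $\|A(x)^{-1}\|\geq \E^s$ then $\sigma_{\min}(A(x))\leq \E^{-s}$, so bounding the other singular values by $\|A(x)\|\leq B$ yields $|\det A(x)|\leq \E^{-s}B^{N-1}$. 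Thus the scalar function $f(x)=B^{-N}\det A(x)$ satisfies $\|f\|_{L^\infty(\mathbb{D}^n)}\leq 1$ and
\[
 |f(x_0)|\geq (BD)^{-N},\qquad \|A(x)^{-1}\|\geq \E^s\ \Longrightarrow\ |f(x)|\leq B^{-1}\E^{-s}.
\]

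Second, I would re-center so that the distinguished point sits at the origin, at which point Proposition~\ref{prop:cartanbourgain} applies. Set $g(y)=f(y+x_0)$; because $|x_0|_\infty\leq \tfrac12$ and $2\E<6-\tfrac12$, the polydisk $\mathbb{D}_{2\E}^n$ lies inside $\mathbb{D}^n-x_0$, so $g$ is analytic on $\mathbb{D}_{2\E}^n$ with $\|g\|_{L^\infty(\mathbb{D}_{2\E}^n)}\leq 1$ and $|g(0)|\geq (BD)^{-N}$. Moreover, $\{x\in[-\tfrac12,\tfrac12]^n:\|A(x)^{-1}\|\geq \E^s\}$ sits inside $\{y\in[-1,1]^n:|g(y)|\leq \E^{-s'}\}$ with $s'=s+\log B\geq s$. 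Applying Proposition~\ref{prop:cartanbourgain} with $\eps=(BD)^{-N}$ yields
\[
 |\{x:\|A(x)^{-1}\|\geq \E^s\}|\ \leq\ 60\,\E^3 n^{3/2} 2^n \exp\!\left(-\frac{s}{N\log(BD)}\right).
\]

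Third, I absorb the prefactor $60\E^3 n^{3/2} 2^n$ into half of the exponential using the hypothesis $s/N\geq 20 n\log(BD)$. We need
\[
 \log(60\E^3)+\tfrac32\log n + n\log 2 \ \leq\ \frac{s}{2N\log(BD)},
\]
and the right side is bounded below by $10n$ under the hypothesis, which comfortably dominates the left side for all $n\geq 1$. This yields the stated bound $\exp\!\bigl(-\tfrac12 s/(N\log(BD))\bigr)$.

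I do not expect a serious obstacle: the only two things to check carefully are the singular-value inequalities (standard) and the geometric fit of $\mathbb{D}_{2\E}^n$ inside the translated polydisk, which is the reason the paper chose radius $6$ for $\mathbb{D}$. If the conclusion really is meant to bound $\|A(x)\|\geq \E^s$ rather than $\|A(x)^{-1}\|\geq \E^s$, then only an interpretive patch is needed; the substance of the argument is the determinantal reduction to scalar Cartan.
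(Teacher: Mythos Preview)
Your proposal is correct and follows essentially the same route as the paper: reduce to the scalar Cartan estimate (Proposition~\ref{prop:cartanbourgain}) via $f(z)=B^{-N}\det A(z+x_0)$, using the determinant bounds that the paper records separately as Lemma~\ref{lem:detestis}, and then absorb the prefactor $60\E^3 n^{3/2}2^n$ with the hypothesis $s/N\geq 20n\log(BD)$. You are also right that the conclusion concerns the set where $\|A(x)^{-1}\|\geq \E^s$; this is a typo in the displayed statement, and the paper's own proof makes this clear.
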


In order to pass from the determinant to information on the matrix,
we use the following lemma.

\begin{lemma}\label{lem:detestis}
 Let $A$ be an $N \times N$ matrix.
 \begin{enumerate}
  \item $|\det(A)| \leq \|A\|^{N}$.
  \item $|\det(A)| \geq \frac{1}{\|A^{-1}\|^{N}}$.
  \item $\|A^{-1}\| \leq \frac{N \|A\|^{N-1}}{|\det(A)|}$.
 \end{enumerate}
\end{lemma}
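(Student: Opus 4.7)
The plan is to prove all three inequalities via the singular value decomposition, which makes the determinant and the operator norm transparently comparable. Write $\sigma_1 \geq \sigma_2 \geq \cdots \geq \sigma_N \geq 0$ for the singular values of $A$, and recall the three identities $\|A\| = \sigma_1$, $|\det(A)| = \prod_{i=1}^N \sigma_i$, and (when $A$ is invertible) $\|A^{-1}\| = \sigma_N^{-1}$. These three identities reduce everything to elementary manipulations of a decreasing sequence of nonnegative numbers.

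For (i), since each $\sigma_i \leq \sigma_1$, I would just write $|\det(A)| = \prod_i \sigma_i \leq \sigma_1^N = \|A\|^N$. (If $A$ is not invertible, both sides are handled trivially; if the result for singular $A$ is needed it follows by continuity, or because $|\det A|=0$.) For (ii), since each $\sigma_i \geq \sigma_N$, I get $|\det(A)| \geq \sigma_N^N = \|A^{-1}\|^{-N}$; alternatively one can apply (i) to $A^{-1}$ and use $|\det(A^{-1})| = |\det(A)|^{-1}$, which avoids repeating the argument.

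For (iii), I would observe that $\|A^{-1}\| = \sigma_N^{-1}$ together with $|\det(A)| = \sigma_N \prod_{i<N}\sigma_i$ gives
\begin{equation*}
 \|A^{-1}\| = \frac{\prod_{i<N}\sigma_i}{|\det(A)|} \leq \frac{\sigma_1^{N-1}}{|\det(A)|} = \frac{\|A\|^{N-1}}{|\det(A)|},
\end{equation*}
which is even sharper than the stated bound (the factor of $N$ in the statement is slack). If one prefers not to invoke SVD, an equivalent route is to use the cofactor formula $A^{-1} = \mathrm{adj}(A)/\det(A)$: each entry of $\mathrm{adj}(A)$ is the determinant of an $(N-1)\times(N-1)$ submatrix of $A$, which by part (i) is bounded by $\|A\|^{N-1}$, and then $\|\mathrm{adj}(A)\| \leq N \max_{i,j}|\mathrm{adj}(A)_{ij}|$ via the Frobenius norm, producing exactly the stated bound with the $N$ out front. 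There is no genuine obstacle here; the only choice to make is the SVD route versus the cofactor route, and SVD is the cleanest.
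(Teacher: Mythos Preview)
Your proof is correct and takes a slightly different route from the paper. The paper argues (i) and (ii) via the eigenvalues of $A$ (using $|\det(A)| = \prod_j |\lambda_j|$ together with $|\lambda_j| \leq \|A\|$ and $|\lambda_j| \geq \|A^{-1}\|^{-1}$), and then proves (iii) via the cofactor route you mention as your alternative: bounding each minor by $\|A\|^{N-1}$ using part (i). Your singular-value argument is cleaner, since $\sigma_1 = \|A\|$ and $\sigma_N^{-1} = \|A^{-1}\|$ are equalities rather than inequalities, and as you note it yields the sharper bound $\|A^{-1}\| \leq \|A\|^{N-1}/|\det(A)|$ in (iii) without the extraneous factor of $N$. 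The paper's eigenvalue approach has the minor advantage of not invoking SVD, but for non-normal matrices eigenvalues only bound the norm, so the singular-value framing is more natural here.
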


\begin{proof}
 Denote by $\lam_j$ the eigenvalues of $A$ and order them
 by modulus
 $$
  0 \leq |\lam_1| \leq |\lam_2| \leq \dots \leq |\lam_N|.
 $$
 By $\det(A) = \prod_{j=1}^{N} \lam_j$,
 $|\lam_N| \leq \|A\|$, and $|\lam_1| \geq \frac{1}{\|A^{-1}\|}$,
 the first two claim follows.
 For the third claim, use the first one to show that
 for any minor $\ti{A}$ of $A$, we have $|\det(A)| \leq \|A\|^{n-1}$.
\end{proof}

\begin{proof}[Proof of Theorem~\ref{thm:matcartan}]
 Define $\varphi(z) = \det(A(z))$. By Lemma~\ref{lem:detestis},
 we have that 
 $$
  \sup_{z \in \mathbb{D}^n} |\varphi(z)| \leq B^{N}\quad\text{and}\quad
  |\varphi(x_0)| \geq \frac{1}{D^N}.
 $$
 Hence, we can apply Proposition~\ref{prop:cartanbourgain}
 to the function
 $$
  f(z) = \frac{1}{B^N} \varphi(z + x_0).
 $$
 Since $2\E+\frac{1}{2} < 6$, we obtain $|f(z)| \leq 1$ 
 for $z \in (\mathbb{D}_{2\E})^n$,
 and $|f(0)| \geq \eps = \left(\frac{1}{B\cdot D}\right)^N$.
 By Proposition~\ref{prop:cartanbourgain}, we obtain
 $$
  |\{x \in [-\frac{1}{2}, \frac{1}{2}]^n:\quad |f(x)| \leq \E^{-s + 2 N \log(B)}\}|
    \leq 60 \E^3 n^{3/2} 2^n \exp\left(-\frac{s - 2 N \log(B)}{N \log(B \cdot D)}\right).
 $$
 By the previous lemma and $B \geq N$, we have
 $$
  \|A(x)^{-1}\| \leq \frac{B^{N}}{|\det(A(x))|}
   = \frac{B^{2N}}{|f(x)|}.
 $$
 The claim follows by some computations.
\end{proof}

Let us now discuss, what happens if we
replace the Lebesgue measure $| . |$ by an absolutely continuous
measure with density $\rho$. First note, that then the measure
in the case $n = 1$ of a set is just
\be
 \mu(A) = \int_{A} \rho(x) dx.
\ee
In particular, when the density $\rho$ is bounded, and supported
in $[-\frac{1}{2}, \frac{1}{2}]$, we obtain the simple estimate
\be
 \mu(A) \leq \| \rho \|_{\infty} \cdot |A|.
\ee
Of course, if we do this now for $A \subseteq [-\frac{1}{2}, \frac{1}{2}]^n$,
we obtain
\be
 \mu^{\otimes n} (A) \leq (\| \rho \|_{\infty})^n \cdot |A|.
\ee 
Hence, we see that our results remain valid.

\begin{theorem}\label{thm:matcartan2}
 Let $B \geq N$, $D > 0$, $s>0$,
 $A: \mathbb{D}^n \to \C^{N \times N}$ be an analytic function,
 and $\mu$ an absolutely continuous measure with bounded density $\rho$
 and $\operatorname{supp}(\mu)\subseteq [-\frac{1}{2},\frac{1}{2}$.
 Assume there exists $x_0 \in [-\frac{1}{2}, \frac{1}{2}]^n$
 such that
 \be
  \sup_{z \in \mathbb{D}^n} \|A(z) \|\leq B,\quad\text{and}\quad \|A(x_0)^{-1}\| \leq D .
 \ee
 Assume 
 \be
  \frac{s}{N} \geq 25 n \log(\|\rho\|_{\infty}) \cdot \log(B \cdot D).
 \ee
 Then
 \be
  \mu^{\otimes n}(\{x \in [-\frac{1}{2}, \frac{1}{2}]^n:\quad \|A(x)\| \geq \E^{s} \})
    \leq \exp\left(-\frac{1}{4} \frac{s}{N \log(B D)} \right).
 \ee
\end{theorem}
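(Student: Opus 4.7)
The plan is to reduce directly to the Lebesgue-measure version, Theorem~\ref{thm:matcartan}, by transferring from $\mu^{\otimes n}$ to Lebesgue measure at the cost of a factor $\|\rho\|_\infty^n$, and then to absorb this factor into a mild loss in the exponential decay rate. The stronger quantitative hypothesis on $s$ in this version (with the extra $\log(\|\rho\|_\infty)$ factor) is tailored precisely to pay for that absorption.

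First I would record the elementary transference bound: for any Borel $A\subseteq[-\frac{1}{2},\frac{1}{2}]$ we have $\mu(A)=\int_A \rho(x)\,dx\le\|\rho\|_\infty\,|A|$, and by Fubini, for any Borel $A\subseteq[-\frac{1}{2},\frac{1}{2}]^n$,
\begin{equation}
 \mu^{\otimes n}(A)\le\|\rho\|_\infty^n\,|A|.
\end{equation}
Since $\mu$ is a probability measure on a set of Lebesgue measure one, one also has $\|\rho\|_\infty\ge 1$, so $\log\|\rho\|_\infty\ge 0$ and the new hypothesis implies the old one,
\begin{equation}
 \frac{s}{N}\ge 25\,n\log(\|\rho\|_\infty)\log(BD)\ \Longrightarrow\ \frac{s}{N}\ge 20\,n\log(BD),
\end{equation}
(where the implication uses $\log\|\rho\|_\infty\ge 1$ in the nontrivial case; when $\|\rho\|_\infty$ is within a harmless constant of $1$, $\mu^{\otimes n}$ is already within a constant of Lebesgue measure and the result is immediate from Theorem~\ref{thm:matcartan} directly).

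Next I would apply Theorem~\ref{thm:matcartan} to the set
\begin{equation}
 A_s=\Bigl\{x\in[-\tfrac{1}{2},\tfrac{1}{2}]^n:\ \|A(x)\|\ge\E^{s}\Bigr\}
\end{equation}
to obtain $|A_s|\le\exp\!\bigl(-\tfrac{1}{2}\tfrac{s}{N\log(BD)}\bigr)$, and then combine with the transference estimate of Step~1 to get
\begin{equation}
 \mu^{\otimes n}(A_s)\le\|\rho\|_\infty^n\exp\!\Bigl(-\tfrac{1}{2}\tfrac{s}{N\log(BD)}\Bigr).
\end{equation}
The final step is bookkeeping: to conclude the claimed bound $\exp\!\bigl(-\tfrac{1}{4}\tfrac{s}{N\log(BD)}\bigr)$, it suffices to have $n\log\|\rho\|_\infty\le\tfrac{1}{4}\tfrac{s}{N\log(BD)}$, and this is an immediate consequence of the hypothesis $s/N\ge 25\,n\log(\|\rho\|_\infty)\log(BD)$.

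There is really no conceptual obstacle here; the whole content is the transference inequality plus the choice of the constant $25$ in the hypothesis. The only thing to be mindful of is the degenerate regime $\|\rho\|_\infty$ close to $1$, which must be handled separately (or by replacing $\log\|\rho\|_\infty$ by $\max(1,\log\|\rho\|_\infty)$ implicitly) so that the old Cartan hypothesis is available; this is the only point where the proof is not entirely mechanical.
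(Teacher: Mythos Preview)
Your proposal is correct and follows exactly the paper's approach: the paper derives Theorem~\ref{thm:matcartan2} from Theorem~\ref{thm:matcartan} by the same transference inequality $\mu^{\otimes n}(A)\le\|\rho\|_\infty^{\,n}\,|A|$ and then absorbs the factor $\|\rho\|_\infty^{\,n}$ into the exponent using the strengthened hypothesis on $s/N$. Your observation about the degenerate regime $\|\rho\|_\infty$ close to $1$ is apt---the paper glosses over this point, and the implicit fix you suggest (reading $\log\|\rho\|_\infty$ as $\max(1,\log\|\rho\|_\infty)$) is the natural one.
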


%
%
%

\section{Cartan's lemma for Schr\"odinger operators}
\label{sec:cartanschroe}

In this section, we will state a version of the results of the last section
suited for our application. In particular, we will combine them with
the results of Section~\ref{sec:resolv1}.
We will denote by $\mathcal{B}(\ell^2(\Lambda_R(0)))$ the Banach space
of Schr\"odinger operators $\ell^2(\Lambda_R(0)) \to \ell^2(\Lambda_R(0))$
with not necessarily real potential.
Given $A \in \mathcal{B}(\ell^2(\Lambda_R(0)))$, we denote by
$A^{\Xi}$ the restriction of $A$ to $\ell^2(\Xi)$, where $\Xi \subseteq\Lambda_R(0)$.
Similar results can be found in the work of Bourgain,
see for example Lemma~2 in \cite{b2009}.
We recall $\D =\{z:\quad |z|<6\}$.

\begin{theorem}\label{thm:schroecartan}
 Let
 \be
  H: \D^n \to \mathcal{B}(\ell^2(\Lambda_R(0)))
 \ee
 be an analytic function taking values in the normal operators.
 Given $\Lambda_{r_j}(m_j) \subseteq \Lambda_R(0)$ for $1 \leq j \leq J$.
 Assume
 \begin{enumerate}
  \item The set
   \be
    \Xi = \Lambda_R(0) \setminus \left(\bigcup_{j=1}^{J} \Lambda_{r_j}(m_j)\right)
   \ee
   is $r$-acceptable. 
  \item For $i \neq j$
   \be
    \Lambda_{r_i + 2 r}(m_i) \cap \Lambda_{r_j + 2 r}(m_j) = \emptyset.
   \ee
  \item The bound
   \be
    \sup_{z\in\mathbb{D}^n} \|H(z)\| \leq \E^{r^{\tau}}.
   \ee
  \item For any subcube $\Lambda_r(n)\subseteq\Xi$ and $z \in \D^n$
   \be
    \Lambda_r(n)\text{ is $(\gamma,\tau,1)$-suitable for } H(z).
   \ee
  \item There exists $x_0 \in \left[-\frac{1}{2},\frac{1}{2}\right]^n$
   such that for $1 \leq j \leq J$
   \be
    \|H^{\Lambda_{r_j + r}(m_j)}(x_0)^{-1}\| \leq \E^{(r_j)^{\tau}}.
   \ee
  \item The measure $\mu$ is absolutely continuous with bounded density $\rho$.
  \item Let $r_{\infty} = \max_{1\leq j\leq J} r_j$. 
   Let $S \geq T > 0$. The inequalities
   \begin{align}\label{eq:condupRlarge}
    \frac{S}{T} &\geq 1200 \cdot 3^d J (r_{\infty})^{d + 2\tau},\\
    S &\geq \max\left(10000 J 3^{d} (r_{\infty})^{d + 2 \tau} n \log(\|\rho\|_{\infty}),
     2(p+4)\log(2) + 10 r^{\tau}\right).
   \end{align}
 \end{enumerate}
 Then
 \be
  \mu^{\otimes n}\left(\left\{x \in \left[-\frac{1}{2},\frac{1}{2}\right]^n:\quad \|H(x)\| 
  \geq \frac{1}{2^{p}} \E^{S}\right\}\right)
   \leq \E^{-T}.
 \ee
\end{theorem}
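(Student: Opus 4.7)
The plan is to apply the matrix-valued Cartan lemma (Theorem~\ref{thm:matcartan2}) separately to each of the $J$ restricted operators $A_j(\cdot) := H^{\Lambda_{r_j+r}(m_j)}(\cdot)$, take a union bound over the resulting exceptional sets, and on the complement invoke the resolvent estimate of Theorem~\ref{thm:resolv1b} to control $\|H(x)^{-1}\|$. (I read the conclusion as a bound on $\|H(x)^{-1}\|$ rather than $\|H(x)\|$, since (iii) already forces $\|H(x)\|\le \E^{r^\tau}\ll \E^{S}$, making the literal event empty.)

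First, for each $1\le j \le J$ the map $A_j:\D^n \to \C^{N_j \times N_j}$, with $N_j = \#\Lambda_{r_j+r}(m_j) \le 3^d(r_\infty+r)^d$, is analytic. By (iii), $\sup_{z\in\D^n}\|A_j(z)\| \le B := \E^{r^\tau}$, and by (v), $\|A_j(x_0)^{-1}\| \le D_j := \E^{(r_j)^\tau}$, so $\log(BD_j) \le 2(r_\infty)^\tau$. Applying Theorem~\ref{thm:matcartan2} with a target $s_j^\ast$ chosen so that $s_j^\ast/N_j$ dominates both $25\,n\log\|\rho\|_\infty\log(BD_j)$ (Cartan hypothesis) and $4\log(BD_j)(T+\log J)$ (to make the measure bound useful) yields an exceptional set $\mathcal{E}_j\subseteq [-\tfrac12,\tfrac12]^n$ with
\[
 \mu^{\otimes n}(\mathcal{E}_j) \le \exp\!\Bigl(-\tfrac{s_j^\ast}{4N_j\log(BD_j)}\Bigr) \le \E^{-T-\log J},
\]
and with $\|A_j(x)^{-1}\| \le \E^{s_j^\ast}$ for every $x\notin \mathcal{E}_j$. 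Concretely one may take $s_j^\ast \sim S/(10J)$; the first inequality in (vii), $S/T\gtrsim J(r_\infty)^{d+2\tau}$, is exactly what makes both of the above requirements on $s_j^\ast$ hold simultaneously.

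Next, fix $x\notin \bigcup_j \mathcal{E}_j$ and invoke Theorem~\ref{thm:resolv1b} with $s_q:=r_q$ and $t_q:=r_q+r$. The geometric conditions (i)--(iii) there follow immediately from (i), (ii) of the present theorem (the extra $+1$ buffer in the separation is absorbed by enlarging $r$); the suitability condition (iv) there is our (iv); condition (v) there, with the quantity $(t_q)^\tau$ replaced by the Cartan output $s_q^\ast$, is precisely what Step~1 provides. Inspection of Lemma~\ref{lem:perturbElarge2} and the proof of Theorem~\ref{thm:resolv1b} shows that $(t_q)^\tau$ enters only as an upper bound for the local resolvents, so replacing it by $s_q^\ast$ yields the modified conclusion
\[
 \|H(x)^{-1}\| \le \E^{5\max_q s_q^\ast} \le \E^{S-p\log 2},
\]
where the last inequality uses the second half of (vii) to absorb the factor $5$ and the $2^p$. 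Combining with the union bound $\mu^{\otimes n}(\bigcup_j \mathcal{E}_j)\le J\cdot \E^{-T-\log J} = \E^{-T}$ yields the theorem.

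The main obstacle is the bookkeeping in Step~1: the matrix Cartan lemma loses a factor of $N_j\log(BD_j)\lesssim (r_\infty)^{d+\tau}$ in passing from the target $s$ to the measure bound, and a further factor of $J$ is lost in the union bound. Condition (vii) is calibrated precisely to absorb both losses into $S$ and $T$. A secondary point that must be checked is that substituting the Cartan output $\E^{s_q^\ast}$ for $\E^{(t_q)^\tau}$ does not disturb the argument of Theorem~\ref{thm:resolv1b}; but since the latter uses this bound only through Lemma~\ref{lem:perturbElarge2} and the resulting contradiction at the maximum of $|u|$, the substitution goes through verbatim with $\max_q s_q^\ast$ in place of $(t_\infty)^\tau$.
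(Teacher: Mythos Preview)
Your approach is genuinely different from the paper's, and it contains a real gap.

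The paper does not apply Cartan to the individual boxes $\Lambda_{r_j+r}(m_j)$. Instead it decomposes $\Lambda_R(0)=\Xi\cup\Theta$ with $\Theta=\bigcup_j\Lambda_{r_j}(m_j)$, writes $H(z)$ in $2\times 2$ block form, and forms the \emph{Schur complement} $S(z)=H^{\Theta}(z)-\Gamma_2(z)(H^{\Xi}(z))^{-1}\Gamma_1(z)$. By Theorem~\ref{thm:resolv1} one has $\|(H^{\Xi}(z))^{-1}\|\le \E^{3r^\tau}$ for all $z\in\D^n$, so $S$ is analytic and bounded on $\D^n$; by Theorem~\ref{thm:resolv1b} applied at the single point $x_0$ one gets $\|H(x_0)^{-1}\|\le \E^{5(r_\infty)^\tau}$, hence $\|S(x_0)^{-1}\|\le \E^{5(r_\infty)^\tau}$. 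A \emph{single} application of the matrix Cartan lemma to $S$ (dimension $N\le 3^dJ(r_\infty)^d$) then controls $\|S(x)^{-1}\|$, and the algebraic Schur inequality \eqref{eq:bddschur2} yields $\|H(x)^{-1}\|\le 16\,\E^{5r^\tau}\|S(x)^{-1}\|$. The point is that the passage from $S^{-1}$ to $H^{-1}$ costs only the fixed factor $\E^{5r^\tau}$, independent of the Cartan output.

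Your route fails at the step where you substitute the Cartan output $s_q^\ast$ for $(t_q)^\tau$ in Theorem~\ref{thm:resolv1b}. It is not true that $(t_\infty)^\tau$ ``enters only as an upper bound for the local resolvents''. It also enters through hypothesis (vi), namely $\gamma r\ge 10(t_\infty)^\tau+\ldots$, and this inequality is essential: in Lemma~\ref{lem:perturbElarge2} the off-diagonal decay of the small-cube Green's function is only $\E^{-\gamma r/10}$, and the proof of Theorem~\ref{thm:resolv1b} needs this to be $\le \E^{-2(t_\infty)^\tau}/\#(\partial_-\Lambda_r)$ so that the cutoff eigenfunction $v$ satisfies $\|(H^{\Lambda_{t_q}(m_q)}-\hat E)v\|\le \tfrac12\|v\|\E^{-(t_\infty)^\tau}$ and contradicts the local resolvent bound. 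If you replace $(t_\infty)^\tau$ by your $s_\infty^\ast\sim S/(10J)$, the required inequality becomes $\gamma r\gtrsim S/J$; but (vii) forces $S/J\gtrsim (r_\infty)^{d+2\tau}T$, so this is hopeless. Conversely, if you keep $(t_q)^\tau$ as in the original Theorem~\ref{thm:resolv1b}, you would need Cartan to produce $\|A_j(x)^{-1}\|\le \E^{(t_j)^\tau}$, and the corresponding measure bound $\exp\bigl(-(t_j)^\tau/(4N_j\log(BD_j))\bigr)\approx\exp\bigl(-(r_\infty)^{-d}\bigr)$ is useless. The Schur complement is precisely what decouples the Cartan output from the small-scale decay rate $\gamma r$.
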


In our applications, we will have $n \approx \#(\Xi)$. This is the big difference
to Schr\"odinger operators with quasi-periodic or skew-shift potential,
where $n$ is a small fixed number.

We will now begin with the setup of the proof. As
usually, it takes some notation.
Introduce 
\be
 \Theta = \bigcup_{j=1}^{J} \Lambda_{r_j}(m_j).
\ee
We note that $\Lambda_R(0) = \Xi \cup \Theta$. By the results
of Section~\ref{sec:resolv1}, we can conclude that

\begin{lemma}
 We have that
 \begin{enumerate}
  \item For $z \in \D^n$, we have
   \be
    \| (H^{\Xi}(z))^{-1} \| \leq \E^{3 r^{\tau}}.
   \ee
  \item We have
   \be
    \| (H(x_0))^{-1} \| \leq \E^{5 (r_{\infty})^{\tau}}.
   \ee
 \end{enumerate}
\end{lemma}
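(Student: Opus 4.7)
The plan is to deduce both estimates by direct appeal to the resolvent-norm results of Section~\ref{sec:resolv1}, invoking Remark~\ref{rem:normalop} in each case so that the non-self-adjoint operators $H(z)$ can be handled by the machinery developed there for self-adjoint $H$. In both parts the analytic content is already packaged inside Theorems~\ref{thm:resolv1} and \ref{thm:resolv1b}; what remains is to check that the hypotheses of those theorems are precisely what Theorem~\ref{thm:schroecartan} gives us.

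For (i), I would apply Theorem~\ref{thm:resolv1} (with $E=0$) to $H(z)$ on the set $\Xi$, separately for each fixed $z\in\D^n$. Hypothesis (i) of Theorem~\ref{thm:schroecartan} states that $\Xi$ is $r$-acceptable, and hypothesis (iv) provides that every subcube $\Lambda_r(n)\subseteq\Xi$ is $(\gamma,\tau,1)$-suitable for $H(z)$, which is \emph{a fortiori} $(\gamma,\tau,0)$-suitable since the bounds in Definition~\ref{def:suitable} relax when $p$ is decreased. The size conditions \eqref{eq:ineqresolv1} are absorbed by the largeness assumption (vii). Theorem~\ref{thm:resolv1} then yields the desired uniform bound $\|(H^\Xi(z))^{-1}\|\leq\E^{3r^\tau}$.

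For (ii), I would apply Theorem~\ref{thm:resolv1b} to $H(x_0)$ on $\Lambda_R(0)$ with the bad cubes $\Lambda_{r_j}(m_j)$, choosing $s_q=r_q$ and $t_q=r_q+r$. The hypotheses then line up as follows: (i)--(iii) reduce, for $r$ sufficiently large, to (i) and (ii) of Theorem~\ref{thm:schroecartan} together with the $r$-acceptability of $\Xi$; (iv) is exactly the subcube suitability from (iv) of Theorem~\ref{thm:schroecartan} (downgraded from $p=1$ to $p=0$); (v) is provided by $\|H^{\Lambda_{r_j+r}(m_j)}(x_0)^{-1}\|\leq\E^{(r_j)^\tau}\leq\E^{(r_j+r)^\tau}=\E^{(t_q)^\tau}$ in hypothesis (v); and (vi) is contained in (vii). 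Theorem~\ref{thm:resolv1b} then gives $\|H(x_0)^{-1}\|\leq\E^{5t_\infty^\tau}$ with $t_\infty=r_\infty+r$. In the regime where $r\leq r_\infty$ this exponent is within a harmless constant of $5r_\infty^\tau$, yielding the claim (possibly with a mild tightening of parameters).

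The main obstacle is purely bookkeeping: verifying that the inflation of the bad cubes $\Lambda_{r_j}(m_j)\mapsto\Lambda_{r_j+r}(m_j)$ does not destroy the pairwise disjointness required by Theorem~\ref{thm:resolv1b}(iii) nor the $r$-acceptability of the complement needed in (ii), and confirming that the size inequalities in \eqref{eq:ineqresolv1} and \eqref{eq:ineqresolv1b} really are implied by (vii) of Theorem~\ref{thm:schroecartan}. There is no new analytic difficulty beyond what Section~\ref{sec:resolv1} already provides.
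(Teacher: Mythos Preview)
Your approach is correct and essentially identical to the paper's: the proof there consists of the single sentence ``These are consequences of Theorem~\ref{thm:resolv1} and Theorem~\ref{thm:resolv1b},'' with Remark~\ref{rem:normalop} justifying the application to the normal (not necessarily self-adjoint) operators $H(z)$. The bookkeeping issues you flag---matching the separation condition (ii) of Theorem~\ref{thm:schroecartan} to condition (iii) of Theorem~\ref{thm:resolv1b}, and the slight inflation $t_\infty=r_\infty+r$ versus the stated exponent $5(r_\infty)^\tau$---are left implicit in the paper as well and are absorbed into the slack in the numerical constants of (vii).
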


\begin{proof}
 These are consequences of Theorem~\ref{thm:resolv1}
 and Theorem~\ref{thm:resolv1b}.
\end{proof}

We now write our operator as a block matrix
\be
 H(z) = \begin{pmatrix} H^{\Xi}(z) & \Gamma_1(z) \\ \Gamma_2(z) & H^{\Theta}(z) \end{pmatrix}.
\ee
From (iii), we have that
\be
 \|H^{\Xi}(z)\|, \|\Gamma_1(z)\|, \|\Gamma_2(z)\|, \|H^{\Theta}(z)\| \leq \E^{r^{\tau}},
\ee
for $z \in \mathbb{D}^n$.
We now recall the Schur complement formula.

\begin{lemma}[Schur complement]\label{lem:schur}
 Assume $A$ is invertible. Then
 \be
  \begin{pmatrix} A & B \\ C & D \end{pmatrix},
 \ee
 is invertible, if and only if
 \be
  S = D - C A^{-1} B
 \ee
 is invertible. Furthermore then
 \be
  \begin{pmatrix} A & B \\ C & D \end{pmatrix}^{-1} =
  \begin{pmatrix} A^{-1} + A^{-1} B S^{-1} C A^{-1} & - A^{-1} B S^{-1} \\ - S^{-1} C A^{-1} & S^{-1} \end{pmatrix}.
 \ee
\end{lemma}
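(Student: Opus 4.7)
The plan is to prove both claims simultaneously by writing down an explicit block $LDU$ factorization of the matrix, which reduces everything to inverting triangular matrices and the diagonal block $S$.

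First I would record the identity
\be
 \begin{pmatrix} A & B \\ C & D \end{pmatrix}
 = \begin{pmatrix} \id & 0 \\ C A^{-1} & \id \end{pmatrix}
   \begin{pmatrix} A & 0 \\ 0 & S \end{pmatrix}
   \begin{pmatrix} \id & A^{-1} B \\ 0 & \id \end{pmatrix},
\ee
where $S = D - C A^{-1} B$. This is verified by a direct matrix multiplication, using only the invertibility of $A$ to make sense of the factors. The two outer (unit triangular) factors are invertible for free, with inverses
\be
 \begin{pmatrix} \id & 0 \\ C A^{-1} & \id \end{pmatrix}^{-1}
 = \begin{pmatrix} \id & 0 \\ - C A^{-1} & \id \end{pmatrix}, \qquad
 \begin{pmatrix} \id & A^{-1} B \\ 0 & \id \end{pmatrix}^{-1}
 = \begin{pmatrix} \id & - A^{-1} B \\ 0 & \id \end{pmatrix}.
\ee

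From this factorization, the invertibility of the full block matrix is equivalent to the invertibility of the diagonal middle factor, and since $A$ is invertible by hypothesis, this is in turn equivalent to the invertibility of $S$. This gives the first claim.

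For the formula for the inverse, I would simply invert the three factors and multiply them back together in reverse order, using
\be
 \begin{pmatrix} A & 0 \\ 0 & S \end{pmatrix}^{-1}
 = \begin{pmatrix} A^{-1} & 0 \\ 0 & S^{-1} \end{pmatrix}.
\ee
Carrying out the resulting matrix multiplication is mechanical and yields exactly the formula in the statement. No real obstacle is expected, since this is the standard Schur complement identity; the only thing to be careful about is the ordering of the factors and making sure that $A^{-1}$ appears everywhere it should before $S^{-1}$ does. Since the operators here act on finite-dimensional spaces $\ell^2(\Xi)$ and $\ell^2(\Theta)$, there are no issues with unbounded operators or domains.
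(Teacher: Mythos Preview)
Your proof is correct and is exactly the standard $LDU$ argument for the Schur complement formula. The paper itself states this lemma without proof, treating it as a well-known identity, so there is nothing to compare against; your write-up would serve perfectly well as a proof if one were wanted.
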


In particular, if we have
\be\label{eq:bddschur1}
 \|S^{-1}\| \leq \left\| \begin{pmatrix} A & B \\ C & D \end{pmatrix}^{-1} \right\|
\ee
and
\be\label{eq:bddschur2}
 \left\| \begin{pmatrix} A & B \\ C & D \end{pmatrix}^{-1} \right\| \leq
  (1 + \|S^{-1}\|)(1 + \|A^{-1}\|)^2(1 + \|B\|) (1 + \|C\|).
\ee
We now switch to out explicit setting with
\be
 A = H^{\Xi}(z),\quad B = \Gamma_1(z), \quad C = \Gamma_2(z), \quad D = H^{\Theta}(z).
\ee
We then have
\be
 S(z) = H^{\Theta}(z)  - \Gamma_2(z) (H^{\Xi}(z))^{-1} \Gamma_1(z).
\ee
By (iii), we have that 
\be
 \sup_{z \in \mathbb{D}^n} \|S(z)\| \leq 2 \E^{5 r^{\tau}},
\ee
and from \eqref{eq:bddschur1} that $\|S(x_0)^{-1}\| \leq \E^{5 (r_{\infty})^{\tau}}$.

\begin{proof}[Proof of Theorem~\ref{thm:schroecartan}]
 We have that $S(z)$ satisfies the assumptions of
 Theorem~\ref{thm:matcartan2}. We apply it with 
 $$
  s = S -5 r^{\tau} - (p+4)\log(2),
 $$
 $N \leq 3^d J (r_{\infty})^{d}$, $B \leq 2 \E^{5 (r)^{\tau}}$,
 and $D \leq \E^{5 (r_{\infty})^{\tau}}$. We obtain a set 
 $$
  \mathcal{B} \subseteq \left[-\frac{1}{2},\frac{1}{2}\right]^n
 $$
 satisfying 
 $$
  \mu^{\otimes n}(\mathcal{B}) \leq \exp\left(-\frac{1}{4} \frac{s}{N \log(B\cdot D)}\right).
 $$
 such that for $x\notin\mathcal{B}$, we have
 $$
  \|S(x)^{-1}\| \leq \frac{1}{16} \E^{-5r^{\tau}} \frac{1}{2^{p}} \E^{S}
 $$
 and
 By \eqref{eq:bddschur2}, we have
 $$
  \|H(x)\| \leq 16 \E^{5 r^{\tau}} \cdot \|S(x)\|.
 $$
 The claim follows.
\end{proof}


%
%
%

\section{Further probabilistic estimates and combinatorial results}
\label{sec:combinatoric2}

In this section, we return to result specific for the
operator $H_{\lam,\omega}$. We will first prove a variant of
the probabilistic estimate, Proposition~\ref{prop:probatmostonebadcube},
which will allow to obtain \eqref{eq:asthmresolv1b}.
Second, we will improve on Proposition~\ref{prop:combinatoric}
in order to obtain the geometric conditions from Theorem~\ref{thm:resolv1b}.
After this section, we will have to combine all our results to
prove the multi-scale step.

We now prove the second probabilistic estimate, which follows ideas from
Bourgain's work \cite{b2009}.

\begin{proposition}\label{prop:probmodifyomega}
 Given $s_1 \geq -1$, $t_1 \geq \max(r + \ol{r}, s_1 + \ol{r})$,
 and for $2 \leq k \leq K$
 \be\label{eq:defsktk}
  s_k \geq \ol{t_{k-1}},\quad t_k \geq s_k + \ol{r}.
 \ee
 Assume \eqref{eq:asfr}, \eqref{eq:asrgamp}, and
 for $r \leq u \leq t_K$ that
 \be\label{eq:probnotsuitableleqeps2}
  \mathbb{P}(\Lambda_{u}(0)\text{ is not $(\gamma,\tau,p)$-suitable for } H_{\lam,\omega} - E) 
   \leq \eps.
 \ee
 Then there exists a set $\mathcal{B}_{2}^{R,K}$ with the following properties.
 \begin{enumerate}
  \item $\mathbb{P}(\mathcal{B}_{2}^{R,K}) \leq (3 R)^d \eps^K$.
  \item Let $\omega \notin \mathcal{B}_{2}^{R}$ and $n \in \Lambda_R(0)$.
   There exists $1 \leq k \leq K$ such that for some
   \be
    \hat{\omega} = \omega \pmod{\Lambda_{s_k}^{R, t_{k-1}} (n)}
   \ee
   we have
   \be
    \Lambda_{t_k}^{K}(n)\text{ is $(\gamma,\tau,p-1)$-suitable for } H_{\lam,\hat{\omega}} - E.
   \ee
 \end{enumerate}
\end{proposition}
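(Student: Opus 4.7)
The plan is to define, for each $n \in \Lambda_R(0)$ and each $k \in \{1,\ldots,K\}$, the bad event
\[
 Y_{n,k} = \left\{\omega : \text{for all } \hat\omega = \omega \pmod{\Lambda_{s_k}^{R,t_{k-1}}(n)},\ \Lambda_{t_k}^R(n)\text{ is not $(\gamma,\tau,p-1)$-suitable for }H_{\lam,\hat\omega}-E\right\},
\]
and to set $\mathcal{B}_2^{R,K} := \bigcup_{n\in\Lambda_R(0)} \bigcap_{k=1}^K Y_{n,k}$. Property (ii) is then immediate by construction: for $\omega \notin \mathcal{B}_2^{R,K}$ and any $n \in \Lambda_R(0)$, some $k$ must satisfy $\omega \notin Y_{n,k}$, supplying the required modification.

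For (i), a union bound over the $\#\Lambda_R(0) \leq (3R)^d$ choices of $n$ reduces matters to the single-$n$ estimate $\mathbb{P}\bigl(\bigcap_{k=1}^K Y_{n,k}\bigr) \leq \eps^K$. Each $Y_{n,k}$ is contained in $W_{n,k} := \{\omega : \Lambda_{t_k}^R(n)\text{ is not $(\gamma,\tau,p)$-suitable for }H_{\lam,\omega}-E\}$, because $(p)$-suitability implies $(p-1)$-suitability and is witnessed by the trivial choice $\hat\omega = \omega$. Applying \eqref{eq:probnotsuitableleqeps2} with $u=t_k \in [r,t_K]$ and using the translation invariance of the product measure $\mathbb{P}$ gives $\mathbb{P}(W_{n,k}) \leq \eps$ for every $k$.

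The product bound $\eps^K$ will follow by decoupling the $Y_{n,k}$ through nested shells, in the spirit of Bourgain~\cite{b2009}. By construction, $Y_{n,k}$ depends on $\omega$ only through $\omega|_{(\Lambda_{s_k}^{R,t_{k-1}}(n))^c}$. Using Lemma~\ref{lem:LamrstoLamr} together with the existential modification built into $Y_{n,k}^c$, I will further localize this dependence to the shell
\[
 B_{n,k} \subseteq \Lambda_{\ol{t_k}}(n^{t_k}_R) \setminus \Lambda_{s_k}^{R,t_{k-1}}(n);
\]
the inevitable one-unit loss in $p$ coming from the lemma is precisely the slack between the $p$ in \eqref{eq:probnotsuitableleqeps2} and the $p-1$ in the conclusion, so the localized events still enjoy the probability bound $\eps$. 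The thickness condition $t_k \geq s_k + \ol r$ and the separation $s_{k+1} \geq \ol{t_k}$ from \eqref{eq:defsktk} force $B_{n,k+1}$ to sit outside $\Lambda_{\ol{t_k}}(n^{t_k}_R) \supseteq B_{n,k}$, making the shells $B_{n,1},\ldots,B_{n,K}$ pairwise disjoint. Hypothesis~\ref{hyp:expdecay}(iv) then delivers independence of the decoupled events, and combining with $\mathbb{P}(Y_{n,k}) \leq \mathbb{P}(W_{n,k}) \leq \eps$ yields $\mathbb{P}\bigl(\bigcap_k Y_{n,k}\bigr) \leq \eps^K$.

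The main obstacle is the bookkeeping of the shifted cubes: the centers $n^{t_k}_R$ and $n^{t_{k-1}}_R$ can differ by up to $t_k-t_{k-1}$, and the precise geometric inclusions needed for the shells to nest correctly must be extracted from \eqref{eq:defsktk}. The most delicate step is the rigorous localization of $Y_{n,k}$ to $\omega|_{B_{n,k}}$ while keeping the $p$-level accountable: given a witness $\hat\omega$ for $\omega \notin Y_{n,k}$ and a configuration $\omega'$ agreeing with $\omega$ on $B_{n,k}$, I would construct a candidate witness $\hat\omega'$ by setting $\hat\omega' = \omega'$ outside $\Lambda_{s_k}^{R,t_{k-1}}(n)$ and $\hat\omega' = \hat\omega$ inside; the equality of $\hat\omega$ and $\hat\omega'$ on $\Lambda_{\ol{t_k}}(n^{t_k}_R)$ that this geometric setup enforces is exactly what Lemma~\ref{lem:LamrstoLamr} needs to transfer the suitability from $H_{\lam,\hat\omega}$ to $H_{\lam,\hat\omega'}$, at a one-unit cost in $p$ that is absorbed by the margin between \eqref{eq:probnotsuitableleqeps2} and the conclusion.
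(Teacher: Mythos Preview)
Your proposal is correct and follows essentially the same approach as the paper: define the per-site bad event $\mathcal{B}^n = \bigcap_k Y_{n,k}$, fatten each $Y_{n,k}$ to a shell-measurable event via Lemma~\ref{lem:LamrstoLamr} at the cost of one unit in $p$, use the nesting conditions \eqref{eq:defsktk} to make the shells disjoint and hence the fattened events independent under the product measure, and finish with a union bound over $n\in\Lambda_R(0)$. The paper makes the fattened events explicit (its $X^k$), whereas you describe the localization more informally, but the mechanism is identical.

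One small bookkeeping slip in your final paragraph: you start from a witness $\hat\omega$ giving $(p-1)$-suitability (i.e., $\omega\notin Y_{n,k}$) and then invoke Lemma~\ref{lem:LamrstoLamr}, which would land you at $(p-2)$ and hence not recover $\omega'\notin Y_{n,k}$. The correct direction (and what the paper does) is to show that the shell-fattening $X_{n,k}\supseteq Y_{n,k}$ satisfies $X_{n,k}\subseteq Y_{n,k}^{(p)}$: start from $\omega\notin Y_{n,k}^{(p)}$, so the witness $\hat\omega$ gives $p$-suitability, transfer via the lemma to $(p-1)$-suitability for your constructed $\hat\omega'$, and conclude $\omega'\notin Y_{n,k}$. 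Then $\mathbb{P}(X_{n,k})\leq\mathbb{P}(Y_{n,k}^{(p)})\leq\eps$ by taking $\hat\omega=\omega$ and invoking \eqref{eq:probnotsuitableleqeps2}. This is exactly the ``margin'' you identified; you just applied it at the wrong end.
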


The proof is similar to the one of Proposition~\ref{prop:probatmostonebadcube}.
For $n \in \Lambda_R(0)$, introduce the set $\mathcal{B}^n$ such that
(ii) fails.

\begin{lemma}
 Assume \eqref{eq:probnotsuitableleqeps2} and let $n \in \Lambda_R(0)$.
 Then we have $\mathbb{P}(\mathcal{B}^n) \leq \eps^{K}$.
\end{lemma}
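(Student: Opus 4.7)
The plan is to write $\mathcal{B}^n = \bigcap_{k=1}^{K} \mathcal{B}^n_k$, where $\mathcal{B}^n_k$ is the event that no $\hat\omega = \omega \pmod{\Lambda_{s_k}^{R,t_{k-1}}(n)}$ makes $\Lambda_{t_k}^{R}(n)$ a $(\gamma,\tau,p-1)$-suitable cube, and to show these $K$ events are essentially independent via a disjoint-shell decomposition. Define the shells
$$S_k := \Lambda_{\ol{t_k}}(n_R^{t_k}) \setminus \Lambda_{s_k}(n_R^{t_{k-1}}).$$
A direct check from $s_\ell \geq \ol{t_{\ell-1}} \geq \ol{t_k}$ (for $\ell > k$) shows the $S_k$ are pairwise disjoint. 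Moreover, $\mathcal{B}^n_k$ depends only on $\omega$ restricted to $\Lambda_{s_k}^{R,t_{k-1}}(n)^{c}$ (the interior values being quantified over), so in particular it does not depend on $\omega|_{S_j}$ for $j < k$.

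The heart of the argument is a uniform conditional estimate: for every $k$ and every realization of $\omega$ outside $S_k$,
$$\mathbb{P}\bigl(\mathcal{B}^n_k \,\big|\, \omega|_{S_k^{c}}\bigr) \leq \eps.$$
To prove this, write the defining condition of $\mathcal{B}^n_k$ as ``for every interior value $v_k$, the cube is not $(\gamma,\tau,p-1)$-suitable for the configuration $(v_k, \omega|_{S_k}, \text{fixed exterior})$''. Markov's inequality in $v_k$ bounds the conditional probability by
$$\mathbb{P}_{v_k, \omega|_{S_k}}\bigl(\Lambda_{t_k}^{R}(n)\text{ is not } (\gamma,\tau,p-1)\text{-suitable for this configuration}\bigr).$$
Now invoke Lemma~\ref{lem:LamrstoLamr} on the pair of configurations $(v_k, \omega|_{S_k}, \text{fixed exterior})$ and $(v_k, \omega|_{S_k}, W)$ where $W$ is an independent fully random sample on $\Lambda_{\ol{t_k}}(n_R^{t_k})^{c}$; by construction these agree on $\Lambda_{\ol{t_k}}(n_R^{t_k})$, so non-$(\gamma,\tau,p-1)$-suitability for the former implies non-$(\gamma,\tau,p)$-suitability for the latter, for every such $W$. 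Integrating over $W$ and using translation invariance together with hypothesis \eqref{eq:probnotsuitableleqeps2} at $u = t_k$ yields
$$\mathbb{P}\bigl(\mathcal{B}^n_k \,\big|\, \omega|_{S_k^{c}}\bigr) \leq \mathbb{P}_\omega\bigl(\Lambda_{t_k}(0)\text{ is not } (\gamma,\tau,p)\text{-suitable}\bigr) \leq \eps,$$
uniformly in the conditioning.

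The conclusion follows by peeling off shells one at a time. Conditioning on $\omega|_{S_1^{c}}$, each $\chi_{\mathcal{B}^n_j}$ for $j \geq 2$ is $\omega|_{S_1^{c}}$-measurable and factors out of the inner integral, while $\mathbb{E}[\chi_{\mathcal{B}^n_1} \mid \omega|_{S_1^{c}}] \leq \eps$ by the bound above. Iterating the same step with $S_2, S_3, \ldots, S_K$ in turn produces $\mathbb{P}(\mathcal{B}^n) \leq \eps^{K}$. The main technical subtlety is the mismatch between the $(\gamma,\tau,p-1)$-suitability appearing in the definition of $\mathcal{B}^n_k$ and the $(\gamma,\tau,p)$-suitability controlled by the hypothesis; bridging this one-level gap is precisely the role of Lemma~\ref{lem:LamrstoLamr}, which is why the radii $s_k, t_k$ are required to carry the $\ol{r}$-sized buffers.
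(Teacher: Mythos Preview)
Your argument is essentially the same as the paper's, organized slightly differently. The paper introduces, for each $k$, an ``outer-saturated'' event $X^k$: the set of $\omega$ for which \emph{some} $\tilde\omega$ agreeing with $\omega$ on $\Lambda_{t_k}(n)$ lies in $Y^{k,p-1}$ (your $\mathcal{B}^n_k$). By construction $X^k$ depends only on $\omega$ restricted to the annulus $\Lambda_{t_k}^R(n)\setminus\Lambda_{s_k}^{R,t_{k-1}}(n)$, so the $X^k$ are genuinely independent; Lemma~\ref{lem:LamrstoLamr} gives $X^k\subseteq Y^{k,p}$ and hence $\mathbb{P}(X^k)\leq\eps$, and one finishes with $\mathcal{B}^n\subseteq\bigcap_k X^k$. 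Your conditional-expectation peeling is the same mechanism in disguise: your uniform bound $\mathbb{P}(\mathcal{B}^n_k\mid\omega|_{S_k^c})\leq\eps$ is exactly the statement $\mathbb{P}(X^k)\leq\eps$ after unwinding the saturation, and the peeling order reproduces the product bound. The paper's packaging via $X^k$ is marginally cleaner because independence is manifest and no iterated conditioning is needed.

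Two minor remarks. First, what you call ``Markov's inequality in $v_k$'' is really just the inclusion $\{\forall v_k:\ A(v_k,\cdot)\}\subseteq\{A(V_k,\cdot)\}$ for any fixed (or random, independent) $V_k$, followed by Fubini; the argument is correct but the name is off. Second, your shells $S_k$ use two different shifted centers $n_R^{t_k}$ and $n_R^{t_{k-1}}$, so the containment $\Lambda_{s_k}(n_R^{t_{k-1}})\subseteq\Lambda_{\ol{t_k}}(n_R^{t_k})$ (needed so that ``interior $\cup$ shell'' really equals $\Lambda_{\ol{t_k}}(n_R^{t_k})$) and the disjointness of the $S_k$ deserve a line of verification; the paper has the same issue and treats it with similar informality.
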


\begin{proof}
 We first discuss the statement of the analog of Lemma~\ref{lem:propXn}
 in this context.
 Let $1\leq k\leq K$. Introduce $Y^{k,q}$ as the set of $\omega$
 such that for every $\hat{\omega} = \omega \pmod{\Lambda_{s_k}^{R,t_{k-1}}(n)}$,
 we have
 $$
  \Lambda_{t_k}^{R}(n)\text{ is not $(\gamma,\tau,q)$-suitable for } H_{\lam,\hat{\omega}} - E.
 $$
 By \eqref{eq:probnotsuitableleqeps2}, we have $\mathbb{P}(Y^{k,p}) \leq \eps$.
 Denote by $X^{k}$ the set of $\omega$ such that
 there exists $\ti{\omega} = \omega \pmod{\Lambda_{t_k}(n)^{c}}$ such
 that 
 $$
  \ti{\omega} \in Y^{k, p-1}.
 $$ 
 One should note that $\omega \in X^k$ and
 $$
  \ti{\omega} = \omega \pmod{\Lambda_{s_k}^{R,t_{k-1}}(n) 
   \cup \Lambda_{t_k}^{R}(n)^{c}}
 $$
 then also $\ti{\omega} \in X^k$.

 By Lemma~\ref{lem:LamrstoLamr}, we have that $X^k \subseteq Y^{k,p}$,
 which implies $\mathbb{P}(X^k) \leq \eps$.
 As in Lemma~\ref{lem:propXn}, one checks that that $X^{k}$ and $X^{\ell}$
 are independent for $k \neq \ell$. 

 We also have
 $$
  \mathcal{B}^{n} \subseteq \bigcap_{k=1}^{K} X^k,
 $$
 which implies the claim.
\end{proof}

\begin{proof}[Proof of Proposition~\ref{prop:probmodifyomega}]
 Define
 $$
  \mathcal{B}_{2}^{R} = \bigcup_{n \in \Lambda_R(0)} \mathcal{B}^n.
 $$
 From the definition of $\mathcal{B}^n$, we have that (ii) holds.
 By the previous lemma (i) follows.
\end{proof}

We are now done with the probabilistic part of this section,
and now move on to the combinatorics. We will show a variant
of Proposition~\ref{prop:combinatoric}, which also allows
us to prove the geometric conditions of Theorem~\ref{thm:resolv1b}.

The main difference is that we will not only ensure that the sets
$\Lambda_{s_{q_j}}(m_j)$ are disjoint, but we will also wish to ensure
that the set
\be
 \Lambda_R(0) \setminus \left(\bigcup_{j=1}^{J} \Lambda_{\ti{r}_{j}}(m_j) \right)
\ee
is $r$-acceptable for any choice $r_{q_j} \leq \ti{r}_{j} \leq s_{q_j} - 2r - 1$.

\begin{theorem}\label{thm:combinatoric}
 Let $Q \geq (d + 1) K + 1$ and $R \geq r \geq 1$. 
 Given $K$ points $n_1, \dots, n_K \in \Lambda_R(0)$ and 
 $Q$ length scales $r \leq r_1 \leq s_1 \leq \dots \leq s_Q \leq r_Q \leq R$ satisfying
 \eqref{eq:condrq1rq} and
 \be
  s_q \geq r_q + 3 r.
 \ee 
 Then there exists a sequence of points $m_1,\dots,m_J$ and 
 numbers $q_1,\dots,q_J$ with $J\leq K$ such that (i) and (ii) from
 Proposition~\ref{prop:combinatoric} and 
 \begin{enumerate}
  \item[(iii)] For any choice $r_{q_j} \leq \ti{r}_{j} \leq s_{q_j} - 2r - 2$,
   we have that the set
   \be
    \Lambda_R(0) \setminus \left(\bigcup_{j=1}^{J} \Lambda_{\ti{r}_{j}}^{R}(m_j) \right)
   \ee
   is $r$-acceptable.
 \end{enumerate}
\end{theorem}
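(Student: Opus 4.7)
The plan is to prove Theorem~\ref{thm:combinatoric} by strengthening the inductive construction of Proposition~\ref{prop:combinatoric} so that the $r$-acceptability condition (iii) is maintained at every step. I induct on $K$. In the base case $K=1$, take $m_1=n_1$ and $q_1=1$; then for any $\ti{r}_1\in[r_1,s_1-2r-2]$, the removed cube $\Lambda_{\ti{r}_1}^R(n_1)$ sits inside $\Lambda_{s_1}^R(n_1)\subseteq\Lambda_R(0)$ surrounded by a buffer annulus of width at least $2r+2$, so every $x\in\Xi=\Lambda_R(0)\setminus\Lambda_{\ti{r}_1}^R(n_1)$ admits a cube $\Lambda_r(n)\subseteq\Xi$ with $\dist(x,\partial^{\Xi}\Lambda_r(n))\geq r/10$: for $x$ away from the boundary of $\Lambda_R(0)$ one centers a cube at $x$ possibly shifted away from $n_1$, while for $x$ near $\partial\Lambda_R(0)$ one uses the shifted cube $\Lambda_r^R(n)$ together with the observation that boundary pairs with $y\in\Z^d\setminus\Lambda_R(0)$ are not counted in $\partial^{\Xi}$.

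For the induction step I run the three-case analysis of Proposition~\ref{prop:combinatoric}: if $\Lambda_r(n_K)\subseteq\Lambda_{r_{q_j}}^R(m_j)$ for some $j$, do nothing; if $\Lambda_{s_1}(n_K)$ is disjoint from every $\Lambda_{s_{q_j}}^R(m_j)$, add $n_K$ as a new cube at level $1$; otherwise merge by incrementing the largest colliding $q_i$ and, in case the enlarged $\Lambda_{s_{q_i+1}}^R(m_i)$ hits a further cube, cascading the merger as before. The new ingredient is that to keep (iii) alive while enforcing (i), a single insertion can absorb several neighbours of the enlarged cube before stabilizing, and each absorption costs one level of the sequence. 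In $d$ dimensions the exponential growth $r_{q+1}\geq 3 s_q$ and the separation $s_q\geq r_q+3r$ ensure the cascade terminates after a bounded number of steps per insertion, and a careful count (using that a box of scale $s_q$ can be hit by at most a fixed $d$-dependent number of neighbours of comparable size) shows the total level-increases across all $K$ insertions is at most $(d+1)K$, which is precisely the point of requiring $Q\geq(d+1)K+1$.

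Once the final configuration $\{(m_j,q_j)\}_{j=1}^{J}$ has been produced, (iii) is verified directly. Fix any admissible $\ti{r}_j\in[r_{q_j},s_{q_j}-2r-2]$ and any $x\in\Xi=\Lambda_R(0)\setminus\bigcup_{j}\Lambda_{\ti{r}_j}^R(m_j)$. If $x$ avoids every $\Lambda_{s_{q_j}}^R(m_j)$, the shifted cube $\Lambda_r^R(x)$ is contained in $\Xi$ and has $x$ well away from its boundary. Otherwise $x$ lies in some buffer annulus $\Lambda_{s_{q_i}}^R(m_i)\setminus\Lambda_{\ti{r}_i}^R(m_i)$, and I place a cube of radius $r$ shifted outward from $m_i$ into this $\geq 2r+2$-wide annulus; the disjointness of the $\Lambda_{s_{q_j}}^R(m_j)$ from (i), combined with $\ti{r}_j\leq s_{q_j}-2r-2$, yields $\dist(\Lambda_{\ti{r}_i}^R(m_i),\Lambda_{\ti{r}_{j'}}^R(m_{j'}))\geq 4r+5$ for $j'\neq i$, so the placed cube avoids every other removed region, and the outward shift ensures $\dist(x,\partial^{\Xi}\Lambda_r(n))\geq r/10$. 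The main obstacle is the bookkeeping in the merge cascade and the precise combinatorial argument justifying the $(d+1)K+1$ budget; the $d+1$ factor reflects how many neighbouring cubes a single merge step can be forced to absorb in $d$-dimensional geometry, and its verification will rely crucially on the scale growth $r_{q+1}\geq 3s_q$ to terminate each cascade within a bounded number of steps.
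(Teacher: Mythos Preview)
Your approach has a genuine gap: the buffer-annulus argument you use to verify (iii) breaks down when a removed cube sits at a scale where it is \emph{about to} touch a face of $\Lambda_R(0)$ but does not yet. Concretely, take $d=2$, $R=100$, $r=1$, $n_1=(97,0)$, and scales starting at $r_1=2$, $s_1=6$. Your base case sets $m_1=n_1$, $q_1=1$, so the only admissible $\ti{r}_1$ is $2$. Since $97+2\leq 100$, the cube $\Lambda_2^{R}(n_1)=[95,99]\times[-2,2]$ is \emph{not} shifted in the first coordinate, leaving the single-layer slit $\{100\}\times[-2,2]$ in $\Xi$. The point $(100,0)$ lies in this slit, and there is no cube $\Lambda_1(n)\subseteq\Xi$ containing it at all, so $\Xi$ is not $1$-acceptable. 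Your remark that ``boundary pairs with $y\in\Z^d\setminus\Lambda_R(0)$ are not counted in $\partial^{\Xi}$'' does not help here, because the obstruction is that no admissible cube exists, not that the boundary is counted wrongly. More generally, the ``annulus'' $\Lambda_{s_{q_i}}^R(m_i)\setminus\Lambda_{\ti r_i}^R(m_i)$ is not a genuine annulus of width $\geq 2r+2$ in every direction once the two shifted cubes have different centres.

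This boundary phenomenon is exactly what drives the factor $d$ in the hypothesis $Q\geq (d+1)K+1$, and your explanation of that factor (absorption of neighbours during the merge cascade) is not the right mechanism: the cascade in Proposition~\ref{prop:combinatoric} already terminates within $K+1$ levels regardless of $d$. The paper's proof proceeds quite differently. It first proves a lemma saying that for each fixed centre $n_k$ there are at most $d$ ``bad'' indices $q$---one per coordinate direction $j$ in which the transition $|n_{k,j}|+r_q\leq R\leq |n_{k,j}|+s_q$ occurs---and that for any other $q$ the single removal $\Lambda_R(0)\setminus\Lambda_{\ti r}^R(n_k)$ is $r$-acceptable for all $\ti r$ in the allowed range. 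Discarding these at most $dK$ bad scales leaves $\geq K+1$ scales, to which Proposition~\ref{prop:combinatoric} is applied verbatim; a second lemma then shows that $r$-acceptability of each individual removal, together with the separation (i), propagates to the union. To repair your argument you would need to incorporate this scale-exclusion step before choosing $q_1=1$ in the base case (and likewise before each insertion), which in effect recovers the paper's structure.
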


In order to prove this theorem, we prove two preliminary 
lemmas, which study properties of being $r$-acceptable
from Definition~\ref{def:setacceptable}.

\begin{lemma}
 Let $1 \leq k \leq K$. There exists a set $\mathcal{Q}^k$
 such that
 \begin{enumerate}
  \item $\#(\mathcal{Q}^k) \leq d$.
  \item For $q \notin \mathcal{Q}^k$ and $r_q \leq \ti{r} \leq s_q - r -1$,
   we have that
  \be
   \Lambda_R(0) \setminus \Lambda_{\ti{r}}^R(n)
  \ee
  is $r$-acceptable.
 \end{enumerate}
\end{lemma}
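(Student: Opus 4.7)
The plan is to pin down the integers $\ti r$ for which $\Xi := \Lambda_R(0)\setminus\Lambda^R_{\ti r}(n_k)$ fails to be $r$-acceptable, observe that these $\ti r$ are contained in the union of $d$ intervals coming from the $d$ coordinate directions, and then use the geometric growth of the scale hierarchy to conclude that each such interval meets at most one scale range $[r_q, s_q - r - 1]$.

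First I would analyse the gap structure of the inner cube. In each coordinate direction $j$, the shifted cube $\Lambda^R_{\ti r}(n_k)$ produces two gaps $g_j^-(\ti r), g_j^+(\ti r) \ge 0$ between its faces and the corresponding faces of $\Lambda_R(0)$. By the definition of the shifted centre, in the interior case $|(n_k)_j|\le R-\ti r$ one has $g_j^-(\ti r) = (n_k)_j + R - \ti r$ and $g_j^+(\ti r) = R - (n_k)_j - \ti r$, both non-negative; in the shifted case one of the two gaps is $0$ and the other equals $2R - 2\ti r$. The key geometric fact to verify is that $\Xi$ is $r$-acceptable whenever every non-zero gap satisfies $g_j^\pm \ge 2r + 1$: under this condition, for each $x \in \Xi$ one can place a cube $\Lambda_r(n) \subseteq \Xi$ inside an adjacent thick strip so that every face of $\Lambda_r(n)$ that actually contributes to $\partial^\Xi \Lambda_r(n)$ lies at $\ell^\infty$-distance at least $r/10$ from $x$.

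Second, I would translate this into a condition on $\ti r$. Since $\ti r \le s_Q - r - 1 \le R - r - 1$, in the shifted case the non-zero gap automatically satisfies $2R - 2\ti r \ge 2r + 1$, so only the interior case can create an obstruction. An elementary computation then shows that the $\ti r$ for which direction $j$ is bad (i.e.\ one of $g_j^-$, $g_j^+$ lies strictly between $0$ and $2r + 1$) are contained in a single interval $I_j$ centred near $R - |(n_k)_j|$ and of length at most $3r$.

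Third, combining $s_q \ge r_q + 3r$ with $r_{q+1} \ge 3 s_q$ yields that consecutive scale ranges $[r_q, s_q - r - 1]$ are separated by a gap of length at least $r_{q+1} - s_q + r \ge 2 s_q + r \ge 9r$. Since $|I_j| \le 3r < 9r$, each interval $I_j$ meets at most one scale range and hence contributes at most one index to $\mathcal{Q}^k$; collecting over the $d$ coordinate directions gives $\# \mathcal{Q}^k \le d$.

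The main obstacle is the first step. Once all non-zero gaps exceed $2r$, the construction of the required cube $\Lambda_r(n)$ is routine for $x$ in the interior of a thick strip of $\Xi$, but delicate when $x$ sits near an edge of $\Lambda^R_{\ti r}(n_k)$. In those configurations one must exploit that some of the candidate boundary faces of $\Lambda_r(n)$ are flush either with a face of $\Lambda_R(0)$ or with a face of $\Lambda^R_{\ti r}(n_k)$, and therefore do not appear in $\partial^\Xi \Lambda_r(n)$; it is precisely the scales for which no such placement is possible that produce the $O(r)$-wide bad intervals $I_j$ identified in the second step.
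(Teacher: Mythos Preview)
Your approach is the same as the paper's, only more explicit. The paper's proof is two sentences: it defines $\mathcal{Q}^k$ directly as the set of $q$ for which some coordinate $j$ satisfies $|n_j|+r_q\le R\le |n_j|+s_q$, i.e.\ $R-|n_j|\in[r_q,s_q]$; since the intervals $[r_q,s_q]$ are pairwise disjoint (from $r_{q+1}\ge 3s_q$), each fixed number $R-|n_j|$ lies in at most one of them, giving $\#\mathcal{Q}^k\le d$ immediately. The paper then simply asserts that if this condition fails for every $j$ the complement is $r$-acceptable, without spelling out the gap analysis you carry out in your first step. Your version replaces the disjointness-of-$[r_q,s_q]$ argument by the equivalent observation that a short bad interval $I_j$ cannot bridge the large gap between consecutive ranges $[r_q,s_q-r-1]$; the content is identical.

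One small quantitative point: your bound $|I_j|\le 3r$ is not quite right when $|(n_k)_j|$ is small, since then both gaps $g_j^{\pm}$ can fall into $(0,2r+1)$ simultaneously and the union of bad $\ti r$ has length up to about $4r+1$. This does not affect the conclusion, because your separation bound $\ge 9r$ still dominates. If you want a clean statement you can either replace $3r$ by $5r$, or note (as the paper implicitly does) that the single number $R-|(n_k)_j|$ can lie in at most one of the disjoint intervals $[r_q,s_q]$, which is a slightly slicker way to get the ``at most one $q$ per direction'' conclusion without tracking interval lengths at all.
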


\begin{proof}
 Choose $\mathcal{Q}^k$ as the set of $q$ such that for some $1 \leq j \leq d$
 $$
  |n_j| + r_q \leq R \leq |n_j| + s_q.
 $$
 Clearly we have at most $d$ such choices, and if neither of
 the two conditions is the case, then the above set is
 $r$-acceptable.
\end{proof}

\begin{lemma}
 Suppose that
 \be\label{eq:LamRminI1}
  \Lambda_R(0) \setminus \left(\bigcup_{i=1 }^{I - 1}\Lambda_{r_i}^R(n_i)\right)
 \ee
 and
 \be\label{eq:LamRminI}
  \Lambda_R(0) \setminus \Lambda_{r_I}^{R}(n_I).
 \ee
 are $r$-acceptable. Furthermore suppose that for $i \neq I$
 \be
  \Lambda_{r_i + 2r+1}^{R, r_i}(n_i) \cap \Lambda_{r_I + 2r+1}^{R, r_I}(n_I) = \emptyset.
 \ee
 Then 
 \be
  \Lambda_R(0) \setminus \left(\bigcup_{i=1 }^{I}\Lambda_{r_i}^R(n_i)\right)
 \ee
 is $r$-acceptable.
\end{lemma}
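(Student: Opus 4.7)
The plan is to verify the nontrivial clause of Definition~\ref{def:setacceptable}: for every $x$ in the new set $\Xi := \Lambda_R(0) \setminus \bigcup_{i=1}^I \Lambda_{r_i}^R(n_i)$, I would produce a cube $\Lambda_r(n_0) \subseteq \Xi$ containing $x$ with $\dist(x, \partial^{\Xi} \Lambda_r(n_0)) \geq r/10$; finiteness is inherited from $\Lambda_R(0)$. I would split on whether $x$ lies in the buffered cube $\Lambda_{r_I + 2r+1}^{R, r_I}(n_I)$ around the newly removed region, using throughout the monotonicity $\partial^{\Xi} \Lambda_r(n_0) \subseteq \partial^{\Xi'} \Lambda_r(n_0)$ whenever $\Xi \subseteq \Xi'$, so that any distance bound obtained in a larger ambient set transfers to $\Xi$ for free.

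In Case 1, $x \notin \Lambda_{r_I + 2r+1}^{R, r_I}(n_I)$. Since $x$ still belongs to $\Lambda_R(0) \setminus \bigcup_{i=1}^{I-1} \Lambda_{r_i}^R(n_i)$, the $r$-acceptability of \eqref{eq:LamRminI1} delivers a cube $\Lambda_r(n_0)$ containing $x$ with the required distance bound. A triangle inequality $|n_0 - (n_I)^{r_I}_R|_\infty \geq |x - (n_I)^{r_I}_R|_\infty - |x - n_0|_\infty > r_I + r + 1$ then shows $\Lambda_r(n_0) \cap \Lambda_{r_I}^R(n_I) = \emptyset$, so $\Lambda_r(n_0) \subseteq \Xi$, and the monotonicity gives the distance bound on $\partial^{\Xi} \Lambda_r(n_0)$.

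In Case 2, $x \in \Lambda_{r_I + 2r+1}^{R, r_I}(n_I)$. Since $x \in \Lambda_R(0) \setminus \Lambda_{r_I}^R(n_I)$, the $r$-acceptability of \eqref{eq:LamRminI} furnishes a cube $\Lambda_r(n_0)$ containing $x$ with the distance bound. The critical step is the triangle inequality: for $i \neq I$,
\begin{equation*}
|n_0 - (n_i)^{r_i}_R|_\infty \geq |(n_i)^{r_i}_R - (n_I)^{r_I}_R|_\infty - |n_0 - (n_I)^{r_I}_R|_\infty > (r_i + r_I + 4r + 2) - (r_I + 3r + 1) = r_i + r + 1,
\end{equation*}
where the first term uses the hypothesized disjointness of the $(2r+1)$-buffered cubes, and the bound $|n_0 - (n_I)^{r_I}_R|_\infty \leq r_I + 3r + 1$ comes from combining $|n_0 - x|_\infty \leq r$ with $x \in \Lambda_{r_I + 2r+1}^{R, r_I}(n_I)$. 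Hence $\Lambda_r(n_0) \cap \Lambda_{r_i}^R(n_i) = \emptyset$ for every $i \neq I$, giving $\Lambda_r(n_0) \subseteq \Xi$, and monotonicity again transfers the distance bound.

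The only delicate step is Case 2: the buffer $2r+1$ in the hypothesis is precisely what prevents a radius-$r$ cube through a point $x$ at distance up to $r_I + 2r+1$ from $(n_I)^{r_I}_R$ from spilling into any unbuffered $\Lambda_{r_i}^R(n_i)$; any smaller buffer would leave the triangle inequality short of the needed separation $r_i + r$.
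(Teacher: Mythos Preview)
Your proof is correct and follows the same two-case split as the paper: whether or not $x$ lies in the buffered cube $\Lambda_{r_I+2r+1}^{R,r_I}(n_I)$ around the newly removed region. The paper's own proof is considerably terser --- it simply asserts that in each case the relevant hypothesis \eqref{eq:LamRminI} or \eqref{eq:LamRminI1} furnishes a cube $\Lambda_r(n)\subseteq\Xi$ with the required distance bound --- whereas you spell out the triangle-inequality computations showing that the cube produced by the hypothesis is actually disjoint from the other removed regions, and you make explicit the boundary-monotonicity $\partial^{\Xi}\Lambda_r(n_0)\subseteq\partial^{\Xi'}\Lambda_r(n_0)$ for $\Xi\subseteq\Xi'$ that transfers the distance bound.
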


\begin{proof}
 Let 
 $$
  \Xi = \Lambda_R(0) \setminus \left(\bigcup_{i=1 }^{I}\Lambda_{r_i}^R(n_i)\right).
 $$
 Let $x \in \Lambda_{r_I + 2r+1}^{R, r_I}(n_I) \cap \Xi$.
 By \eqref{eq:LamRminI}, we can find $\Lambda_r(n) \subset \Xi$
 such that 
 $$
  \dist(x,\partial^{\Xi}\Lambda_r(n))\geq\frac{r}{10}.
 $$
 For $x \in \Xi \setminus \Lambda_{r_I + 2r+1}^{R, r_I}(n_I)$,
 the claim follows from \eqref{eq:LamRminI1}.
\end{proof}

\begin{proof}[Proof of Theorem~\ref{thm:combinatoric}]
 By the first lemma, we can eliminate $Q \cdot K$ choices of the
 $r_q, s_q$, so that the individual cubes would all satisfy the
 $r$-acceptability assumption.

 By Proposition~\ref{prop:combinatoric}, we can now find a choice
 $m_1, \dots, m_J$ and $q_1, \dots, q_J$ such that (i) and (ii)
 hold. Then, by the previous lemma, we have that (iii) holds.
\end{proof}

%
%
%

\section{Setup for the proof of Theorem~\ref{thm:indstep}}

In this section, we will begin the proof of Theorem~\ref{thm:indstep}
and for this reason no longer work in full generality. In particular,
we will begin specializing to
\be
 p = 3,\quad \tau= \frac{1}{2}.
\ee
The main reason is that this way the numerical inequalities become
somewhat more transparent. We will furthermore define
\be
 \widehat{t} = \max(\ol{t}, 3 t, t + 3r),
\ee
which is motivated by the conditions of Theorem~\ref{thm:combinatoric}.

\begin{theorem}\label{thm:indsetup}
 Let $K \geq 1$, $r \geq 1$ and assume
 \be\label{eq:condKsetup}
  r \geq \left(\max(4,2 + \frac{4\gamma}{c})\right)^{2 d K^2}.
 \ee
 Assume for $r \leq u \leq r^3$ that
 \be\label{eq:probnotsuitableleqeps5}
  \mathbb{P}(\Lambda_{u}(0)\text{ is not $(\gamma,\frac{1}{2},3)$-suitable for }
    H_{\lam,\omega} - E) \leq \eps.
 \ee
 Then for $R \geq r^4$ there exists a set $\mathcal{B}^{R}$ 
 satisfying
 \begin{enumerate}
  \item We have
   \be\label{eq:probindsetup}
    \mathbb{P}(\mathcal{B}^{R}) \leq 3^d \left(\frac{3^{d K}}{(K+1)!} + 2 d K\right) R^{d K} \eps^{K}.
   \ee
  \item For each $\omega\notin \mathcal{B}^{R}$ there exist $0 \leq L \leq K$,
   $\ul{m} \in \Lambda_R(0)^{L}$, $s_{\ell} \geq 0$, and 
   \be
    \ol{s_{\ell}}  \leq t_{\ell} \leq r^3
   \ee
   such that the following hold.
   \begin{enumerate}
    \item The set
     \be
      \Lambda_R(0) \setminus \left(\bigcup_{\ell=1}^{L} \Lambda_{s_{\ell} + r}^{R}(m_{\ell})\right)
     \ee
     is $r$-acceptable.
    \item For $k \neq \ell$, we have
     \be
      \Lambda_{\widehat{t_{k}}}(m_{k}) \cap \Lambda_{\widehat{t_{\ell}}}(m_{\ell}) = \emptyset.
     \ee
    \item For
     \be
      \Lambda_r(n) \subseteq \Lambda_R(0) \setminus 
       \left(\bigcup_{\ell=1}^{L} \Lambda_{s_{\ell}}^{R}(m_{\ell})\right)
     \ee
     we have
     \be
      \Lambda_r(n)\text{ is $(\gamma,\frac{1}{2},2)$-suitable for } H_{\lam,\omega} - E.
     \ee
    \item For $1 \leq \ell \leq L$, there exists $0 \leq \ti{t} \leq s_{\ell}$
     \be
      \hat{\omega}_{\ell} = \omega \pmod{\Lambda_{s_{\ell}}^{R, \ti{t}}(m_{\ell})}
     \ee
     such that
     \be
      \|(H_{\lam,\hat{\omega}_{\ell}}^{\Lambda_{t_{\ell}}^{R}(m_{\ell})} - E)^{-1}\| 
        \leq \frac{1}{2} \E^{ \sqrt{t_{\ell}}}.
     \ee
   \end{enumerate}
 \end{enumerate}
 Furthermore the possible number of choices $s_q, t_q$ in (ii)
 is bounded by $r$.
\end{theorem}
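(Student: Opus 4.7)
The plan is to combine three tools from the preceding sections: the ``at most $K$ bad cubes'' Proposition~\ref{prop:probatmostonebadcube}, the ``modify $\omega$ to obtain suitability at a larger scale'' Proposition~\ref{prop:probmodifyomega}, and the combinatorial Theorem~\ref{thm:combinatoric}. First, I would fix a ladder of $Q=(d+1)K+1$ scale pairs $(\sigma_q,\tau_q)$ with $r\le\sigma_1\le\tau_1\le\sigma_2\le\cdots\le\tau_Q\le r^3$, chosen to satisfy simultaneously the growth requirements of Proposition~\ref{prop:probmodifyomega} ($\sigma_{q+1}\ge\ol{\tau_q}$ and $\tau_q\ge\sigma_q+\ol{r}$) and those of Theorem~\ref{thm:combinatoric} ($\tau_q\ge\sigma_q+3r$ and $\sigma_{q+1}\ge 3\tau_q$, equivalently $\sigma_{q+1}\ge\widehat{\tau_q}$). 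Each $\ol{\,\cdot\,}$ step inflates the scale by at most $T_0=\max(4,2+4\gamma/c)$ by Lemma~\ref{lem:proprbar}, so the whole ladder fits in $[r,r^3]$ precisely because \eqref{eq:condKsetup} supplies $r\ge T_0^{2dK^2}\ge T_0^{2Q}$, which also guarantees that \eqref{eq:probnotsuitableleqeps5} is applicable at every scale $\tau_q$.

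Second, I would set $\mathcal{B}^R=\mathcal{B}_1^{R,K}\cup\mathcal{B}_2^{R,Q}$, with $\mathcal{B}_1^{R,K}$ the set from Proposition~\ref{prop:probatmostonebadcube} applied at scale $r$ with parameter $K$ and $p=3$, and $\mathcal{B}_2^{R,Q}$ the union, over the at most $2dK$ scale-assignment patterns produced by the combinatorial elimination in Theorem~\ref{thm:combinatoric}, of the exceptional sets from Proposition~\ref{prop:probmodifyomega} applied with the above ladder. The two contributions produce the two summands in \eqref{eq:probindsetup}, and since $Q\le r$, the final assertion that the possible number of $(s_\ell,t_\ell)$-choices is bounded by $r$ is immediate.

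Third, for $\omega\notin\mathcal{B}^R$, I would extract the data and repackage. Proposition~\ref{prop:probatmostonebadcube} supplies $L\le K$ centers $m_\omega^1,\dots,m_\omega^L$ outside of whose $2\ol{r}$-neighborhoods every $\Lambda_r(n)$ is $(\gamma,\tfrac12,2)$-suitable for $H_{\lam,\omega}-E$. For each such center Proposition~\ref{prop:probmodifyomega} supplies an index $q_\ell\in\{1,\dots,Q\}$ and a configuration $\hat\omega_\ell$, agreeing with $\omega$ off $\Lambda_{\sigma_{q_\ell}}^{R,\tau_{q_\ell-1}}(m_\omega^\ell)$, such that $\Lambda_{\tau_{q_\ell}}^R(m_\omega^\ell)$ is $(\gamma,\tfrac12,2)$-suitable for $H_{\lam,\hat\omega_\ell}-E$. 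Feeding the $L$ centers together with the $Q$-ladder into Theorem~\ref{thm:combinatoric} (whose parameter $K$ is now $L$, and whose hypothesis $Q\ge (d+1)L+1$ is met) yields refined centers $m_1,\dots,m_J$ with $J\le L\le K$ and indices $q'_j$ such that the $\Lambda_{\sigma_{q'_j}}^R(m_j)$ are pairwise disjoint, each $m_\omega^k$ lies in some $\Lambda_{\tau_{q'_j}}^R(m_j)$, and $\Lambda_R(0)\setminus\bigcup_j\Lambda_{\sigma_{q'_j}-2r-2}^R(m_j)$ is $r$-acceptable. Setting $s_\ell=\sigma_{q'_\ell}-2r-2$ and $t_\ell=\tau_{q'_\ell}$ then reads off conditions (A), (B) and (C) of (ii) directly, using the spacing $\sigma_{q+1}\ge\widehat{\tau_q}$ for (B); condition (D) follows from the $(\gamma,\tfrac12,2)$-suitability of $\Lambda_{\tau_{q_\ell}}^R(m_\omega^\ell)$ for $H_{\lam,\hat\omega_\ell}-E$ after verifying the appropriate containment.

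The main obstacle I anticipate is precisely this last bookkeeping step: the modification $\hat\omega_\ell$ supplied by Proposition~\ref{prop:probmodifyomega} is anchored at the original $m_\omega^\ell$, whereas (D) demands a resolvent bound on a cube centered at the refined $m_\ell$. One must check that $\Lambda_{\tau_{q_\ell}}^R(m_\omega^\ell)\subseteq\Lambda_{t_\ell}^R(m_\ell)$ and that the modification region $\Lambda_{\sigma_{q_\ell}}^{R,\tau_{q_\ell-1}}(m_\omega^\ell)$ is contained in $\Lambda_{s_\ell}^{R,\ti{t}}(m_\ell)$ for a permissible $\ti{t}\le s_\ell$, so that the conclusion transfers from the original to the refined center; this follows from the ladder's spacing together with the containment guaranteed by Theorem~\ref{thm:combinatoric}, but requires care. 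The remaining verifications---bounding the probability of $\mathcal{B}^R$ and matching numerical constants---are routine.
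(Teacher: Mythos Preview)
Your overall strategy---combine Proposition~\ref{prop:probatmostonebadcube}, Proposition~\ref{prop:probmodifyomega}, and Theorem~\ref{thm:combinatoric}---is exactly the paper's. But the way you mesh the last two tools has a real gap, and the obstacle you flag at the end is not the one that bites.

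First, the obstacle you name is spurious. The output centers $m_1,\dots,m_J$ of Theorem~\ref{thm:combinatoric} are always a \emph{subset} of the input centers $m_\omega^1,\dots,m_\omega^L$: look at the inductive construction in the proof of Proposition~\ref{prop:combinatoric}, where every $m_j$ is initialized as some $n_k$ and only deletions or scale-increments occur thereafter. So there is no center realignment to worry about.

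The real problem is \emph{scale} alignment. With your single $Q$-ladder, Proposition~\ref{prop:probmodifyomega} guarantees for each surviving center $m_\ell$ the existence of \emph{one} index $q^*_\ell\in\{1,\dots,Q\}$ at which the resolvent bound holds on $\Lambda_{\tau_{q^*_\ell}}^R(m_\ell)$, after modifying $\omega$ inside a cube of radius $\sigma_{q^*_\ell}$. Independently, Theorem~\ref{thm:combinatoric} assigns to $m_\ell$ a scale $q'_\ell$, and you set $s_\ell\approx\sigma_{q'_\ell}$, $t_\ell=\tau_{q'_\ell}$. Nothing forces $q^*_\ell=q'_\ell$. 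If $q^*_\ell<q'_\ell$ then $\tau_{q^*_\ell}<\sigma_{q'_\ell}\approx s_\ell$, so taking $t_\ell=\tau_{q^*_\ell}$ violates $\ol{s_\ell}\le t_\ell$; if $q^*_\ell>q'_\ell$ then the modification region has radius $\sigma_{q^*_\ell}>s_\ell$, so $\hat\omega_\ell$ does not satisfy $\hat\omega_\ell=\omega\pmod{\Lambda_{s_\ell}^{R,\ti t}(m_\ell)}$ for any $\ti t\le s_\ell$. The containment conclusion of Theorem~\ref{thm:combinatoric} is only $\Lambda_r(m_\omega^k)\subseteq\Lambda_{r_{q'_j}}^R(m_j)$, which says nothing about cubes of radius $\tau_{q^*}$ or $\sigma_{q^*}$.

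The paper's fix is a \emph{double-indexed} ladder $(s_k^q,t_k^q)$ with outer index $q=1,\dots,Q$ and inner index $k=1,\dots,K$, built so that each outer block $[\hat r_q,\hat s_q]:=[s_1^q,t_K^q]$ itself contains a full $K$-ladder obeying \eqref{eq:defsktk}. Proposition~\ref{prop:probmodifyomega} is applied \emph{once for each outer $q$} with its inner $K$-ladder, giving sets $\mathcal{B}_{2,q}^{R,K}$ each of measure $\le(3R)^d\eps^K$; their union over $q\le Q$ contributes the $2dK$ term in \eqref{eq:probindsetup}. Theorem~\ref{thm:combinatoric} operates only on the outer scales $(\hat r_q,\hat s_q)$ and assigns $q'_\ell$ to $m_\ell$. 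Since $\omega\notin\mathcal{B}_{2,q'_\ell}^{R,K}$, Proposition~\ref{prop:probmodifyomega} still gives a free choice of inner index $k$ within the $q'_\ell$-block, and one sets $s_\ell=s_k^{q'_\ell}$, $t_\ell=t_k^{q'_\ell}$, $\ti t=t_{k-1}^{q'_\ell}$. Because $\hat r_{q'_\ell}\le s_\ell\le t_\ell\le\hat s_{q'_\ell}$, the geometric conclusions (ii.a)--(ii.c) coming from the outer combinatorics survive, and (ii.d) now holds by construction. The ladder has at most $2QK\le 2dK^2$ levels, which is why the exponent $2dK^2$ appears in \eqref{eq:condKsetup} (via Lemma~\ref{lem:estitinf}) and why the number of possible $(s_\ell,t_\ell)$ is bounded by $r$.
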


The conditions in (ii) are chosen in such a way that the ones
of Theorem~\ref{thm:resolv1b} hold.

The proof of this theorem will proceed by combining the results of
the last section with the ones from Section~\ref{sec:suitability}.
Denote by $\mathcal{B}_{1}^{R,K}$ the set from
Proposition~\ref{prop:probatmostonebadcube}.
We see that (ii.c) holds for $\omega\notin\mathcal{B}_{1}^{R,K}$
as long as for $\ul{m}_{\omega}$, we have
\be
 \bigcup_{k=1}^{K} \Lambda_r(m_{\omega}^{k}) \subseteq \bigcup_{q=1}^{Q} \Lambda_{s_q}^{R}(m_q).
\ee
Next, we wish to apply Proposition~\ref{prop:probmodifyomega}.
Introduce 
\be
 Q = (d+1)K + 1,\quad s_1^1 = \ol{r}
\ee
and
\be
 t_k^{q} = \ol{s_{k}^{q}},\quad
 s_k^{q} = \ol{t_{k-1}^{q}},\quad
 s_{1}^{q} = \widehat{t_{K}^{q-1}}.
\ee
We will write $\ul{t}^{q} = \{t_k^{q}\}_{k=1}^{K}$
and $\ul{s}^{q} = \{s_k^{q}\}_{k=1}^{K}$. These are choosen
such that Theorem~\ref{thm:resolv1b} will be applicable.

\begin{lemma}\label{lem:estitinf}
 The number of $t_{k}^{q}$ is bounded by $2d K^2$.
 Furthermore, we have that
 \be\label{eq:boundtKQ}
  t_{K}^{Q} \leq r^3.
 \ee
\end{lemma}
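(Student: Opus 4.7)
The plan is to unroll the recursion defining the sequence $t_k^q$ and bound everything geometrically by powers of the common growth factor
\be
 T_0 = \max\!\left(4,\, 2 + \tfrac{4\gamma}{c}\right).
\ee
The sequence of sizes forms a linear chain
\be
 s_1^1 = \ol{r} \;\to\; t_1^1 \;\to\; s_2^1 \;\to\; t_2^1 \;\to\; \cdots \;\to\; t_K^1 \;\to\; s_1^2 \;\to\; \cdots \;\to\; t_K^Q,
\ee
where consecutive terms differ by one application of $\ol{\cdot}$ (within a fixed $q$) or one application of $\widehat{\cdot}$ (going from $q$ to $q+1$). This is the only structural input needed.

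For the count, I would simply note that the $t_k^q$ are indexed by pairs $(k,q)$ with $1 \le k \le K$ and $1 \le q \le Q = (d+1)K + 1$, so their total number is
\be
 KQ \;=\; (d+1)K^2 + K \;\le\; 2 d K^2
\ee
for $K \ge 1$ (using $d \ge 1$; a one-line inequality absorbs the extra $K$). For the second claim, I would first observe that Lemma~\ref{lem:proprbar} gives $\ol{t} \le T_0 t$ whenever $t \ge \log(\lambda)/c$, and that the identity $\widehat{t} = \max(\ol{t}, 3 t, t+3r)$, combined with $T_0 \ge 4 \ge 3$ and $t \ge r$, gives $\widehat{t} \le T_0 t$. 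Thus every transition in the chain multiplies the current term by at most $T_0$, provided all terms remain $\ge \log(\lambda)/c$, which is automatic from $r \ge \log(\lambda)/c$ and the monotonicity of the chain.

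Counting transitions along the chain gives at most $Q(2K-1) + (Q-1) = 2KQ - 1$ of them, hence
\be
 t_K^Q \;\le\; T_0^{\,2KQ - 1}\, r \;\le\; T_0^{\,2dK^2 \cdot c_{d}}\, r
\ee
for an explicit constant $c_d$ of order one. Plugging in the hypothesis~\eqref{eq:condKsetup}, namely $r \ge T_0^{2dK^2}$, converts this into $t_K^Q \le T_0^{O(dK^2)} r \le r^{O(1)} \cdot r$, and one checks the exponent is bounded by $3$ for the given choice of constants, yielding the required bound $t_K^Q \le r^3$.

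The main obstacle is purely arithmetic bookkeeping: ensuring that the power of $T_0$ accumulated over the at most $2KQ-1$ iterations fits inside the $r^2$ slack provided by~\eqref{eq:condKsetup}. One must verify that the counting $2KQ - 1 \le 4dK^2$ (or similar) combines cleanly with $\log r \ge 2dK^2 \log T_0$; any minor mismatch is absorbable by weakening $T_0$ slightly in the definition or by being a touch more careful with the $\widehat{\cdot}$ step (it often actually satisfies $\widehat{t} \le 3t$ once $t \ge r$, which saves a factor). Beyond this, there is nothing subtle: the proof is a straightforward monotone iteration driven by Lemma~\ref{lem:proprbar}.
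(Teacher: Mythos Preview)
Your approach is essentially identical to the paper's: count $KQ \le 2dK^2$, bound each step of the recursion by a factor $T_0$ via Lemma~\ref{lem:proprbar}, and then invoke \eqref{eq:condKsetup} to convert the accumulated power of $T_0$ into a power of $r$. The paper's proof is even terser than yours, writing only the three inequalities $s_k^q \le T_0 t_{k-1}^q$, $s_1^q \le T_0 t_K^{q-1}$, $t_k^q \le T_0 s_k^q$ and concluding ``since $\ol{r} \le r^2$.'' One small slip: your chain starts at $s_1^1 = \ol{r}$, not at $r$, so after $2KQ-1$ transitions you reach $T_0^{2KQ-1}\,\ol{r}$ rather than $T_0^{2KQ-1}\,r$; the paper's ``$\ol{r} \le r^2$'' is exactly what absorbs this extra factor, and you should do likewise rather than silently drop it. Your caveat about the arithmetic bookkeeping is apt---neither your write-up nor the paper's makes the inequality $2KQ \le 4dK^2$ (equivalently $(d-1)K \ge 1$) fully explicit, and in fact it fails for $d=1$; this is a harmless constant issue in context but worth flagging.
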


\begin{proof}
 The first claim follows by the number being bounded by $Q K$.
 For the second claim, we have by Lemma~\ref{lem:proprbar}
 $$
 s_k^{q} \leq T_0 t_{k-1}^{q},\quad
 s_{1}^{q} \leq T_0 t_{K}^{q-1},\quad
 t_{k}^{q} \leq T_0 s_{k}^{q},
 $$
 as long as the terms on the right hand side are greater $\frac{\log(\lam)}{c}$.
 The claim follows since $\ol{r} \leq r^2$.
\end{proof}

Denote by $\mathcal{B}_{2,q}^{R,K}$
the set resulting by applying Proposition~\ref{prop:probmodifyomega}
to $\ul{s}^{q}$ and $\ul{t}^q$.
We then introduce
\be
 \mathcal{B}^{R} = \mathcal{B}_{1}^{R,K} \cup
  \left(\bigcup_{q=1}^{Q} \mathcal{B}_{2,q}^{R,K} \right).
\ee
One can easily check that (i) holds. 

Let now $\omega \notin \mathcal{B}^{R}$.
We have already seen that (ii.c) holds. Let's now check (ii.a) and (ii.b).
For this, we introduce
\be
 \hat{r}_q = s_1^{q},\quad \hat{s}_q = t_K^{q}.
\ee
Now we can apply Theorem~\ref{thm:combinatoric} apply to the
sequence $m_{\omega}^{k}$ and the $\hat{r}_q$ and $\hat{s}_q$.
We see that (ii.a) and (ii.b) hold as long as we choose
\be
 s_{\ell} \in \ul{s}^{q},\quad t_{\ell} \in \ul{t}^{q}.
\ee
Since $\omega \notin \mathcal{B}_{2,q}^{R,K}$, there is a
choice of $k$ such that for
\be
 s_{\ell} = s^{q}_{k}, \quad \ti{t} = t^{q}_{k-1},\quad t_{\ell} = t^{q}_{k},
\ee
we have that (ii.d) holds. This finishes
the proof of Theorem~\ref{thm:indsetup}.

%
%
%

\section{Application of Cartan's lemma}
\label{sec:appcartan}

The idea of this section is to study the measure of the set of $\omega$
such that conclusions (ii.a) to (ii.d) of Theorem~\ref{thm:indsetup}
hold but
\be
 \Lambda_R(0)\text{ is not $(\hat{\gamma},\frac{1}{2},3)$-suitable for } H_{\lam,\omega} - E
\ee
for some fixed $\hat{\gamma}$. In order to do this, we will show that
the Schr\"odinger valued Cartan Theorem, Theorem~\ref{thm:schroecartan},
implies that the measure of $\omega$, such that the necessary resolvent
estimates to apply Theorem~\ref{thm:resolv2} do not hold, is small.

\begin{definition}
 We see that $\ul{m},\ul{s},\ti{\ul{t}},\ul{t}$ obey the 
 geometric conditions for $\Lambda_R(0)$, if the following hold:
 \begin{enumerate}
  \item For $1 \leq \ell \leq L$, we have $m_{\ell} \in \Lambda_R(0)$ and
   $0 \leq \ti{t}_{\ell} \leq s_{\ell} \leq t_{\ell}$. 
  \item We have
   \be
    \ol{s_k} \leq t_k \leq r^3.
   \ee
  \item The set
   \be
    \Xi = \Lambda_R(0) \setminus \left(\bigcup_{\ell=1}^{L} \Lambda_{t_{\ell}}^{R} (m_{\ell}) \right)
   \ee
   is $r$-acceptable.
  \item For $k \neq \ell$
   \be
    \Lambda_{\widehat{t_{k}}}(m_{k}) \cap \Lambda_{\widehat{t_{\ell}}}(m_{\ell}) = \emptyset.
   \ee 
 \end{enumerate}
\end{definition}

These purely geometric conditions correspond to (ii.a) and (ii.b)
of Theorem~\ref{thm:indsetup}. We also note that (i) and (iii) ensure
the conditions (ii) and (iv) of Theorem~\ref{thm:resolv2}, and that
(i) - (iv) also imply the conditions (i), (ii), and (iii)
of Theorem~\ref{thm:resolv1b}.

\bigskip

We will now turn to introduce the conditions on the operator $H_{\lam,\omega}$,
which will depend on the random parameter $\omega$.
Let $\ul{m},\ul{s},\ti{\ul{t}},\ul{t}$ obey geometric conditions for $\Lambda_R(0)$.
Introduce the set $\Omega_{C}^{\ul{m}, \ul{s}, \ul{\ti{t}}, \ul{t}}$
as the set of $\omega$ satisfying
\begin{enumerate}
 \item For
  \be
   \Lambda_r(n) \subseteq \Lambda_R(0) \setminus 
    \left(\bigcup_{\ell=1}^{L} \Lambda_{s_{\ell}}^{R} (m_{\ell}) \right),
  \ee
  we have
  \be
   \Lambda_r(n)\text{ is $(\gamma,\frac{1}{2},3)$-suitable for } H_{\lam,\omega} - E.
  \ee
 \item For $1 \leq \ell \leq L$ there exists
  \be
   \hat{\omega}_{\ell} = \omega \pmod{\Lambda_{s_{\ell}}^{R,\ti{t}_{\ell}}(m_{\ell})}
  \ee
  such that
  \be
   \|(H_{\lam,\hat{\omega}_{\ell}}^{\Lambda_{t_{\ell}}^{R}(m_{\ell})} - E)^{-1}\|
    \leq \frac{1}{2} \E^{\sqrt{t_{\ell}}}.
  \ee
\end{enumerate}

\begin{remark}
 Theorem~\ref{thm:indsetup} implies that
 \be
  \mathbb{P}\left(\Omega\setminus\left(\bigcup_{\ul{m},\ul{s},\ul{\ti{t}},\ul{t}}\Omega^{\ul{m},\ul{s},\ul{\ti{t}},\ul{t}}_C\right)\right) \lesssim (\eps R)^{K}.
 \ee
\end{remark}

Define
\be
 t_{\infty} = \max_{1\leq\ell\leq L} t_{\ell}.
\ee
We are now ready for the main result of this section,
which will use the analyticity of the map $\omega\mapsto H_{\lam,\omega}$
for the first time.

\begin{theorem}\label{thm:applycartan}
 Assume Hypothesis~\ref{hyp:analytic}, $\gamma \geq 1$
 \begin{align}
  \label{eq:condrlargecartan}
   r &\geq \max(20000 \cdot K \cdot 3^d, \log(\|\rho\|_{\infty}), 2^{2 d K^2},
      (\log(2d+\lam))^2) \\
   \label{eq:condRlargecartan}
  R &\geq r^{3 d + 8} \\
   \label{eq:condrlargecartan2}
  t_{\infty} & \leq r^3.
 \end{align}
 There exists a set $\mathcal{B}^{\ul{m},\ul{s},\ul{\ti{t}},\ul{t}}_{C}$
 of measure 
 \be
   \mathbb{P}(\mathcal{B}^{\ul{m},\ul{s},\ul{\ti{t}},\ul{t}}_{C}) \leq \E^{- Kr}
 \ee
 such that for 
 \be
  \omega \in  \Omega^{\ul{m},\ul{s},\ul{\ti{t}},\ul{t}}_{C} 
   \setminus \mathcal{B}^{\ul{m},\ul{s},\ul{\ti{t}},\ul{t}}_{C}
 \ee
 we have
 \be
  \Lambda_R(0)\text{ is $(\hat{\gamma},\frac{1}{2},3)$-suitable for }H_{\lam,\omega} - E,
 \ee
 where $\hat{\gamma} = \gamma(1-\frac{2}{r})$.
\end{theorem}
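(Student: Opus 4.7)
The strategy is to apply Theorem~\ref{thm:schroecartan} to $H_{\lam,\om}^{\Lambda_R(0)} - E$, viewed as an analytic function of the free coordinates $\{\om_n\}_{n \in \bigcup_\ell \Lambda_{s_\ell}^{R,\ti{t}_\ell}(m_\ell)}$, with the remaining coordinates of $\om$ frozen at their given values. Analyticity and the uniform norm bound $\sup_{z \in \D^n}\|H(z)\| \leq \lam + 2d$ required as hypothesis (iii) of Theorem~\ref{thm:schroecartan} are both immediate from Hypothesis~\ref{hyp:analytic}(i) and \eqref{eq:asfr}.

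The first technical step is to manufacture a common witness $x_0$. Because the enlarged cubes $\Lambda_{\widehat{t_\ell}}(m_\ell)$ are pairwise disjoint and each $\Lambda_{s_\ell}^{R,\ti{t}_\ell}(m_\ell)$ lies inside $\Lambda_{\widehat{t_\ell}}(m_\ell)$, the pasted configuration
\[
  \hat{\om}_n = \begin{cases} (\hat{\om}_\ell)_n, & n \in \Lambda_{s_\ell}^{R,\ti{t}_\ell}(m_\ell); \\ \om_n, & \text{otherwise}, \end{cases}
\]
is unambiguous. For every $\ell$, $\hat{\om}$ agrees with $\hat{\om}_\ell$ throughout $\Lambda_{\widehat{t_\ell}}(m_\ell)$, so Lemma~\ref{lem:Homtiom} together with $\widehat{t_\ell} \geq \ol{t_\ell}$ makes $\|H_{\lam,\hat{\om}}^{\Lambda_{t_\ell}^R(m_\ell)} - H_{\lam,\hat{\om}_\ell}^{\Lambda_{t_\ell}^R(m_\ell)}\|$ exponentially small in $t_\ell$, which combined with the bound from the definition of $\Omega_C^{\ul{m},\ul{s},\ul{\ti{t}},\ul{t}}$ yields $\|(H_{\lam,\hat{\om}}^{\Lambda_{t_\ell}^R(m_\ell)} - E)^{-1}\| \leq \E^{\sqrt{t_\ell}}$ simultaneously for every $\ell$.

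Next I would verify the remaining hypotheses of Theorem~\ref{thm:schroecartan} by taking the exceptional cubes to be $\Lambda_{t_\ell - r}(m_\ell)$, so that condition (v) translates to the uniform bound on $\Lambda_{t_\ell}(m_\ell)$ obtained above (the slack $\sqrt{t_\ell - r} \geq \sqrt{t_\ell} - \log 2$ is absorbed using $t_\ell \geq \ol{s_\ell}$ together with \eqref{eq:condrlargecartan}). The geometric conditions (i)--(iii) follow from the fact that $\ul{m},\ul{s},\ul{\ti{t}},\ul{t}$ obey the geometric conditions for $\Lambda_R(0)$; condition (iv) follows from part (i) of the definition of $\Omega_C^{\ul{m},\ul{s},\ul{\ti{t}},\ul{t}}$, because any $\Lambda_r(n) \subseteq \Lambda_R(0) \setminus \bigcup_\ell \Lambda_{t_\ell}(m_\ell)$ is a fortiori outside $\bigcup_\ell \Lambda_{s_\ell}^R(m_\ell)$ and, since $\ti{t}_\ell \leq s_\ell$, the free coordinates lie buried deep enough inside the bad region that Lemma~\ref{lem:LamrstoLamr} transfers the suitability to every $z \in \D^n$. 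The numerical hypothesis (vii) of Theorem~\ref{thm:schroecartan} reduces, with the choices $S = \tfrac{1}{2}\sqrt{R}$ and $T = K r$, to the inequalities \eqref{eq:condrlargecartan}--\eqref{eq:condrlargecartan2}: the factors $J\,(r_\infty)^{d + 2\tau} \leq K\,r^{3d + 3/2}$ and $n \leq K\,(s_\infty)^{d} \leq K\,r^{3d}$ are comfortably dominated by $\sqrt{R} \geq r^{(3d+8)/2}$ from \eqref{eq:condRlargecartan}. Cartan therefore produces a bad set $\mathcal{B}_C^{\ul{m},\ul{s},\ul{\ti{t}},\ul{t}}$ of probability $\leq \E^{-Kr}$ outside of which $\|(H_{\lam,\om}^{\Lambda_R(0)} - E)^{-1}\| \leq \tfrac{1}{8} \E^{\sqrt{R}}$.

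Finally, for $\om \notin \mathcal{B}_C^{\ul{m},\ul{s},\ul{\ti{t}},\ul{t}}$, I would upgrade this global resolvent bound to $(\hat{\gam},\frac{1}{2},3)$-suitability via Theorem~\ref{thm:resolv2}: the suitable small cubes come from part (i) of the definition of $\Omega_C^{\ul{m},\ul{s},\ul{\ti{t}},\ul{t}}$ (with room to spare since $p = 3 \geq 0$), and resolvent bounds on the exceptional cubes $\Lambda_{t_\ell}^R(m_\ell)$ for the \emph{true} $\om$ (hypothesis (iv) of Theorem~\ref{thm:resolv2}) are produced by the identical Cartan argument localized inside each $\Lambda_{t_\ell}^R(m_\ell)$ and absorbed into the same bad set up to a factor. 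An elementary computation using $R \geq r^{3d+8}$ confirms that the exponent $\hat{\gam}$ extracted from Theorem~\ref{thm:resolv2} satisfies $\hat{\gam} \geq \gam(1 - 2/r)$. The main obstacle is the numerical reconciliation of condition (v) of Theorem~\ref{thm:schroecartan} with what $\Omega_C^{\ul{m},\ul{s},\ul{\ti{t}},\ul{t}}$ actually supplies---namely, a resolvent bound on $\Lambda_{t_\ell}^R(m_\ell)$ rather than on $\Lambda_{r_j + r}(m_j)$---while simultaneously keeping the dimension $n$ of free coordinates small enough that $\E^{-T}$ beats $\E^{-Kr}$ and $S \approx \sqrt{R}$ remains admissible in \eqref{eq:condupRlarge}.
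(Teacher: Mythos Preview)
Your overall architecture matches the paper: freeze the coordinates outside $\mathcal{M}=\bigcup_\ell\Lambda_{s_\ell}^{R,\ti t_\ell}(m_\ell)$, paste the witnesses $\hat\omega_\ell$ into a single $\hat\omega$, apply Theorem~\ref{thm:schroecartan} to obtain the global bound $\|(H_{\lam,\omega}^{\Lambda_R(0)}-E)^{-1}\|\le\tfrac18\E^{\sqrt R}$, and finish with Theorem~\ref{thm:resolv2}. The gap is in the line ``resolvent bounds on the exceptional cubes $\Lambda_{t_\ell}^R(m_\ell)$ for the \emph{true} $\omega$ \ldots\ are produced by the identical Cartan argument localized inside each $\Lambda_{t_\ell}^R(m_\ell)$.''

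This localized Cartan step cannot close numerically. If you run Theorem~\ref{thm:schroecartan} with ambient cube $\Lambda_{t_\ell}(m_\ell)$ and bad region $\Lambda_{t_\ell-r}(m_\ell)$ (so that condition~(v) matches the bound you have from $\Omega_C$), then $r_\infty=t_\ell-r$ and condition~(vii) forces
\[
 S\;\ge\;T\cdot 1200\cdot 3^d\,(t_\ell-r)^{d+1}\;\ge\;Kr\cdot 1200\cdot 3^d\,(t_\ell-r)^{d+1}.
\]
Feeding this into Theorem~\ref{thm:resolv2} you need $\E^S\le\E^{a\gamma r}$ together with $s_\ell+ar\le t_\ell$, hence $ar\le t_\ell-s_\ell\le t_\ell$, so
\[
 t_\ell\;\ge\;ar\;\ge\;\frac{S}{\gamma}\;\ge\;\frac{1200\cdot 3^d K}{\gamma}\,r\,(t_\ell-r)^{d+1}.
\]
Since $t_\ell\ge\ol r\ge r$ and $\gamma\le 2$, the right side is already $\gg t_\ell$ for any $d\ge 1$: the cube $\Lambda_{t_\ell}(m_\ell)$ simply does not have an annulus $t_\ell-s_\ell$ wide enough to absorb the resolvent bound Cartan produces on it. The obstacle you flag at the end is therefore not a matter of bookkeeping but a genuine size incompatibility.

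The paper's remedy is an extra combinatorial step that you are missing. Rather than seeking a resolvent bound on the \emph{original} $\Lambda_{t_\ell}(m_\ell)$, one sets $a=r^{3d+3}$ and reapplies Theorem~\ref{thm:combinatoric} (with ``$r$'' there equal to $t_\infty$ and scales $r_q=t_\infty+(2q-1)ar$, $s_q=t_\infty+2qar$) to manufacture \emph{new} exceptional cubes $\Lambda_{\check t_\ell}^R(\check m_\ell)$ of size $\check t_\ell\lesssim t_\infty+Kar$. These new cubes contain all the $\Lambda_{t_k}(m_k)$, have disjoint $ar$-neighbourhoods, and leave an $r$-acceptable complement. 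Cartan is then applied inside each $\Lambda_{\check t_\ell}^R(\check m_\ell)$ with $S=a\gamma r$, $T=Kr$; now $r_\infty\le t_\infty\le r^3$ while the ambient cube is of order $ar=r^{3d+4}$, so condition~(vii) is satisfied and the resulting bound $\|(H^{\Lambda_{\check t_\ell}(\check m_\ell)}-E)^{-1}\|\le\E^{a\gamma r}$ is exactly hypothesis~(iv) of Theorem~\ref{thm:resolv2} for the \emph{new} cubes, whose annulus width is $ar$ by construction. It is these $\check t_\ell,\check m_\ell$, not the original $t_\ell,m_\ell$, that one plugs into Theorem~\ref{thm:resolv2}; the $\hat\gamma$ computation with $\sum\check t_\ell\lesssim K^2ar$ and $R\ge r^{3d+8}$ then gives $\hat\gamma\ge\gamma(1-\tfrac{2}{r})$.
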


We now proceed to give the prove of this theorem. For this, we will need
to switch from the probabilistic notions in statement to the more analytic
notions in Theorem~\ref{thm:schroecartan}.

We fix some $\omega_0 \in \Omega^{\ul{m},\ul{s},\ul{\ti{t}},\ul{t}}_{C}$.
Introduce
\be
 \mathcal{M} = \bigcup_{\ell=1}^{L} \Lambda_{s_{\ell}}^{R,\ti{t}_{\ell}}(m_{\ell}),\quad
 \ol{\mathcal{M}} = \bigcup_{\ell=1}^{L} \Lambda_{t_{\ell}}^{R}(m_{\ell}).
\ee
We introduce the restricted probability space $\widehat{\Omega}$ by
\be
 \widehat{\Omega} = \{\omega:\quad \omega=\omega_0\pmod{\mathcal{M}^c}\}.
\ee
In particular, we see that $\widehat{\Omega}$ is now $\#(\mathcal{M})$
dimensional. Using Fubini, we see that it is sufficient to prove the
following variant of the main theorem

\begin{proposition}\label{prop:reducedcartan}
 Assume $\gamma \geq 1$ \eqref{eq:condrlargecartan},
 \eqref{eq:condRlargecartan}, and \eqref{eq:condrlargecartan2}.
 There exists a set $\widehat{\mathcal{B}}$ of measure
 \be
  \mu^{\otimes \#(\mathcal{M})}(\widehat{\mathcal{B}}) \leq \E^{-K r}
 \ee
 such that for
 $\omega \in \widehat{\Omega} \setminus \widehat{\mathcal{B}}$,
 we have
 \be
  \Lambda_R(0)\text{ is $(\hat{\gamma},\frac{1}{2},3)$-suitable for }H_{\lam,\omega} - E,
 \ee
\end{proposition}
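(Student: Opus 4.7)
The plan is to apply the matrix--valued Cartan estimate Theorem~\ref{thm:schroecartan} to the analytically extended operator $z\mapsto H^{\Lambda_R(0)}_{\lam,\omega(z)}-E$ on the restricted probability space $\widehat{\Omega}$, producing a resolvent bound on all of $\Lambda_R(0)$ outside a set of measure $\leq \E^{-Kr}$; combined with Theorem~\ref{thm:resolv2} this then upgrades to full $(\hat\gamma,\frac{1}{2},3)$--suitability. By Hypothesis~\ref{hyp:analytic}, freezing $\omega=\omega_0$ on $\mathcal{M}^c$ and letting the $\#\mathcal{M}$ coordinates on $\mathcal{M}$ vary in $\D$ gives an honest analytic map whose operator norm, by the tail bound \eqref{eq:asfr} and \eqref{eq:condrlargecartan}, is uniformly at most $\E^{r^{1/2}}$---verifying hypothesis (iii) of Theorem~\ref{thm:schroecartan}.

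For hypothesis (iv), I would take the exceptional regions to be the cubes $\Lambda_{t_\ell}^{R}(m_\ell)$, so that $\Xi = \Lambda_R(0)\setminus\bigcup_\ell \Lambda_{t_\ell}^R(m_\ell)$ is $r$--acceptable by condition (iii) of the geometric conditions and the disjointness requirement follows from (iv) of those conditions together with $\widehat{t_\ell}\geq t_\ell+3r$. For any $\Lambda_r(n)\subseteq \Xi$, condition (i) of $\Omega_C$ provides $(\gam,\frac{1}{2},3)$--suitability at the real point $z=\omega_0|_{\mathcal{M}}$; the exponential decay \eqref{eq:asfr} and the geometric separation between $\Lambda_r(n)$ and $\mathcal{M}$ then make the perturbation $\|H^{\Lambda_r(n)}(z)-H^{\Lambda_r(n)}(\omega_0|_{\mathcal{M}})\|$ uniformly negligible for $z\in\D^{\#\mathcal{M}}$, and Lemma~\ref{lem:suitstable} downgrades this to $(\gam,\frac{1}{2},1)$--suitability valid for every $z$. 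For the base point $x_0$ required by hypothesis (v), I would glue together the values of $\hat{\omega}_\ell$ on their respective shifted cubes $\Lambda_{s_\ell}^{R,\ti{t}_\ell}(m_\ell)$---these pieces being disjoint in $\mathcal{M}$---and use condition (ii) of $\Omega_C$ together with a small Lemma~\ref{lem:suitstable}--type perturbation to pass from the given bound on $\Lambda_{t_\ell}^R(m_\ell)$ to the required bound on the slightly larger $\Lambda_{t_\ell+r}(m_\ell)$.

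With these hypotheses in place, Theorem~\ref{thm:schroecartan} applied with $S\approx \sqrt{R}$ and $T\approx Kr$ (permissible thanks to \eqref{eq:condrlargecartan}--\eqref{eq:condrlargecartan2}) produces a set $\widehat{\mathcal{B}}_{\mathrm{full}}$ of measure $\leq \E^{-Kr}$ outside of which $\|(H^{\Lambda_R(0)}_{\lam,\omega}-E)^{-1}\|\leq \frac{1}{2^3}\E^{\sqrt{R}}$, establishing condition (i) of $(\hat\gamma,\frac{1}{2},3)$--suitability. For condition (ii) I would feed this estimate as hypothesis (v) into Theorem~\ref{thm:resolv2}, taking the $\Lambda_{t_\ell}^R(m_\ell)$ as exceptional regions. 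The remaining input---hypothesis (iv) of Theorem~\ref{thm:resolv2}, i.e.\ resolvent bounds $\|(H^{\Lambda_{t_\ell}^R(m_\ell)}_{\lam,\omega}-E)^{-1}\|\leq \E^{a\gam r}$ for the actual $\omega$ rather than for the auxiliary $\hat{\omega}_\ell$---I would obtain by running the very same Cartan argument on each $\Lambda_{t_\ell}^R(m_\ell)$ viewed as its own ambient cube, with $\Lambda_{s_\ell}^R(m_\ell)$ as the single exceptional sub-region and $\hat{\omega}_\ell$ as base point; this produces additional bad sets of total measure $\lesssim L\,\E^{-Kr}$ which are absorbed into the final $\widehat{\mathcal{B}}$. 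The main obstacle I anticipate lies in the verification of hypothesis (iv) of Theorem~\ref{thm:schroecartan}: one must bookkeep carefully the distances from each $\Lambda_r(n)\subseteq\Xi$ to each $\Lambda_{s_\ell}^{R,\ti{t}_\ell}(m_\ell)\subseteq\mathcal{M}$ and use \eqref{eq:asfr} to show that complex perturbations on $\D^{\#\mathcal{M}}$ translate into $\ell^2$--operator perturbations dominated by $\E^{-cr}$ uniformly, so that Lemma~\ref{lem:suitstable} applies at every point of the polydisc.
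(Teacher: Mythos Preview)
Your overall architecture is right---first Cartan on $\Lambda_R(0)$ to get the global resolvent bound, then resolvent bounds on the exceptional cubes, then Theorem~\ref{thm:resolv2}---and your verification of hypotheses (iii), (iv), (v) for the global Cartan step matches the paper. The gap is in the last step, where you propose to run Cartan on each $\Lambda_{t_\ell}^R(m_\ell)$ with exceptional region $\Lambda_{s_\ell}^R(m_\ell)$ in order to feed hypothesis~(iv) of Theorem~\ref{thm:resolv2}.

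The trouble is quantitative. Condition~\eqref{eq:condupRlarge} in Theorem~\ref{thm:schroecartan} forces $S/T\gtrsim (s_\ell)^{d+1}$; since $s_\ell$ can be as large as $r^3$ and you need $T\geq Kr$ for the measure bound, the best resolvent bound you can achieve on $\Lambda_{t_\ell}^R(m_\ell)$ is $\E^{S}$ with $S\gtrsim r^{3d+4}$. In Theorem~\ref{thm:resolv2} this corresponds to $a\gtrsim r^{3d+3}$, and condition~\eqref{eq:ineqskartk} then demands $t_\ell-s_\ell\geq ar\gtrsim r^{3d+4}$. But $t_\ell\leq r^3$, so this is impossible: the exceptional cubes $\Lambda_{t_\ell}^R(m_\ell)$ are far too small to absorb the resolvent blow-up that Cartan gives.

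The paper resolves this by inserting a second combinatorial step: it sets $a=r^{3d+3}$, defines new scales $r_q=t_\infty+(2q-1)ar$, $s_q=t_\infty+2qar$, and applies Theorem~\ref{thm:combinatoric} once more to the original $m_\ell$'s, producing enlarged disjoint cubes $\Lambda_{\check{t}_\ell}^R(\check{m}_\ell)$ of size $\sim r^{3d+4}$ that still satisfy the $r$-acceptability and separation conditions. Cartan is then applied with \emph{these} as ambient cubes (Lemma~\ref{lem:cartesti1}), yielding $\|(H^{\Lambda_{\check{t}_\ell}^R(\check{m}_\ell)}-E)^{-1}\|\leq\E^{a\gamma r}$; now the gap $\check{t}_\ell-\text{(inner scale)}\geq ar$ holds by construction, and Theorem~\ref{thm:resolv2} applies with the $\check{t}_\ell,\check{m}_\ell$ in place of the $t_\ell,m_\ell$. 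This intermediate enlargement is the missing idea in your sketch; without it the argument does not close.
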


It will furthermore be convenient to identify
\be
 \widehat{\Omega} \cong \left[-\frac{1}{2}, \frac{1}{2}\right]^{\nu},
\ee
where
\be
 \nu = \#(\mathcal{M}) \leq 3^d L \left(\max_{1 \leq \ell \leq L} s_{\ell} \right)^d.
\ee
We now check the conditions of Theorem~\ref{thm:schroecartan}.

\begin{lemma}
 The map $\omega\mapsto H_{\lam,\omega} - E$ extends
 to analytic map
 \be
  \mathbb{D}^{\nu} \ni z \mapsto \mathcal{H}(z).
 \ee
 The following properties hold
 \begin{enumerate}
  \item The bound
   \be 
    \sup_{z \in \mathbb{D}^{\nu}} \|\mathcal{H}(z)\| \leq \E^{\sqrt{r}}.
   \ee
  \item For
   \be
    \Lambda_r(n) \subseteq \Lambda_R(0) \setminus \ol{\mathcal{M}}
   \ee
   we have
   \be
    \Lambda_r(n)\text{ is $(\gamma,\tau,0)$-suitable for } \mathcal{H}(z).
   \ee
 \end{enumerate}
\end{lemma}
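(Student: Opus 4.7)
The plan breaks into three steps: construct the analytic extension, control its norm, and transfer the suitability assertion from $\omega_0$ to every $z \in \D^\nu$ by a perturbation argument.

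For the extension, Hypothesis~\ref{hyp:analytic}(ii) writes
\[
 V_{\lam,\omega}(x) = \lam \sum_{s\geq 0} f_s(\{\omega_n\}_{n\in\Lambda_s(x)}),
\]
with $f_s$ analytic on $\D^{\Lambda_s(0)}$ and $\sum_{s\geq 0} \|f_s\|_{L^\infty(\D^{\Lambda_s(0)})} \leq 1$ by \eqref{eq:asfr} applied at $R=0$. After freezing $\omega_n = \omega_{0,n}$ for $n \notin \mathcal{M}$ and letting the remaining coordinates $z_n$, $n \in \mathcal{M}$, range over $\D$, each term is analytic in $z \in \D^\nu$ and the series converges uniformly, so by Weierstrass the limit $V_{\lam,z}(x)$ is analytic. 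Entrywise this defines an operator-valued analytic function $\mathcal{H}(z) = \Delta + V_{\lam,z} - E$ on $\D^\nu$.

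For property (i), the multiplication operator $V_{\lam,z}$ satisfies $\|V_{\lam,z}\| = \sup_x |V_{\lam,z}(x)| \leq \lam$, and $\|\Delta\| \leq 2d$. We may assume $|E| \leq 2d + \lam$, since outside this range the resolvent of any restriction of $\mathcal{H}(z)$ is bounded trivially and the containing Theorem~\ref{thm:applycartan} becomes vacuous. Then $\|\mathcal{H}(z)\| \leq 2(2d+\lam)$, and condition \eqref{eq:condrlargecartan} gives $\log(2d+\lam) \leq \sqrt{r}$, so $\|\mathcal{H}(z)\| \leq \E^{\sqrt{r}}$.

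For property (ii), fix $\Lambda_r(n) \subseteq \Lambda_R(0) \setminus \ol{\mathcal{M}}$. Since $\ol{\mathcal{M}} \supseteq \bigcup_\ell \Lambda_{s_\ell}^R(m_\ell)$, condition (i) in the definition of $\Omega_C^{\ul{m},\ul{s},\ul{\ti{t}},\ul{t}}$ applied to $\omega_0$ gives that $\Lambda_r(n)$ is $(\gamma,\tfrac12,3)$-suitable for $\mathcal{H}(\omega_0)$. For any $z \in \D^\nu$ and any $y \in \Lambda_r(n)$, the difference $V_{\lam,z}(y) - V_{\lam,\omega_0}(y)$ only picks up terms with $\Lambda_s(y) \cap \mathcal{M} \neq \emptyset$, each bounded by $2 \lam \|f_s\|_{L^\infty(\D^{\Lambda_s(0)})}$. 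The geometric setup guarantees $\dist(y,\mathcal{M}) \geq t_\ell - s_\ell$ for the relevant index $\ell$, with the clipping shift between $\Lambda_{s_\ell}^{R,\ti{t}_\ell}$ and $\Lambda_{t_\ell}^{R}$ absorbed by $t_\ell \geq \ol{s_\ell}$. By \eqref{eq:defrbar}, $t_\ell - s_\ell \geq (4\gamma/c) s_\ell \geq 4\gamma r/c$, hence by \eqref{eq:asfr},
\[
 \|\mathcal{H}(z)^{\Lambda_r(n)} - \mathcal{H}(\omega_0)^{\Lambda_r(n)}\| \leq 2\lam \sum_{s \geq t_\ell - s_\ell} \|f_s\|_{L^\infty} \leq 2\lam \E^{-4\gamma r}.
\]
By \eqref{eq:asrgamp} and the largeness assumption $\log(2d+\lam) \leq \sqrt{r}$ from \eqref{eq:condrlargecartan}, this is dominated by $\frac{1}{2^{p+1}} \frac{1}{\#(\partial_-\Lambda_r(n))} \E^{-\gamma r - 2\sqrt{r}}$ for $p \in \{1,2,3\}$. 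Three applications of Lemma~\ref{lem:suitstable} lower $p$ from $3$ to $0$, giving $(\gamma,\tfrac12,0)$-suitability of $\mathcal{H}(z)^{\Lambda_r(n)}$.

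The main technical point is the distance estimate between $\Lambda_r(n)$ and $\mathcal{M}$: the set $\mathcal{M}$ is built from the shifted cubes $\Lambda^{R,\ti{t}_\ell}_{s_\ell}(m_\ell)$ whose centers $(m_\ell)_R^{\ti{t}_\ell}$ differ from $(m_\ell)_R^{t_\ell}$ by at most $t_\ell - \ti{t}_\ell$, so one needs to verify that the gap $t_\ell - s_\ell$ still dominates this shift. Since $t_\ell \geq \ol{s_\ell} = \lceil(1+4\gamma/c)s_\ell + \log(\lam)/c\rceil$, the gap is a fixed positive fraction of $s_\ell$ and easily dwarfs the $\Lambda^R$-clipping; the exponential decay $\E^{-cs}$ of $\|f_s\|_{L^\infty}$ then absorbs the correction.
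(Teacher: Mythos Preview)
Your approach matches the paper's: for (ii) the paper simply notes that $t_\ell \geq s_\ell + \ol{r} - r$ forces $\Lambda_{\ol{r}}(n) \subseteq \Lambda_R(0)\setminus\mathcal{M}$, so $z$ and $\omega_0$ agree on $\Lambda_{\ol{r}}(n)$, and then invokes Lemma~\ref{lem:LamrstoLamr} directly; you unpack that lemma into the explicit tail bound via \eqref{eq:asfr}, which is exactly its proof extended to complex $z\in\D^\nu$. One small correction: ``three applications of Lemma~\ref{lem:suitstable}'' is not how the argument runs---there is a single perturbation from $\mathcal{H}(\omega_0)$ to $\mathcal{H}(z)$, so the lemma applies once and yields $(\gamma,\tfrac12,2)$-suitability for $\mathcal{H}(z)$; the drop to $p=0$ is then automatic, since larger $p$ in Definition~\ref{def:suitable} is the stronger statement.
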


\begin{proof}
 The existence of the analytic extension is a consequence of
 Hypothesis~\ref{hyp:analytic}. From \eqref{eq:asfr}, we can
 conclude that
 $$
  \sup_{z \in \mathbb{D}^{\nu}} \|\mathcal{H}(z)\| \leq 2d + \lam.
 $$
 (i) now follows from \eqref{eq:condrlargecartan}.

 Since $t_{\ell} \geq s_{\ell} + \ol{r} - r$, we have
 $$
  \Lambda_{\ol{r}}(n) \subseteq \Lambda_R(0)\setminus\mathcal{M}.
 $$
 Thus we have $z=\omega_0\pmod{\Lambda_{\ol{r}}(n)}$.
 So (ii) follows by Lemma~\ref{lem:LamrstoLamr}.
\end{proof}

\begin{lemma}
 There exists $\hat{\omega} \in \widehat{\Omega}$ such that for $1 \leq \ell \leq L$,
 we have
 \be
  \|(H^{\Lambda_{t_{\ell}}^{R}(m_{\ell})}_{\lam,\hat{\omega}} - E)^{-1}\|
   \leq \E^{\sqrt{t_{\ell}}}
 \ee
 or $\|\mathcal{H}^{\Lambda_{t_{\ell}}^{R}(m_{\ell})}(\hat{\omega})\| \leq \E^{\sqrt{t_{\ell}}}$.,
\end{lemma}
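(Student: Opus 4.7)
The plan is to construct $\hat\omega$ explicitly by patching together the witnesses $\hat\omega_\ell$ provided by the membership $\omega_0\in\Omega^{\ul m,\ul s,\ul{\ti t},\ul t}_C$, and then to check that the patching only negligibly perturbs each local resolvent. Concretely, I define
\be
\hat\omega_n=\begin{cases}(\hat\omega_\ell)_n,& n\in\Lambda^{R,\ti t_\ell}_{s_\ell}(m_\ell)\text{ for some }\ell,\\ (\omega_0)_n,&\text{otherwise}.\end{cases}
\ee
This makes sense because the cubes $\Lambda^{R,\ti t_\ell}_{s_\ell}(m_\ell)$ are pairwise disjoint: this follows from $\Lambda_{\widehat{t_k}}(m_k)\cap\Lambda_{\widehat{t_\ell}}(m_\ell)=\emptyset$ together with $s_\ell\le t_\ell\le\widehat{t_\ell}$, which puts $\Lambda^{R,\ti t_\ell}_{s_\ell}(m_\ell)\subseteq\Lambda_{\widehat{t_\ell}}(m_\ell)$. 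By construction $\hat\omega=\omega_0\pmod{\mathcal M^c}$, so $\hat\omega\in\widehat\Omega$.

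Now fix $\ell$ and compare the finite-volume operators $H^{\Lambda^R_{t_\ell}(m_\ell)}_{\lam,\hat\omega}$ and $H^{\Lambda^R_{t_\ell}(m_\ell)}_{\lam,\hat\omega_\ell}$. The two $\omega$'s agree everywhere except possibly on $\bigcup_{k\ne\ell}\Lambda^{R,\ti t_k}_{s_k}(m_k)$. Since every such cube lies inside $\Lambda_{\widehat{t_k}}(m_k)$, the disjointness $\Lambda_{\widehat{t_k}}(m_k)\cap\Lambda_{\widehat{t_\ell}}(m_\ell)=\emptyset$ forces the set of disagreement to sit outside $\Lambda_{\widehat{t_\ell}}(m_\ell)$, and in particular outside $\Lambda_{\ol{t_\ell}}(m_\ell)$ because $\widehat{t_\ell}\ge\ol{t_\ell}$. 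Thus
\be
\hat\omega=\hat\omega_\ell\pmod{\Lambda_{\ol{t_\ell}}(m_\ell)^c},
\ee
and the same conclusion holds after restricting to the shifted cube $\Lambda^R_{t_\ell}(m_\ell)$.

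Next I invoke Lemma~\ref{lem:Homtiom} with $s=\ol{t_\ell}-t_\ell\ge(4\gamma/c)t_\ell+\log(\lam)/c$ (using the definition \eqref{eq:defrbar}) to obtain
\be
\|H^{\Lambda^R_{t_\ell}(m_\ell)}_{\lam,\hat\omega}-H^{\Lambda^R_{t_\ell}(m_\ell)}_{\lam,\hat\omega_\ell}\|\le\lam\E^{-c s}\le\E^{-4\gam t_\ell},
\ee
which is negligible compared with $\E^{\sqrt{t_\ell}}$ in the regime $\gam\ge 1$, $t_\ell\ge r$. A straightforward resolvent identity (as in the proof of Lemma~\ref{lem:suitstable}) combined with the bound $\|(H^{\Lambda^R_{t_\ell}(m_\ell)}_{\lam,\hat\omega_\ell}-E)^{-1}\|\le\tfrac12\E^{\sqrt{t_\ell}}$ then upgrades to $\|(H^{\Lambda^R_{t_\ell}(m_\ell)}_{\lam,\hat\omega}-E)^{-1}\|\le\E^{\sqrt{t_\ell}}$, which is the desired inequality.

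The only delicate point, and the one I expect to carry the most bookkeeping, is the geometric verification that the disagreement set lies outside $\Lambda_{\ol{t_\ell}}(m_\ell)$; this requires combining the definitions of $\widehat{t_\ell}$, $\ol{t_\ell}$, the shifted cubes $\Lambda^{R,\ti t_k}_{s_k}$, and the disjointness hypothesis. Once that is pinned down, the perturbation step is routine given the exponential decay of correlations from Hypothesis~\ref{hyp:analytic} (or equivalently \eqref{eq:asfr2}).
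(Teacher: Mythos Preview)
Your proposal is correct and follows essentially the same approach as the paper: you define $\hat\omega$ by patching the witnesses $\hat\omega_\ell$ on the cubes $\Lambda^{R,\ti t_\ell}_{s_\ell}(m_\ell)$ and using $\omega_0$ elsewhere, then invoke the exponential decay of correlations to control the perturbation of each local resolvent. The paper compresses the final step into a citation of Lemma~\ref{lem:LamrstoLamr}, whereas you spell out the use of Lemma~\ref{lem:Homtiom} and the resolvent identity; the geometric disjointness you flag as delicate is exactly the observation the paper records (in the form $\Lambda_{\ol{t_{k}}}^{R,t_{k}}(m_{k}) \cap \Lambda_{\ol{t_{\ell}}}^{R,t_{\ell}}(m_{\ell})= \emptyset$), and both versions ultimately rest on condition (iv) of the geometric conditions together with $\widehat{t_\ell}\ge\ol{t_\ell}$.
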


\begin{proof}
 Observe that for $k \neq \ell$, we have
 $$
  \Lambda_{\ol{t_{k}}}^{R,t_{k}}(m_{k}) \cap \Lambda_{\ol{t_{\ell}}}^{R,t_{\ell}}(m_{\ell})= \emptyset.
 $$
 Define $\hat{\omega}$ by
 $$
  \hat{\omega}_x = \begin{cases} (\hat{\omega}_{\ell})_x,& x \in \Lambda_{s_{\ell}}^{R,\ti{t}_{\ell}}(m_{\ell}); \\
   \omega_0,&\text{otherwise}.\end{cases}
 $$
 It is easy to see that $\hat{\omega} \in\widehat{\Omega}$.
 The claim now follows by Lemma~\ref{lem:LamrstoLamr}.
\end{proof}

We also obtain

\begin{lemma}\label{lem:cartesti2}
 Assume
 \be
  R^{\frac{1}{4}} \geq \max\left(25 \cdot 3^d K 
    \left(\max_{1\leq\ell\leq L}t_{\ell}\right)^{d+\frac{1}{2}},
   100 \log(\|\rho\|_{\infty}) \sqrt{\max_{1\leq\ell\leq L}t_{\ell}}\right).
 \ee
 We have
 \be
  \mu^{\otimes\nu}(\{\omega:\quad \|(H_{\lam,\omega}^{\Lambda_{R}(0)} - E)^{-1}\|
   > \frac{1}{8} \E^{\sqrt{R}}\}) \leq \E^{-R^{\frac{1}{4}}}.
 \ee
\end{lemma}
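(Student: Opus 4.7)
The plan is to apply the operator-valued Cartan estimate, Theorem~\ref{thm:schroecartan}, to the analytic family $z\mapsto \mathcal{H}(z)-E$ on the $\nu$-dimensional polydisk $\D^{\nu}$, after identifying the restricted probability space $\widehat{\Omega}$ with $[-\tfrac{1}{2},\tfrac{1}{2}]^{\nu}$. Under Hypothesis~\ref{hyp:analytic}, the map $\omega\mapsto H_{\lam,\omega}^{\Lambda_R(0)}-E$ extends analytically to $\D^{\nu}$ with values in the normal operators, and the first lemma of this section already supplies the uniform bound $\|\mathcal{H}(z)\|\le \E^{\sqrt{r}}$ together with the $(\gamma,\tfrac{1}{2},1)$-suitability of every cube $\Lambda_r(n)\subseteq\Xi$ for $\mathcal{H}(z)$, uniformly in $z\in\D^{\nu}$. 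Note that although the statement of Theorem~\ref{thm:schroecartan} is phrased in terms of $\|H(x)\|$, its proof via the Schur complement (inequality \eqref{eq:bddschur2}) actually produces a bound on $\|H(x)^{-1}\|$, which is precisely the quantity we need to control.

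Next I would check that the geometric hypotheses (i) and (ii) of Theorem~\ref{thm:schroecartan} hold with the exceptional boxes $\Lambda_{t_\ell}^{R}(m_\ell)$: the $r$-acceptability of $\Xi=\Lambda_R(0)\setminus\ol{\mathcal{M}}$ is part of the geometric conditions defining $\Omega_{C}^{\ul m,\ul s,\ul{\ti t},\ul t}$, and the disjointness of the inflated boxes $\Lambda_{t_\ell+2r}^{R}(m_\ell)$ follows from $\widehat{t_\ell}\ge t_\ell+3r$ built into the definition of $\widehat{t}$. The reference point $x_0=\hat\omega\in\widehat\Omega$ demanded by hypothesis (v) is the point constructed in the second lemma immediately preceding the statement, which gives $\|(\mathcal{H}^{\Lambda_{t_\ell}^{R}(m_\ell)}(\hat\omega)-E)^{-1}\|\le \E^{\sqrt{t_\ell}}$ for each $1\le \ell\le L$; finally, hypothesis (vi) on the boundedness of the density is exactly Hypothesis~\ref{hyp:analytic}(iv).

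I would then invoke the theorem with $p=3$, $S=\sqrt{R}-3\log 2$, $T=R^{1/4}$, $r_\infty=t_\infty=\max_\ell t_\ell$, $J=L\le K$, and the trivial count $\nu=\#(\mathcal{M})\le 3^{d}K t_\infty^{d}$. Its output is a set $\widehat{\mathcal{B}}\subseteq [-\tfrac{1}{2},\tfrac{1}{2}]^{\nu}$ of $\mu^{\otimes\nu}$-measure at most $\E^{-T}=\E^{-R^{1/4}}$, off of which $\|(H_{\lam,\omega}^{\Lambda_R(0)}-E)^{-1}\|\le \tfrac{1}{8}\E^{\sqrt{R}}$. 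A Fubini integration over the coordinates $\omega_x$ with $x\in\mathcal{M}^{c}$ then upgrades this $\widehat\Omega$-statement to the full $\mu^{\otimes\nu}$-estimate asserted by the lemma.

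The main obstacle is the bookkeeping for hypothesis (vii) of Theorem~\ref{thm:schroecartan}: with $\tau=\tfrac{1}{2}$ one must verify that the two inequalities $S/T\gtrsim 3^{d}K t_\infty^{d+1}$ and $S\gtrsim 3^{d}K t_\infty^{d+1}\,\nu\,\log\|\rho\|_\infty$ follow from the two hypothesis bounds on $R^{1/4}$ of the present lemma, absorbing the factor $\nu\le 3^{d}K t_\infty^{d}$ as well as the $O(\log 2)$ losses coming from $p=3$ and the constant $\tfrac{1}{8}$. No genuinely new analytic input is required beyond the Cartan machinery already developed; the entire content of the lemma is a careful comparison of scales that has been engineered precisely by the choice of $R^{1/4}$-threshold in the statement.
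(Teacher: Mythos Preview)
Your proposal is correct and follows essentially the same approach as the paper: apply Theorem~\ref{thm:schroecartan} with $S=R^{1/2}$ and $T=R^{1/4}$, and check that the numerical hypothesis (vii) is implied by the stated bound on $R^{1/4}$. Two minor remarks: since the theorem already builds in the factor $\tfrac{1}{2^p}$, you may simply take $S=\sqrt{R}$ with $p=3$ rather than $S=\sqrt{R}-3\log 2$; and no Fubini step is needed at the end, as the conclusion of the lemma is already formulated for $\mu^{\otimes\nu}$ on the restricted space $\widehat{\Omega}$.
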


\begin{proof}
 We will apply Theorem~\ref{thm:schroecartan} here with 
 $$
  S = R^{\frac{1}{2}}, \quad T = R^{\frac{1}{4}}.
 $$
 Then the claim follows after some computations.
\end{proof}

We now proceed to prove the resolvent estimates on the cubes $\Lambda_{t_{\ell}}(m_{\ell})$.
The main difficulty is that in order to apply Theorem~\ref{thm:resolv2},
we will need to ensure \eqref{eq:resolvbddagamr} with some $a$ such that
$t_{\ell} - s_{\ell} \geq a r$. Define
\be
 a = r^{3d + 3}.
\ee

\begin{lemma}\label{lem:checkcondST}
 Assume $K \leq r$, $\gamma \geq 1$
 \begin{align}
  r &\geq \max(20000 \cdot K \cdot 2^d, \log(\|\rho\|_{\infty}), 2^{2 d K^2}) \\
  t_{\infty} & \leq r^3.
 \end{align}
 With $S = \gamma a r$ and $T = K r$, \eqref{eq:condupRlarge}
 holds.
\end{lemma}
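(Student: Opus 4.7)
The plan is to verify each of the two inequalities that compose \eqref{eq:condupRlarge} by direct substitution of the specialized values $\tau = \frac{1}{2}$, $p = 3$, $S = \gamma a r$ (with $a = r^{3d+3}$), and $T = Kr$, together with the data already fixed earlier in the section: the number of excised regions satisfies $J = L \leq K$, the bound $r_\infty \leq t_\infty \leq r^{3}$ comes from Lemma~\ref{lem:estitinf}, and the dimension of the restricted probability space is $n = \nu \leq 3^{d} L (\max_{\ell} s_{\ell})^{d} \leq 3^{d} K r^{3d}$.

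For the first inequality $S/T \geq 1200 \cdot 3^{d} J r_{\infty}^{d+2\tau}$, the left-hand side simplifies to $S/T = \gamma a / K = \gamma r^{3d+3}/K$, while the right-hand side is bounded above by $1200 \cdot 3^{d} K r^{3(d+1)} = 1200 \cdot 3^{d} K r^{3d+3}$ using $J \leq K$ and $r_{\infty} \leq r^{3}$. The resulting polynomial inequality in $\gamma$, $K$, $r$ then follows from $\gamma \geq 1$, $K \leq r$, and the lower bound $r \geq 20000 \cdot K \cdot 2^{d}$ inherited from \eqref{eq:condrlargecartan}. For the second inequality, $S = \gamma r^{3d+4}$ must dominate the maximum of two terms. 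The additive term $2(p+4)\log(2) + 10 r^{\tau} \leq 14 \log 2 + 10 \sqrt{r}$ is trivially dominated whenever $\gamma \geq 1$ and $r$ is moderately large. The other term, $10000 \cdot J \cdot 3^{d} r_{\infty}^{d+2\tau} n \log \|\rho\|_{\infty}$, expands into an explicit monomial in $r$, $K$, $d$ after substituting $J \leq K$, $r_{\infty} \leq r^{3}$, $n \leq 3^{d} K r^{3d}$, and using the hypothesis $r \geq \log \|\rho\|_{\infty}$; the resulting polynomial inequality is absorbed by the assumed lower bounds on $r$.

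The argument is essentially routine bookkeeping, and no genuinely new estimate is required beyond what was already set up in Section~\ref{sec:appcartan} and recalled in Lemma~\ref{lem:estitinf}. The only technical point that requires some care is the interplay between the $K$-dependent factors arising from $J \leq K$ together with the estimate on $n$, and the polynomial growth in $r$; these $K$-factors are absorbed by the joint use of $K \leq r$ and $r \geq 20000 \cdot K \cdot 2^{d}$, while the dimension-dependent constants such as $3^{d}$ and $9^{d}$ are comfortably absorbed by the stronger assumption $r \geq 2^{2dK^{2}}$.
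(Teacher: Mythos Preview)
Your approach is the same as the paper's: direct substitution of $\tau=\tfrac12$, $p=3$, $a=r^{3d+3}$, $S=\gamma a r$, $T=Kr$ and bookkeeping of the bounds $J\leq K$, $r_\infty\leq t_\infty\leq r^3$, $n=\nu\leq 3^d K r^{3d}$. The paper's entire argument is literally ``a computation shows that \eqref{eq:condupRlarge} is equivalent to $S\geq r^{3d+4}$, $S/T\geq r^{3d+2}$; the claim follows since $T\leq r^2$,'' so in terms of strategy there is nothing to compare.

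There is, however, a numerical point you should be aware of. In your verification of the first inequality you arrive at
\[
\frac{S}{T}=\frac{\gamma r^{3d+3}}{K}\quad\text{versus}\quad 1200\cdot 3^d K\, r^{3d+3},
\]
and then assert that the hypotheses $\gamma\geq 1$, $K\leq r$, $r\geq 20000\cdot K\cdot 2^d$ close the gap. They do not: cancelling $r^{3d+3}$ leaves $\gamma\geq 1200\cdot 3^d K^2$, and no assumed lower bound on $\gamma$ beyond $\gamma\geq 1$ is available. The paper's one-line proof sidesteps this by declaring the condition ``equivalent'' to $S/T\geq r^{3d+2}$, which is one power of $r$ weaker than what the substitution actually gives; so the paper is being loose at exactly the same spot, and your more explicit write-up simply exposes an arithmetic slack already present in the source. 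This is not a failure of method on your part, but you should not claim the inequality ``follows'' from the stated hypotheses without either adjusting the exponent of $a$ or acknowledging that the constants in \eqref{eq:condupRlarge} are being treated as absorbable.
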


\begin{proof}
 A computation shows that \eqref{eq:condupRlarge} is equivalent
 to 
 $$
  S \geq r^{3d + 4},\quad \frac{S}{T} \geq r^{3d + 2}.
 $$
 The claim follows since $T \leq r^2$.
\end{proof}

Let $Q = (d+1)K + 1$ and apply Theorem~\ref{thm:combinatoric} with
the $r$ from it being
\be
 t_{\infty} = \max_{1\leq\ell\leq L} t_{\ell}
\ee
and to the length scales
\be
 r_q = t_{\infty} + (2 q - 1) a r,\quad
 s_q = t_{\infty} + 2 q a r.
\ee
Denote the resulting choice by $\check{t}_{\ell}$ and 
$\check{m}_{\ell}$. We now come to

\begin{lemma}\label{lem:cartesti1}
 We have that
 \be
  \mu^{\otimes\nu}\left(\left\{\omega:\quad 
   \|(H_{\lam,\omega}^{\Lambda_{\check{t}_{\ell}}^{R}(\check{m}_{\ell})} - E)^{-1}\| > 
    \frac{1}{8}\E^{a \gamma r} \right\}\right) \leq \E^{-K\cdot r}.
 \ee
\end{lemma}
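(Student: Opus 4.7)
The plan is to apply the Schr\"odinger-valued Cartan estimate (Theorem~\ref{thm:schroecartan}) to the cube $\Lambda_{\check{t}_{\ell}}^{R}(\check{m}_{\ell})$, treated as the ambient box (playing the role of $\Lambda_R(0)$ in that theorem), with the "excluded" subcubes being those original $\Lambda_{t_{\ell'}}^{R}(m_{\ell'})$ which are captured by $\Lambda_{\check{t}_{\ell}}^{R}(\check{m}_{\ell})$ by construction of Theorem~\ref{thm:combinatoric}. The parameters to feed into Theorem~\ref{thm:schroecartan} are $S=\gamma a r$ and $T=Kr$, which is exactly what Lemma~\ref{lem:checkcondST} was designed for: it verifies the numerical inequality \eqref{eq:condupRlarge}.

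First, I would assemble the geometric hypotheses of Theorem~\ref{thm:schroecartan}. The $r$-acceptability of $\Lambda_{\check{t}_{\ell}}^{R}(\check{m}_{\ell})$ minus the inner exclusions follows from the $r$-acceptability condition (iii) in Theorem~\ref{thm:combinatoric}, while the pairwise disjointness of the fattened exclusions follows from condition (i) there, together with our choice $s_q = t_{\infty} + 2qar$ providing the necessary $2r+1$ buffer. Next, the analyticity of $z\mapsto \mathcal{H}(z)$ and the uniform bound $\sup_{z\in\D^\nu}\|\mathcal{H}(z)\|\le \E^{\sqrt{r}}$ were already established in the first lemma of Section~\ref{sec:appcartan}, as was the $(\gamma,\tfrac12,1)$-suitability of every $\Lambda_r(n)$ inside $\Lambda_R(0)\setminus\ol{\mathcal{M}}$; both properties restrict to the smaller cube $\Lambda_{\check{t}_{\ell}}^{R}(\check{m}_{\ell})$.

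The key input is condition (v) of Theorem~\ref{thm:schroecartan}: the existence of $x_0\in[-\tfrac12,\tfrac12]^n$ for which the resolvent on each excluded subcube is bounded by $\E^{(r_j)^{\tau}}$. Here we invoke the second lemma of Section~\ref{sec:appcartan}, which produces a single $\hat{\omega}\in\widehat{\Omega}$ such that $\|(H_{\lam,\hat{\omega}}^{\Lambda_{t_{\ell'}}^{R}(m_{\ell'})}-E)^{-1}\|\le \E^{\sqrt{t_{\ell'}}}$ for every $\ell'$ simultaneously; this is possible precisely because the perturbation windows $\Lambda_{s_{\ell'}}^{R,\ti{t}_{\ell'}}(m_{\ell'})$ are pairwise disjoint. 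Since $t_{\ell'}\le t_\infty$ and $a\gamma r\ge \sqrt{t_{\ell'}}$, this $\hat{\omega}$ serves as $x_0$. Hypothesis~\ref{hyp:analytic}(iv) supplies the bounded-density condition (vi).

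Finally, applying Theorem~\ref{thm:schroecartan} with $S=\gamma a r$, $T=Kr$, $p=3$, and the dimension bound $n=\nu\le 3^dL(t_\infty)^d$, the conclusion yields a set of $\omega$'s of $\mu^{\otimes\nu}$-measure at most $\E^{-Kr}$ outside of which $\|\mathcal{H}^{\Lambda_{\check{t}_{\ell}}^R(\check{m}_{\ell})}(\omega)\|\le \tfrac18\E^{\gamma a r}$, which is exactly the claim. The main obstacle is the bookkeeping of verifying \eqref{eq:condupRlarge} with the choices $S=\gamma a r$ (which is why $a$ was defined as $r^{3d+3}$) and checking the compatibility $t_\infty \le r^3$; this reduces to the inequalities already packaged in Lemma~\ref{lem:checkcondST}, so no further work is needed beyond substituting.
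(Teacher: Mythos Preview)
Your proposal is correct and follows exactly the same route as the paper: apply Theorem~\ref{thm:schroecartan} with $S=a\gamma r$ and $T=Kr$, and invoke Lemma~\ref{lem:checkcondST} to verify \eqref{eq:condupRlarge}. The paper's proof is in fact just this one line, while you have additionally spelled out how the geometric hypotheses (i)--(vi) of Theorem~\ref{thm:schroecartan} are supplied by Theorem~\ref{thm:combinatoric} and the two preparatory lemmas of Section~\ref{sec:appcartan}.
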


\begin{proof}
 Apply Theorem~\ref{thm:schroecartan} with 
 $$
  S = a \gamma r,\quad T = K\cdot r
 $$
 and the claim follows by Lemma~\ref{lem:checkcondST}.
\end{proof}

We are now ready for

\begin{proof}[Proof of Proposition~\ref{prop:reducedcartan}]
 First we see that using Lemma~\ref{lem:cartesti1} and \ref{lem:cartesti2},
 we need to eliminate  a set of measure
 $$
  L \cdot \E^{-r} + \E^{-R^{\frac{1}{4}}}
  \leq \E^{-\frac{1}{2} r}.
 $$
 Here, we used $r \geq \frac{1}{2} \log(2 K)$.

 We are now in a situation, we can apply Theorem~\ref{thm:resolv2}.
 We obtain that
 $$
  \frac{\hat{\gamma}}{\gamma} \geq 1 - \frac{1}{r}
   - \frac{10}{R}\left(2 K (t_{\infty} + K a r) + \sqrt{R} + d \log(3)\right).
 $$
 For $R \geq \max(160, 40 d \log(3)) r$, this reduces to
 $$
  \frac{\hat{\gamma}}{\gamma} \geq 1 - \frac{3}{2}\frac{1}{r}
   - \frac{10}{R}\left(2 K (t_{\infty} + K a r)\right).
 $$
 By Lemma~\ref{lem:estitinf}, this can be further reduced to
 $$
  \frac{\hat{\gamma}}{\gamma} \geq 1 - \frac{3}{2}\frac{1}{r}
   - \frac{1600K 2^{d K^2} a r}{R}.
 $$
 By assumptions $10000K 2^{d K^2} a r^2 \leq R$, it follows that
 $\hat{\gamma} \geq \gamma (1 -\frac{2}{r})$, which is what
 we wanted.
\end{proof}

%
%
%
%

\section{Proof of Theorem~\ref{thm:indstep}}
\label{sec:proofthmindstep}

Our goal in this section is to complete the proof
of Theorem~\ref{thm:indstep}. Before jumping into 
the proof, to take the time to check various inequalities.
We define
\be\label{eq:defKproof}
 K = \left\lfloor\sqrt{\frac{1}{2d} \frac{\log(r)}{\log(\max(4,2 + \frac{4\gamma}{c}))}}\right\rfloor
\ee
such that the last condition of \eqref{eq:condrlargecartan}
holds. We even obtain

\begin{lemma}
 Define $K$ by \eqref{eq:defKproof}. Assume
 \be
  r \geq \log(2^{200 d}, \|\rho\|_{\infty}).
 \ee
 Then \eqref{eq:condrlargecartan} holds.

 Furthermore, for $r \geq 2^{4 \alpha d}$ we have  $K\geq \alpha$
\end{lemma}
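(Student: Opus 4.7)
The plan is to verify the four inequalities hiding inside the max on the right-hand side of $\eqref{eq:condrlargecartan}$ one at a time, then establish the auxiliary claim $K \geq \alpha$ by a direct computation from the definition $\eqref{eq:defKproof}$.

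The key inequality is $r \geq 2^{2dK^2}$, and this is built into the definition of $K$. Squaring $\eqref{eq:defKproof}$ gives
\[
 K^2 \leq \frac{1}{2d}\,\frac{\log r}{\log(\max(4,\,2+\tfrac{4\gamma}{c}))}.
\]
Since $\max(4,\,2+\tfrac{4\gamma}{c}) \geq 4$, one has $\log(\max(4,\,2+\tfrac{4\gamma}{c})) \geq 2\log 2$, so $2dK^2 \log 2 \leq \log r$, i.e., $2^{2dK^2} \leq r$, which is exactly the condition we want.

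The remaining three inequalities in $\eqref{eq:condrlargecartan}$ are forced by the hypothesis on $r$. The bound $r \geq \log(\|\rho\|_\infty)$ is immediate. For $r \geq 20000\cdot K\cdot 3^d$, I would use that $K \leq \sqrt{\log r/(4d\log 2)} \leq C\sqrt{\log r}$; combined with $r \geq 2^{200d}$, this gives $r/K \gtrsim 2^{200d}/\sqrt{200 d\log 2}$, which easily dominates $20000\cdot 3^d$. The fourth condition $r \geq (\log(2d+\lambda))^2$ needs to be absorbed into the blanket largeness assumption on $\lambda$ relative to $r$ (the growth is only polylogarithmic in $\lambda$, so this is a mild requirement on the regime being considered).

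For the $K\geq \alpha$ statement, I would substitute the lower bound $r \geq 2^{4\alpha d}$ into $\eqref{eq:defKproof}$: then $\log r \geq 4\alpha d\log 2$, and using once more $\log(\max(4,\,2+\tfrac{4\gamma}{c})) \geq 2\log 2$, one finds
\[
 \frac{1}{2d}\,\frac{\log r}{\log(\max(4,\,2+\tfrac{4\gamma}{c}))}
  \;\geq\; \frac{4\alpha d\log 2}{2d\cdot 2\log 2} \;=\; \alpha.
\]
Taking square roots and the floor then yields $K \geq \lfloor\sqrt{\alpha}\rfloor$; to actually get $K\geq \alpha$ as stated, the natural reading is that the intended exponent in the hypothesis is $2^{4\alpha^2 d}$ (so that the quantity under the square root is $\geq \alpha^2$), and one would proceed identically with that correction. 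No step here is hard; the only mild pitfall is carrying the logarithm correctly through the definition of $K$ and tracking the effect of the floor, both of which are handled by the clean lower bound $\log(\max(4,\,2+\tfrac{4\gamma}{c})) \geq 2\log 2$ used above.
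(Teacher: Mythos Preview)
Your verification of the four terms in \eqref{eq:condrlargecartan} follows the paper closely: the paper also notes that $r\geq 2^{2dK^2}$ is built into the definition \eqref{eq:defKproof}, treats the $\log\|\rho\|_\infty$ and $(\log(2d+\lambda))^2$ terms as hypotheses, and then deduces $r\geq 20000\cdot K\cdot 3^d$ from $r\geq 2^{2dK^2}$ together with a crude lower bound on $K$. Your alternative route to that last term via the upper bound $K\lesssim\sqrt{\log r}$ is a harmless (arguably cleaner) variant.

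There is, however, a genuine slip in your argument for the ``Furthermore'' claim. When you write
\[
 \frac{1}{2d}\,\frac{\log r}{\log(\max(4,\,2+\tfrac{4\gamma}{c}))}
  \;\geq\; \frac{4\alpha d\log 2}{2d\cdot 2\log 2},
\]
you are invoking the \emph{lower} bound $\log(\max(\dots)) \geq 2\log 2$ on the denominator; but a lower bound on a denominator yields an \emph{upper} bound on the fraction, so the displayed inequality is in the wrong direction. (Your earlier use of the same bound, to conclude $2^{2dK^2}\leq r$, was correct because there you were upper-bounding $K$.) To lower-bound $K$ one needs instead an \emph{upper} bound on $\max(4,\,2+4\gamma/c)$; this is available in the paper's setting since $\gamma\leq 2$ throughout the induction and $c>0$ is a fixed constant, so $\max(4,\,2+4\gamma/c)\leq M_0$ for some absolute $M_0$. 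With that correction in place your diagnosis that the exponent should be $\alpha^2$ rather than $\alpha$ is on target; the paper records the whole step only as ``a computation.''
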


\begin{proof}
 The last two conditions hold by definition or assumption.
 From $r \geq 2^{200 d}$, we obtain by \eqref{eq:defKproof}
 that $K \geq 10$. Thus, we have 
 $$
  r \geq 2^{2dK^2} \geq 2^{20 K} \cdot 2^{2 K} \cdot 2^{2 d},
 $$
 since $K\geq 10$ and $d \geq 1$. This implies the first condition.

 The last claim is a computation.
\end{proof}

We collect the result in the following lemma

\begin{lemma}
 There exists a set $\mathcal{B}_{1}^{R}$ such that
 \be
  \mathbb{P}(\mathcal{B}^{R}_{1}) \leq \frac{1}{2} \frac{1}{R^{K}}
 \ee
 Furthermore the conclusions from (ii) of Theorem~\ref{thm:indsetup}
 hold for $\omega\notin\mathcal{B}^{R}_{1}$. 
\end{lemma}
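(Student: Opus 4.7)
The plan is to obtain the set $\mathcal{B}_1^R$ directly as the set $\mathcal{B}^R$ produced by Theorem~\ref{thm:indsetup}, applied with the specific $K$ defined in \eqref{eq:defKproof} and with $\eps$ supplied by the inductive hypothesis of Theorem~\ref{thm:indstep}. All that remains is bookkeeping: check that the hypotheses of Theorem~\ref{thm:indsetup} are satisfied, and verify that the resulting probability bound \eqref{eq:probindsetup} collapses to $\tfrac{1}{2} R^{-K}$.

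First I would verify the scale condition \eqref{eq:condKsetup}. The definition \eqref{eq:defKproof} gives
\[
K^2 \;\leq\; \frac{1}{2d}\,\frac{\log(r)}{\log(\max(4, 2 + 4\gamma/c))},
\]
which immediately rearranges to $(\max(4, 2+4\gamma/c))^{2dK^2} \leq r$, i.e.\ \eqref{eq:condKsetup}. Next, the assumption of Theorem~\ref{thm:indstep} is that $[r, r^3]$ is $(\gamma, \alpha)$-acceptable, so by Definition~\ref{def:acceptable} together with the remark relating acceptability to $(\gamma, \tfrac{1}{2}, 3)$-suitability, we have
\[
\mathbb{P}\bigl(\Lambda_u(0)\text{ is not $(\gamma, \tfrac{1}{2}, 3)$-suitable for }H_{\lam,\omega} - E\bigr) \;\leq\; \frac{1}{r^{\alpha}}
\]
uniformly for $r \leq u \leq r^3$. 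Thus \eqref{eq:probnotsuitableleqeps5} of Theorem~\ref{thm:indsetup} holds with $\eps = r^{-\alpha}$.

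I then apply Theorem~\ref{thm:indsetup} to obtain the set $\mathcal{B}^R$ and take $\mathcal{B}_1^R := \mathcal{B}^R$. By construction, conclusion (ii) of Theorem~\ref{thm:indsetup} is exactly what is asserted to hold for $\omega \notin \mathcal{B}_1^R$. It remains to estimate the probability using \eqref{eq:probindsetup}:
\[
\mathbb{P}(\mathcal{B}_1^R) \;\leq\; 3^d \left(\frac{3^{dK}}{(K+1)!} + 2dK\right) R^{dK} \cdot r^{-\alpha K}.
\]
Using the range restriction $R \leq R_1 = r^{(\alpha - 1)/(d+1)}$ from Theorem~\ref{thm:indstep}, one has $R^{(d+1)K} \leq r^{(\alpha - 1)K}$, so it suffices to check
\[
3^d \left(\frac{3^{dK}}{(K+1)!} + 2dK\right) \;\leq\; \tfrac{1}{2}\, r^{K}.
\]

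The main (only) obstacle is this final numerical inequality, and it is routine: the left-hand side grows at most like $3^{(d+1)K}$ in $K$, while by the choice \eqref{eq:defKproof} of $K$ the right-hand side $r^K$ grows at least like $(\max(4, 2+4\gamma/c))^{2dK^3}$, which dominates $3^{(d+1)K}$ once $r$ (equivalently $K$) is large enough. Since Theorem~\ref{thm:indstep} is only claimed for $r$ large enough, this inequality can be absorbed into the standing largeness assumption on $r$. This completes the verification.
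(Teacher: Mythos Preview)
Your proposal is correct and follows essentially the same approach as the paper: apply Theorem~\ref{thm:indsetup} with $\eps = r^{-\alpha}$ (the paper writes $r_0$ for $r$) to produce $\mathcal{B}^{R}_{1}$, then factor the bound \eqref{eq:probindsetup} using $R^{d+1} \leq r^{\alpha-1}$ so that the task reduces to $3^d\bigl(\tfrac{3^{dK}}{(K+1)!} + 2dK\bigr) \leq \tfrac{1}{2} r^{K}$, which holds for large $r$. Your verification of \eqref{eq:condKsetup} and of \eqref{eq:probnotsuitableleqeps5} via Definition~\ref{def:acceptable} is actually more explicit than the paper's own proof.
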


\begin{proof}
 We can apply  Theorem~\ref{thm:indsetup} with $\eps = \frac{1}{(r_0)^{\alpha}}$.
 To obtain a set $\mathcal{B}^{R}_{1}$ satisfying
 $$
  \mathbb{P}(\mathcal{B}^{R}_{1}) \leq 3^d \left(\frac{3^{d K}}{(K+1)!} + 2 d K\right)
  \cdot \left(\frac{R^{d}}{(r_0)^{\alpha}}\right)^{K}
 $$
 By assumption and the choice of $R$, we have
 $$
  3^d \left(\frac{3^{d K}}{(K+1)!} + 2 d K\right) \frac{1}{(r_0)^K} \leq \frac{1}{2}, \quad
  \frac{R^{d}}{(r_0)^{\alpha - 1}} \leq \frac{1}{R}.
 $$
 The claim follows.
\end{proof}

Second, we will want to apply Theorem~\ref{thm:applycartan}.
For this, we first note

\begin{lemma}
 Denote the number of choices of $\ul{m}, \ul{s}, \ul{\ti{t}}, \ul{t}$ 
 in Theorem~\ref{thm:indsetup} by $\#(\text{choices})$. We have that
 \be
  \#(\text{choices}) \leq 2 d K^2 (3R)^{d K}.
 \ee
\end{lemma}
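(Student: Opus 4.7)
The plan is to count by separating the spatial data (the centers $\ul m$) from the scale data (the triples $(s_\ell,\ti t_\ell,t_\ell)$), both of which are drawn from the discrete scheme constructed in the proof of Theorem~\ref{thm:indsetup}.

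First I would bound the number of admissible $\ul m$. Since $L\in\{0,1,\ldots,K\}$ and each $m_\ell\in\Lambda_R(0)$, for fixed $L$ there are at most $(\#\Lambda_R(0))^L\le (3R)^{dL}$ configurations. Summing the geometric series in $L$ is dominated by its leading term, contributing at most $(3R)^{dK}$ configurations up to an absolute constant (valid since $R\ge r^4$).

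Next I would count the scale triples. By the construction at the end of Section~12, each triple $(s_\ell,\ti t_\ell,t_\ell)$ must be of the form $(s_k^q,t_{k-1}^q,t_k^q)$ for some $1\le q\le Q=(d+1)K+1$ and some $1\le k\le K$. Lemma~\ref{lem:estitinf} ensures that the full list of scale values $\{t_k^q\}$ has cardinality at most $2dK^2$, so the admissible scale data is indexed by the pair $(q,k)$ and contributes a factor of at most $QK\le 2dK^2$.

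Multiplying the two bounds yields $\#(\text{choices})\le 2dK^2\,(3R)^{dK}$, as claimed. The delicate point, and where I would focus the most care, is verifying that the scale assignment is really one combinatorial datum (governed by a single $(q,k)$ index via one application of Theorem~\ref{thm:combinatoric}) rather than an $L$-fold product across the indices $\ell=1,\ldots,L$; once this is made precise the remainder is direct enumeration.
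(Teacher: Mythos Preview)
Your approach is essentially the paper's: bound the number of centers $\ul m$ by $(3R)^{dK}$ and multiply by a bound on the admissible scale configurations. The paper's proof is even terser than yours---it simply cites ``the last statement in Theorem~\ref{thm:indsetup}'' for the scale factor---whereas you trace the count back to Lemma~\ref{lem:estitinf} explicitly.

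The ``delicate point'' you flag is genuine, and the paper does not address it either. In the construction each index $\ell$ receives its own pair $(q_\ell,k_\ell)$: the $q_\ell$ come out of Theorem~\ref{thm:combinatoric} and the $k_\ell$ from the choice in (ii.d). So the honest count for the scale data is $(QK)^L\le(2dK^2)^K$, not a single factor $2dK^2$. Neither your argument nor the paper's two-line proof closes this gap. It is harmless for the application, however: under the standing hypothesis \eqref{eq:condKsetup} one has $(2dK^2)^K\le r$, and in the next lemma one only needs $\#(\text{choices})\cdot\E^{-Kr}\le\tfrac12 R^{-K}$, which holds with enormous room to spare. If you want the stated inequality exactly, you would need to argue that once $\ul m$ is fixed the $q_\ell$ are determined by the deterministic procedure of Theorem~\ref{thm:combinatoric}, leaving only the $k_\ell\in\{1,\dots,K\}$ free; but even that gives $K^L$ rather than $2dK^2$ in general.
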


\begin{proof}
 The number of choices for $\ul{m}$ is bounded by $(3R)^{dK}$.
 The claim now follows from the last statement in Theorem~\ref{thm:indsetup}.
\end{proof}

Next, we apply Theorem~\ref{thm:applycartan}
to all possible choice of $\ul{m}, \ul{s}, \ul{\ti{t}}, \ul{t}$ to
obtain a set $\mathcal{B}^{R}_2$.

\begin{lemma}
 For $r \geq 1$ large enough, we have that
 \be
  \mathbb{P}(\mathcal{B}^{R}_2) \leq \frac{1}{2} \frac{1}{R^{K}}
 \ee
\end{lemma}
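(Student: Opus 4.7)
The plan is to apply a union bound over all admissible parameter tuples $(\ul{m}, \ul{s}, \ul{\ti{t}}, \ul{t})$ arising from Theorem~\ref{thm:indsetup}, using the Cartan-type per-choice measure estimate from Theorem~\ref{thm:applycartan} to control the total probability. First I would verify that Theorem~\ref{thm:applycartan} actually applies to each individual tuple: conditions \eqref{eq:condrlargecartan} and \eqref{eq:condRlargecartan} have just been checked in the preceding lemma (for $r$ large enough, with $K$ as in \eqref{eq:defKproof}), while \eqref{eq:condrlargecartan2}, namely $t_{\infty}\leq r^3$, is built into the conclusion of Theorem~\ref{thm:indsetup}. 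Each individual exceptional set $\mathcal{B}_C^{\ul{m},\ul{s},\ul{\ti t},\ul t}$ therefore has measure at most $\E^{-Kr}$.

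Next I would use the preceding lemma, which bounds the number of admissible tuples by $2dK^2(3R)^{dK}$, to obtain via the union bound
\[
\mathbb{P}(\mathcal{B}^R_2) \leq 2 d K^2 (3R)^{dK}\, \E^{-K r}.
\]
To match the target $\tfrac{1}{2} R^{-K}$, it suffices, after taking logarithms and dividing by $K$, to verify
\[
r \;\geq\; (d+1)\log R + d\log 3 + \tfrac{1}{K}\log(4 d K^2).
\]

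To close the argument I would invoke the hypothesis $R \leq R_1 = \lfloor r^{(\alpha-1)/(d+1)}\rfloor$ from Theorem~\ref{thm:indstep}, which yields $(d+1)\log R \leq (\alpha-1)\log r$. Since $K \sim \sqrt{\log r}$ by \eqref{eq:defKproof}, the term $K^{-1}\log(4dK^2)$ is $o(1)$ as $r\to\infty$, so the right-hand side of the displayed inequality is bounded by $(\alpha-1)\log r + C_d$, which is certainly at most $r$ once $r$ is large enough in terms of $\alpha$ and $d$. The only mild subtlety is that the ``large enough'' threshold depends on the input exponent $\alpha$, but this is harmless since $\alpha$ is fixed throughout the induction step and all other implicit quantities ($K$, the admissible values of $R$, and the correction terms) are controlled by polynomial or logarithmic functions of $r$. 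There is no serious obstacle here: the work is essentially absorbed into the per-choice Cartan estimate of Theorem~\ref{thm:applycartan}, and the present lemma is just the combinatorial bookkeeping that passes from a single tuple to the union.
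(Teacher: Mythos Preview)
Your proposal is correct and follows exactly the same approach as the paper: apply Theorem~\ref{thm:applycartan} to each admissible tuple, take the union bound using the count $2dK^2(3R)^{dK}$, and then check that $2dK^2(3R)^{dK}\E^{-Kr}\leq \tfrac12 R^{-K}$ for $r$ large. The paper's proof merely asserts this last inequality holds ``for $r$ large enough'' without elaboration, whereas you actually justify it via the constraint $R\leq r^{(\alpha-1)/(d+1)}$; your version is in fact more complete.
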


\begin{proof}
 This follows since $r$ is large enough to estimate
 $$
  2 d K^2 (3R)^{d K} \E^{-rK} \leq \frac{1}{2} \frac{1}{R^{K}}.
 $$
 This finishes the proof.
\end{proof}

We now finally come to

\begin{proof}[Proof of Theorem~\ref{thm:indstep}]
 Comparing the conclusions of Theorem~\ref{thm:indsetup} and
 Theorem~\ref{thm:applycartan}, we then obtain for
 $$
  \omega \notin \mathcal{B}^{R} = \mathcal{B}^{R}_{1} \cup \mathcal{B}^{R}_{2}
 $$
 that
 $$
  \Lambda_R(0)\text{ is $(\hat{\gamma},\frac{1}{2},3)$-suitable for }
   H_{\lam,\omega} - E,
 $$
 where $\hat{\gamma} = \gamma (1 - \frac{2}{r_0})$.

 Now again for $r$ large enough, we now have
 $$
  \mathbb{P}(\mathcal{B}^{R}) \leq \frac{1}{R^{K}}.
 $$
 This finishes the proof.
\end{proof}


%
%
%

\section{A first step towards localization}
\label{sec:firststeplocalization}

In this section, we begin to draw conclusions from the results
of multi-scale analysis. The main topic will be to draw conclusions
from the knowledge that the estimate 
\be
 \mathbb{P}(\Lambda_r(0)\text{ is not $(\gamma,\tau,2)$-suitable for }H_{\lam,\omega} - E)
  \leq \frac{1}{r^{4 d}}
\ee
holds for all energies $E$. In this section and the following
sections, we will address questions about the spectral type
of $H_{\lam,\omega}$ and the dynamics of the time evolution
$\E^{-\I t H_{\lam,\omega}}$. In Appendix~\ref{sec:wegner}, we discuss
what continuity property of the integrated density of states
this implies.

\bigskip

I will begin by introducing the concept of {\em generalized eigenfunction}.
We call a solution $\psi \neq 0$ of
\be\label{eq:genevsol}
 H_{\lam, \omega} \psi = E \psi
\ee
interpreted as a formal difference equation a {\em generalized eigenfunction},
if it obeys the growth condition
\be\label{eq:genevbdd}
 |\psi(n)| \leq (1 + |n|_{\infty})^{2 d}.
\ee
We call $E$ the {\em generalized eigenvalue}.
Given $\eps > 0$ and $R \geq 1$, we introduce $\mathcal{E}_{\lam,\omega}^{\eps,R}$
as the set of all generalized eigenvalues $E$ of $H_{\lam,\omega}$,
where the generalized eigenfunction $\psi$ obeys
\be\label{eq:genevnorm}
 \sum_{x \in \Lambda_R(0)} |\psi(x)|^2 \geq \eps.
\ee
We introduce the set $\mathcal{E}_{\lam,\omega}$ of all generalized
eigenvalues as
\be
 \mathcal{E}_{\lam,\omega} = \bigcup_{\eps > 0} \bigcup_{R \geq 1} 
  \mathcal{E}_{\lam,\omega}^{\eps,R}.
\ee
We furthermore, recall that given $\varphi \in \ell^2(\Z)$, the
associated spectral measure $\mu_{\lam,\omega}^{\varphi}$ is
characterized by
\be
 \spr{\varphi}{(H_{\lam,\omega} - z)^{-1} \varphi} =
  \int \frac{1}{t-z} d\mu_{\lam,\omega}^{\varphi} (t).
\ee
The importance of the generalized eigenfunctions comes from

\begin{proposition}
 For any $\varphi\in\ell^2(\Z)$, we have that
 \be
  \mu_{\lam,\omega}^{\varphi}(\R \setminus \mathcal{E}_{\lam,\omega}) = 0.
 \ee
 This means that all the spectral measures are supported
 on $\mathcal{E}_{\lam,\omega}$.
\end{proposition}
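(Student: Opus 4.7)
The plan is to invoke the classical generalized eigenfunction expansion theorem of Berezanskii and Schnol (in its form for Schr\"odinger operators on $\ell^2(\Z^d)$; see for instance Section~7 of \cite{kirsch}), which asserts that for any self-adjoint operator $H$ on $\ell^2(\Z^d)$ and any $\varphi\in\ell^2(\Z^d)$, for $\mu^\varphi$-almost every $E$ the formal difference equation $H\psi=E\psi$ admits a nonzero polynomially bounded solution.

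First I would reduce the statement to the following intermediate claim: for $\mu:=\mu^\varphi_{\lam,\omega}$-a.e.\ $E\in\R$, the equation $H_{\lam,\omega}\psi=E\psi$ has a nonzero formal solution $\psi$ obeying the polynomial bound \eqref{eq:genevbdd}. Once this is in hand, any such nonzero $\psi$ has some $x_0\in\Z^d$ with $\psi(x_0)\neq 0$, so $\sum_{x\in\Lambda_R(0)}|\psi(x)|^2\geq|\psi(x_0)|^2>0$ whenever $R\geq|x_0|_\infty$, whence $E\in\mathcal{E}^{\eps,R}_{\lam,\omega}\subseteq\mathcal{E}_{\lam,\omega}$ for $\eps:=|\psi(x_0)|^2$. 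The $\mu$-null character of the complement of $\mathcal{E}_{\lam,\omega}$ then follows.

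To connect the intermediate claim to a standard Hilbert-space construction I would fix an exponent $s\in(d/2,2d)$ and set $w(n)=(1+|n|_\infty)^{-s}$. Then $w\in\ell^2(\Z^d)$, so the multiplication operator $W$ by $w$ is Hilbert--Schmidt on $\ell^2(\Z^d)$. The elementary but crucial observation is that any sequence $\psi$ with $\sum_n w(n)^2|\psi(n)|^2=:C^2<\infty$ obeys the pointwise bound $|\psi(n)|\leq C(1+|n|_\infty)^s\leq C(1+|n|_\infty)^{2d}$ term-by-term, which after rescaling by $1/C$ gives exactly \eqref{eq:genevbdd}. It therefore suffices to produce, for $\mu$-a.e.\ $E$, a nonzero element $\psi_E$ of the weighted space $\mathcal{H}_w:=\{\psi:\sum_n w(n)^2|\psi(n)|^2<\infty\}$ formally satisfying $H_{\lam,\omega}\psi_E=E\psi_E$.

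The main obstacle, and the analytic core of the argument, is this last existence statement. It is handled by a standard direct-integral/Radon--Nikodym construction: decompose the cyclic subspace of $\varphi$ as a direct integral $\int^{\oplus}\mathcal K_E\,d\mu(E)$ on which $H_{\lam,\omega}$ acts as multiplication by $E$, and use the Hilbert--Schmidt property of $W$ (equivalently, the trace-class property of $W(H_{\lam,\omega}-z)^{-1}W$) to extract a measurable section $E\mapsto\psi_E\in\mathcal H_w$ that is nonzero for $\mu$-a.e.\ $E$ and satisfies $H_{\lam,\omega}\psi_E=E\psi_E$ pointwise. Because $H_{\lam,\omega}-E$ is a bounded local difference operator on $\ell^2(\Z^d)$, weak solvability against the basis $\{e_n\}$ coincides with pointwise solvability, so no continuum-operator regularity argument is needed. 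This construction is entirely standard for discrete Schr\"odinger operators, uses only self-adjointness of $H_{\lam,\omega}$, and in particular makes no use of Hypothesis~\ref{hyp:expdecay} or \ref{hyp:analytic}.
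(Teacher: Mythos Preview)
Your proposal is correct and aligns with the paper's own treatment: the paper's proof is simply a one-line citation, ``This is Proposition~7.4 in \cite{kirsch},'' which is precisely the reference you invoke at the outset. You have in fact gone further than the paper by sketching the Berezanskii--Schnol argument (weighted $\ell^2$ space, Hilbert--Schmidt weight $W$, direct-integral extraction of $\psi_E$) that underlies Proposition~7.4 of \cite{kirsch}, so there is no discrepancy in approach---only in level of detail.
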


\begin{proof}
 This is Proposition~7.4 in \cite{kirsch}.
\end{proof}

With a slight abuse of notation, we will often write
$\psi \in \mathcal{E}_{\lam, \omega}^{\eps,R}$, if there exists
$E \in \mathcal{E}_{\lam, \omega}^{\eps,R}$ such that \eqref{eq:genevsol}
holds and $\psi$ obeys \eqref{eq:genevbdd} and \eqref{eq:genevbdd}.
The main reason is that most of the following statements
are concerned with the eigenfunctions and not the eigenvalues.

In particular, we obtain the following corollary

\begin{corollary}
 Suppose we can show for any $\psi \in \mathcal{E}_{\lam,\omega}$ that
 $\psi \in \ell^2(\Z^d)$. Then the spectrum of $H_{\lam,\omega}$
 is pure point.
\end{corollary}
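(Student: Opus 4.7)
The plan is to combine the hypothesis with the preceding proposition in a short argument: under the assumption, every generalized eigenfunction is a genuine $\ell^2$ eigenfunction, so $\mathcal{E}_{\lam,\omega}$ is at most countable, and since every spectral measure is supported there, every spectral measure must be pure point.

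More concretely, I would first observe that by the hypothesis, if $E \in \mathcal{E}_{\lam,\omega}$ then there exists a nonzero $\psi$ satisfying $H_{\lam,\omega}\psi = E\psi$ with $\psi \in \ell^2(\Z^d)$; in particular $E$ is an eigenvalue of the self-adjoint operator $H_{\lam,\omega}$ in the usual Hilbert-space sense. The second step is to argue that the set of such eigenvalues is at most countable. This is a standard fact: the eigenspaces $\ker(H_{\lam,\omega} - E)$ for distinct $E$ are pairwise orthogonal closed subspaces of the separable Hilbert space $\ell^2(\Z^d)$, so only countably many of them can be nontrivial. Thus $\mathcal{E}_{\lam,\omega}$ is countable.

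The third step is to invoke the preceding proposition: for every $\varphi \in \ell^2(\Z^d)$, one has $\mu^{\varphi}_{\lam,\omega}(\R \setminus \mathcal{E}_{\lam,\omega}) = 0$. Since $\mathcal{E}_{\lam,\omega}$ is countable, the measure $\mu^{\varphi}_{\lam,\omega}$ is supported on a countable set and is therefore purely atomic. Because this holds for every $\varphi$, the spectrum of $H_{\lam,\omega}$ is pure point, which is exactly the claim.

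There is essentially no obstacle here; the argument is a three-line linkage between the hypothesis, the countability of eigenvalues of a self-adjoint operator on a separable Hilbert space, and the support statement from the preceding proposition. The only conceptual point worth flagging is the use of separability of $\ell^2(\Z^d)$ to ensure countability of the set of eigenvalues, which is what converts the pointwise statement (every generalized eigenfunction is $\ell^2$) into the global statement (the spectrum is pure point).
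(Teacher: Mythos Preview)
Your proof is correct and is precisely the standard argument the paper leaves implicit; the corollary is stated without proof there, and your three steps (hypothesis $\Rightarrow$ every $E\in\mathcal{E}_{\lam,\omega}$ is a genuine eigenvalue, separability $\Rightarrow$ countability of $\mathcal{E}_{\lam,\omega}$, preceding proposition $\Rightarrow$ spectral measures supported on a countable set hence pure point) are exactly what is intended.
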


We will now turn towards investigating the set $\mathcal{E}_{\lam,\omega}^{\eps,R}$
for fixed $\eps>0$ and $R\geq 1$. This understanding will be important
in order to be able to prove dynamical localization. The first result is

\begin{proposition}\label{prop:closetosigmafin}
 Assume for $r \geq 2^{k_1}$
 \be\label{eq:asprobleqr4d}
  \mathbb{P}(\Lambda_r(0)\text{ is not $(\gamma,\tau,2)$-suitable for }H_{\lam,\omega} - E)
   \leq \frac{1}{r^{4d}}.
 \ee
 Let $k \geq k_1$ and
 \be
  k \geq \frac{1}{\log(2)} \max\left(3 \gamma, \log\left(\frac{8d}{\gamma}\right), 
     \log\left(\frac{R}{2}\right)\right) + 1.
 \ee
 Then there exists a set $\mathcal{B}_k^{L}$ satisfying
 \begin{enumerate}
  \item The measure estimate
   \be 
    \mathbb{P}(\mathcal{B}_k^{L}) \leq 3 \lam \E^{-2^{k}}.
   \ee
  \item Let $r = 2^k$. For $\omega_0 \notin \mathcal{B}_k^{L}$, we have for
   \be
    \omega = \omega_0 \pmod{\Lambda_{4 r \ol{r}}(0)^{c}}
   \ee
   that for $\eps \geq \E^{-\frac{\gamma}{8} r}$
   \be\label{eq:disttosigma1}
    \dist(\mathcal{E}_{\lam,\omega}^{\eps,R}, \sigma(H_{\lam,\omega}^{\Lambda_{3 r \ol{r}}(0)}))
     \leq \E^{-\frac{\gamma}{8}r}.
   \ee
 \end{enumerate}
\end{proposition}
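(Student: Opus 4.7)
The strategy is to argue by contrapositive: if $E$ is far from $\sigma(H_{\lam,\omega}^{\Lambda_{3r\ol{r}}(0)})$, then the Green's function on $\Lambda_{3r\ol{r}}(0)$ exhibits exponential off-diagonal decay, which via the geometric resolvent identity forces any generalized eigenfunction to be negligibly small on $\Lambda_R(0)$, contradicting the mass bound $\sum_{x\in\Lambda_R(0)}|\psi(x)|^2\geq\eps$. Throughout I work with subcubes of scale $r=2^k$ inside the large cube $\Lambda_{3r\ol{r}}(0)$ and exploit that perturbations of $\omega_0$ outside $\Lambda_{4r\ol{r}}(0)$ do not affect subcube suitability of any $\Lambda_r(n)$ with $n\in\Lambda_{3r\ol{r}}(0)$, since then $\Lambda_{\ol{r}}(n)\subseteq\Lambda_{4r\ol{r}}(0)$ and Lemma~\ref{lem:LamrstoLamr} applies.

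I would construct $\mathcal{B}_k^L$ by combining an energy-net with the many-bad-cubes formalism of Proposition~\ref{prop:probatmostonebadcube}. Take an $\E^{-4\gamma r}$-net $\{E_j\}$ of $[-(2d+\lam),2d+\lam]$, of cardinality $M\lesssim \lam\E^{4\gamma r}$; this spacing is forced by Lemma~\ref{lem:perturbEsuitable}. At each $E_j$ apply Proposition~\ref{prop:probatmostonebadcube} at scale $r$ inside $\Lambda_{3r\ol{r}}(0)$ with $\eps=r^{-4d}$ and $K$ allowed bad cubes; the resulting probability $((3\cdot 3r\ol{r})^d r^{-4d})^{K+1}/(K+1)!$ is at most $\E^{-(1+4\gamma)r}$ for $K$ of order $r/(2d\log r)$. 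Taking the union over the net yields $\mathbb{P}(\mathcal{B}_k^L)\lesssim\lam\E^{-r}$, matching the required bound up to constants.

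For $\omega_0\notin\mathcal{B}_k^L$ and $\omega=\omega_0\pmod{\Lambda_{4r\ol{r}}(0)^c}$, each net energy $E_j$ admits $\leq K$ exceptional sites $m_\ell^{(j)}$ such that every $\Lambda_r(n)\subseteq\Lambda_{3r\ol{r}}(0)\setminus\bigcup_\ell\Lambda_{2\ol{r}}(m_\ell^{(j)})$ is $(\gamma,\tau,0)$-suitable for $H_{\lam,\omega}-E_j$ (with one level of $p$ consumed by the $\omega_0\to\omega$ transfer via Lemma~\ref{lem:LamrstoLamr}); this extends to suitability for $H_{\lam,\omega}-E$ for any $E$ within $\E^{-4\gamma r}$ of some $E_j$ by Lemma~\ref{lem:perturbEsuitable}. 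Now suppose for contradiction that $E\in\mathcal{E}_{\lam,\omega}^{\eps,R}$ with eigenfunction $\psi$ and $\dist(E,\sigma(H_{\lam,\omega}^{\Lambda_{3r\ol{r}}(0)}))>\E^{-\gamma r/8}$. The spectral gap gives $\|(H_{\lam,\omega}^{\Lambda_{3r\ol{r}}(0)}-E)^{-1}\|<\E^{\gamma r/8}$, and combined with subcube suitability away from the exceptional sites, Theorem~\ref{thm:resolv2} yields that $\Lambda_{3r\ol{r}}(0)$ is $(\gamma',\tau,0)$-suitable for $H_{\lam,\omega}-E$ with $\gamma'\geq\gamma(1-O(1/r))$, hence off-diagonal decay $|G_E^{\Lambda_{3r\ol{r}}(0)}(x,u)|\leq\E^{-\gamma'|x-u|}/\#(\partial_-\Lambda_{3r\ol{r}}(0))$ for $|x-u|\geq 3r\ol{r}/10$.

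The contradiction then follows from the geometric resolvent identity \eqref{eq:unasgreen}: for $x\in\Lambda_R(0)$ and $u\in\partial_-\Lambda_{3r\ol{r}}(0)$ we have $|x-u|\geq 3r\ol{r}-R\geq 2r\ol{r}$, while the polynomial growth bound \eqref{eq:genevbdd} gives $|\psi(v)|\leq(4r\ol{r})^{2d}$ for the boundary values. This yields $|\psi(x)|\leq(4r\ol{r})^{2d}\E^{-2\gamma'r\ol{r}}$, and summing over $\Lambda_R(0)$ (which has $\leq(2R+1)^d$ points) produces an $\ell^2$-mass much smaller than $\E^{-\gamma r/8}$, contradicting $\eps\geq\E^{-\gamma r/8}$. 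The main obstacle is balancing the $E$-net granularity against the probability budget: the net cannot be avoided, but its $\sim\lam\E^{4\gamma r}$ size forces us to use Proposition~\ref{prop:probatmostonebadcube} with $K$ large (rather than the naive $K=1$), and then verifying the resolvent condition (iv) of Theorem~\ref{thm:resolv2} on the exceptional cubes is delicate --- it likely requires using the global norm bound on $\Lambda_{3r\ol{r}}(0)$ in a refined Green's function iteration, rather than an independent Wegner input at the intermediate scale.
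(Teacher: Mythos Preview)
Your contrapositive strategy via Theorem~\ref{thm:resolv2} takes a genuinely different route from the paper and has a real gap that you flag but do not close: condition (iv) of Theorem~\ref{thm:resolv2} demands a resolvent bound $\|(H^{\Lambda_{t_k}(m_k)}-E)^{-1}\|\leq\E^{a\gamma r}$ on each exceptional cube, and the global spectral gap $\|(H^{\Lambda_{3r\ol{r}}(0)}-E)^{-1}\|<\E^{\gamma r/8}$ does not supply this. Restriction to a subcube can create eigenvalues near $E$ even when the full operator has none, so the global bound does not transfer downward. Without (iv) there is no mechanism in Theorem~\ref{thm:resolv2} (or in the iteration it abstracts, via Lemma~\ref{lem:removebadsets}) to cross a bad region; the ``refined iteration'' you allude to would have to produce exactly such an intermediate-scale resolvent bound, and nothing in the available data does so.

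The paper avoids this entirely by an annulus argument, working in the direct rather than contrapositive direction. For each energy in an $\E^{-3\gamma r}$-net one does not ask that all subcubes of $\Lambda_{3r\ol{r}}(0)$ are suitable modulo $K$ exceptions, but only that some radius $t\in\{t_1,\dots,t_r\}$ with $t_j=(1+2j)\ol{r}$ has every $\Lambda_r(n)$ with $|n|_\infty=t$ suitable. The $r$ annuli have disjoint $\ol{r}$-thickenings and are hence independent; each fails with probability $\lesssim r^{-d}$, so the probability all fail is $\leq r^{-dr}=\E^{-d(\log r)r}$, which comfortably absorbs the $\lam\E^{3\gamma r}$-sized energy net once $d\log r>3\gamma$. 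On the good annulus, suitability and \eqref{eq:unasgreen} give $|\psi(n)|\leq\E^{-\gamma r/2}$ directly, and the truncation $u=\psi\cdot\chi_{\Lambda_{t-1}(0)}$ is an approximate eigenvector of $H^{\Lambda_{3r\ol{r}}(0)}$ with error $\lesssim\E^{-\gamma r/4}$ and $\ell^2$-mass $\geq\eps$, yielding \eqref{eq:disttosigma1}. No resolvent bound on any bad region, and no appeal to Theorem~\ref{thm:resolv2}, is ever needed.
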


The proof of this proposition proceeds in several steps.
Consider for $t$ and $E$ the set $\mathcal{B}_{r}^{t}(E)$
of all $\omega$ such that for every
\be
 \ti{\omega} = \omega \pmod{ \Lambda_{t - \ol{r}}(0) \cup \Lambda_{t + \ol{r}}(0)^c}
\ee
we have that for every $n$ with $|n|_{\infty} = t$
\be
 \Lambda_r(n)\text{ is $(\gamma,\tau,1)$-suitable for }H_{\lam,\ti{\omega}} - E.
\ee
We have the following lemma

\begin{lemma}\label{lem:probannulus}
 Assume \eqref{eq:asprobleqr4d}.
 For $2d \leq t \leq r^{3}$, we have that
 \be
  \mathbb{P}(\mathcal{B}_{r}^{t}(E)^c) \leq \frac{1}{r^{d}}.
 \ee
 Furthermore, we have for 
 \be
  |t_1 - t_2| \geq 2 \ol{r} + 1
 \ee
 that $\mathcal{B}_{r}^{t_1}(E)$ and $\mathcal{B}_{r}^{t_2}(E)$ are independent.
\end{lemma}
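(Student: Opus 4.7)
\medskip

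The plan is to bound $\mathbb{P}(\mathcal{B}_{r}^{t}(E)^c)$ by a union bound over the surface $\{n \in \Z^d : |n|_{\infty} = t\}$, converting the event associated with each such $n$ into the previously analyzed set $X_n^r$ from Lemma~\ref{lem:propXn}. First I would note the geometric inclusion: for $n$ with $|n|_{\infty} = t$, the reverse triangle inequality gives $\Lambda_{\ol{r}}(n) \subseteq \{x : t - \ol{r} \leq |x|_{\infty} \leq t + \ol{r}\}$, which is essentially the annulus $\Lambda_{t+\ol{r}}(0) \setminus \Lambda_{t-\ol{r}}(0)$; the single-layer off-by-one at $|x|_{\infty} = t - \ol{r}$ is absorbed by the slack in the definition \eqref{eq:defrbar} of $\ol{r}$. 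Consequently any $\tilde{\omega}$ that agrees with $\omega$ on the annulus automatically satisfies $\tilde{\omega} = \omega \pmod{\Lambda_{\ol{r}}(n)^c}$, so for fixed $n$ on the surface the bad event is contained in $X_n^r(\gamma,\tau,2,E)$. Applying Lemma~\ref{lem:propXn} with $p = 2$ and the hypothesis \eqref{eq:asprobleqr4d} (which supplies $\eps = r^{-4d}$) yields probability at most $r^{-4d}$ for each such $n$.

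Second I would perform the union bound over all $n \in \Z^d$ with $|n|_{\infty} = t$. The cardinality of this surface is bounded by $2d (2t+1)^{d-1}$, and using $t \leq r^3$ this is at most $C_d r^{3(d-1)}$. Multiplying by the per-$n$ bound gives
\[
  \mathbb{P}(\mathcal{B}_{r}^{t}(E)^c) \leq C_d \, r^{3(d-1) - 4d} = C_d \, r^{-d-3},
\]
which is $\leq r^{-d}$ once $r$ is large enough (implicit in $r \geq 2^{k_1}$).

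Third, for the independence statement, the key observation is that $\mathcal{B}_{r}^{t}(E)$ is measurable with respect to the restriction of $\omega$ to the annulus $\Lambda_{t+\ol{r}}(0) \setminus \Lambda_{t-\ol{r}}(0)$: the existential quantifier over $\tilde{\omega}$ ranges over configurations agreeing with $\omega$ on precisely this annulus, so whether the event occurs depends only on $\omega$ restricted there. When $|t_1 - t_2| \geq 2\ol{r} + 1$, the two annuli have disjoint intersections with $\Z^d$, so the product structure of $\mathbb{P}$ immediately gives independence of $\mathcal{B}_{r}^{t_1}(E)$ and $\mathcal{B}_{r}^{t_2}(E)$.

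The only mildly delicate point is the geometric inclusion in the first step, where one has to verify the boundary-layer issue at $|x|_{\infty} = t - \ol{r}$ does not prevent the reduction to $X_n^r$; beyond this, the argument is a direct union bound combined with Lemma~\ref{lem:propXn} and the independence inherited from the product measure.
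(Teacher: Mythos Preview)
Your proposal is correct and follows the same approach as the paper: reduce the complement of $\mathcal{B}_r^t(E)$ to a union of the sets $X_n^r$ over the surface $\{|n|_\infty = t\}$, invoke Lemma~\ref{lem:propXn} with $p=2$ and $\eps = r^{-4d}$, and then count surface points by $2d\,t^{d-1}\leq r^{3d}$ so the product $r^{3d}\cdot r^{-4d}=r^{-d}$ gives the bound; independence is deduced from the fact that each $\mathcal{B}_r^t(E)$ depends only on $\omega$ restricted to the annulus $\Lambda_{t+\ol{r}}(0)\setminus\Lambda_{t-\ol{r}}(0)$. Your write-up is in fact cleaner than the paper's: the paper records the inclusion as $\mathcal{B}_r^t\subseteq\bigcap X_n^r$ and writes ``the number of such $t$'', which are evident slips for $(\mathcal{B}_r^t)^c\subseteq\bigcup X_n^r$ and ``such $n$''; you also flag the single-layer discrepancy at $|x|_\infty=t-\ol{r}$, which the paper silently ignores.
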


\begin{proof}
 Denote by $X_n^r$ the set from Lemma~\ref{lem:propXn}.
 We have
 $$
  \mathcal{B}_{r}^{t} \subseteq \bigcap_{|n|_{\infty} = t} X_n^{r}.
 $$
 Since the number of such $t$ is bounded by $2 d t^{d-1} \leq r^{3 d}$,
 the claim follows.
\end{proof}

Define $t_j$ by
\be
 t_j = (1 + 2j) \ol{r}.
\ee
Introduce $\mathcal{B}_k (E)$ as the set of $\omega$ such that
for every $1 \leq j \leq r$, we have
\be 
 \omega \in \mathcal{B}_{r}^{t_j} (E).
\ee
Furthermore, we have $t_r + r + 1 \leq 3 r \ol{r}$.
Because of independence of the $\mathcal{B}_{r}^{t_j}(E)$, we have
that
\be
 \mathcal{B}_k(E) = \bigcap_{j=1}^{r} \mathcal{B}_{r}^{t_j}(E)
\ee
and thus by the previous lemma
\be
 \mathbb{P}(\mathcal{B}_k(E)) \leq \E^{-d \log(r) r}.
\ee
We obtain

\begin{lemma}\label{lem:probexistannulus}
 There exists a set $\mathcal{B}_k(E)$ with the following properties
 \begin{enumerate}
  \item $\mathbb{P}(\mathcal{B}_k(E)) \leq \E^{-d \log(r) r}$.
  \item For $\omega\notin \mathcal{B}_k(E)$  there exists $1 \leq j \leq r$
   such that for $|E - \ti{E}| \leq \E^{-3 \gamma r}$ and 
   \be
    \ti{\omega} = \omega \pmod{\Lambda_{4 r \ol{r}}(0)^c}
   \ee
   we have for $|n|_{\infty} = t_j$
   \be
    \Lambda_{r}(n)\text{ is $(\gamma,\tau,0)$-suitable for } H_{\lam,\ti{\omega}} - \ti{E}.
   \ee
 \end{enumerate}
\end{lemma}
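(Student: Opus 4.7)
My plan has three steps: establish the measure bound on $\mathcal{B}_k(E)$ via independence of the annular events; extract a single good annulus index on its complement; and reconcile the two different perturbation classes by a cube-dependent auxiliary configuration, absorbing both the potential and energy perturbations in one application of Lemma~\ref{lem:suitstable}.

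For (i), I would verify that each $\mathcal{B}_r^{t_j}(E)$ is measurable with respect to $\omega$ on the annulus $\Lambda_{t_j+\ol r}(0)\setminus\Lambda_{t_j-\ol r}(0)$, and note that the choice $t_j=(1+2j)\ol r$ (adjusted by an additive constant to secure the strict $\geq 2\ol r+1$ separation required by Lemma~\ref{lem:probannulus}) makes these annuli pairwise disjoint. The events are therefore mutually independent; combining the per-annulus bound $r^{-d}$ from Lemma~\ref{lem:probannulus} across $j=1,\dots,r$ gives $\mathbb{P}(\mathcal{B}_k(E))\leq r^{-dr}=\E^{-dr\log r}$.

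For (ii), fix $\omega\notin\mathcal{B}_k(E)$ and pick an index $j_0$ with $\omega\notin\mathcal{B}_r^{t_{j_0}}(E)$; this provides the universal statement that for every $\omega'$ agreeing with $\omega$ on the annulus at $t_{j_0}$ and every $n$ with $|n|_\infty=t_{j_0}$, $\Lambda_r(n)$ is $(\gamma,\tau,1)$-suitable for $H_{\lam,\omega'}-E$. Given $\ti\omega=\omega\pmod{\Lambda_{4r\ol r}(0)^c}$ and $|E-\ti E|\leq\E^{-3\gamma r}$, I would introduce, for each $n$ with $|n|_\infty=t_{j_0}$, an $n$-dependent $\omega'_n$ that agrees with $\omega$ on the annulus and with $\ti\omega$ elsewhere. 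Then $\omega'_n$ is in the admissible class and gives the desired $(\gamma,\tau,1)$-suitability at $E$; since $\omega'_n\neq\ti\omega$ only on the annulus, and every $y\in\Lambda_r(n)$ is at $\ell^\infty$-distance $\gtrsim t_{j_0}$ from points on the opposite side of the annulus where $\omega'_n$ and $\ti\omega$ actually differ, Hypothesis~\ref{hyp:expdecay} yields the diagonal bound $|V_{\lam,\omega'_n}(y)-V_{\lam,\ti\omega}(y)|\leq \lam\E^{-2ct_{j_0}}$, hence $\|H_{\lam,\omega'_n}^{\Lambda_r(n)}-H_{\lam,\ti\omega}^{\Lambda_r(n)}\|\leq \lam\E^{-2ct_{j_0}}$. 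Adding $|E-\ti E|\leq \E^{-3\gamma r}$ and invoking Lemma~\ref{lem:suitstable} at $p=1$ converts this into $(\gamma,\tau,0)$-suitability for $H_{\lam,\ti\omega}-\ti E$, as claimed.

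The main obstacle is that the two perturbation classes are genuinely non-nested: $\mathcal{B}_r^{t_{j_0}}(E)$ quantifies over $\omega'$ that match $\omega$ on the annulus, while the lemma's $\ti\omega$ can differ from $\omega$ on the annulus (the annulus at $t_{j_0}$ sits well inside $\Lambda_{4r\ol r}(0)$). The cube-dependent auxiliary $\omega'_n$ is the bridge, and it works only because the scale $\ol r$ in \eqref{eq:defrbar} has been tuned with the factor $1+4\gamma/c$: this ensures $c\ol r\geq 4\gamma r$, so that $c\,t_{j_0}\geq 3c\ol r\geq 12\gamma r$ dominates the Lemma~\ref{lem:suitstable} threshold $\#(\partial_-\Lambda_r(0))^{-1}\E^{-\gamma r-2r^\tau}$ by a huge margin. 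The remaining verifications -- that \eqref{eq:asrgamp} holds, that $r\geq 2^{k_1}$ from the statement ensures the polynomial prefactors are harmless, and that the adjusted $t_j$ still lie in the admissible range $[2d,r^3]$ of Lemma~\ref{lem:probannulus} -- are routine bookkeeping.
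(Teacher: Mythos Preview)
Your part (i) is fine and matches the paper: the $\mathcal{B}_r^{t_j}(E)$ are measurable with respect to $\omega$ on the annulus $\Lambda_{t_j+\ol r}(0)\setminus\Lambda_{t_j-\ol r}(0)$, the spacing $t_{j+1}-t_j=2\ol r$ (up to an additive unit) gives independence, and the product bound yields $\mathbb{P}(\mathcal{B}_k(E))\leq r^{-dr}$.

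For part (ii), however, you have misread the paper's convention. By definition, $\omega=\ti\omega\pmod{\Xi}$ means $\omega_x=\ti\omega_x$ for $x\in\Xi^c$; hence $\ti\omega=\omega\pmod{\Lambda_{4r\ol r}(0)^c}$ says that $\ti\omega$ and $\omega$ agree \emph{on} $\Lambda_{4r\ol r}(0)$ and may differ only outside it. Since every annulus $\Lambda_{t_j+\ol r}(0)\setminus\Lambda_{t_j-\ol r}(0)$ is contained in $\Lambda_{4r\ol r}(0)$ (because $t_r+\ol r=(2+2r)\ol r\leq 4r\ol r$), the perturbation $\ti\omega$ agrees with $\omega$ on the annulus at $t_{j_0}$ and is therefore already in the admissible class for $\mathcal{B}_r^{t_{j_0}}(E)$. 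The two perturbation classes are nested, not non-nested; your ``main obstacle'' does not exist. The paper's proof is accordingly one line: $\omega\notin\mathcal{B}_r^{t_{j_0}}(E)$ gives $(\gamma,\tau,1)$-suitability of $\Lambda_r(n)$ for $H_{\lam,\ti\omega}-E$ directly, and Lemma~\ref{lem:perturbEsuitable} absorbs the energy shift $|E-\ti E|\leq\E^{-3\gamma r}$, dropping the index to $0$.

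Worse, under your own (incorrect) reading your bridge construction fails. You set $\omega'_n=\omega$ on the annulus and $\omega'_n=\ti\omega$ elsewhere, so $\omega'_n$ and $\ti\omega$ differ on the \emph{entire} annulus. But the cube $\Lambda_r(n)$ with $|n|_\infty=t_{j_0}$ sits inside that annulus (since $r\leq\ol r$), so for $y\in\Lambda_r(n)$ the site $y$ itself is a point where $\omega'_n$ and $\ti\omega$ may disagree. The exponential tail in Hypothesis~\ref{hyp:expdecay} gives no smallness whatsoever for $|V_{\lam,\omega'_n}(y)-V_{\lam,\ti\omega}(y)|$, and the asserted bound $\|H_{\lam,\omega'_n}^{\Lambda_r(n)}-H_{\lam,\ti\omega}^{\Lambda_r(n)}\|\leq\lam\E^{-2ct_{j_0}}$ is false. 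The phrase ``opposite side of the annulus'' does not rescue this: the disagreement set is the full annulus, not just its far side.
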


\begin{proof}
 This follows from the discussion preceding the statement
 and Lemma~\ref{lem:perturbEsuitable} to perturb $E$. Here,
 we used that \eqref{eq:asrgamp} holds for $k$ large enough.
\end{proof}

Since $\sigma(H_{\lam,\omega}) \subseteq [-3\lam, 3\lam]$, we can
introduce
\be
 \mathcal{B}_k = \bigcup_{\ell = - \frac{3}{2} \lam \E^{3 \gamma r}}^{\frac{3}{2} \lam \E^{3 \gamma r}}
  \mathcal{B}_k(2 \ell \E^{-3 \gamma r}).
\ee
Then for $\omega \notin \mathcal{B}_k$ the conclusion (ii) of the
previous lemma holds for all $\ti{E}$. Furthermore, we have that
\be
 \mathbb{P}(\mathcal{B}_k) \leq 3 \lam \E^{(3 \gamma - d \log(r))r},
\ee
which is small as long as $d \log(r) > 3 \gamma$. Since $\log(r) = k \log(2)$,
we obtain that we must have
\be
 k \geq \frac{3\gamma}{\log(2)} + 1.
\ee
For the proof, we will furthermore need the elementary inequality
\be
 (1 + x)^{p} \leq \E^{\eps x}
\ee
for $x \geq \frac{2 p}{\eps}$.

\begin{proof}[Proof of Proposition~\ref{prop:closetosigmafin}]
 Let $E \in \mathcal{E}_{\omega}^{B,r}$ and $\omega \notin \mathcal{B}_k$
 constructed above. Then we can find $t$ such that for every $n$
 with $|n|_{\infty} = t$, we have
 $$
  \Lambda_r(n)\text{ is $(\gamma,\tau,0)$-suitable for } H_{\lam, \omega} - E.
 $$
 Hence, we obtain for these $n$ by \eqref{eq:unasgreen}
 and \eqref{eq:genevbdd} for $r \geq \frac{8 d}{\gamma}$ that
 $$
  |\psi(n)| \leq \E^{-\frac{\gamma}{2} r}.
 $$
 Consider the test function
 $$
  u(x) = \begin{cases} \psi(x), & x \in \Lambda_{t-1}(0);\\  0, &\text{otherwise},\end{cases}
 $$
 which satisfies $\|(H_{\lam,\omega}^{\Lambda_{3 r \ol{r}}(0)} - E) u\| \leq \E^{-\frac{\gamma}{4} r}$,
 by the choice of $r$. The claim follows since 
 $$
  \|u\|_{\ell^2(\Lambda_{R}(0))} \geq \E^{-\frac{\gamma}{8} r}
 $$
 by assumption.
\end{proof}

%
%
%

\section{Super polynomial decay of the eigenfunctions}

We will show 

\begin{theorem}\label{thm:ppsqrt}
 Assume \eqref{eq:asprobleqr4d}.
 There exists a set $\Omega_1$ and a constant $\hat{\gamma} > 0$ satisfying
 \begin{enumerate} 
  \item $\mathbb{P}(\Omega_1) = 1$.
  \item For $\omega\in\Omega_1$ the spectrum of $H_{\lam,\omega}$
   is pure point.
  \item For every $\omega\in\Omega_1$, there exists $\ell \geq 1$
   such that for $k \geq \ell$,
   $\|\psi\| = 1$ solving $H_{\lam,\omega} \psi = E \psi$
   with
   \be
    \sum_{x \in \Lambda_{2^{k -2}}(0)} |\psi(x)|^2 \geq \E^{-\frac{\gamma}{8} 2^{k}},
   \ee
   we have for $|n|_{\infty} \geq 2^{4 k}$ that
   \be
    |\psi(n)| \leq \E^{-c \sqrt{|n|_{\infty}}}.
   \ee
 \end{enumerate}
\end{theorem}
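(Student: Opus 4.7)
The strategy is to apply Proposition~\ref{prop:closetosigmafin} at every sufficiently large scale, use Borel--Cantelli to pass to a full-measure event on which all generalized eigenvalues are simultaneously close to finite-volume spectra, and, on the same event, verify that a covering family of cubes of appropriately chosen radius is $(\gamma,\tfrac{1}{2},2)$-suitable for every one of these finitely many candidate energies. A single application of the resolvent identity \eqref{eq:unasgreen} on such a cube of radius $\sim\sqrt{|n|_{\infty}}$ then delivers the bound $\E^{-c\sqrt{|n|_{\infty}}}$. Concretely, for $j\geq k_{1}$ let $\mathcal{B}_{j}^{L}$ denote the exceptional set produced by Proposition~\ref{prop:closetosigmafin} at scale $2^{j}$ (so $\mathbb{P}(\mathcal{B}_{j}^{L})\leq 3\lambda\E^{-2^{j}}$). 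For $k\geq k_{1}$ and $k''\geq 4k$, set $j(k'')=\lceil k''/2\rceil+5$, $r(k'')=2^{k''/2}$, and
$$
\mathcal{B}_{k,k''}^{S}=\bigcup_{m\in\mathcal{N}_{k''}}\;\bigcup_{\tilde E\in\sigma(H_{\lam,\omega}^{\Lambda_{3\cdot 2^{j(k'')}\ol{2^{j(k'')}}}(0)})}\{\Lambda_{r(k'')}(m)\text{ is not }(\gamma,\tfrac{1}{2},2)\text{-suitable for }H_{\lam,\omega}-\tilde E\},
$$
where $\mathcal{N}_{k''}$ is a net of at most $\lesssim 2^{k''d/2}$ centers chosen so that every $n$ with $|n|_{\infty}\in[2^{k''},2^{k''+1})$ lies in $\Lambda_{9r(k'')/10}(m)$ for some $m\in\mathcal{N}_{k''}$. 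Put $\mathcal{B}_{k}=\bigcup_{j\geq k}\mathcal{B}_{j}^{L}\cup\bigcup_{k''\geq 4k}\mathcal{B}_{k,k''}^{S}$.

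The crux is the probabilistic estimate. By Lemma~\ref{lem:proprbar}, the local spectrum appearing in the definition of $\mathcal{B}_{k,k''}^{S}$ has cardinality $\lesssim 2^{(k''+10)d}$, and \eqref{eq:asprobleqr4d} gives failure probability $\leq r(k'')^{-4d}=2^{-2k''d}$ per (cube, energy) pair. A union bound yields $\mathbb{P}(\mathcal{B}_{k,k''}^{S})\lesssim 2^{10d-k''d/2}$ and hence $\mathbb{P}(\mathcal{B}_{k})\lesssim \lambda\E^{-2^{k}}+2^{-2kd}$, which is summable in $k$. By Borel--Cantelli, $\Omega_{1}=\Omega\setminus\limsup_{k\to\infty}\mathcal{B}_{k}$ has full measure, and for each $\omega\in\Omega_{1}$ some $\ell=\ell(\omega)$ satisfies $\omega\notin\mathcal{B}_{k}$ for every $k\geq\ell$.

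Now fix $\omega\in\Omega_{1}$, $k\geq\ell$, and a unit eigenfunction $\psi$ of eigenvalue $E$ obeying the scale-$k$ non-triviality condition. Given $n$ with $|n|_{\infty}\in[2^{k''},2^{k''+1})$, $k''\geq 4k$, note that the scale-$k$ condition implies the scale-$j(k'')$ condition, because $j(k'')\geq k$ makes the ball larger and the threshold more lax. Proposition~\ref{prop:closetosigmafin} therefore furnishes $\tilde E$ in the finite-volume spectrum with $|E-\tilde E|\leq \E^{-\gamma 2^{j(k'')}/8}\leq \E^{-4\gamma r(k'')}$, by the choice of $j(k'')$. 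Pick $m_{n}\in\mathcal{N}_{k''}$ with $n\in\Lambda_{9r(k'')/10}(m_{n})$. Since $\omega\notin\mathcal{B}_{k,k''}^{S}$, the cube $\Lambda_{r(k'')}(m_{n})$ is $(\gamma,\tfrac{1}{2},2)$-suitable for $H_{\lam,\omega}-\tilde E$, and Lemma~\ref{lem:perturbEsuitable} upgrades this to $(\gamma,\tfrac{1}{2},1)$-suitability for $H_{\lam,\omega}-E$. A single application of \eqref{eq:unasgreen} using the growth bound $|\psi(v)|\leq (1+|v|_{\infty})^{2d}\leq (2|n|_{\infty}+1)^{2d}$ on the boundary yields $|\psi(n)|\leq \tfrac{1}{2}\E^{-9\gamma r(k'')/10}(2|n|_{\infty}+1)^{2d}\leq \E^{-c\sqrt{|n|_{\infty}}}$ for some absolute $c>0$, once $k$ is large enough for the exponential to swallow the polynomial. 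This proves (iii); statement (ii) then follows because every $E\in\mathcal{E}_{\lam,\omega}$ eventually satisfies the scale-$k$ condition (one chooses $k$ with $2^{k-2}\geq R$ and $\E^{-\gamma 2^{k}/8}\leq\eps$), so its eigenfunction lies in $\ell^{2}$, and the generalized eigenfunction expansion gives pure point spectrum.

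The main obstacle is the delicate three-way balance in the probability bound: the annular shell ($\approx 2^{k''d}$ sites) and the local spectrum ($\approx 2^{(k''+10)d}$ eigenvalues) each grow fast with the scale, and only by covering the shell with $\lesssim 2^{k''d/2}$ cubes of radius $r(k'')=2^{k''/2}$ does the factor $r(k'')^{-4d}=2^{-2k''d}$ render the union bound summable. A larger cube would demand a closeness in Proposition~\ref{prop:closetosigmafin} tighter than the perturbation allowed by Lemma~\ref{lem:perturbEsuitable}, while a smaller cube or a finer scale $j(k'')$ would enlarge the local spectrum count and break summability. The choice $j(k'')\approx k''/2+5$ is precisely where these two constraints meet, and it is this meeting point that pins down the $\sqrt{|n|_{\infty}}$ rate in the conclusion.
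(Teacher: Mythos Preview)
There is a genuine gap in your probability estimate for $\mathcal{B}_{k,k''}^{S}$. The energies $\tilde E$ over which you union are themselves random: they are the eigenvalues of $H_{\lam,\omega}^{\Lambda_{3\cdot 2^{j(k'')}\ol{2^{j(k'')}}}(0)}$, and hence depend on $\omega$. The hypothesis \eqref{eq:asprobleqr4d} is a fixed-energy estimate and cannot be applied to the random pair $(m,\tilde E(\omega))$; your union bound therefore has no justification as written. Worse, this cannot be repaired by a conditioning/Fubini argument with your parameters. With $j(k'')=\lceil k''/2\rceil+5$ the finite-volume box has radius $3\cdot 2^{j(k'')}\,\ol{2^{j(k'')}}\gtrsim 2^{k''}$, so it \emph{contains} the centers $m\in\mathcal{N}_{k''}$ (which sit at distance $\sim 2^{k''}$). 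Thus the $\omega$-coordinates that determine the spectrum are exactly the ones that determine suitability of $\Lambda_{r(k'')}(m)$, and there is no independence to exploit. Shrinking $j(k'')$ does not help either: Lemma~\ref{lem:perturbEsuitable} needs $|E-\tilde E|\leq \E^{-4\gamma r(k'')}$, forcing $2^{j(k'')}\gtrsim 2^{k''/2+5}$, which already makes the box radius $\gtrsim 2^{k''}$.

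The paper resolves this by reversing the geometry. At scale $r=2^{k}$ it compares with the spectrum of $H_{\lam,\omega}^{\Lambda_{3r\ol{r}}(0)}$ but places the suitability cubes on annuli at radii $t\in[4r\ol{r},16r\ol{r}]$, \emph{outside} the box. The annular bad events $\mathcal{B}_{r}^{t}(E)$ are built (via the universal $\tilde\omega$-quantifier) to depend only on $\omega$ in the annulus, hence are independent of the spectrum once one conditions on $\omega|_{\Lambda_{4r\ol{r}}(0)}$; this is exactly the Fubini step implemented by the auxiliary map in the paper. Crucially, the paper then intersects over $\sim r$ independent annuli to drive the bad probability down to $\E^{-d r\log r}$, which is small enough to union over an $\E^{3\gamma r}$-fine deterministic grid of energies --- this is how the random-energy issue is actually handled. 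Your single-cube bound $r(k'')^{-4d}$ is far too weak to beat either a random spectrum or an exponential grid; the independence of annuli is the missing idea.
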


Let $\mathcal{B}^{L}_{k}$ be the set from Proposition~\ref{prop:closetosigmafin}.
Introduce
\be
 \mathcal{B}^{L} = \bigcap_{\ell \geq 1} \bigcup_{k \geq \ell} \mathcal{B}_{k}^{L}.
\ee
It follows that $\mathbb{P}(\mathcal{B}^{L}) = 0$.
Let now $\omega_0 \notin \mathcal{B}^{L}$. Then by Proposition~\ref{prop:closetosigmafin},
we have that there exists some $k_0({\omega})$ such that
for $k \geq k_0(\omega)$, we have with $r = 2^{k}$
for
\be
 \omega = \omega_0 \pmod{\Lambda_{4 r \ol{r}}(0)^{c}}
\ee
and $\eps \geq \E^{-\frac{\gamma}{8} r}$ that
\be
 \dist(\mathcal{E}_{\lam,\omega}^{\eps,B}, \sigma(H_{\lam,\omega_0}^{\Lambda_{3 r \ol{r}}(0)})) 
  \leq \E^{-\frac{\gamma}{8} r}.
\ee

Denote by $\mathcal{B}_{r}^{t}(E)$ the set constructed
before Lemma~\ref{lem:probannulus}. We introduce
\be
 \mathcal{B}_{r}^{t}(\omega_0) = 
  \bigcup_{E \in \sigma(H^{\Lambda_{3 r \ol{r}}(0)}_{\lam,\omega_0})}
   \mathcal{B}_{r}^{t}(E).
\ee
We introduce
\be
 \mathcal{B}_{k}^{S}(\omega_0) = \bigcup_{4 r \ol{r} \leq t \leq 16 r \ol{r}}
  \begin{cases} \mathcal{B}_{r}^{t}(\omega_0),& k \geq k_0(\omega_0); \\
   \emptyset, & \text{otherwise}. \end{cases}
\ee
We now define a set $\mathcal{B}_{k}^{S}$ as follows. Define
a map
\be
 [-\frac{1}{2},\frac{1}{2}]^{\Lambda_{4r\ol{r}}(0)} \to [-\frac{1}{2},\frac{1}{2}]^{\Z^d}
\ee
by mapping $x$ to some $\omega \in \mathcal{B}^{L}$ satisfying
\be
 \omega = x \pmod{\Lambda_{4r\ol{r}}(0)^c}
\ee
if such an $\omega$ exists, otherwise to any $\omega$
satisfying this condition. Then, we define $\mathcal{B}_{k}^{S}$
as the union over the set $\mathcal{B}_{k}^{S}(\omega)$ with
$\omega$ constructed above.

\begin{lemma}
 We have that
 \be
  \mathbb{P}(\mathcal{B}_{k}^{S}) \leq \E^{-2^{k - 2}}.
 \ee
\end{lemma}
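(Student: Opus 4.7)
The plan is to freeze the auxiliary parameter $\omega_0$ (on which $\mathcal{B}_k^S(\omega_0)$ depends through $k_0(\omega_0)$ and $\sigma(H^{\Lambda_{3r\ol{r}}(0)}_{\lam,\omega_0})$), bound $\mathbb{P}(\mathcal{B}_k^S(\omega_0))$ uniformly, and then observe that the $\omega_0$-dependence dissolves once the eigenvalues are handled on a fixed grid. For $k < k_0(\omega_0)$ there is nothing to prove, so I take $k \geq k_0(\omega_0)$; then $\mathcal{B}_k^S(\omega_0)$ is assembled from the annular events $\mathcal{B}_r^t(E)$ over $t \in [4r\ol{r}, 16r\ol{r}]$ and $E$ in the spectrum of the inner cube, which lies in $[-3\lam, 3\lam]$.

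First, I would exploit the independence of annular scales furnished by Lemma~\ref{lem:probannulus}: for a fixed $E$ the events $\mathcal{B}_r^{t_j}(E)$ at radii $t_j = 4r\ol{r} + (2j-1)\ol{r}$ are independent in $j$ and each has probability at most $r^{-d}$, so the conjunction over the $\approx 6r$ separated radii fitting in $[4r\ol{r}, 16r\ol{r}]$ has probability at most $r^{-6dr}$. Second, to convert the $\omega_0$-dependent enumeration over $E \in \sigma$ into an $\omega_0$-free one, I would apply Lemma~\ref{lem:perturbEsuitable}: suitability at $E$ persists under perturbations of size $\E^{-3\gamma r}$ at the cost of lowering the parameter $p$ by one. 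Hence each $E \in \sigma$ is replaced by the nearest point of a fixed grid $\mathcal{G} \subseteq [-3\lam,3\lam]$ of spacing $\E^{-3\gamma r}$ with $|\mathcal{G}| \leq 6\lam \E^{3\gamma r}$, independent of $\omega_0$, producing an enveloping set $\widetilde{\mathcal{B}}_k^S \supseteq \mathcal{B}_k^S$ whose definition no longer references $\omega_0$ or the measurable selection $x \mapsto \omega(x)$.

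A union bound over the grid then yields
$$
\mathbb{P}(\mathcal{B}_k^S) \leq \mathbb{P}(\widetilde{\mathcal{B}}_k^S) \leq |\mathcal{G}| \cdot r^{-6dr} \leq 6\lam \exp\bigl(3\gamma r - 6dr\log r\bigr),
$$
which, for $r = 2^k$ with $k$ large enough that $d k \log 2 \geq 6\gamma$, is comfortably dominated by $\E^{-r/4} = \E^{-2^{k-2}}$.

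The main obstacle, and the step that demands real care, is the reconciliation of the union-over-$t$ form of $\mathcal{B}_k^S(\omega_0)$ with the intersection-over-$t$ structure that actually provides the product of small probabilities. Concretely, one must justify that the role $\mathcal{B}_k^S$ will play in the localization argument of the next section only requires controlling the quantifier-swapped event ``there exists $E$ in the inner spectrum such that \emph{every} annular scale $t_j$ in $[4r\ol{r},16r\ol{r}]$ admits a violating $n$ and perturbation $\tilde{\omega}$''. Once this swap is set up, the counting of independent annular radii, the size of $\mathcal{G}$, and the absorption of the map $x\mapsto \omega(x)$ into $\widetilde{\mathcal{B}}_k^S$ are all routine consequences of the lemmas established in Sections~\ref{sec:suitability} and \ref{sec:firststeplocalization}.
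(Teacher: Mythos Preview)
Your argument is correct, and you have correctly identified the central quantifier issue: as literally written, $\mathcal{B}_k^S(\omega_0)$ is a union over $t$ of sets built from $\mathcal{B}_r^t(E)$, whereas any useful bound must come from an \emph{intersection} over the separated radii $t_j$. The paper's own proof silently makes the same swap (and in fact writes ``intersection'' where it means ``union'' in the final union bound), so your diagnosis is on target and not a defect of your proof.

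Where your route genuinely differs from the paper's is in how the $\omega_0$-dependence of the eigenvalues is removed. The paper freezes $\omega_0$, observes that $\sigma(H_{\lambda,\omega_0}^{\Lambda_{3r\ol r}(0)})$ contains at most $\#\Lambda_{3r\ol r}(0)$ points, takes the union over those finitely many eigenvalues (each contributing a set of measure $\leq \E^{-2^k}$ via the intersection over independent radii), and then integrates out the inner variables by Fubini; the measurable selection $x\mapsto\omega_0(x)$ exists precisely so that this conditioning makes sense. You instead discretize the energy axis to a grid $\mathcal G$ of mesh $\E^{-3\gamma r}$ and invoke Lemma~\ref{lem:perturbEsuitable}, exactly as the paper does a page earlier for the set $\mathcal{B}_k$, thereby obtaining an enveloping event $\widetilde{\mathcal B}_k^S$ that is independent of $\omega_0$; Fubini and the selection map then become unnecessary. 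Your union is over $|\mathcal G|\sim\lam\E^{3\gamma r}$ points rather than the paper's polynomially many eigenvalues, but since the per-energy bound $r^{-6dr}$ decays much faster than $\E^{-3\gamma r}$, this costs nothing. Your route is arguably cleaner, at the price of tracking one extra drop in the suitability parameter $p$ when passing from eigenvalues to grid points.
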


\begin{proof}
 We have that $\mathcal{B}_{k}^{S}(\omega_0)$ is the intersection of
 less then
 $$
  2^{k+1} \cdot (3 2^{k+1})^{d} 
 $$
 many sets of measure $\leq \E^{-2^{k}}$. Hence
 $$
  \mathbb{P}(\mathcal{B}_{k}^{S}(\omega_0)) \leq \E^{-2^{k - 2}}.
 $$
 The claim now follows by Fubini.
\end{proof}

We can now introduce
\be
 \mathcal{B}^{S} = \bigcap_{\ell \geq 1}
  \left( \bigcup_{k\geq l} \mathcal{B}_{k}^{S} \right),
\ee
which satisfies $\mathbb{P}(\mathcal{B}^{S}) = 0$ by
a Borel--Cantelli argument.  Define
\be
 \Omega_1 = [-\frac{1}{2},\frac{1}{2}]^{\Z^d} 
  \setminus (\mathcal{B}^{S} \cup \mathcal{B}^{L}).
\ee
Assume now that $\psi \in \mathcal{E}_{\lam, \omega}^{R, \eps}$
for some $\omega \in \Omega_1$. Then there exists
$\ell \geq 1$ such that for $k \geq \ell$
\be
 \omega \notin (\mathcal{B}^{S}_{k} \cup \mathcal{B}^{L}_{k}).
\ee
In particular, we obtain that
\be
 |\psi(n)| \leq \E^{-c \sqrt{|n|_{\infty}}}
\ee
for some $c$ once $|n|_{\infty} \geq 2^{\ell}$.

\begin{proof}[Proof of Theorem~\ref{thm:ppsqrt}]
 It is easy to see that, we can conclude pure point spectrum,
 that is (i). Now (ii) follows after some computations.
\end{proof}

%
%
%

\section{Dynamical Localization}
\label{sec:dynloc}

In this section, we will adapt the machinery of the last section to
prove dynamical localization. The proof follows the strategy
of Bourgain and Jitomirskaya from \cite{bj}, where it was used
to prove dynamical localization for a certain quasi-periodic
band model.

Recall that $\{e_x\}_{x\in\Z^d}$
denotes the standard basis of $\ell^2(\Z)$, that is
\be
 e_x(n) = \begin{cases} 1, &x=n;\\ 0,& \text{otherwise}.\end{cases}
\ee
For simplicity, we will consider the Schr\"odinger
equation with initial condition $e_0$, that is
\be\label{eq:schroeevol}\begin{split}
 \I\partial_t \psi(t) &= H_{\omega} \psi(t)\\
  \psi(0) &= e_0.
\end{split}\ee
It would be somewhat more tedious to consider more general initial
states. For $p \geq 1$, consider the moment operator
\be
 X(p,\omega) = \sup_{t\geq 0} \left( \sum_{n \in \Z^d} (1 + |n|^2)^{p} |\psi(t,n)|^2\right),
\ee
where $|n|^2 = \sum_{k=1}^{d} (n_k)^2$.
We will show

\begin{theorem}\label{thm:dynloc}
 Let $p \geq 1$ then for almost every $\omega$, we have
 \be
  X(p,\omega) < \infty.
 \ee
\end{theorem}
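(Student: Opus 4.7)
The plan is to lift the pure-point spectrum plus fractional-exponential decay of generalized eigenfunctions (Theorem~\ref{thm:ppsqrt}) to a semi-uniformly localized eigenfunctions (SULE) statement, and then deduce dynamical localization from SULE by the now-standard argument of del Rio--Jitomirskaya--Last--Simon \cite{dRJLS} adapted to the setting of Bourgain--Jitomirskaya \cite{bj}. The starting point is that, on the full measure set $\Omega_1$ from Theorem~\ref{thm:ppsqrt}, I may choose an orthonormal basis of eigenfunctions $\{\phi_j\}$ with eigenvalues $\{E_j\}$ and expand
\[
 \psi(t,n) = \langle e_n, \E^{-\I t H_{\lam,\omega}} e_0 \rangle
  = \sum_j \E^{-\I t E_j}\,\overline{\phi_j(0)}\,\phi_j(n),
\]
so that $|\psi(t,n)| \le \sum_j |\phi_j(0)|\,|\phi_j(n)|$ uniformly in $t$, and the $\sup_{t\ge 0}$ in the definition of $X(p,\omega)$ is absorbed.

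The main step is to establish SULE, which is where the real work lies. I would run the Borel--Cantelli construction that produced $\mathcal{B}^L_k$ and $\mathcal{B}^S_k$ in Section~\ref{sec:firststeplocalization} with origin replaced by an arbitrary $x\in\Z^d$. Translation covariance of the model (the maps $T_x$ preserve $\mathbb{P}$) gives the same probability bounds for the shifted exceptional sets $\mathcal{B}^L_{k,x}$ and $\mathcal{B}^S_{k,x}$, and multiplying by a factor $(1+|x|_\infty)^{d+1}$ before summing over $x$ keeps the total measure summable in $k$. The resulting full-measure set $\Omega_2 \subseteq \Omega_1$ has the property that for $\omega\in\Omega_2$ there exist $C_\omega<\infty$ and a labeling $j\mapsto x_j\in\Z^d$ (a "center" for each eigenfunction) such that
\[
 |\phi_j(y)| \le C_\omega (1+|x_j|_\infty)^{2d}\,\E^{-c\sqrt{|y-x_j|_\infty}},
 \qquad y\in\Z^d,\ j\ge 1,
\]
with the map $j\mapsto x_j$ polynomially bounded to-one (by orthonormality and the lower bound at the center that is dual to Theorem~\ref{thm:ppsqrt}). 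This is the SULE property at the "stretched-exponential" rate delivered by our multi-scale analysis.

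Given SULE, I would substitute the bounds at $y=0$ and $y=n$ into the spectral expansion to get
\[
 |\psi(t,n)| \le C_\omega^2 \sum_j (1+|x_j|_\infty)^{4d}\,
   \E^{-c\left(\sqrt{|x_j|_\infty} + \sqrt{|n-x_j|_\infty}\right)},
\]
split the inner sum into the two regions $|x_j|_\infty \le \tfrac{1}{2}|n|_\infty$ and $|x_j|_\infty > \tfrac{1}{2}|n|_\infty$, and in each region use $\sqrt{|x_j|_\infty}+\sqrt{|n-x_j|_\infty} \ge \sqrt{|n|_\infty/4}$. Combined with the polynomial bound on the multiplicity of $j\mapsto x_j$, this gives
\[
 (1+|n|_\infty^2)^p\,|\psi(t,n)|^2 \le C'_\omega (1+|n|_\infty)^{2p+O(d)}\,\E^{-c\sqrt{|n|_\infty}},
\]
and summing over $n\in\Z^d$ yields $X(p,\omega)<\infty$ for $\omega\in\Omega_2$, which is a full-measure set.

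The main obstacle is Step~2: running the single-site Borel--Cantelli construction uniformly in the base point $x$ and organizing the probability loss so that the polynomial factors $(1+|x|_\infty)^{2d}$ in SULE survive after summation over $x$ and $k$. Once this has been done, Step~3 is a routine geometric calculation, and the spectral expansion in Step~1 is automatic from pure-point spectrum.
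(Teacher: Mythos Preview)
Your approach is correct but takes a genuinely different route from the paper. You go through a full SULE statement (decay from a center $x_j$ for every eigenfunction, obtained by rerunning the Borel--Cantelli construction of Sections~\ref{sec:firststeplocalization}--17 at every base point $x\in\Z^d$), and then feed SULE into the del Rio--Jitomirskaya--Last--Simon machine. This works, and indeed yields the stronger statement for arbitrary initial states, but the ``main obstacle'' you flag in Step~2 is real extra work.

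The paper sidesteps this entirely by exploiting that the initial state is $e_0$. It uses the same eigenfunction expansion you wrote in Step~1, but instead of assigning a center to each $\phi_\alpha$, it stratifies the eigenfunctions by the size of $|\phi_\alpha(0)|^2$ into dyadic shells
\[
 A_{\omega,s}=\{\alpha:\ 2^{-s-1}<|\phi_\alpha(0)|^2\le 2^{-s}\}.
\]
For $\alpha\in A_{\omega,s}$ the hypothesis of Theorem~\ref{thm:ppsqrt} (mass near the origin $\gtrsim\E^{-\gamma 2^k/8}$) is satisfied once $2^k\gtrsim s$, so Theorem~\ref{thm:ppsqrt} already gives $|\phi_\alpha(n)|\le\E^{-c\sqrt{|n|_\infty}}$ for $|n|_\infty\gtrsim s^2$ without any shifting. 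A Hilbert--Schmidt count bounds $\#(A_{\omega,s})$ polynomially in $s$, and summing $2^{-s}\cdot\#(A_{\omega,s})\cdot s^{O(d+p)}$ over $s$ is immediate. Thus the paper never needs to construct SULE or sum exceptional sets over all centers; the single-center statement of Theorem~\ref{thm:ppsqrt} is already enough because the initial condition is localized at $0$. Your route is more general, the paper's is shorter for the claim as stated.
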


We now begin the proof of this theorem. We will in fact
show that the almost sure set, is the same as in Theorem~\ref{thm:ppsqrt}.
So let $\omega\in\Omega_1$. The first step will be to
rewrite the time evolution \eqref{eq:schroeevol} in terms of
the eigenfunctions of $H_{\lam,\omega}$.

Denote by $E_{\omega}^{\alpha}$ and $\varphi_{\omega}^{\alpha}$ the
orthonormal basis of $\ell^2(\Z^d)$ consisting of eigenfunctions
of $H_{\lam,\omega}$. We have that
$\varphi_{\omega}^{\alpha}(t) = \E^{-\I t E_{\omega}^{\alpha}} \varphi_{\omega}^{\alpha}(0)$
and that
\be
 \psi(t) = \sum_{\alpha} \varphi_{\omega}^{\alpha}(0) \cdot \varphi_{\omega}^{\alpha}(t).
\ee
In particular
\be
 |\psi(t,n)|^2 \leq \sum_{\alpha} |\varphi_{\omega}^{\alpha}(0)|^2 \cdot |\varphi_{\omega}^{\alpha}(n)|^2.
\ee
Hence, it suffices to show that
that
\be
 \sum_{\alpha} \left(|\varphi_{\omega}^{\alpha}(0)|^2 \cdot \left( \sum_{n \in \Z^d} (1 + |n|^2)^{p} |\varphi_{\omega}^{\alpha}(n)|^2\right)\right)
\ee
is finite.

Introduce for $s \geq 0$ the set
\be
 A_{\omega,s} = \{\alpha:\quad \frac{1}{2^{s + 1}} < |\varphi_{\omega}^{\alpha}(0)|^2 \leq\frac{1}{2^{s}}\}.
\ee
Clearly, this set is finite. Furthermore, our task reduces to showing
that for almost every $\omega$ the sequence
\be
 \frac{1}{2^{s}} \sum_{\alpha\in A_{\omega,s}} \left(\sum_{n \in \Z^d} (1 + |n|^2)^{p} |\varphi_{\omega}^{\alpha}(n)|^2\right)
\ee
is summable. 

\begin{proposition}
 There exists $c > 0$. For $\omega\in\Omega_1$, there exists $R_{\omega}$.
 Let $\alpha \in A_{\omega,s}$, then for $|n|_{\infty} \geq \max(4 s^2, R_{\omega})$, we have 
 \be
  |\varphi_{\omega}^{\alpha}(n)| \leq \E^{-c \sqrt{|n|_{\infty}}}.
 \ee
 Furthermore, we have for $R \geq \max(4 s^2, R_{\omega})$ that
 \be
  \sum_{n \in \Lambda_{R}(0)} |\varphi_{\omega}^{\alpha}(n)|^2 \geq \frac{1}{2}.
 \ee
\end{proposition}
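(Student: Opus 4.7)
The plan is to derive both conclusions directly from Theorem~\ref{thm:ppsqrt} by verifying, for each eigenfunction $\varphi_{\omega}^{\alpha}$ with $\alpha\in A_{\omega,s}$, the mass hypothesis of that theorem at a suitably chosen scale $k=k(n,s)$. Fix $\omega\in\Omega_1$ and let $\ell=\ell(\omega)$ be the integer supplied by Theorem~\ref{thm:ppsqrt}. I will define $R_{\omega}$ as $2^{4\ell}$ initially and then enlarge it once more at the end to absorb the tail bound needed for the second assertion.

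For the pointwise decay, note that by definition of $A_{\omega,s}$ we have $|\varphi_{\omega}^{\alpha}(0)|^{2}>2^{-(s+1)}$, which trivially gives
\be
\sum_{x\in\Lambda_{2^{k-2}}(0)}|\varphi_{\omega}^{\alpha}(x)|^{2}\geq |\varphi_{\omega}^{\alpha}(0)|^{2}>2^{-(s+1)}.
\ee
The right-hand side is at least $\E^{-\tfrac{\gamma}{8}2^{k}}$ as soon as $2^{k}\geq 8(s+1)\log(2)/\gamma$, which is a condition of the form $2^{k}\geq C_{\gamma}s$. Given $n$ with $|n|_{\infty}\geq\max(4s^{2},R_{\omega})$, I would choose $k$ to be the largest integer with $2^{4k}\leq|n|_{\infty}$. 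Provided $|n|_{\infty}$ is large enough relative to $s$ and $\ell$, this $k$ satisfies (a) $k\geq\ell$, (b) $2^{k}\geq C_{\gamma}s$, and (c) $2^{4k}\leq|n|_{\infty}$; that is exactly the content of the threshold in the hypothesis, after matching exponents between the scale $2^{k-2}$ at which mass is measured and the scale $2^{4k}$ at which decay is concluded. Invoking Theorem~\ref{thm:ppsqrt} now gives $|\varphi_{\omega}^{\alpha}(n)|\leq\E^{-c\sqrt{|n|_{\infty}}}$.

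For the $\ell^{2}$ lower bound, I would combine the decay just obtained with the normalization $\sum_{n}|\varphi_{\omega}^{\alpha}(n)|^{2}=1$. For $R\geq\max(4s^{2},R_{\omega})$ the exponential decay yields
\be
\sum_{|n|_{\infty}>R}|\varphi_{\omega}^{\alpha}(n)|^{2}\leq \sum_{|n|_{\infty}>R}\E^{-2c\sqrt{|n|_{\infty}}}\leq C_{d}\sum_{m>R}m^{d-1}\E^{-2c\sqrt{m}},
\ee
which is $\leq\tfrac{1}{2}$ provided $R$ (and hence $R_{\omega}$) is chosen large enough from the start; consequently $\sum_{n\in\Lambda_{R}(0)}|\varphi_{\omega}^{\alpha}(n)|^{2}\geq\tfrac{1}{2}$.

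The main point requiring care is the bookkeeping that matches the polynomial dependence on $s$ with the threshold stated in the proposition: we need the admissible scale $k$ to be at least $\ell$ and large enough that the mass hypothesis holds, while simultaneously $2^{4k}\leq|n|_{\infty}$, and the proposition formalizes the resulting window through the bound $|n|_{\infty}\geq\max(4s^{2},R_{\omega})$. Any slack between the constants produced here and the literal statement is absorbed by enlarging $R_{\omega}$, which only depends on $\omega$. No additional probabilistic input is required beyond Theorem~\ref{thm:ppsqrt}.
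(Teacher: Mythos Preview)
Your approach is the same as the paper's: invoke part (iii) of Theorem~\ref{thm:ppsqrt} for the decay, then deduce the mass lower bound from the resulting tail estimate (possibly after enlarging $R_\omega$). The paper's own proof is two lines and does no bookkeeping, so you have actually supplied more detail than the original.

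One point deserves correction. Your computation shows that the mass hypothesis $|\varphi_\omega^\alpha(0)|^2>2^{-(s+1)}\geq \E^{-\frac{\gamma}{8}2^k}$ requires $2^k\gtrsim s$, while choosing the largest $k$ with $2^{4k}\leq |n|_\infty$ only yields $2^k\gtrsim |n|_\infty^{1/4}$. Combining these forces $|n|_\infty\gtrsim s^4$, not $4s^2$. You then say the slack ``is absorbed by enlarging $R_\omega$, which only depends on $\omega$'' --- but the discrepancy is an $s$-dependent polynomial factor, so it \emph{cannot} be pushed into $R_\omega$. This is a genuine (if minor) imprecision: with the exponent $2^{4k}$ as stated in Theorem~\ref{thm:ppsqrt}, the honest threshold is a fixed polynomial in $s$ of degree $4$ rather than $2$. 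The paper's terse proof glosses over the same point. Fortunately it is irrelevant downstream: in the proof of Theorem~\ref{thm:dynloc} one only needs $\#(A_{\omega,s})$ and the moment sum to grow polynomially in $s$, since they are then divided by $2^{-s}$ and summed. So the fix is simply to state the threshold as $C s^4$ (or, more safely, ``some fixed polynomial in $s$'') rather than $4s^2$; everything else in your argument stands.
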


\begin{proof}
 The first part is a consequence of (iii)
 of Theorem~\ref{thm:ppsqrt}.
 The second part follows from
 $$
  \sum_{n\in\Lambda_{R}} \E^{-c\sqrt{|n|_{\infty}}} \to 0
 $$
 as $R \to \infty$ and possibly enlarging $R_{\omega}$.
\end{proof}

\begin{lemma}
 Let $\omega\in\Omega_1$, we have
 \be
  \#(A_{\omega,s}) \leq 2\cdot 9^d (\max(4 s^2, R_{\omega}))^{2 d}.
 \ee
\end{lemma}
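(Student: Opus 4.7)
The plan is to obtain the bound from a trace/dimension argument using the mass concentration supplied by the previous proposition. Setting $R = \max(4s^2, R_\omega)$, every $\alpha \in A_{\omega,s}$ satisfies
\[
 \sum_{n \in \Lambda_R(0)} |\varphi_\omega^\alpha(n)|^2 \geq \frac{1}{2},
\]
so each of the orthonormal eigenfunctions indexed by $A_{\omega,s}$ has at least half of its $\ell^2$-mass inside the finite box $\Lambda_R(0)$. Intuitively, this caps the cardinality of $A_{\omega,s}$ by twice the dimension of $\ell^2(\Lambda_R(0))$.

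To make this precise, let $P$ denote the orthogonal projection of $\ell^2(\Z^d)$ onto $\ell^2(\Lambda_R(0))$. Because $\omega \in \Omega_1$ we have pure point spectrum, so $\{\varphi_\omega^\alpha\}_\alpha$ is an orthonormal basis of $\ell^2(\Z^d)$, and therefore
\[
 \sum_{\alpha} \|P\varphi_\omega^\alpha\|^2
 = \sum_\alpha \spr{\varphi_\omega^\alpha}{P\varphi_\omega^\alpha}
 = \tr(P) = \#\Lambda_R(0).
\]
Discarding all terms with $\alpha \notin A_{\omega,s}$ and using the lower bound $\|P\varphi_\omega^\alpha\|^2 \geq \tfrac{1}{2}$ from the preceding proposition yields
\[
 \frac{1}{2}\, \#(A_{\omega,s})
 \leq \sum_{\alpha \in A_{\omega,s}} \|P\varphi_\omega^\alpha\|^2
 \leq \#\Lambda_R(0) \leq (2R+1)^d \leq (3R)^d.
\]
Rearranging and using $R \geq 1$ (so $R^d \leq R^{2d}$ and $3^d \leq 9^d$) gives
\[
 \#(A_{\omega,s}) \leq 2\cdot 3^d R^d \leq 2 \cdot 9^d R^{2d} = 2\cdot 9^d (\max(4s^2, R_\omega))^{2d},
\]
which is the claim.

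There is no real obstacle: the heavy lifting is done in the mass-concentration statement of the previous proposition (which itself relies on the super-polynomial decay of eigenfunctions from Theorem~\ref{thm:ppsqrt}). The only thing to check is that $\omega \in \Omega_1$ ensures $H_{\lam,\omega}$ has a complete orthonormal eigenbasis, so that the trace identity $\sum_\alpha \|P\varphi_\omega^\alpha\|^2 = \tr(P)$ is applicable; this is (ii) of Theorem~\ref{thm:ppsqrt}. The stated bound is in fact rather loose compared to the sharper $2\cdot 3^d R^d$ produced by the argument, but the looser form suffices for the subsequent summability computation in the proof of Theorem~\ref{thm:dynloc}.
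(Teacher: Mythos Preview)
Your proof is correct and follows essentially the same approach as the paper: use the mass concentration from the preceding proposition together with the trace/Hilbert--Schmidt identity for the projection onto $\ell^2(\Lambda_R(0))$. In fact your computation $\sum_\alpha \|P\varphi_\omega^\alpha\|^2 = \tr(P) = \#\Lambda_R(0)$ is cleaner than the paper's, which writes $\|R_{\Lambda_{r_s}(0)}\|_{\mathrm{HS}}^2 = (\#\Lambda_{r_s}(0))^2$ (an apparent slip that nonetheless still lands below the stated $(3r_s)^{2d}$).
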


\begin{proof}
 Denote by $R_{\Lambda_{r_s}(0)}$ the restriction operator
 to $\Lambda_{r_s}(0)$. We have
 $$
  \sum_{\alpha} \|R_{\Lambda_{r_s}(0)} \varphi_{\omega}^{\alpha} \|^2 
   = \| R_{\Lambda_{r_s}}(0)\|_{\mathrm{HS}}^2
   = (\#(\Lambda_{r_s}(0)))^2 \leq (3 r_s)^{2 d}.
 $$
 Next, we have for any $\alpha \in A_{\omega,s}$ that
 $$
  \|R_{\Lambda_{r_s}(0)} \varphi_{\omega}^{\alpha} \|^2 \geq 
  \frac{1}{2} \|\varphi_{\omega}^{\alpha}\|^2.
 $$ 
 In particular, we also obtain
 $$ 
  \#(A_{\omega,s}) = \sum_{\alpha \in A_{\omega,s}} \|\varphi_{\omega}^{\alpha}\|^2
   \leq 2 \sum_{\alpha \in A_{\omega,s}} \|R_{\Lambda_{r_s}(0)} \varphi_{\omega}^{\alpha} \|^2 
    \leq 2 \sum_{\alpha} \|R_{\Lambda_{r_s}(0)} \varphi_{\omega}^{\alpha} \|^2.
 $$
 This implies the claim.
\end{proof}

We are now ready for

\begin{proof}[Proof of Theorem~\ref{thm:dynloc}]
 We first observe that the previous lemmas imply that
 $$
  \sum_{\alpha\in A_{\omega,s}} \left(\sum_{n \in \Z^d} 
   (1 + |n|^2)^{p} |\varphi_{\omega}^{\alpha}(n)|^2  \right) \leq C (4s)^{4 (d + p)}
 $$
 for some $C \geq 1$.
 We also have that
 $$
  \sum_{s \geq 1} \frac{(s)^{4 (d + p)}}{2^{s}} < \infty.
 $$
 The claim follows.
\end{proof}


\section*{Acknowledgements}

I thank Martin Tautenhahn for useful discussions.
I thank Ivan Veseli\'c for his kind invitation for a visit
to the Technische Universit\"at Chemnitz, from when the
argument in Appendix~\ref{sec:spectrum} originates,
and for useful discussions.

\appendix

%
%
%

\section{The initial condition}
\label{sec:initcond}

In this section, I wish to discuss how to obtain the initial scale
estimate at large coupling. A main motivation is that main standard
proofs rely on the Wegner estimate, see \cite{v09} or \cite{kirsch},
which is not available in our context. I will begin by showing that
Hypothesis~\ref{hyp:analytic} implies (ii) of Hypothesis~\ref{hyp:expdecay}.
We will use the notation
\be
 f(\omega) = \sum_{r=0}^{\infty} f(\{\omega_n\}_{n\in\Lambda_r(0)}.
\ee

\begin{lemma}\label{lem:loja}
 Assume Hypothesis~\ref{hyp:analytic}. Then there exist constants
 $F$ and $\alpha > 0$ such that
 \be
  \mathbb{P}(\{\omega\in [-\frac{1}{2},\frac{1}{2}]^{\Z^d}:\quad
   |f(\omega) - E| \leq \eps\}) \leq F\cdot\eps^{\alpha}.
 \ee
\end{lemma}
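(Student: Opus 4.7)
The plan is to reduce to a one-dimensional Cartan estimate in $\omega_0$, fibered over the remaining coordinates $\omega' = (\omega_x)_{x \neq 0}$ via Fubini. The crux is to produce, uniformly in $\omega' \in \Omega' := [-\tfrac{1}{2},\tfrac{1}{2}]^{\Z^d \setminus \{0\}}$ and in $E \in \R$, a real reference point $x^* \in [-\tfrac{1}{2},\tfrac{1}{2}]$ at which $|g_{\omega'}(x^*) - E| \geq m_0$, where $g_{\omega'}(z) := f(z, \omega')$ and $m_0 > 0$ depends only on $f$.

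I first upgrade the pointwise non-constancy in Hypothesis~\ref{hyp:analytic}(iii) to a quantitative uniform statement. Expand $g_{\omega'}(z) = \sum_{k \geq 0} a_k(\omega') z^k$; the coefficients $a_k : \Omega' \to \C$ are continuous, since $\sum_r f_r$ converges uniformly by \eqref{eq:asfr} and Cauchy's estimate bounds each $\partial^k_z f_r$. Condition (iii) says $\bigcap_N S_N = \emptyset$ for the nested closed sets $S_N := \{\omega' : a_1 = \cdots = a_N = 0\}$. Since $\Omega'$ is compact in the product topology (Tychonoff), the finite intersection property forces $S_{K_0} = \emptyset$ for some $K_0 < \infty$, and continuity of $\psi(\omega') := \max_{1 \leq k \leq K_0} |a_k(\omega')|$ on this compact set yields $\delta_0 := \min_{\omega'} \psi(\omega') > 0$.

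To convert the non-vanishing of some Taylor coefficient into a uniform lower bound on the oscillation on the real interval, I combine the Bernstein/Markov equivalence of norms on polynomials of degree at most $K_0$ with the Cauchy tail bound $|a_k| \leq M \cdot 6^{-k}$ on $\D = \D_6$; after possibly enlarging $K_0$, this gives $\sup_{x \in [-\tfrac{1}{2},\tfrac{1}{2}]} |g_{\omega'}(x) - a_0(\omega')| \geq 2 m_0$ with $m_0 = m_0(f) > 0$. A case split on whether $|a_0(\omega') - E| \geq m_0$ then produces (via the triangle inequality) the desired real $x^* \in [-\tfrac{1}{2},\tfrac{1}{2}]$ with $|g_{\omega'}(x^*) - E| \geq m_0$. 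Translating so that $x^*$ is the origin (permissible since $|x^*| \leq \tfrac{1}{2}$ and $6 - \tfrac{1}{2} > 2\E$, so $\D_{2\E}$ embeds into $\D - x^*$), rescaling and normalizing by $M := \sup_{\omega \in \D^{\Z^d}} |f(\omega)|$, Theorem~\ref{thm:cartanlemma} yields
\[
 \bigl|\{x \in [-\tfrac{1}{2},\tfrac{1}{2}] : |g_{\omega'}(x) - E| \leq \eps\}\bigr|
 \leq C \eps^{\alpha}, \qquad \alpha = \frac{1}{\log(M/m_0)} > 0,
\]
with $C, \alpha$ depending only on $f$. Switching from Lebesgue to $\mu$ using $\|\rho\|_\infty$ (Hypothesis~\ref{hyp:analytic}(iv)) and integrating in $\omega'$ by Fubini gives $\mathbb{P}(|f - E| \leq \eps) \leq F \eps^{\alpha}$.

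The main obstacle is the compactness step: without extracting a \emph{uniform} $K_0$ and $\delta_0$, the Cartan exponent $\alpha(\omega')$ would depend on $\omega'$, and the resulting bound $\eps^{\alpha(\omega')}$ would not integrate to a clean power bound in $\eps$. Both the continuity of the $a_k$ in the product topology and the Cauchy tail estimate needed to pass from Taylor coefficients back to values on the real interval are enabled by the exponential decay \eqref{eq:asfr} of $\|f_r\|_\infty$.
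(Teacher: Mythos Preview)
Your proposal is correct and follows the paper's overall architecture: fiber over $\omega' = (\omega_x)_{x\neq 0}$, obtain a uniform lower bound on the oscillation of $g_{\omega'}$ on $[-\tfrac12,\tfrac12]$ via compactness of $\Omega'$, produce a real reference point $x^*$ with $|g_{\omega'}(x^*)-E|\geq m_0$, apply the one-dimensional Cartan lemma, and integrate out $\omega'$ by Fubini.

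The only substantive difference is how the uniform oscillation bound is obtained. The paper argues directly: the function
\[
 \tilde\omega \;\longmapsto\; \max_{x\in[-\frac12,\frac12]} f(\{\tilde\omega,x\}) - \min_{x\in[-\frac12,\frac12]} f(\{\tilde\omega,x\})
\]
is continuous on the compact space $\Omega'$ and strictly positive by (iii), hence bounded below by some $\eta>0$; this immediately yields $x^*$ with $|g_{\tilde\omega}(x^*)-E|\geq\eta/2$. You instead pass through Taylor coefficients, extract a uniform $K_0$ and $\delta_0$ via the finite-intersection property, and then convert back to an oscillation bound using norm equivalence on $\mathcal P_{K_0}$ together with the Cauchy tail estimate. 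This detour is correct (the norm-equivalence constant on $[-\tfrac12,\tfrac12]$ behaves like $4^{-K_0}$ while the tail decays like $12^{-K_0}$, so enlarging $K_0$ wins), but it is more elaborate than necessary: the paper's route avoids Taylor coefficients entirely by working with the oscillation functional itself. On the other hand, your explicit case split on $|a_0(\omega')-E|$ and the careful handling of the translation before applying Theorem~\ref{thm:cartanlemma} make steps precise that the paper leaves implicit.
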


\begin{proof}
 For $\ti{\omega} = \{\omega_x\}_{x\in\Z^d\setminus\{0\}}$, we
 define
 $$
  g(\ti{\omega}) = \max_{x\in[-\frac{1}{2},\frac{1}{2}]} (f(\{\ti{\omega},x\})
   -\min_{x\in[-\frac{1}{2},\frac{1}{2}]} (f(\{\ti{\omega},x\}).
 $$
 Then by condition (iii) of Hypothesis~\ref{hyp:analytic},
 we have $g(\ti{\omega}) > 0$. Since $g$ is defined on a compact
 space, there exists $\eta > 0$ such that $g(\ti{\omega}) \geq \eta$
 for all choices of $\ti{\omega}$.

 For fixed $\ti{\omega}$, denote by $f_{\ti{\omega}}(x)$ the function
 $f(\{x,\ti{\omega}\})$, so $x$ plays no the role of $\omega_0$. We have
 that $f_{\ti{\omega}}$ obeys the assumption of Cartan's lemma,
 Theorem~\ref{thm:cartanlemma}. So we may conclude that
 $$
  |\{x \in [-\frac{1}{2},\frac{1}{2}]:\quad |f(x)| \leq \E^{-s}\}|
   \leq 30 \E^{3} \exp\left(-\frac{1}{\log(\eta^{-1})} s\right).
 $$
 By Fubini, we thus see that the claim holds with  
 $F = 30\E^3$ and $\alpha = \frac{1}{\log(\eta^{-1})}$.
\end{proof}

We now begin with the proof of the initial condition. The
strategy will be to exhibit a large gap in the spectrum
of $H^{\Lambda_R(0)}$. To conclude the decay of the Green's
function, we will use the Combes--Thomas estimate, whose
consequence, we now recall.

\begin{proposition}\label{prop:combesthomas}
 There exists an universal constant $c_0 > 0$.
 Let $H: \ell^2(\Lambda_r(0)) \to \ell^2(\Lambda_r(0))$ be 
 a Schr\"odinger operator and $\tau\in(0,1)$. Assume that
 \be
  \dist(E, \sigma(H)) \geq \delta
 \ee
 and $r \geq \frac{1}{\delta^2} + 10$.
 Then $\Lambda_r(0)$ is $(c_0 \log(1 + \delta),\tau,3)$-suitable for $H - E$.
\end{proposition}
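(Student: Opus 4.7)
The plan is to establish a standard Combes--Thomas estimate and then translate its conclusion into the two conditions of $(\gam, \tau, 3)$-suitability. Condition (i) is immediate: since $\dist(E, \sigma(H)) \geq \delta$, we have $\|(H - E)^{-1}\| \leq 1/\delta$, and the hypothesis $r \geq \delta^{-2} + 10$ forces $1/\delta \leq \tfrac{1}{8} \E^{r^\tau}$ after an elementary check splitting into the cases $\delta \geq 1$ (trivial) and $\delta < 1$ (where $r^\tau \geq \delta^{-2\tau}$ easily dominates $\log(8/\delta)$).

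For (ii), fix a target $y \in \Lambda_r(0)$ and set $\rho(x) = \alpha |x - y|_\infty$. Let $M_\rho$ be the diagonal multiplication operator with entries $\E^{\rho(x)}$. The conjugated operator $M_\rho H M_{-\rho}$ coincides with $H$ on the diagonal (since the potential commutes with $M_\rho$) and differs from $H$ only in the off-diagonal matrix elements of the discrete Laplacian, where it is multiplied by $\E^{\rho(x) - \rho(z)}$. For $|x - z|_1 = 1$ this factor differs from $1$ by at most $\E^\alpha - 1$, so the Schur test yields
\be
 \|M_\rho H M_{-\rho} - H\| \leq 2d(\E^\alpha - 1).
\ee
Choose $\alpha = 2 c_0 \log(1 + \delta)$ with $c_0$ small enough (depending only on $d$) that $2d(\E^\alpha - 1) \leq \delta / 2$ uniformly in $\delta > 0$. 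This is where the factor $\log(1+\delta)$ rather than $\delta$ itself enters: linearizing for small $\delta$ forces $c_0 \leq 1/(8d)$, while for large $\delta$ we may take $\alpha$ of order $\log \delta$ before the perturbation escapes control by a Neumann series.

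Armed with $\|M_\rho H M_{-\rho} - H\| \leq \delta/2$ and $\|(H-E)^{-1}\| \leq 1/\delta$, a one-line Neumann series argument gives $\|(M_\rho H M_{-\rho} - E)^{-1}\| \leq 2/\delta$. Since the $(x,y)$ matrix element of this conjugated resolvent equals $\E^{\alpha |x-y|_\infty} \spr{e_x}{(H-E)^{-1}e_y}$, we conclude
\be
 |\spr{e_x}{(H-E)^{-1} e_y}| \leq \frac{2}{\delta} \E^{-\alpha |x-y|_\infty}.
\ee
Setting $\gam = c_0 \log(1+\delta)$ (half of $\alpha$), the excess decay rate $c_0 \log(1+\delta)$ must absorb the polynomial prefactor $\#(\partial_- \Lambda_r(x)) \leq 2d (3r)^{d-1}$ together with the constant $16/\delta$ when $|x-y|_\infty \geq r/10$. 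The resulting inequality reduces essentially to
\be
 \frac{c_0 \log(1+\delta) \cdot r}{10} \geq \log(16/\delta) + (d-1)\log(3r) + \log(2d),
\nn
\ee
which is precisely what $r \geq \delta^{-2} + 10$ delivers in both the $\delta \to 0$ and bounded-$\delta$ regimes.

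The only mild obstacle is the bookkeeping of the universal constant $c_0$: it must be chosen once and for all so that the perturbation estimate $2d(\E^\alpha - 1) \leq \delta/2$ holds for every $\delta > 0$, with $\alpha = 2c_0 \log(1+\delta)$. The worst case is $\delta$ small (linearization forces $c_0 \leq 1/(8d)$), while the case $\delta$ large is handled by the inequality $\E^\alpha - 1 = (1+\delta)^{2c_0} - 1$ being subdominant to $\delta$ whenever $2c_0 < 1$. No genuine analytic difficulty arises beyond this.
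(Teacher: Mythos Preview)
Your approach is correct and is precisely what the paper does: the paper's own proof consists of a single sentence citing the Combes--Thomas estimate \cite{ct}, \cite{kirsch}, and you have supplied the standard conjugation-by-exponential-weight argument that this citation stands for. The splitting of the decay rate $\alpha = 2\gam$ so that half absorbs the polynomial prefactors is the usual bookkeeping, and your identification of the constraint $c_0 \leq 1/(8d)$ from the small-$\delta$ linearization is exactly right.

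One small caveat worth flagging (which is really an issue with the \emph{statement} rather than your argument): for $\tau$ very close to $0$ the verification of condition (i), namely $1/\delta \leq \tfrac{1}{8}\E^{r^\tau}$, does not quite go through from $r \geq \delta^{-2}+10$ alone when $\delta$ is of order $1$, since $r^\tau$ can be arbitrarily close to $1$. The paper in practice only uses $\tau = \tfrac12$, where this is harmless, and in the application (Theorem~\ref{thm:initcond}) $\delta$ is chosen freely so the point is moot; but if you want the proposition to hold literally for all $\tau \in (0,1)$ you would need a mild additional largeness condition on $r$ in terms of $\tau$.
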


\begin{proof}
 This is an application of the Combes--Thomas estimate \cite{ct},
 \cite{kirsch}.
\end{proof}

We now exhibit a gap in the spectrum. Here, we use the notation
$(T_x\omega)_n = \omega_{x + n}$. Although this is of little importance
for the proof, $T_x$ could be any measure preserving map.

\begin{lemma}
 Assume there are constants $F > 0$ and $\alpha >0$
 such that for every $E \in \R$
 \be
  \mathbb{P}(\{\omega:\quad |f(\omega) - E| \leq \eps\}) \leq F \eps^{\alpha}.
 \ee
 Let $V_{\lam,\omega}(x) = \lam f(T_x \omega)$.
 Then for $p > 0$, $E \in \R$, $\delta > 0$, and $R \geq 1$,
 there exists $\lam_0 = \lam_0(F,\alpha,p, R, \delta)$
 such that for $\lam > \lam_0$, we have
 \be
  \mathbb{P}(\{\omega:\quad \dist(E, \sigma(H_{\lam,\omega}^{\Lambda_R(0)})) \leq \delta\}) 
   \leq \frac{1}{R^{p}}.
 \ee
\end{lemma}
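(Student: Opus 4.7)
The plan is to reduce the event that $E$ lies close to the spectrum to the event that $E/\lam$ lies close to some single-site value $f(T_x\omega)$, and then to union-bound using the anticoncentration assumption on $f$. The Laplacian acts as a bounded perturbation of the diagonal potential, which at large $\lam$ makes the spectrum essentially a $2d$-neighbourhood of the potential values $\{\lam f(T_x\omega):x\in\Lambda_R(0)\}$.

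Concretely, I would first observe, via a Gershgorin-disk argument applied to the matrix $H_{\lam,\omega}^{\Lambda_R(0)}=\Delta+V_{\lam,\omega}$, that every eigenvalue $\mu$ of $H_{\lam,\omega}^{\Lambda_R(0)}$ satisfies
\[
 |\mu-\lam f(T_x\omega)|\leq 2d\qquad\text{for some }x\in\Lambda_R(0),
\]
because the diagonal entry in row $x$ is $\lam f(T_x\omega)$ and the sum of the moduli of the off-diagonal entries in that row is at most $2d$ (the Laplacian has at most $2d$ neighbours, each with weight $1$). Consequently, if $\dist(E,\sigma(H_{\lam,\omega}^{\Lambda_R(0)}))\leq\delta$, there exists $x\in\Lambda_R(0)$ with $|\lam f(T_x\omega)-E|\leq\delta+2d$, equivalently
\[
 \bigl|f(T_x\omega)-E/\lam\bigr|\leq\frac{\delta+2d}{\lam}.
\]

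Next, since each $T_x$ is measure-preserving, the assumed anticoncentration bound applied at the energy $E/\lam$ gives
\[
 \mathbb{P}\!\left(\bigl|f(T_x\omega)-E/\lam\bigr|\leq\frac{\delta+2d}{\lam}\right)\leq F\left(\frac{\delta+2d}{\lam}\right)^{\alpha}
\]
for every single $x$. A union bound over the at most $(2R+1)^d\leq 3^dR^d$ lattice points in $\Lambda_R(0)$ then yields
\[
 \mathbb{P}\bigl(\dist(E,\sigma(H_{\lam,\omega}^{\Lambda_R(0)}))\leq\delta\bigr)\leq 3^d R^d\cdot F\left(\frac{\delta+2d}{\lam}\right)^{\alpha}.
\]
Choosing
\[
 \lam_0=(\delta+2d)\bigl(3^dF\bigr)^{1/\alpha}R^{(d+p)/\alpha}
\]
makes the right-hand side at most $R^{-p}$ for all $\lam\geq\lam_0$, which gives the claim.

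There is essentially no obstacle here; the only point to watch is that the Gershgorin bound must be applied to $H-E$ rather than to $H$ directly to get the cleanest statement, and that $T_x$-invariance of $\mathbb{P}$ is what lets us apply the single-site hypothesis uniformly over $x$. Together with Proposition~\ref{prop:combesthomas}, taking $\delta$ large (but fixed depending on the desired $\gamma$) and then $\lam$ larger still, this yields the initial-scale estimate $[1,r]$ is $(2,\alpha)$-acceptable in the sense required by Proposition~\ref{prop:msinitcond}.
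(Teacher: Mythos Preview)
Your argument is correct and essentially the same as the paper's: both show that if $|V_{\lam,\omega}(x)-E|>2d+\delta$ for every $x\in\Lambda_R(0)$ then $\dist(E,\sigma(H_{\lam,\omega}^{\Lambda_R(0)}))>\delta$, and then union-bound the complementary event using the anticoncentration hypothesis together with the $T_x$-invariance of $\mathbb{P}$. The only cosmetic difference is that you phrase the spectral inclusion via Gershgorin disks while the paper uses the operator bound $\|\Delta\|\leq 2d$ directly; you also spell out an explicit choice of $\lam_0$, which the paper leaves implicit.
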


\begin{proof}
 Introduce the set $\Omega_E$ as the set of $\omega\in\Omega_E$
 satisfying for $x \in \Lambda_R(0)$ that
 $$
  \dist(E, V_{\lam,\omega}(x)) > 2 d + \delta.
 $$
 By assumption, we have that
 $$
  \mathbb{P}(\Omega_E) \leq (3 R)^d F \cdot \left(\frac{2d +\delta}{\lam}\right)^{\alpha},
 $$
 which is $\leq \frac{1}{R^p}$ for $\lam > 0$ large enough.
 Furthermore, one sees that for $\omega\in\Omega_E$, we have
 $$
  \dist(E,\sigma(H_{\lam,\omega})) > \delta,
 $$
 since $\|\Delta\| \leq 2d$. The claim follows.
\end{proof}

Combining this lemma with Proposition~\ref{prop:combesthomas}
and an appropriate choice of $\delta > 0$, we obtain

\begin{theorem}\label{thm:initcond}
 Assume Hypothesis~\ref{hyp:expdecay}.
 For any $\alpha, r_0 < r_1$
 there exists $\lam_0 = \lam_0(\alpha,r_0,r_1,f) > 0$ such that
 \be
  [r_0, r_1]\text{ is $(1,\alpha)$-acceptable for } H_{\lam,\omega} - E
 \ee
 in the sense of Definition~\ref{def:acceptable}.
\end{theorem}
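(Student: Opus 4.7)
The plan is to combine the spectral gap lemma stated just before the theorem with the Combes--Thomas estimate (Proposition~\ref{prop:combesthomas}), and then convert the resulting suitability to acceptability via the remark in Section~\ref{sec:suitability}.

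First, by that remark it suffices to prove that for every integer $r \in [r_0, r_1]$ one has
\[
 \mathbb{P}\bigl(\Lambda_r(0)\text{ is not }(1,\tfrac{1}{2},3)\text{-suitable for }H_{\lam,\omega} - E\bigr) \leq \frac{1}{r^{\alpha}}.
\]
Choose $\delta > 0$ large enough that $c_0 \log(1+\delta) \geq 1$, with $c_0$ the universal constant from Proposition~\ref{prop:combesthomas}, and further enlarge $\delta$ if necessary so that $r_0 \geq \delta^{-2} + 10$. This is always possible because the gap lemma allows $\delta$ to be taken arbitrarily large at the cost of enlarging $\lam_0$; the chosen $\delta$ depends only on $c_0$ and $r_0$.

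Second, Hypothesis~\ref{hyp:expdecay}(iii) furnishes precisely the concentration estimate $\mathbb{P}(|f(\omega) - E|\leq\eps) \leq F \eps^{\alpha_f}$ required as an input to the unnumbered gap lemma preceding the theorem. Applying that lemma with $R = r$ and $p = \alpha$ yields, for each $r \in [r_0, r_1]$, a threshold $\lam_0(r,\delta,F,\alpha_f,\alpha)$ such that for all $\lam \geq \lam_0(r)$ the event
\[
 \Omega_r := \{\omega :\ \dist(E,\sigma(H^{\Lambda_r(0)}_{\lam,\omega})) > \delta\}
\]
has probability $\geq 1 - r^{-\alpha}$. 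On $\Omega_r$, Proposition~\ref{prop:combesthomas} applies (the size condition $r \geq \delta^{-2} + 10$ holds by our choice of $\delta$) and gives that $\Lambda_r(0)$ is $(c_0\log(1+\delta),\tfrac{1}{2},3)$-suitable for $H_{\lam,\omega} - E$, hence $(1,\tfrac{1}{2},3)$-suitable. Setting $\mathcal{B}_r := \Omega_r^c$ and $\lam_0 := \max_{r_0 \leq r \leq r_1}\lam_0(r)$ (a maximum over finitely many integers) yields the claim.

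The only mild obstacle is making sure $\gamma$ is a uniform constant (here $1$) independent of $r$ and $\lam$: because Combes--Thomas returns decay at rate $c_0\log(1+\delta)$, the gap size $\delta$ must be fixed first, independently of $r$, and only \emph{then} is $\lam$ chosen large enough to force this gap with the required probability. Once this ordering of quantifiers is respected, the argument is a direct chain of the two quoted results, and the $\lam$-dependence of the $\lam_0(r)$ is absorbed into a finite maximum.
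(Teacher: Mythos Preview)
Your proof is correct and matches the paper's own argument, which is simply the one-line remark ``Combining this lemma with Proposition~\ref{prop:combesthomas} and an appropriate choice of $\delta > 0$''; you have supplied the details that the paper omits. One minor caveat: your claim that one can always ``enlarge $\delta$ if necessary so that $r_0 \geq \delta^{-2} + 10$'' requires $r_0 > 10$, since $\delta^{-2} > 0$; this constraint is inherited from the paper's own statement of Proposition~\ref{prop:combesthomas} rather than being a defect in your reasoning, and in the applications (e.g.\ Proposition~\ref{prop:msinitcond}) the lower endpoint can always be taken large enough.
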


This implies Proposition~\ref{prop:msinitcond}.


%
%
%

\section{The spectrum}\label{sec:spectrum}

The following argument originates from a discussion with Ivan Veseli\'c.

\begin{theorem}
 The spectrum of $H_{\lam,\omega}$ is almost surely an interval.
\end{theorem}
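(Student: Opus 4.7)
The plan is to combine Pastur's ergodic theorem, a Weyl sequence argument, and a path-continuity argument inside $\operatorname{supp}(\mathbb{P})$. First, since $\mathbb{P}$ is a product measure the $\Z^d$-action $(T_x)$ is ergodic on $(\Omega,\mathbb{P})$, so Pastur's theorem produces a deterministic compact $\Sigma \subseteq \R$ with $\sigma(H_{\lam,\omega}) = \Sigma$ almost surely, characterized by $E \in \Sigma$ iff $\mathbb{P}(\dist(E, \sigma(H_{\lam,\omega})) < \eps) > 0$ for every $\eps > 0$. Next I would establish the reduction $\sigma(H_{\lam,\omega^*}) \subseteq \Sigma$ for every $\omega^* \in \operatorname{supp}(\mathbb{P})$: given $E \in \sigma(H_{\lam,\omega^*})$ and $\eps > 0$, truncate a Weyl sequence to obtain a compactly supported $u$ of norm one with $\|(H_{\lam,\omega^*} - E) u\| < \eps/2$. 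For $R'$ large compared to $\operatorname{supp}(u)$ and $\delta > 0$ small, the exponential decay \eqref{eq:asfr2} gives $\|(H_{\lam,\omega} - H_{\lam,\omega^*}) u\| < \eps/2$ for every $\omega$ satisfying $|\omega_n - \omega^*_n| < \delta$ on $\Lambda_{R'}(0)$; the corresponding cylinder event has positive $\mathbb{P}$-probability since $\omega^*_n \in \operatorname{supp}(\mu)$ for each $n$, hence $E \in \Sigma$. Together with the trivial reverse inclusion this yields $\Sigma = \overline{\bigcup_{\omega^* \in \operatorname{supp}(\mathbb{P})} \sigma(H_{\lam,\omega^*})}$.

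To show that this union is connected, I would exploit that $\operatorname{supp}(\mu)$ is an interval (in the alloy setting of Theorem~\ref{thm:int1} the measure $\mu$ has a bounded density on $[-\frac{1}{2}, \frac{1}{2}]$), so $\operatorname{supp}(\mathbb{P}) = \operatorname{supp}(\mu)^{\Z^d}$ is convex. For any $\omega^{(0)}, \omega^{(1)} \in \operatorname{supp}(\mathbb{P})$, the linear path $\omega^{(t)} = (1-t)\omega^{(0)} + t \omega^{(1)}$ stays in $\operatorname{supp}(\mathbb{P})$, and in the alloy case $V_{\lam,\omega}$ is linear in $\omega$ with kernel $\varphi \in \ell^1(\Z^d)$, giving $\|H_{\lam,\omega^{(t)}} - H_{\lam,\omega^{(t')}}\| \leq C\lam |t - t'|$; consequently $\sigma(H_{\lam,\omega^{(t)}})$ varies Hausdorff-continuously in $t$.

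The main obstacle is upgrading Hausdorff continuity to an intermediate value statement: for $E_0 < E < E_1$ with $E_i \in \sigma(H_{\lam,\omega^{(i)}})$, I need to produce $t^* \in [0,1]$ with $E \in \sigma(H_{\lam,\omega^{(t^*)}})$. I would argue by contradiction: if $E$ were in the resolvent set of every $H_{\lam,\omega^{(t)}}$, then the spectral projection $P_t = \chi_{(-\infty, E)}(H_{\lam,\omega^{(t)}})$ would be norm continuous in $t$ via a Riesz contour separated from $E$ by a uniform gap, so the local trace $F_N(t) = \tr(\chi_{\Lambda_N(0)} P_t \chi_{\Lambda_N(0)})$ would be continuous for every finite box $\Lambda_N(0)$. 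Restricting to finite volume, the eigenvalues of $H_{\lam,\omega^{(t)}}^{\Lambda_N(0)}$ depend continuously on $t$ by Rellich's theorem, and a counting argument for the number of eigenvalues below $E$ (using approximate eigenfunctions for $E_0$ and $E_1$ at the two endpoints to seed the count) shows that at least one eigenvalue must cross $E$ at some intermediate $t^*$, contradicting the assumed spectral gap and placing $E$ within $O(|\Lambda_N|^{-1/2})$ of $\sigma(H_{\lam,\omega^{(t^*)}})$. Letting $N \to \infty$ yields $E \in \sigma(H_{\lam,\omega^{(t^*)}}) \subseteq \Sigma$, so $\Sigma$ is connected, hence an interval.
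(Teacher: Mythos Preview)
Your reduction via Pastur's theorem and the Weyl-sequence inclusion $\sigma(H_{\lam,\omega^*})\subseteq\Sigma$ for $\omega^*\in\operatorname{supp}(\mathbb{P})$ is correct and is also used (implicitly) in the paper. The problem is step~4. Your finite-volume crossing argument does not close: the crossing time $t^*=t^*_N$ depends on $N$, and the fact that $E$ is an eigenvalue of $H_{\lam,\omega^{(t^*_N)}}^{\Lambda_N(0)}$ does \emph{not} place $E$ within $O(|\Lambda_N|^{-1/2})$ of the infinite-volume spectrum $\sigma(H_{\lam,\omega^{(t^*_N)}})$. Dirichlet restrictions can have eigenvalues in a spectral gap of the full-line operator (boundary states), and there is no uniform-in-$t$ mechanism here that rules this out. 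Passing to a convergent subsequence $t^*_{N_k}\to t^{**}$ does not help either, since the approximate eigenfunctions may drift to the boundary of $\Lambda_{N_k}(0)$.

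The paper bypasses this difficulty with a single observation you are missing: anchor the homotopy at the \emph{constant} configuration $\omega^c\equiv 0$. Then $V_{\lam,\omega^c}$ is constant, so $\sigma(H_{\lam,\omega^c})=[-2d+\lam f(\omega^c),\,2d+\lam f(\omega^c)]$ is already an interval. For any $\omega\in\operatorname{supp}(\mathbb{P})$ the path $\gamma(t)=t\omega$ stays in $\operatorname{supp}(\mathbb{P})$, and now the connectedness of $\bigcup_{t\in[0,1]}\sigma(H_{\lam,\gamma(t)})$ follows from the elementary fact that if $t\mapsto K_t\subseteq\R$ is Hausdorff-continuous and $K_0$ is an interval, then $\bigcup_t K_t$ is an interval (just note that $t\mapsto\max K_t$ and $t\mapsto\min K_t$ are continuous and apply the ordinary IVT). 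Since every such union contains the fixed interval $\sigma(H_{\lam,\omega^c})$, their union over all $\omega$ is again an interval, and this union is exactly $\Sigma$. The anchoring at $\omega^c$ is what replaces your uncompleted counting argument.
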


\begin{proof}
 Denote by $\Sigma$ the almost sure spectrum of $H_{\lam,\omega}$.
 Define $(\omega^c)_x = 0$ for $x\in\Z^d$. We then have that
 $$
  \sigma(H_{\lam,\omega^c}) = [-2d + f(\omega^c), 2d + f(\omega^c)].
 $$
 In particular, it is an interval. Let now $\omega \in \Omega_r$
 and define a continuous path $\gamma: [0,1] \to \Omega$ by
 $$
  \gamma(t)_x = t \cdot \omega_x.
 $$
 We then have that $\omega^c = \gamma(0)$ and $\omega = \gamma(1)$.
 We clearly have that
 $$
  \bigcup_{t \in [0,1]} \sigma(H_{\lam,\gamma(t)}) \subseteq \Sigma.
 $$
 Since $\sigma(H_{\lam,\gamma(t)})$ depends continuously
 on $t$, we obtain that this set is an interval, and so
 also $\Sigma$.
\end{proof}


%
%
%

\section{On Wegner's estimate}
\label{sec:wegner}

We now discuss that the conclusions of multi-scale analysis imply
Wegner estimates. This is not new and can for example be found
in \cite{schlag} by Schlag.

\begin{proposition}
 Let $\tau \in (0,1)$ and $\psi(r)$ be a decreasing function
 satisfying $\lim_{r\to\infty}\psi(r) = 0$.
 Assume for all $E$ and $r \geq r_0$ that
 \be
  \mathbb{P}(\Lambda_r(0)\text{ is not $(\gamma,\tau,0)$-suitable for }H_{\omega} - E) < \psi(r).
 \ee
 Then, we have for $R \geq R_0$ that
 \be
  \mathbb{E}\left(\frac{1}{\#\Lambda_R(0)} 
   \tr\left(P_{[E - \eps, E+ \eps]}(H_{\omega}^{\Lambda_R(0)})\right)\right) 
   \leq 7 \psi\left(\frac{1}{3} \left(\log(\eps^{-1})\right)^{\frac{1}{\tau}}\right)^{\frac{1}{d+1}}.
 \ee
\end{proposition}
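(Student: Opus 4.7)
The plan is to prove the estimate by a two-scale block-decomposition argument, combining the exponential improvement provided by Theorem~\ref{thm:resolv1} with a classical rank-perturbation comparison between $\tr P_I(H^{\Lambda_R(0)})$ and the sum of the traces on the blocks. First, given $\eps$, choose the small scale $r \sim (\log\eps^{-1})^{1/\tau}$ so that $\eps \leq \E^{-3r^{\tau}}$, and introduce an intermediate scale $r'$, to be optimized below. The key observation is that if \emph{every} subcube $\Lambda_r(n) \subseteq \Lambda_{r'}(x_j)$ is $(\gamma,\tau,0)$-suitable for $H_{\omega} - E$, then Theorem~\ref{thm:resolv1} (the hypothesis $r$-acceptability is easy to check for $r' \geq 10\,r$) yields $\|(H_{\omega}^{\Lambda_{r'}(x_j)} - E)^{-1}\| \leq \E^{3 r^{\tau}} \leq \eps^{-1}$, so $H_{\omega}^{\Lambda_{r'}(x_j)}$ carries no spectrum in $I = [E-\eps, E+\eps]$.

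Partition $\Lambda_R(0)$ into $M \sim (R/r')^d$ disjoint translates $\Lambda_{r'}(x_j)$. The decomposition
\[
H_{\omega}^{\Lambda_R(0)} = \bigoplus_{j=1}^{M} H_{\omega}^{\Lambda_{r'}(x_j)} + \Gamma,
\]
where $\Gamma$ is the collection of discrete-Laplacian hopping terms cut by the partition, has $\mathrm{rank}(\Gamma) \lesssim M \cdot d (r')^{d-1} \sim R^d/r'$. Weyl's eigenvalue-counting interlacing for rank-$k$ Hermitian perturbations gives $|\tr P_I(A) - \tr P_I(B)| \leq 2k$ for any interval $I$, so
\[
\tr P_I\bigl(H_{\omega}^{\Lambda_R(0)}\bigr) \leq \sum_{j=1}^{M} \tr P_I\bigl(H_{\omega}^{\Lambda_{r'}(x_j)}\bigr) + C\, R^d/r'.
\]

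For a single block, using the hypothesis $\mathbb{P}(\Lambda_r(n)\text{ is not $(\gamma,\tau,0)$-suitable for }H_\omega-E) < \psi(r)$, a union bound over the at most $(3r')^d$ translates of $\Lambda_r$ inside $\Lambda_{r'}(x_j)$ gives that with probability $\geq 1 - (3r')^d \psi(r)$ the trace is zero; on the complementary event, bound the trace by the trivial dimension bound $\#\Lambda_{r'} \leq (3r')^d$. Taking expectations and using stationarity,
\[
\mathbb{E}\bigl[\tr P_I(H_{\omega}^{\Lambda_{r'}(x_j)})\bigr] \leq (3r')^{2d}\psi(r).
\]
Summing over $j$ and dividing by $\#\Lambda_R(0) \sim R^d$ yields
\[
\frac{1}{\#\Lambda_R(0)} \mathbb{E}\bigl[\tr P_I(H_{\omega}^{\Lambda_R(0)})\bigr] \lesssim (r')^d \psi(r) + \frac{1}{r'}.
\]
Optimizing in $r'$ via the balance $(r')^{d+1}\psi(r) = 1$, i.e., $r' = \psi(r)^{-1/(d+1)}$, produces the bound $\lesssim \psi(r)^{1/(d+1)}$. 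Inserting $r = \tfrac{1}{3}(\log \eps^{-1})^{1/\tau}$ gives the claimed estimate, with the explicit constant $7$ absorbing the combinatorial factors coming from the union bound and the rank count.

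The main obstacle is the bookkeeping around the rank-perturbation step: one must verify that Theorem~\ref{thm:resolv1}'s geometric hypothesis ($r$-acceptability of $\Lambda_{r'}(x_j)$) and its numerical inequalities \eqref{eq:ineqresolv1} are met for the chosen scales, and one must control the rank of $\Gamma$ sharply (not just by its operator norm, where the classical Weyl inequality is too weak) in order to get the additive error $R^d/r'$ rather than a term proportional to $\|\Gamma\|$. Once that is done, the balancing $(r')^{d+1}\psi(r)\sim 1$ is what produces the characteristic exponent $1/(d+1)$.
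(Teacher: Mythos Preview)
Your proof is correct and follows essentially the same approach as the paper's: both choose the intermediate scale $s = r' \sim \psi(r)^{-1/(d+1)}$, apply Theorem~\ref{thm:resolv1} together with a union bound over the $r$-subcubes, and balance $(r')^{d}\psi(r)$ against $1/r'$ to produce the exponent $1/(d+1)$. The only difference is that you make explicit, via Weyl's rank-interlacing inequality for the block decomposition, the passage from the intermediate scale to the full box $\Lambda_R(0)$, which the paper leaves implicit in the sentence ``Hence, we can conclude that for any $R\geq R_0$\dots''.
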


\begin{proof}
Define
$$
 s = \left\lfloor\frac{1}{3 \cdot (\psi(r))^{\frac{1}{d + 1}}}\right\rfloor.
$$
By Theorem~\ref{thm:resolv1} combined with a trivial
probabilistic estimate, we obtain
$$
 \mathbb{P}( \|(H_{\omega}^{\Lambda_s(0)} - E)^{-1} \| > \E^{3 r^{\tau}} ) < \psi(r)^{\frac{1}{d+1}}.
$$
In particular also
$$
 \mathbb{E}\left(\frac{1}{\#\Lambda_s(0)} 
  \tr\left(P_{[E - \eps, E+ \eps]}(H_{\omega}^{\Lambda_s(0)})\right)\right)
  \leq 2 \psi(r)^{\frac{1}{d+1}} \left(1 +  \eps \E^{3 r^{\tau}}\right).
$$
We choose
$$
 r = \left\lfloor \frac{1}{3} \left(\log(\eps^{-1})\right)^{\frac{1}{\tau}} \right\rfloor.
$$
Hence, we can conclude that for any $R \geq R_0$ that 
$$
 \mathbb{E}\left(\frac{1}{\#\Lambda_R(0)} 
  \tr\left(P_{[E - \eps, E+ \eps]}(H_{\omega}^{\Lambda_R(0)})\right)\right)
  \leq 7 \psi\left(\frac{1}{3} \left(\log(\eps^{-1})\right)^{\frac{1}{\tau}}\right)^{\frac{1}{d+1}}.
$$
This is the claim.
\end{proof}

%
%
%

\end{document}